 \let\mathscr\relax
\theoremstyle{definition}
\newtheorem{defin}{Definition}[section]
\theoremstyle{definition}
\theoremstyle{plain}
\newtheorem{theo}[defin]{Theorem}
\theoremstyle{plain}
\newtheorem{prop}[defin]{Proposition}
\theoremstyle{plain}
\newtheorem{lem}[defin]{Lemma}
\theoremstyle{plain}
\newtheorem{cor}[defin]{Corollary}
\theoremstyle{definition}
\newtheorem{rmk}[defin]{Remark}
\theoremstyle{definition}
\theoremstyle{definition}
\theoremstyle{plain}
\newtheorem{conj}[defin]{Conjecture}
\theoremstyle{definition}
\newtheorem{notation}[defin]{Notation}
\theoremstyle{definition}
\newtheorem{hyp}[defin]{Hypothesis}
\theoremstyle{definition}
\newtheorem{cond}[defin]{Condition}
\theoremstyle{plain}
\newtheorem{para}[defin]{Parametrisation}
\theoremstyle{definition}
\theoremstyle{definition}
\newtheorem*{defin*}{Definition}
\theoremstyle{definition}
\newtheorem*{ex*}{Example}
\theoremstyle{plain}
\newtheorem*{theo*}{Theorem}
\theoremstyle{plain}
\newtheorem*{prop*}{Proposition}
\theoremstyle{plain}
\newtheorem*{lem*}{Lemma}
\theoremstyle{plain}
\newtheorem*{cor*}{Corollary}
\theoremstyle{definition}
\newtheorem*{rmk*}{Remark}
\theoremstyle{definition}
\newtheorem*{exe*}{Exercise}
\theoremstyle{plain}
\newtheorem{theoA}{Theorem}
\theoremstyle{plain}
\newtheorem{conjA}[theoA]{Conjecture}
\theoremstyle{plain}
\theoremstyle{plain}
\newtheorem{paraA}[theoA]{Parametrisation}
\theoremstyle{plain}
\numberwithin{equation}{section}
\def\thm@space@setup{%
  \thm@preskip=\parskip \thm@postskip=0pt
}
\setlist[enumerate]{label=(\roman*)}
\def\R{{\mathbf{R}}} 
\def\Bl{{\rm Bl}}
\def\bl{{\rm bl}}
\def\irr{{\rm Irr}}
\def\abirr{{\rm AbIrr}}
\def\ab{{\rm Ab}}
\def\ker{{\rm Ker}}
\def\syl{{\rm Syl}}
\def\aut{{\rm Aut}}
\def\ind{{\rm Ind}}
\def\res{{\rm Res}}
\def\n{{\mathbf{N}}} 
\def\c{{\mathbf{C}}} 
\def\z{{\mathbf{Z}}} 
\def\O{{\mathbf{O}}} 
\def\E{{\mathcal{E}}} 
\def\d{{\mathbb{D}}} 
\def\e{{\mathbb{E}}} 
\def\C{{\mathcal{C}}} 
\def\k{{\mathbf{k}}}
\def\G{{\mathbf{G}}} 
\def\H{{\mathbf{H}}} 
\def\K{{\mathbf{K}}} 
\def\L{{\mathbf{L}}} 
\def\M{{\mathbf{M}}} 
\def\T{{\mathbf{T}}} 
\def\S{{\mathbf{S}}} 
\def\B{{\mathbf{B}}} 
\def\P{{\mathbf{P}}} 
\def\Q{{\mathbf{Q}}} 
\def\CL{{\mathcal{L}}} 
\DeclareMathOperator{\BL}{\mathcal{B}\BLkern \mathcal{L}}
\newcommand{\BLkern}{%
  \mkern-4.0mu
  \mathchoice{}{}{\mkern0.2mu}{\mkern0.5mu}%
}
\DeclareMathOperator{\CP}{\mathcal{C}\CPkern \mathcal{P}}
\newcommand{\CPkern}{%
  \mkern-1.0mu
  \mathchoice{}{}{\mkern0.2mu}{\mkern0.5mu}%
}
\newcommand{\uset}[3][0ex]{%
  \mathrel{\mathop{#3}\limits_{
    \vbox to#1{\kern-7\ex@
    \hbox{$\scriptstyle#2$}\vss}}}}
\newcommand{\wt}[1]{\widetilde{#1}} 
\newcommand{\wh}[1]{\widehat{#1}}
\newcommand{\ws}[1][1.5]{
  \mathrel{\scalebox{#1}[1]{$\sim$}}
}
\newcommand{\iso}[1]{\ws_{#1}}
\newcommand{\blocktheorem}[1]{%
  \csletcs{old#1}{#1}
  \csletcs{endold#1}{end#1}
  \RenewDocumentEnvironment{#1}{o}
    {\par\addvspace{1.5ex}
     \noindent\begin{minipage}{\textwidth}
     \IfNoValueTF{##1}
       {\csuse{old#1}}
       {\csuse{old#1}[##1]}}
    {\csuse{endold#1}
     \end{minipage}
     \par\addvspace{1.5ex}}
}
\def\blfootnote{\gdef\@thefnmark{}\@footnotetext}
\title{
{\huge\bf Counting conjectures and $e$-local structures in finite reductive groups}\\
\author{\Large Damiano Rossi}
\date{}
\blfootnote{\emph{$2010$ Mathematical Subject Classification:} $20$C$20$ ($20$C$33$, $20$G$40$).
\\
\emph{Key words and phrases:} Dade's Projective Conjecture, finite reductive groups, generalised Harish-Chandra theory.
\\
The content of this paper is part of the author's doctoral thesis written in the framework of the research training group \textit{GRK2240: Algebro-geometric Methods in Algebra, Arithmetic and Topology} funded by the DFG. This work is also supported by the EPSRC grant EP/T$004592/1$. The author would like to thank Britta Sp\"ath for proposing a fascinating research topic and for providing countless suggestions, Marc Cabanes for many insightful comments on the block theory of finite reductive groups, Gunter Malle for a thorough reading of an earlier version of this paper and for instructive discussions on certain aspects of generalised Harish-Chandra theory and Radha Kessar for providing helpful comments and for suggesting Proposition \ref{prop:Transitivity in good series}. This project originates from ideas introduced by Michel Broué, Paul Fong and Bhama Srinivasan in an attempt to solve Dade's Projective Conjecture for unipotent blocks. The author is thankful to Michel Broué and Jean Michel for pointing out some inaccuracies in an earlier version of this paper.
}
}
\begin{document}

\renewcommand{\thetheoA}{\Alph{theoA}}

\renewcommand{\thecorA}{\Alph{corA}}

\selectlanguage{english}

\maketitle

\begin{abstract}
We prove new results in generalised Harish-Chandra theory providing a description of the so-called Brauer--Lusztig blocks in terms of information encoded in the $\ell$-adic cohomology of Deligne--Lusztig varieties. Then, we propose new conjectures for finite reductive groups by considering geometric analogues of the $\ell$-local structures that lie at the heart of the local-global counting conjectures. For large primes, our conjectures coincide with the counting conjectures thanks to a connection established by Broué, Fong and Srinivasan between $\ell$-structures and their geometric counterpart. Finally, using the description of Brauer--Lusztig blocks mentioned above, we reduce our conjectures to the verification of Clifford theoretic properties expected from certain parametrisations of generalised Harish-Chandra series.
\end{abstract}

\tableofcontents

\section{Introduction}



Over the past few decades the research in representation theory of finite groups has been driven by the pursuit of an explanation of the so-called local-global principle. This states that for each prime number $\ell$ dividing the order
of a finite group $G$, the $\ell$-modular representation theory of $G$ is largely determined by the $\ell$-local structure
of the group $G$. The local-global principle is supported by numerous conjectural evidences including the
McKay Conjecture, the Alperin–McKay Conjecture, Alperin’s Weight Conjecture and Brauer’s Height Zero Conjecture among others.

In the 1990s, extending a connection made by Kn\"orr and Robinson between local-global conjectures and the Brown complex associated to chains of $\ell$-subgroups, Dade introduced a new conjecture known as Dade's Projective Conjecture. This provides a unifying statement which implies all of the local-global conjectures mentioned above \cite{Dad92}, \cite{Dad94}. More recently, Dade's Projective Conjecture has been reduced to the verification of the so-called inductive condition for Dade's Conjecture for finite quasi-simple groups \cite{Spa17}. This inductive condition can be stated in terms of Sp\"ath's Character Triple Conjecture (see \cite[Conjecture 6.3]{Spa17}).

When considering large primes in non-defining characteristic, work of Broué, Fong and Srinivasan shows that the $\ell$-local structure of a finite reductive group and the associated Brown complex can be seen as a shadow of geometric objects arising from the underlining linear algebraic group (see Section \ref{sec:Reduction first part}). Building on this idea, in this paper we propose new conjectures for finite reductive groups that can be seen as geometric realisations of the local-global counting conjectures (see Sections \ref{sec:Counting characters e-locally} and Section \ref{sec:CTC reductive}). These new conjectures imply the counting conjectures under the assumptions considered above (see Section \ref{sec:New and old conjecture for large primes}). Remarkably, our conjectures can be explained within the framework of generalised Harish-Chandra theory and, in fact, they reduce to the verification of Clifford theoretic properties expected from certain parametrisation of generalised Harish-Chandra series. In order to prove these results, we first prove new modular representation theoretic results for finite reductive groups by studying the decomposition of certain virtual representations constructed from the $\ell$-adic cohomology of Deligne--Lusztig varieties.

\subsection{$e$-Harish-Chandra theory}

Let $\G$ be a connected reductive group defined over an algebraic closure $\mathbb{F}$ of a finite field of characteristic $p$, $F:\G\to \G$ a Frobenius endomorphism endowing $\G$ with an $\mathbb{F}_q$-structure for some power $q$ of $p$ and $\G^F$ the finite reductive group consisting of the $\mathbb{F}_q$-rational points. Fix a prime number $\ell$ not dividing $q$ and denote by $e$ the multiplicative order of $q$ modulo $\ell$ (modulo $4$ if $\ell=2$). All modular representation theoretic notions are considered with respect to the prime $\ell$. Let $(\G^*,F^*)$ be in duality with $(\G,F)$. Blocks of finite reductive groups have been parametrised by work of Fong--Srinivasan \cite{Fon-Sri82}, \cite{Fon-Sri86}, Broué--Malle--Michel \cite{Bro-Mal-Mic93}, Cabanes--Enguehard \cite{Cab-Eng94}, \cite{Cab-Eng99}, Enguehard \cite{Eng00} and Kessar--Malle \cite{Kes-Mal13}, \cite{Kes-Mal15}. Given this parametrisation, we then need to understand the distribution of characters into such blocks. For this purpose, recall that the set $\irr(\G^F)$ of irreducible characters of $\G^F$ admits a partition
\[\irr\left(\G^F\right)=\coprod\limits_{B,s}\irr(B)\cap \E\left(\G^F,[s]\right)\]
where $B$ runs over the set of Brauer $\ell$-blocks of $\G^F$, $s$ runs over the set of semisimple elements in $\G^{*F^*}$ up to (rational) conjugation and $\E(\G^F,[s])$ is the rational Lusztig series associated to $s$. Using a terminology introduced by Broué, Fong and Srinivasan, we call each non-empty intersection
\[\E\left(\G^F,B,[s]\right):=\irr\left(B\right)\cap \E\left(\G^F,[s]\right)\]
a \emph{Brauer--Lusztig block}. In particular, each $\ell$-block $B$ is a union of Brauer--Lusztig blocks and therefore, in order to understand the distribution of characters into $\ell$-blocks, we need to describe Brauer--Lusztig blocks. Our first main result provides such a description in terms of $e$-Harish-Chandra series defined in terms of the $\ell$-adic cohomology of Deligne--Lusztig varieties.

\begin{theoA}
\label{thm:Main Brauer--Lusztig blocks}
Assume Hypothesis \ref{hyp:Brauer--Lusztig blocks} and let $\E(\G^F,B,[s])$ be a Brauer--Lusztig block. Then 
\[\E\left(\G^F,B,[s]\right)=\coprod\limits_{(\L,\lambda)}\E\left(\G^F,(\L,\lambda)\right),\]
where the union runs over the $\G^F$-conjugacy classes of $(e,s)$-cuspidal pairs $(\L,\lambda)$ (see Definition \ref{def:(e,s)-pair}) such that $\bl(\lambda)^{\G^F}=B$ via Brauer induction of $\ell$-blocks.
\end{theoA}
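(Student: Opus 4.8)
The plan is to prove the decomposition in two stages: first show that every irreducible character in a fixed Brauer--Lusztig block $\E(\G^F,B,[s])$ lies in exactly one $e$-Harish-Chandra series $\E(\G^F,(\L,\lambda))$ for an $(e,s)$-cuspidal pair $(\L,\lambda)$, and second show that the block-theoretic constraint $\bl(\lambda)^{\G^F}=B$ is equivalent to membership of the corresponding series in $B$. The first stage should follow from the general theory of $e$-Harish-Chandra series applied within a single Lusztig series: by the (generalised) disjointness results for $e$-Harish-Chandra theory, the set $\E(\G^F,[s])$ is partitioned into $e$-Harish-Chandra series indexed by $\G^F$-classes of $e$-cuspidal pairs $(\L,\lambda)$ with $\lambda\in\E(\L^F,[s])$ — these are precisely the $(e,s)$-cuspidal pairs of Definition \ref{def:(e,s)-pair}. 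One subtlety is that the cohomology-defined $e$-Harish-Chandra series (via the Deligne--Lusztig induction functors $R_{\L}^{\G}$) need not a priori be disjoint for general $\G$; here is where Hypothesis \ref{hyp:Brauer--Lusztig blocks} enters, guaranteeing the necessary disjointness and partition statement, and this is the main place where the hypothesis is used.

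For the second stage, I would argue that each individual $e$-Harish-Chandra series $\E(\G^F,(\L,\lambda))$ is contained in a single $\ell$-block. The standard tool is that if $\chi\in\E(\G^F,(\L,\lambda))$ then $\chi$ is a constituent of $R_{\L}^{\G}(\lambda)$ (or of $R_{\L}^{\G}$ applied to a character in the $e$-Harish-Chandra series of $\lambda$ above it), and $R_{\L}^{\G}$ is compatible with central characters of blocks in the sense that constituents of $R_{\L}^{\G}(\lambda)$ lie in blocks $B'$ with $\bl(\lambda)^{\G^F}=B'$ — this is a consequence of the fact that Deligne--Lusztig induction can be realised by a bimodule inducing a map on block algebras, together with Bonnafé--Rouquier-type arguments or the explicit computations of Broué--Michel relating $R_{\L}^{\G}$ to Brauer induction at the level of block idempotents. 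Thus all constituents of a fixed $e$-Harish-Chandra series $\E(\G^F,(\L,\lambda))$ lie in the block $\bl(\lambda)^{\G^F}$, so the series is entirely inside $B$ precisely when $\bl(\lambda)^{\G^F}=B$.

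Combining the two stages: starting from $\E(\G^F,[s])=\coprod_{(\L,\lambda)}\E(\G^F,(\L,\lambda))$ over $(e,s)$-cuspidal pairs up to $\G^F$-conjugacy, intersect with $\irr(B)$; by stage two each series is either entirely inside $\irr(B)$ (when $\bl(\lambda)^{\G^F}=B$) or entirely outside it, giving
\[
\E\left(\G^F,B,[s]\right)=\irr(B)\cap\E\left(\G^F,[s]\right)=\coprod_{\substack{(\L,\lambda)\\ \bl(\lambda)^{\G^F}=B}}\E\left(\G^F,(\L,\lambda)\right),
\]
as claimed. The main obstacle I anticipate is not the formal bookkeeping but rather verifying that the hypotheses needed for stage one — disjointness and exhaustiveness of the cohomologically-defined $e$-Harish-Chandra series within $\E(\G^F,[s])$, together with the identification of the indexing set with $(e,s)$-cuspidal pairs — are genuinely delivered by Hypothesis \ref{hyp:Brauer--Lusztig blocks}; one must also be careful that the Jordan-decomposition-style reduction to unipotent characters (via Bonnafé--Rouquier) interacts well with both the $e$-Harish-Chandra partition and with Brauer induction of blocks, since $\L$ ranges over $e$-split Levi subgroups and the centralizers $\mathbf{C}_{\G^*}(s)$ may be disconnected.
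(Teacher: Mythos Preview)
Your two-stage outline is correct and is exactly the structure of the paper's argument: first partition $\E(\G^F,[s])$ into $e$-Harish-Chandra series indexed by $\G^F$-classes of $(e,s)$-cuspidal pairs, then show each such series lies entirely in the block $\bl(\lambda)^{\G^F}$, and finally intersect with $\irr(B)$.

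Where your proposal is thin is that you treat both stages as available inputs, whereas in the paper they are the substantive new results (Proposition~\ref{prop:e-Harish-Chandra series and blocks}, Proposition~\ref{prop:e-Harish-Chandra series, disjointness}, Corollary~\ref{cor:e-Harish-Chandra, disjointness}) and the theorem itself is a short formal consequence. Neither the disjointness/exhaustion statement for arbitrary $(e,s)$-cuspidal pairs nor the containment $\E(\G^F,(\L,\lambda))\subseteq\irr(\bl(\lambda)^{\G^F})$ is standard when $s$ is $\ell$-singular; for $\ell$-regular $s$ both come from \cite{Cab-Eng99}, and extending them to all $s$ is the point. The paper's mechanism is a Jordan decomposition for the $\ell$-part $s_\ell$ (not a reduction to unipotent characters, and not Bonnaf\'e--Rouquier): one passes from an $(e,s)$-pair $(\L,\lambda)$ to an $(e,s_{\ell'})$-pair $(\L(s_\ell),\lambda(s_\ell))$ of the Levi $\G(s_\ell)$ dual to $\c_{\G^*}^\circ(s_\ell)$ (Lemma~\ref{lem:Jordan decomposition for l-elements}), checks that this correspondence preserves $\leq_e$ and $e$-cuspidality (Lemma~\ref{lem:Jordan decomposition for l-elements, order relation}), and shows compatibility with Brauer induction of blocks via $d^1$ and Brauer's second main theorem (Lemma~\ref{lem:Jordan decomposition for l-elements, blocks}). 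One then invokes \cite[Theorems 2.5 and 4.1]{Cab-Eng99} on the $\ell'$-side together with Condition~\ref{cond:Transitivity} (this is where Hypothesis~\ref{hyp:Brauer--Lusztig blocks}(ii) is genuinely used). The disconnectedness worry you raise is handled by the hypothesis $\ell\in\Gamma(\G,F)$, which forces $\c_{\G^*}^\circ(s_\ell)^F=\c_{\G^*}(s_\ell)^F$.
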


We point out that Hypothesis \ref{hyp:Brauer--Lusztig blocks} is satisfied in most of the cases of interest and, in particular, whenever $[\G,\G]$ is simply connected with no irreducible rational components of type ${^2 \mathbf{E}}_6(2)$, $\mathbf{E}_7(2)$, $\mathbf{E}_8(2)$ while considering $\ell\in\Gamma(\G,F)$ with $\ell\geq 5$ (see Remark \ref{rmk:Brauer-Lusztig for simple simply connected}). Moreover, in this case Brauer's induction of blocks is defined (see the discussion preceding Lemma \ref{lem:Jordan decomposition for l-elements, blocks}).

From the perspective of $e$-Harish--Chandra theory, Theorem \ref{thm:Main Brauer--Lusztig blocks} can be seen as an extension of results of Cabanes--Enguehard (see \cite[Theorem 4.1]{Cab-Eng99}) to $e$-cuspidal pairs associated to $\ell$-singular semisimple elements. In addition, Theorem \ref{thm:Main Brauer--Lusztig blocks} provides a generalisation of \cite[Theorem 3.2 (1)]{Bro-Mal-Mic93} to non-unipotent characters. This provides a uniform formulation for $e$-Harish-Chandra theory by considering arbitrary $e$-cuspidal pairs. In fact in Corollary \ref{cor:e-Harish-Chandra, disjointness}, we show that for every $e\geq 1$ the set of irreducible characters of $\G^F$ is partitioned into $e$-Harish-Chandra series provided that there exists a good prime $\ell$ for which $e$ is the order of $q$ modulo $\ell$. It is worth pointing out that this latter statement does not depend on the choice of the prime $\ell$ while, relying on block theoretic techniques, it's proof does. However, such a partition should existence for every $e\geq 1$ independently on the restrictions considered here.

On the way to prove Theorem \ref{thm:Main Brauer--Lusztig blocks}, we obtain two results that are of independent interest. First in Corollary \ref{cor:e-Harish-Chandra series and blocks}, we prove an extension of \cite[Theorem 2.5]{Cab-Eng99} that shows how Deligne--Lusztig induction preserves the decomposition into $\ell$-blocks. More precisely, under Hypothesis \ref{hyp:Brauer--Lusztig blocks} we show that for every $\ell$-block $b_\L$ of an $e$-split Levi subgroup $\L$ of $\G$ the irreducible constituents of the virtual representations obtained via Deligne--Lusztig induction from any $\lambda\in\irr(b_\L)$ belong to a unique $\ell$-block $b_\G$ of $\G^F$. Secondly in Corollary \ref{cor:Transitive closure}, we give a partial solution to a conjecture (see \cite[Notation 1.11]{Cab-Eng99} and Conjecture \ref{conj:Transitive closure}) introduced by Cabanes and Enguehard on the transitivity of a certain relation $\leq_e$ defined on the set of $e$-pairs (see also Proposition \ref{prop:Transitivity in good series}).

As an immediate consequence of Theorem \ref{thm:Main Brauer--Lusztig blocks}, we obtain the above-mentioned description of all the characters in any given $\ell$-block by considering the union over all conjugacy classes of semisimple elements of $\G^{*F^*}$. Namely, for every $\ell$-block $B$ of $\G^F$ there is a partition
\[\irr(B)=\coprod\limits_{(\L,\lambda)}\E\left(\G^F,(\L,\lambda)\right),\]
where the union runs over the $\G^F$-conjugacy classes of $e$-cuspidal pairs $(\L,\lambda)$ such that $\bl(\lambda)^{\G^F}=B$ via Brauer induction of $\ell$-blocks (see Theorem \ref{thm:Blocks are unions of e-HC series}). Given this partition, the next natural step to understand the distribution of characters into $\ell$-blocks of finite reductive groups is to find a parametrisation of the characters in each $e$-Harish-Chandra series $\E(\G^F,(\L,\lambda))$. Inspired by classical Harish-Chandra theory and by results of Broué, Malle and Michel for unipotent characters (see \cite[Theorem 3.2]{Bro-Mal-Mic93}), we propose a parametrisation for arbitrary $e$-Harish-Chandra series which is additionally compatible with Clifford theory and with the action of automorphisms. This Clifford theoretic compatibility is expressed via $\G^F$-block isomorphisms of character triples as defined in \cite[Definition 3.6]{Spa17}.

\begin{paraA}
\label{para:Main iEBC}
Let $\G$, $F$, $\ell$, $q$ and $e$ be as above and consider an $e$-cuspidal pair $(\L,\lambda)$ of $\G$. There exists a defect preserving $\aut_\mathbb{F}(\G^F)_{(\L,\lambda)}$-equivariant bijection
\[\Omega^\G_{(\L,\lambda)}:\E\left(\G^F,(\L,\lambda)\right)\to\irr\left(\n_\G(\L)^F\hspace{1pt}\middle|\hspace{1pt} \lambda\right)\]
such that
\[\left(X_\vartheta,\G^F,\vartheta\right)\iso{\G^F}\left(\n_{X_\vartheta}(\L),\n_{\G^F}(\L),\Omega^\G_{(\L,\lambda)}(\vartheta)\right)\]
in the sense of \cite[Definition 3.6]{Spa17} for every $\vartheta\in\E\left(\G^F,(\L,\lambda)\right)$ and where $X:=\G^F\rtimes \aut_\mathbb{F}(\G^F)$.
\end{paraA}

It is the author's belief that existence of the above parametrisation should provide an explaination for the validity of the inductive conditions for the local-global  conjectures for finite reductive groups in non-defining characteristic. As a matter of fact, similar bijections have been used in \cite{Mal-Spa16} to verify the McKay Conjecture for the prime $\ell=2$ and then in \cite{Ruh22AM} to prove the Alperin--McKay Conjecture and Brauer's Height Zero Conjecture for $\ell=2$. Regarding the validity of the above parametrisation, in \cite{Ros-Clifford_automorphisms_HC} we show that in order to obtain Parametrisation \ref{para:Main iEBC} it is enough to verify certain requirements on the extendibility of characters of $e$-split Levi subgroups. These also appear in the proofs of the inductive conditions for the McKay, the Alperin--McKay and the Alperin Weight conjectures and the checking of these requirements is part of an ongoing project in representation theory of finite reductive groups (see \cite{Mal-Spa16}, \cite{Cab-Spa17I}, \cite{Cab-Spa17II}, \cite{Cab-Spa19}, \cite{Bro-Spa20}, \cite{Spa21}, \cite{Bro22}).

\subsection{Counting conjectures via $e$-local structures}

Our next aim is to provide a geometric realisation of the local-global principle for finite reductive groups by replacing $\ell$-local structures with $e$-local structures. For this purpose we first need some notation. We keep $\G$, $F$, $q$, $\ell$ and $e$ as previously defined. Denote by $\CL_e(\G,F)_{>0}$ the set of non-trivial descending chains of $e$-split Levi subgroups $\sigma=\{\G=\L_0>\L_1\>\dots>\L_n\}$ and define the length of $\sigma$ as $|\sigma|:=n>0$. We denote by $\G^F_\sigma$ the stabiliser of a chain $\sigma$ under the action of $\G^F$. Then, as explained in Section \ref{sec:Counting characters e-locally}, we can associate to every $\ell$-block $b$ of $\G_\sigma^F$ an $\ell$-block $\R_{\G_\sigma}^\G(b)$ of $\G^F$. For every $\ell$-block $B$ of $\G^F$ and $d\geq 0$, we denote by $\k^d(B_\sigma)$ the number of irreducible characters $\vartheta$ of $\G^F_\sigma$ with $\ell$-defect $d$ and such that the $\ell$-block of $\vartheta$ in $\G_\sigma^F$ correspond to $B$ via $\R_{\G_\sigma}^\G$. Moreover, let $\k^d(B)$ and $\k_{\rm c}^d(B)$ be the number of irreducible and $e$-cuspidal characters respectively, with $\ell$-defect $d$ and belonging to the $\ell$-block $B$. Our first conjecture proposes a formula to count the number of characters of any given defect in any $\ell$-block in terms of $e$-local data.

\begin{conjA}
\label{conj:Main Dade reductive}
Let $B$ be an $\ell$-block of $\G^F$ and $d\geq 0$. Then
\[\k^d(B)=\k_{\rm c}^d(B)+\sum\limits_\sigma(-1)^{|\sigma|+1}\k^d(B_\sigma)\]
where $\sigma$ runs over a set of representatives for the action of $\G^F$ on $\CL_e(\G,F)_{>0}$.
\end{conjA}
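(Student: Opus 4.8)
\medskip
\noindent\emph{Strategy.} The plan is to deduce Conjecture~\ref{conj:Main Dade reductive} from the block decomposition $\irr(B)=\coprod_{(\L,\lambda)}\E(\G^F,(\L,\lambda))$ of Theorem~\ref{thm:Blocks are unions of e-HC series} (a consequence of Theorem~\ref{thm:Main Brauer--Lusztig blocks}), the union running over $\G^F$-classes of $e$-cuspidal pairs with $\bl(\lambda)^{\G^F}=B$, together with the $e$-Harish-Chandra parametrisation of Parametrisation~\ref{para:Main iEBC}; the overall architecture mirrors the way Sp\"ath's Character Triple Conjecture implies Dade's Projective Conjecture \cite{Spa17}, with chains of $\ell$-subgroups replaced throughout by chains of $e$-split Levi subgroups. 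First I would rewrite both sides in terms of $e$-cuspidal pairs. On the global side the pairs with $\L=\G$ index exactly the $e$-cuspidal characters of $\G^F$ lying in $B$, and applying the defect-preserving bijection $\Omega^\G_{(\L,\lambda)}$ of Parametrisation~\ref{para:Main iEBC} to each series gives
\[\k^d(B)-\k_{\rm c}^d(B)=\sum_{(\L,\lambda)\,:\,\L<\G}\#\{\chi\in\irr(\n_\G(\L)^F\mid\lambda):\chi\text{ has }\ell\text{-defect }d\},\]
the sum taken over $\G^F$-classes of proper $e$-cuspidal pairs with $\bl(\lambda)^{\G^F}=B$. It then remains to identify the right-hand side with $\sum_\sigma(-1)^{|\sigma|+1}\k^d(B_\sigma)$.

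On the chain side I would attach to each pair $(\sigma,\vartheta)$, with $\sigma=\{\G=\L_0>\L_1>\dots>\L_n\}$ and $\vartheta\in\irr(\G^F_\sigma)$, an $e$-cuspidal pair $(\M,\mu)$ of $\G$, well defined up to $\G^F$-conjugacy: since $\L_n$ is the smallest member of $\sigma$, $\L_n^F$ is normal in $\G^F_\sigma$, so $\vartheta$ lies over some $\lambda_n\in\irr(\L_n^F)$, and $\lambda_n$ acquires a well-defined $e$-cuspidal support $(\M,\mu)$ inside the reductive group $\L_n$ by $e$-Harish-Chandra theory (Corollary~\ref{cor:e-Harish-Chandra, disjointness} applied to $\L_n$); by transitivity of the relation $\leq_e$ (Corollary~\ref{cor:Transitive closure}, Proposition~\ref{prop:Transitivity in good series}) the pair $(\M,\mu)$ is also an $e$-cuspidal pair of $\G$. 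That the block correspondence $\R^\G_{\G_\sigma}$ is compatible with the Brauer correspondence $\bl(\cdot)^{\G^F}$ --- which follows from Corollary~\ref{cor:e-Harish-Chandra series and blocks}, i.e.\ that Deligne--Lusztig induction preserves the decomposition into $\ell$-blocks --- then lets me restrict throughout to pairs $(\M,\mu)$ with $\bl(\mu)^{\G^F}=B$. Since $\sigma$ is descending one has $\M\leq\L_n\leq\dots\leq\L_0=\G$, so after grouping the alternating sum by the $\G^F$-class of $(\M,\mu)$, the chains contributing to a given $(\M,\mu)$ are exactly those running through the closed interval of $e$-split Levi subgroups between $\M$ and $\G$. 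The crucial step is then to iterate Parametrisation~\ref{para:Main iEBC} down the chain $\sigma$ so as to rewrite the number of $\vartheta$ of defect $d$ over a fixed $(\M,\mu)$ as a number of characters of the relative normaliser $\n_{\G^F_\sigma}(\M)$ over $\mu$, in a manner that is $\n_{\G^F}(\M)_\mu$-equivariant and compatible with the $\G^F$-block isomorphisms of character triples of \cite[Definition~3.6]{Spa17}.

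Granted this, what remains is combinatorial: for a fixed proper $(\M,\mu)$ one is left with an alternating sum, over $\n_{\G^F}(\M)_\mu$-orbits of chains in the \emph{half-open} interval between $\M$ and $\G$, of defect-$d$ character counts of the corresponding relative Weyl groups over $\mu$; this interval has least element $\M$, which is fixed by $\n_{\G^F}(\M)_\mu$, so its order complex is $\n_{\G^F}(\M)_\mu$-equivariantly contractible, and a standard homotopy-invariance argument for equivariant Euler characteristics (of Kn\"orr--Robinson type) forces the sum to vanish. The pairs with $\M=\G$ contribute nothing to the alternating chain sum --- a chain of positive length would force $\G\leq\L_n<\G$ --- and are accounted for, on the global side, by the term $\k_{\rm c}^d(B)$. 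Summing over all classes $(\M,\mu)$ with $\bl(\mu)^{\G^F}=B$ and rearranging then produces the identity of Conjecture~\ref{conj:Main Dade reductive}. Equivalently, the whole argument can be packaged as: Parametrisation~\ref{para:Main iEBC} and Theorem~\ref{thm:Main Brauer--Lusztig blocks} imply the $e$-local Character Triple Conjecture of Section~\ref{sec:CTC reductive}, which in turn implies Conjecture~\ref{conj:Main Dade reductive} by the cancellation scheme just outlined.

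The main obstacle is the iteration of Parametrisation~\ref{para:Main iEBC} along a chain. That statement is formulated for a single finite reductive group and a single $e$-cuspidal pair, whereas here one needs a relative version valid for the chain stabilisers $\G^F_\sigma$ --- which are not themselves finite reductive groups --- and one must check that the bijections attached to the various $e$-split Levi subgroups occurring in $\sigma$ are mutually compatible: they must commute with passage to normalisers and with ordinary induction and restriction, and the associated $\G^F$-block isomorphisms of character triples must compose transitively (a Butterfly-Lemma-type verification), all while staying equivariant under $\aut_\mathbb{F}(\G^F)$. Controlling this interaction between Deligne--Lusztig induction, the block correspondence $\R^\G_{\G_\sigma}$ and ordinary Clifford theory for the non-reductive groups $\G^F_\sigma$ is where essentially all the difficulty lies; once it is in place, the global reduction via Theorem~\ref{thm:Blocks are unions of e-HC series} and the equivariant contractibility of these intervals close the argument.
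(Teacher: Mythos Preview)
Your overall architecture matches the paper's: deduce Conjecture~\ref{conj:Main Dade reductive} from Parametrisation~\ref{para:Main iEBC} via Conjecture~\ref{conj:Main CTC reductive}, which is exactly what Theorems~\ref{thm:Main CTC reductive implies Dade reductive} and~\ref{thm:Main reduction} accomplish. You also correctly locate the main technical obstacle --- lifting Parametrisation~\ref{para:Main iEBC} to the non-reductive chain stabilisers $\G^F_\sigma$ with full equivariance and compatibility of $\G^F$-block isomorphisms --- and the paper devotes most of Section~\ref{sec:Final} (culminating in Proposition~\ref{prop:iEBC going up}) to precisely this.

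There is, however, a genuine slip in your cancellation step. After replacing each count $\bigl|\irr^d\bigl(\G^F_\sigma\mid\E(\L(\sigma)^F,(\M,\mu))\bigr)\bigr|$ by $\bigl|\irr^d\bigl(\n_{\G^F_\sigma}(\M)\mid\mu\bigr)\bigr|$ via the lifted parametrisation, the alternating sum over $\sigma\in\CL(\G)_{>0}$ with $\M\leq\L(\sigma)$ does \emph{not} vanish. Writing $\tau=\sigma\cup\{\M\}$ so that $\n_{\G^F_\sigma}(\M)=\G^F_\tau$, every $\tau$ with $|\tau|\geq 2$ has two preimages $\sigma=\tau$ and $\sigma=\tau\setminus\{\M\}$ with opposite signs and cancels, but $\tau=\{\G>\M\}$ has only the preimage $\sigma=\tau$ (since $\tau\setminus\{\M\}=\{\G\}$ has length~$0$) and survives with sign $+1$. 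The sum therefore equals $\bigl|\irr^d\bigl(\n_\G(\M)^F\mid\mu\bigr)\bigr|$, which is exactly what you need to match the global side --- not zero. Contractibility of the interval furnishes the pairing $\tau\leftrightarrow\tau\setminus\{\M\}$, but the surviving boundary term is the whole point; a Kn\"orr--Robinson homotopy-invariance argument would require the character counts to be constant along the contraction, and they are not. The paper sidesteps this by never separating the global term: the trivial chain $\{\G\}$ is included in $\CL^d(B)_+$ (carrying $\k^d(B)-\k^d_{\rm c}(B)$ via Theorem~\ref{thm:Main Brauer--Lusztig blocks}), and the bijection $\Lambda$ in Theorem~\ref{thm:Reduction for CTC for groups of Lie type} pairs $(\sigma,\M,\ab(\mu),\vartheta)$ directly with $(\sigma\cup\{\M\},\M,\ab(\mu),\chi)$ or $(\sigma\setminus\{\M\},\M,\ab(\mu),\chi)$ using a single application of Proposition~\ref{prop:iEBC going up}, so no iteration down the chain is needed.
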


Conjecture \ref{conj:Dade reductive} provides a geometric form of Dade's Conjecture for finite reductive groups. In fact, in Section \ref{sec:Towards Dade's Projective Conjecture} we show that the two statements coincide when the prime $\ell$ is large for $\G$ (see Proposition \ref{prop:Equivalences Dade}). We expect this connection to hold for good primes as well and we suspect that this could be connected to the existence of a homotopy equivalence between the Brown complex and the simplicial complex associated to $\CL_e(\G,F)_{>0}$. In the spirit of Conjecture \ref{conj:Main Dade reductive}, in Section \ref{sec:New conjectures} we introduce a statement (see Conjecture \ref{conj:AWC reductive}) which relates to Alperin's Weight Conjecture. As for the Alperin--McKay Conjecture and its inductive condition, at least for large primes, we identify a geometric counterpart in Parametrisation \ref{para:Main iEBC} (see Proposition \ref{prop:Equivalence AM} and Proposition \ref{prop:Equivalence iAM}). 

The numerical phenomena proposed in Conjecture \ref{conj:Main Dade reductive} as well as in the local-global counting conjectures are believed to be consequences of a deeper underlying theory. For blocks with abelian defect groups, Broué has suggested a structural explanation which predicts the existence of certain derived equivalences \cite{Bro90}. In a different direction, work of Isaacs, Malle and Navarro \cite{Isa-Mal-Nav07}, followed by Navarro and Sp\"ath \cite[Theorem 7.1]{Nav-Spa14I} (see also \cite{Ros-iMcK}) and Sp\"ath \cite[Conjecture 1.2]{Spa17}, suggests a description of Clifford theoretic properties hidden behind the counting conjectures by studying certain relations on sets of character triples. Exploiting this second approach, we now introduce a more conceptual background for Conjecture \ref{conj:Main Dade reductive} by introducing $\G^F$-block isomorphisms of character triples in this context.

In Section \ref{sec:CTC reductive}, for every $\ell$-block $B$, $d\geq 0$ and $\epsilon=\pm$, we introduce a set $\CL^d(B)_\epsilon/\G^F$ consisting of $\G^F$-orbits of quadruples $\omega$. Each $\omega\in\CL^d(B)_\epsilon/\G^F$ determines a $\G^F$-orbit $\omega^\bullet$ of pairs $(\sigma,\vartheta)$ consisting of a chain $\sigma\in\CL_e(\G,F)$ and a certain irreducible character $\vartheta$ of $\G_\sigma^F$. With this notation, we can now present our second conjecture.

\begin{conjA}
\label{conj:Main CTC reductive}
For every $\ell$-block $B$ of $\G^F$ and $d\geq 0$, there is an $\aut_\mathbb{F}(\G^F)_B$-equivariant bijection
\[\Lambda:\CL^d\left(B\right)_+/\G^F\to\CL^d\left(B\right)_-/\G^F\]
such that
\[\left(X_{\sigma,\vartheta},\G^F_\sigma,\vartheta\right)\iso{\G^F}\left(X_{\rho,\chi},\G^F_\rho,\chi\right)\]
in the sense of \cite[Definition 3.6]{Spa17} for every $\omega\in\CL^d(B)_+/\G^F$, any $(\sigma,\vartheta)\in\omega^\bullet$, any $(\rho,\chi)\in\Lambda(\omega)^\bullet$ and where $X:=\G^F\rtimes \aut_\mathbb{F}(\G^F)$.
\end{conjA}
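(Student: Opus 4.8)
The plan is to reduce Conjecture \ref{conj:Main CTC reductive} to Parametrisation \ref{para:Main iEBC} together with Theorem \ref{thm:Main Brauer--Lusztig blocks}, by an inclusion–exclusion argument over the poset $\CL_e(\G,F)_{>0}$ that upgrades the numerical identity of Conjecture \ref{conj:Main Dade reductive} to a bijection respecting $\G^F$-block isomorphisms of character triples. This is the reductive-group analogue of the classical passage from Dade's Conjecture to the Character Triple Conjecture, so the strategy mirrors \cite[Section 6]{Spa17}, but with $e$-split Levi subgroups playing the role of $\ell$-subgroups and $e$-Harish-Chandra series playing the role of Brauer correspondence.

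First I would unpack the definition of the sets $\CL^d(B)_\epsilon/\G^F$ from Section \ref{sec:CTC reductive}: a quadruple $\omega$ packages a chain $\sigma=\{\G=\L_0>\dots>\L_n\}$, an irreducible character of $\G^F_\sigma$ of defect $d$ lying in a block mapping to $B$ under $\R^\G_{\G_\sigma}$, together with the parity bookkeeping data, and the sign $\epsilon$ records the parity $(-1)^{|\sigma|}$ (with the $e$-cuspidal characters of $\G^F$ itself sitting at length zero on the positive side). The map $\omega\mapsto\omega^\bullet$ simply forgets the extra data down to the $\G^F$-orbit of pairs $(\sigma,\vartheta)$. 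With this in place, the skeleton of the proof is: (1) for each chain $\sigma$, apply Parametrisation \ref{para:Main iEBC} inside $\G^F_\sigma$ — or rather, iteratively along the chain — to get a defect-preserving, automorphism-equivariant bijection from characters of $\G^F_\sigma$ lying over a fixed $e$-cuspidal pair $(\L,\lambda)$ to characters of $\n_{\G_\sigma}(\L)^F$ over $\lambda$, with a compatible $\G^F_\sigma$-block isomorphism of character triples; (2) use Theorem \ref{thm:Main Brauer--Lusztig blocks} (in the form Theorem \ref{thm:Blocks are unions of e-HC series}) to sort everything by $e$-cuspidal pair $(\L,\lambda)$ with $\bl(\lambda)^{\G^F}=B$, so that the whole problem decomposes as a disjoint union over such pairs; (3) within the contribution of a fixed $(\L,\lambda)$, build $\Lambda$ by the standard chain-cancellation / sign-reversing involution: a chain $\sigma$ either already terminates at a Levi conjugate to $\L$ (contributing on the cuspidal side) or can be extended/truncated at the bottom, and this extend-or-truncate operation is a parity-reversing bijection on the remaining chains, which one transports through the bijections of step (1).

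The transitivity input is essential here and is exactly what Corollary \ref{cor:Transitive closure} (and Proposition \ref{prop:Transitivity in good series}) provides: to iterate Parametrisation \ref{para:Main iEBC} along a chain and to glue the bijections coherently, one needs that the relation $\leq_e$ behaves transitively on the relevant $e$-pairs, so that ``the $e$-cuspidal pair below $\vartheta\in\irr(\G^F_\sigma)$'' is well defined independently of how one descends the chain. Given transitivity, the composition of local bijections is well defined, and the $\G^F$-block isomorphisms of character triples compose correctly because $\iso{\G^F}$ is transitive and compatible with passing to stabilisers (this is where one invokes the formal properties of $\iso{\G^F}$ from \cite[Section 3]{Spa17}, in particular that it is preserved under $\n_{(-)}(\L)$ and restricts well along the tower $X_{\sigma,\vartheta}\supseteq\G^F_\sigma$). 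The equivariance of $\Lambda$ under $\aut_\mathbb{F}(\G^F)_B$ follows from the equivariance clause of Parametrisation \ref{para:Main iEBC} together with the fact that the cancellation involution on chains is manifestly $\aut_\mathbb{F}(\G^F)$-equivariant.

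The main obstacle, as in \cite{Spa17}, will be the careful matching of defects and of blocks along the chain. Parametrisation \ref{para:Main iEBC} is stated for a single step $\G\rightsquigarrow\n_\G(\L)$, whereas a chain $\sigma$ of length $n$ requires either an inductive application through the successive $e$-split Levi subgroups or a direct normaliser-of-a-chain version; one must check that the defect $d$ is preserved at every step — which is where the defect-preserving assertion of Parametrisation \ref{para:Main iEBC} and the compatibility of $\ell$-defect with $\R^\G_{\G_\sigma}$ (as set up in Section \ref{sec:Counting characters e-locally}) both get used — and that the block correspondence $\R^\G_{\G_\sigma}$ composed along the chain agrees with Brauer induction of the bottom $e$-cuspidal pair, which is precisely the content of Corollary \ref{cor:e-Harish-Chandra series and blocks}. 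A secondary technical point is bookkeeping the $\G^F$-orbits versus the $\G^F_\sigma$-orbits: the bijections of step (1) are naturally $\G^F_\sigma$-equivariant, and one must verify that the induced map on $\G^F$-orbits of quadruples is well defined, which amounts to a standard Clifford-theoretic descent using that stabilisers of chains behave well under conjugation. None of these steps is conceptually new relative to the abstract theory, but assembling them so that the single bijection $\Lambda$ simultaneously respects defect, block, automorphisms, and the character-triple relation is the delicate part.
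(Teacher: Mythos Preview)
Your overall architecture matches the paper's proof of Theorem \ref{thm:Main reduction}: decompose by $e$-cuspidal pair $(\M,\mu)$, use the add/remove-$\M$ involution on chains as the sign-reversing map, and transport characters through bijections coming from Parametrisation \ref{para:Main iEBC}. The chain cancellation you describe is exactly the map $\Delta$ in the paper's proof of Theorem \ref{thm:Reduction for CTC for groups of Lie type}.

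However, there is a genuine conceptual gap in your step (1). You propose to apply Parametrisation \ref{para:Main iEBC} ``inside $\G^F_\sigma$'' or ``iteratively along the chain,'' and you flag as the main obstacle that ``a chain $\sigma$ of length $n$ requires either an inductive application through the successive $e$-split Levi subgroups or a direct normaliser-of-a-chain version.'' This is not how the paper proceeds, and it is not clear that an iteration would work: Parametrisation \ref{para:Main iEBC} is stated for a connected reductive group, whereas $\G^F_\sigma$ is only a chain stabiliser, and composing $\iso{\G^F}$-isomorphisms along a tower of normalisers would require control you do not have. The paper instead applies Parametrisation \ref{para:Main iEBC} \emph{once}, at the level of the smallest term $\K:=\L(\sigma)$ with respect to the $e$-cuspidal pair $(\M,\mu)$ of $\K$, obtaining a bijection $\E(\K^F,(\M,\ab(\mu)))\to\irr(\n_\K(\M)^F\mid\ab(\mu))$; it then \emph{lifts} this single bijection to $\G^F_\sigma\to\n_{\G^F_\sigma}(\M)=\G^F_\rho$ using an abstract Clifford-theoretic machine (Proposition \ref{prop:Constructing bijections over bijections with central quotients} and its instantiation in Proposition \ref{prop:iEBC going up}). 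The bulk of Section \ref{sec:Final} (Propositions \ref{prop:From rational components to K_0}--\ref{prop:iEBC going up}) is devoted to building this lift, first from irreducible rational components to $\K_0=[\K,\K]$, then to $\K$, then to arbitrary $\K^F\leq H\leq\n_\G(\K)^F$; along the way the paper is forced to work with the orbits $\ab(\lambda)$ and $\mathcal{Y}_0$ rather than single characters, which is why the quadruples in $\CL^d(B)_\epsilon$ carry $\ab(\mu)$ rather than $\mu$. Your sketch does not engage with any of this lifting step, and without it the argument does not close: you cannot simply ``apply Parametrisation \ref{para:Main iEBC} inside $\G^F_\sigma$.''

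A secondary point: your emphasis on Corollary \ref{cor:Transitive closure} is somewhat off-target. Transitivity is used (via Hypothesis \ref{hyp:Brauer--Lusztig blocks}) to guarantee the partition of $\irr(\G^F_\sigma)$ into $e$-Harish-Chandra series over $\L(\sigma)$ (Lemma \ref{lem:Characters and blocks of chains normalizers} (iii)), not to glue bijections along the chain. Since the paper's argument uses only one step of Parametrisation \ref{para:Main iEBC}, no gluing is needed.
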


As a first application of Theorem \ref{thm:Main Brauer--Lusztig blocks}, we establish a connection between Conjecture \ref{conj:Main Dade reductive} and Conjecture \ref{conj:Main CTC reductive}. More precisely, we show that for a fixed $\sigma\in\CL^d(\G,F)_{>0}$ the number $\k^d(B_\sigma)$ coincides with the number of quadruples $\omega$ for which there is a character $\vartheta\in\irr(\G_\sigma^F)$ such that $(\sigma,\vartheta)\in\omega^\bullet$. On the other hand $\k^d(B)-\k_{\rm c}^d(B)$ coincide with the number of quadruples $\omega$ for which there exists a character $\chi$ of $\G^F$ such that $(\sigma_0,\chi)\in\omega^\bullet$ and where $\sigma_0=\{\G\}$ is the trivial chain. In particular we see that Conjecture \ref{conj:Main CTC reductive} implies Conjecture \ref{conj:Main Dade reductive}.

\begin{theoA}
\label{thm:Main CTC reductive implies Dade reductive}
Assume Hypothesis \ref{hyp:Brauer--Lusztig blocks} and consider an $\ell$-block $B$ and $d\geq 0$. If Conjecture \ref{conj:Main CTC reductive} holds for $B$ and $d\geq 0$, then Conjecture \ref{conj:Main Dade reductive} holds for $B$ and $d\geq 0$.
\end{theoA}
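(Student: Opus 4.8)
The plan is to show that Conjecture \ref{conj:Main CTC reductive} produces, via a direct counting argument, exactly the numerical identity asserted in Conjecture \ref{conj:Main Dade reductive}. The key mechanism is the passage $\omega\mapsto\omega^\bullet$ from $\G^F$-orbits of quadruples to $\G^F$-orbits of pairs $(\sigma,\vartheta)$, together with the two counting facts highlighted in the text just before the statement: (a) for a \emph{fixed} chain $\sigma\in\CL_e(\G,F)_{>0}$, the number of $\omega\in\CL^d(B)_\epsilon/\G^F$ admitting a representative of the form $(\sigma,\vartheta)$ with $\vartheta\in\irr(\G_\sigma^F)$ equals $\k^d(B_\sigma)$; and (b) the number of $\omega$ admitting a representative of the form $(\sigma_0,\chi)$ with $\sigma_0=\{\G\}$ the trivial chain equals $\k^d(B)-\k_{\rm c}^d(B)$. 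The sign $\epsilon=\pm$ attached to $\CL^d(B)_\epsilon/\G^F$ will be arranged (by construction in Section \ref{sec:CTC reductive}) to record the parity $(-1)^{|\sigma|}$ of the length of the chain component, so that quadruples with even-length chains land in the $+$ part and odd-length chains in the $-$ part (or the reverse, depending on conventions, which only flips the roles of the two sides of the bijection).

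The steps, in order, are as follows. First I would unwind the definition of $\CL^d(B)_\epsilon/\G^F$ and of the map $\omega\mapsto\omega^\bullet$ from Section \ref{sec:CTC reductive}, checking that every $\omega$ has a well-defined chain-length parity and a well-defined defect $d$, and recording fact (a) and fact (b) above as the two enumeration lemmas (these are where Theorem \ref{thm:Main Brauer--Lusztig blocks} enters, since it is what guarantees the block-assignment $\R^\G_{\G_\sigma}$ behaves compatibly with $e$-Harish-Chandra series and hence that the counts $\k^d(B_\sigma)$ are the right ones). Second, I would invoke the bijection $\Lambda:\CL^d(B)_+/\G^F\to\CL^d(B)_-/\G^F$ furnished by Conjecture \ref{conj:Main CTC reductive} to conclude $|\CL^d(B)_+/\G^F|=|\CL^d(B)_-/\G^F|$. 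Third, I would compute each side of this equality by partitioning $\CL^d(B)_\epsilon/\G^F$ according to which $\G^F$-orbit of chains $\sigma$ occurs in $\omega^\bullet$: one side collects all orbits with $|\sigma|$ of one parity (including, on the relevant side, the trivial chain $\sigma_0$ of length $0$), the other side collects the opposite parity. By facts (a) and (b), the side containing $\sigma_0$ contributes $\bigl(\k^d(B)-\k_{\rm c}^d(B)\bigr)+\sum_{\sigma\,:\,|\sigma|\ \mathrm{even},\,|\sigma|>0}\k^d(B_\sigma)$, and the other side contributes $\sum_{\sigma\,:\,|\sigma|\ \mathrm{odd}}\k^d(B_\sigma)$, where both sums run over $\G^F$-orbits of chains in $\CL_e(\G,F)_{>0}$. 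Setting the two equal and rearranging gives
\[
\k^d(B)-\k_{\rm c}^d(B)=\sum_{\sigma}(-1)^{|\sigma|+1}\k^d(B_\sigma),
\]
which is precisely Conjecture \ref{conj:Main Dade reductive}. (The character-triple isomorphisms $\iso{\G^F}$ in Conjecture \ref{conj:Main CTC reductive} are stronger than needed here; only the existence of the equivariant bijection $\Lambda$ is used, and the equivariance itself is irrelevant for the bare counting statement, though I would keep track of it in case a refined version is wanted.)

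The main obstacle I anticipate is purely bookkeeping rather than conceptual: making sure the definitions of the quadruples $\omega$, of $\omega^\bullet$, and of the sign $\epsilon$ are set up in Section \ref{sec:CTC reductive} so that facts (a) and (b) hold \emph{on the nose} — in particular that the fiber of $\omega\mapsto(\text{$\G^F$-orbit of $\sigma$})$ over a fixed chain orbit is in bijection with $\irr^d$ of the appropriate block-sum $B_\sigma$ (for $|\sigma|>0$) and with $\irr^d(B)\setminus(\text{$e$-cuspidals})$ (for $\sigma=\sigma_0$), with no over- or under-counting coming from stabilisers or from the choice of $\vartheta$ within its own $\G^F_\sigma$-orbit. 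Once that correspondence is pinned down, the theorem is an immediate consequence of $|\CL^d(B)_+/\G^F|=|\CL^d(B)_-/\G^F|$, and Theorem \ref{thm:Main Brauer--Lusztig blocks} is exactly the input that legitimises identifying the relevant fibers with honest sets of block characters.
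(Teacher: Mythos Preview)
Your proposal is correct and follows essentially the same route as the paper's proof (Theorem \ref{thm:CTC reductive imples Dade reductive}): the bijection $\Lambda$ yields the vanishing alternating sum \eqref{eq:CTC reductive imples Dade reductive 1}, which is then reorganised as a double sum over $\sigma/\G^F$ and $(\M,\ab(\mu))/\G^F_\sigma$, with the inner sum identified via Lemma \ref{lem:Characters and blocks of chains normalizers} (ii)--(iii) (the precise incarnation of your facts (a) and (b), resting on Corollary \ref{cor:e-Harish-Chandra, disjointness} and hence on the Theorem \ref{thm:Main Brauer--Lusztig blocks} circle of ideas). Your anticipated bookkeeping obstacle---that the fibre over a fixed chain orbit really is $\irr^d(B_\sigma)$ with no over- or under-counting---is exactly what Lemma \ref{lem:Characters and blocks of chains normalizers} (iii) settles, and your observation that only the existence of $\Lambda$ (not the character-triple isomorphisms or equivariance) is needed for the numerical statement is also how the paper proceeds.
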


In analogy with the inductive conditions for the local-global counting conjectures, Conjecture \ref{conj:Main CTC reductive} provides a more conceptual explanation for the numerical phenomenon proposed by Conjecture \ref{conj:Main Dade reductive} and, in particular, yields a description of the Clifford theoretic properties naturally arising in this context. Furthermore, in Section \ref{sec:Towards Dade's Projective Conjecture} we show that Conjecture \ref{conj:Main CTC reductive} implies Sp\"ath's Character Triple Conjecture and the inductive condition for Dade's Conjecture for large primes under suitable assumptions (see Proposition \ref{prop:Equivalence CTC} and Corollary \ref{cor:Equivalence iDade}).

Remarkably, Conjecture \ref{conj:Main CTC reductive} can be explained within the framework of $e$-Harish-Chandra theory. Our final theorem shows that Conjecture \ref{conj:Main CTC reductive} (and hence Conjecture \ref{conj:Main Dade reductive}) is a consequence of Parametrisation \ref{para:Main iEBC} via an application of Theorem \ref{thm:Main Brauer--Lusztig blocks}. In what follows, we say that Parametrisation \ref{para:Main iEBC} holds for $(\G,F)$ at the prime $\ell$ if it holds for every $e$-cuspidal pair $(\L,\lambda)$ of $\G$ where $q$ is the prime power associated to $F$ and $e$ is the order of $q$ modulo $\ell$. Moreover, we refer the reader to Section \ref{sec:Final} for the definition of irreducible rational components (see Definition \ref{def:Irreducible rational component}).

\begin{theoA}
\label{thm:Main reduction}
Assume Hypothesis \ref{hyp:Brauer--Lusztig blocks} and suppose that $\G$ is simply connected with Frobenius endomorphism $F$. If Parametrisation \ref{para:Main iEBC} holds at the prime $\ell$ for every irreducible rational component $(\H,F)$ of every $e$-split Levi subgroup of $\G$, then Conjecture \ref{conj:Main CTC reductive} holds for the prime $\ell$.
\end{theoA}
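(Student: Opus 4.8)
The plan is to reduce the verification of Conjecture~\ref{conj:Main CTC reductive} for $(\G,F)$ to the collection of local parametrisations in Parametrisation~\ref{para:Main iEBC} by slicing the chain complex $\CL_e(\G,F)_{>0}$ according to $e$-cuspidal pairs, and then assembling the Clifford-theoretic data along each chain. First I would fix an $\ell$-block $B$ and a defect $d$, and use Theorem~\ref{thm:Main Brauer--Lusztig blocks} (together with its consequence Theorem~\ref{thm:Blocks are unions of e-HC series}) to write $\irr(B)$ as a disjoint union of $e$-Harish-Chandra series $\E(\G^F,(\L,\lambda))$ over the $\G^F$-classes of $e$-cuspidal pairs $(\L,\lambda)$ with $\bl(\lambda)^{\G^F}=B$. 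The key point is that this same partitioning must be carried out not only for $\G^F$ itself but compatibly along every chain $\sigma=\{\G=\L_0>\dots>\L_n\}$: for the terminal Levi $\L_n$ one applies the block-compatible $e$-Harish-Chandra decomposition, and then one tracks how an $e$-cuspidal pair of $\L_n$ "unfolds" as one moves up the chain, using that $e$-split Levi subgroups of an $e$-split Levi subgroup are again $e$-split (and the transitivity results, Proposition~\ref{prop:Transitivity in good series} and Corollary~\ref{cor:Transitive closure}, to guarantee the relation $\leq_e$ behaves well). This identifies the set $\CL^d(B)_\epsilon/\G^F$ of quadruples $\omega$ with a disjoint union, over $\G^F$-classes of $e$-cuspidal pairs $(\L,\lambda)$ with the correct block and defect, of sets of chains in $\CL_e(\L,F)$ "decorated" by the Parametrisation~\ref{para:Main iEBC} data for $\L$; and crucially, since $\G$ is simply connected, each $\L$ is again (a central product with) simply connected derived subgroup, so Parametrisation~\ref{para:Main iEBC} is available for $\L$ and its irreducible rational components by the hypothesis.

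Next I would construct the bijection $\Lambda$. The idea is the standard chain-cancellation argument of Dade/Knörr--Robinson adapted to the $e$-local setting: for each $e$-cuspidal pair $(\L,\lambda)$ with $\L\neq \G$, the contribution of chains starting at $\G$ and "passing through" $\L$ can be matched by inserting or deleting $\L$ as the first nontrivial term, producing a sign-reversing involution on the chains that do not detect $\L$ as maximal, while the chains that do are precisely accounted for by the inductive parametrisation $\Omega^\L_{(\L,\lambda)}$ applied inside $\n_\G(\L)^F$. More precisely, via Parametrisation~\ref{para:Main iEBC} for $\G$ the series $\E(\G^F,(\L,\lambda))$ is in defect-preserving bijection with $\irr(\n_\G(\L)^F\mid\lambda)$, and this reduces the whole count for $B$ to a union of purely $e$-local counts over the groups $\n_\G(\L)^F$ with their normal subgroup $\L^F$; one then runs the cancellation over the quotient poset. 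The alternating sum telescopes so that only the "top" term (chains of length $0$, i.e. characters of $\G^F$ that are themselves $e$-cuspidal, contributing $\k_{\rm c}^d(B)$) and the matched pairs survive — which is exactly the shape of the bijection $\Lambda$ between $\CL^d(B)_+/\G^F$ and $\CL^d(B)_-/\G^F$.

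The remaining — and most delicate — part is to upgrade this numerical bijection to the required $\G^F$-block isomorphism of character triples
\[
\left(X_{\sigma,\vartheta},\G^F_\sigma,\vartheta\right)\iso{\G^F}\left(X_{\rho,\chi},\G^F_\rho,\chi\right).
\]
Here I would argue that the relation $\iso{\G^F}$ is transitive and compatible with the two operations used above: (a) passing from $\G^F$ to $\n_\G(\L)^F$ via the character-triple isomorphism built into Parametrisation~\ref{para:Main iEBC}, and (b) the chain-stabiliser cancellation, which at the level of character triples is an identity map composed with the "insert/delete first term" operation and is visibly a $\G^F$-block isomorphism because the two stabilisers differ by passing to a normaliser inside the same ambient $X$. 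Composing these and using that $\iso{\G^F}$ respects central characters, block induction $\R^\G_{\G_\sigma}$, defect, and the $\aut_\mathbb{F}(\G^F)_B$-action (all of which are part of the definition in \cite[Definition 3.6]{Spa17}) yields the statement. The main obstacle I anticipate is precisely the bookkeeping in (a): one needs that the character-triple isomorphisms supplied by Parametrisation~\ref{para:Main iEBC}, applied to the various $e$-split Levi subgroups along a chain, can be chosen \emph{coherently} so that their composition along the chain is well-defined up to $\G^F$-conjugacy and equivariant under $\aut_\mathbb{F}(\G^F)_B$ — this is where the irreducible-rational-component formulation of the hypothesis (Definition~\ref{def:Irreducible rational component}) is used, to split each $\L$ into pieces on which Parametrisation~\ref{para:Main iEBC} is known and to control the automorphism action componentwise via a Clifford-theoretic gluing, in the spirit of \cite{Ros-Clifford_automorphisms_HC}.
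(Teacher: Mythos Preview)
Your overall strategy---partition by $e$-cuspidal pairs, cancel chains by a sign-reversing involution, and glue character-triple data---is the same as the paper's. However, two concrete points in your execution are off, and the second one hides the main technical content of the proof.

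First, the chain cancellation is at the wrong end. In the definition of $\CL^d(B)_\epsilon$ the $e$-cuspidal pair $(\M,\mu)$ satisfies $\M\leq \L(\sigma)$, where $\L(\sigma)$ is the \emph{last} (smallest) term of $\sigma$. The paper's involution therefore acts at the bottom: if $\L(\sigma)=\M$ one deletes $\M$, and if $\M<\L(\sigma)$ one appends $\M$ as a new final term. Your proposal to insert or delete $\L$ ``as the first nontrivial term'' does not make sense here, since $\M$ need not be an $e$-split Levi directly below $\G$. With the correct bottom-of-chain move, the two matched chains $\sigma$ and $\rho$ have stabilisers related by $\G^F_\rho=\n_{\G^F_\sigma}(\M)$ (or vice versa), which is exactly the shape needed to feed into Parametrisation~\ref{para:Main iEBC}.

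Second, and more seriously, you apply Parametrisation~\ref{para:Main iEBC} only to $\G$ (the trivial chain) and then hope to ``run the cancellation over the quotient poset'' inside $\n_\G(\L)^F$. This does not work: $\n_\G(\L)^F$ is not itself a finite reductive group, has no intrinsic $e$-chain complex, and Conjecture~\ref{conj:Main CTC reductive} asks for bijections at \emph{every} chain stabiliser $\G^F_\sigma$, not just at $\G^F$. The paper instead lifts Parametrisation~\ref{para:Main iEBC} all the way up to each chain stabiliser: starting from the irreducible rational components of $\K_0=[\K,\K]$ for $\K=\L(\sigma)$, one first assembles a bijection for $\K_0$ (using the direct-product/wreath-product structure and \cite[Theorems 5.1--5.2]{Spa17}), then transports the ambient automorphism group from $\K_0^F\rtimes\aut_\mathbb{F}(\K_0^F)$ to $(\G^F\rtimes\aut_\mathbb{F}(\G^F))_\K$ via \cite[Theorem 5.3]{Spa17}, then lifts from $\K_0$ to $\K$ over the set $\ab(\lambda)$, and finally lifts from $\K^F$ to any $\K^F\leq H\leq \n_\G(\K)^F$ (in particular $H=\G^F_\sigma$). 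Each of these lifting steps is a genuine Clifford-theoretic argument requiring control of radical $\ell$-subgroups (to ensure $\c_X(Q)\leq X_0$, via $\z^\circ(\L)^F_\ell\leq\O_\ell(\n_H(\L))$) and the transversal machinery of a general ``bijections-over-bijections'' proposition. Your sketch acknowledges that coherence of the character-triple data is ``the main obstacle'' but does not supply this lifting; without it the bijection $\Lambda$ cannot be defined on $\CL^d(B)_\pm/\G^F$ at all, let alone shown to induce $\iso{\G^F}$.
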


As a consequence of Theorem \ref{thm:Main CTC reductive implies Dade reductive}, Theorem \ref{thm:Main reduction} and the results obtained in \cite{Ros-Clifford_automorphisms_HC}, our conjectures are now reduced to the verification of technical requirements on character extendibility from $e$-split Levi subgroups in finite reductive groups of irreducible rational type.

\subsection{Reader's guide}

The paper is organised as follows. Section \ref{sec:Preliminaries} contains the main notation and preliminary results on finite reductive groups. In Section \ref{sec:transitivity}, and more precisely in Proposition \ref{prop:e-HC theory for connected reductive}, we study the transitivity of a certain relation defined on the set of $e$-pairs and provide a solution to a fundamental case of a conjecture proposed by Cabanes--Enguehard in \cite[Notation 1.11]{Cab-Eng99}. This is then extended to the $\ell$-singular case in Corollary \ref{cor:Transitive closure}. Furthermore, in Proposition \ref{prop:Transitivity in good series} we prove the conjecture inside $e$-Harish-Chandra series associated with certain good semisimple elements. Most importantly, in Section \ref{sec:Brauer-Lusztig blocks and e-HC series} we obtain a description of the distribution of characters into $\ell$-blocks of finite reductive groups (see Theorem \ref{thm:Blocks are unions of e-HC series}) and prove Theorem \ref{thm:Main Brauer--Lusztig blocks} (see Theorem \ref{thm:Brauer-Lusztig blocks are unions of e-HC series}). As a by-product, in Corollary \ref{cor:e-Harish-Chandra series and blocks} we prove an extension of \cite[Theorem 2.5]{Cab-Eng99} to arbitrary $e$-cuspidal pairs while Corollary \ref{cor:e-Harish-Chandra, disjointness} show the existence of an $e$-Harish-Chandra partition of characters under the existence of a prime $\ell$ satisfying certain properties. In section \ref{sec:New conjectures}, we introduce Conjecture \ref{conj:Main Dade reductive} (see Conjecture \ref{conj:Dade reductive}), a similar statement relating to Alperin's Weight Conjecture (see Conjecture \ref{conj:AWC reductive}), Conjecture \ref{conj:Main CTC reductive} (see Conjecture \ref{conj:CTC reductive}) and prove Theorem \ref{thm:Main CTC reductive implies Dade reductive} (see Theorem \ref{thm:CTC reductive imples Dade reductive}). Here, we also consider Parametrisation \ref{para:Main iEBC} (see Parametrisation \ref{para:iEBC}) in more detail and consider an interesting consequence in Remark \ref{rmk:Consequences of parametrisation}. Section \ref{sec:Final} is dedicated to the proof of Theorem \ref{thm:Main reduction}. This relies on Theorem \ref{thm:Main Brauer--Lusztig blocks} and requires a careful study of the interaction between Parametrisation \ref{para:Main iEBC} and Conjecture \ref{conj:Main CTC reductive}. Finally, in Section \ref{sec:Towards Dade's Projective Conjecture} we make use of ideas of Broué, Fong and Srinivasan in order to show that our conjectures agree with the local-global counting conjectures for finite reductive groups and large primes.

\section{Preliminaries}
\label{sec:Preliminaries}

Throughout this paper, $\G$ is a connected reductive linear algebraic group defined over an algebraic closure $\mathbb{F}$ of a finite field of characteristic $p$ and $F:\G\to \G$ is a Frobenius endomorphism endowing $\G$ with an $\mathbb{F}_q$-structure for a power $q$ of $p$. We denote by $(\G^*,F^*)$ a group in duality with $(\G,F)$ with respect to a choice of an $F$-stable maximal torus $\T$ of $\G$ and an $F^*$-stable maximal torus $\T^*$ of $\G^*$. In this case, there exists a bijection $\L\mapsto \L^*$ between the set of Levi subgroups of $\G$ containing $\T$ and the set of Levi subgroups of $\G^*$ containing $\T^*$ (see \cite[p.123]{Cab-Eng04}). This bijection induces a correspondence between the set of $F$-stable Levi subgroups of $\G$ and the set of $F^*$-stable Levi subgroups of $\G^*$. Moreover, it is compatible with the action of $\G^F$ and $\G^{*F^*}$.

\subsection{Automorphisms}
\label{sec:Automorphisms}

Let $\G$ and $F$ be as above. If $\sigma:\G\to\G$ is a bijective morphism of algebraic groups satisfying $\sigma\circ F=F\circ \sigma$, then the restriction of $\sigma$ to $\G^F$, which by abuse of notation we denote again by $\sigma$, is an automorphism of the finite group $\G^F$. We denote by $\aut_\mathbb{F}(\G^F)$ the set of those automorphisms of $\G^F$ obtained in this way. As mentioned in \cite[Section 2.4]{Cab-Spa13}, a morphism $\sigma\in\aut_\mathbb{F}(\G^F)$ is determined by its restriction to $\G^F$ up to a power of $F$. It follows that $\aut_\mathbb{F}(\G^F)$ acts on the set of $F$-stable closed connected subgroups $\H$ of $\G$. In particular, for any $F$-stable closed connected subgroup $\H$ of $\G$, there is a well defined set $\aut_\mathbb{F}(\G^F)_\H$ whose elements are the restrictions to $\G^F$ of those morphisms $\sigma$ as above that stabilize $\H$. When $\G$ is a simple algebraic group of simply connected type such that $\G^F/\z(\G^F)$ is a non-abelian simple group, then we have $\aut_\mathbb{F}(\G^F)=\aut(\G^F)$ (see \cite[Section 1.15]{GLS} and the comments in \cite[Section 2.4]{Cab-Spa13}).

Assume now that $\G$ is simple of simply connected type. Fix a maximally split torus $\T_0$ contained in an $F$-stable Borel subgroup $\B_0$ of $\G$. This choice corresponds to a set of simple roots $\Delta\subseteq \Phi:=\Phi(\G,\T_0)$. For every $\alpha\in\Phi$ consider a one-parameter subgroup $x_\alpha:\mathbb{G}_{\rm a}\to \G$. Then $\G$ is generated by the elements $x_\alpha(t)$, where $t\in\mathbb{G}_{\rm a}$ and $\alpha\in\pm\Delta$. Consider the \emph{field endomorphism} $F_0:\G\to \G$ given by $F_0(x_\alpha(t)):=x_{\alpha}(t^p)$ for every $t\in\mathbb{G}_{\rm a}$ and $\alpha\in\Phi$. Moreover, for every symmetry $\gamma$ of the Dynkin diagram of $\Delta$, we have a \emph{graph automorphism} $\gamma:\G\to \G$ given by $\gamma(x_\alpha(t)):=x_{\gamma(\alpha)}(t)$ for every $t\in\mathbb{G}_{\rm a}$ and $\alpha\in\pm\Delta$. Then, up to inner automorphisms of $\G$, any Frobenius endomorphism $F$ defining an $\mathbb{F}_q$-structure on $\G$ can be written as $F=F_0^m\gamma$, for some symmetry $\gamma$ and $m\in\mathbb{Z}$ with $q=p^m$ (see \cite[Theorem 22.5]{Mal-Tes}). One can construct a regular embedding $\G\leq\wt{\G}$ in such a way that the Frobenius endomorphism $F_0$ extends to an algebraic group endomorphism $F_0:\wt{\G}\to\wt{\G}$ defining an $\mathbb{F}_p$-structure on $\wt{\G}$. Moreover, every graph automorphism $\gamma$ can be extended to an algebraic group automorphism of $\wt{\G}$ commuting with $F_0$ (see \cite[Section 2B]{Mal-Spa16}). If we denote by $\mathcal{A}$ the group generated by $\gamma$ and $F_0$, then we can construct the semidirect product $\wt{\G}^F\rtimes \mathcal{A}$. Finally, we define the set of \emph{diagonal automorphisms} of $\G^F$ to be the set of those automorphisms induced by the action of $\wt{\G}^F$ on $\G^F$. If $\G^F/\z(\G^F)$ is a non-abelian simple group, then the group $\wt{\G}^F\rtimes \mathcal{A}$ acts on $\G^F$ and induces all the automorphisms of $\G^F$ (see, for instance, the proof of \cite[Proposition 3.4]{Spa12} and of \cite[Theorem 2.4]{Cab-Spa19}).

We conclude this section by recalling an important property that is needed in Section \ref{sec:Final}.

\begin{lem}
\label{lem:Centralizer, automorphisms and central subgroups}
Let $\G$, $\wt{\G}$, $F$ and $\mathcal{A}$ as in the above paragraph and suppose that $\G^F$ is the universal covering group of $\G^F/\z(\G^F)$. Let $Z\leq \z(\G^F)$ and denote by $(\wt{\G}^F\mathcal{A})_Z$ the normaliser of $Z$ in $\wt{\G}^F\mathcal{A}$. Then
\[\c_{(\wt{\G}^F\mathcal{A})_Z/Z}\left(\G^F/Z\right)=\z\left(\wt{\G}^F\right)/Z\]
and the canonical map $(\wt{\G}^F\mathcal{A})_Z\to\aut(\G^F/Z)$ induces an isomorphism
\[\left(\wt{\G}^F\mathcal{A}\right)_Z/\z\left(\wt{\G}^F\right)\simeq \aut\left(\G^F/Z\right).\]
\end{lem}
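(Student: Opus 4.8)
The plan is to reduce the statement to well-known facts about automorphism groups of finite simple groups of Lie type, specialising the standard description of $\aut(\G^F)$ when $\G^F$ is a universal cover. First I would observe that the hypothesis ``$\G^F$ is the universal covering group of $\G^F/\z(\G^F)$'' forces $\G$ to be simple of simply connected type (so that the machinery of the previous paragraph applies), and it rules out the finitely many exceptional cases where the Schur multiplier of $\G^F/\z(\G^F)$ is larger than expected. Under this hypothesis the group $\wt{\G}^F\mathcal{A}$ acts on $\G^F$ and induces all of $\aut(\G^F)$, with kernel exactly $\c_{\wt{\G}^F\mathcal{A}}(\G^F)$; and since $\G^F=[\G,\G]^F$ already, the centraliser of $\G^F$ in $\wt{\G}^F\mathcal{A}$ is $\z(\wt{\G}^F)$ (the field and graph automorphisms act nontrivially, and $\wt{\G}^F/\z(\wt{\G}^F)$ acts faithfully as diagonal automorphisms). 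This gives the ``$Z$ trivial'' version $\wt{\G}^F\mathcal{A}/\z(\wt{\G}^F)\simeq\aut(\G^F)$, which I would take as the starting point.

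Next I would pass to the quotient by $Z\leq\z(\G^F)$. Since $Z$ is central in $\G^F$ and normal in $(\wt{\G}^F\mathcal{A})_Z$ by definition of the normaliser, the quotient group $(\wt{\G}^F\mathcal{A})_Z/Z$ acts on $\G^F/Z$, giving a canonical map $(\wt{\G}^F\mathcal{A})_Z/Z\to\aut(\G^F/Z)$. For the centraliser computation, suppose $x\in(\wt{\G}^F\mathcal{A})_Z$ induces the trivial automorphism of $\G^F/Z$; then for every $g\in\G^F$ we have $[x,g]\in Z\leq\z(\G^F)$, so the map $g\mapsto[x,g]$ is a homomorphism $\G^F\to Z$. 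Because $\G^F$ is perfect (here one uses again that $\G$ is simply connected and, apart from the short explicit list of exceptions, $\G^F$ equals its own commutator subgroup, which is part of the universal-cover hypothesis) and $Z$ is abelian, this homomorphism is trivial, hence $x$ centralises $\G^F$, hence $x\in\z(\wt{\G}^F)$ by the $Z$-trivial case. This proves $\c_{(\wt{\G}^F\mathcal{A})_Z/Z}(\G^F/Z)=\z(\wt{\G}^F)/Z$, noting $Z\leq\z(\G^F)\leq\z(\wt{\G}^F)$ so the right-hand side makes sense.

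Finally, for surjectivity of the induced map onto $\aut(\G^F/Z)$: given $\alpha\in\aut(\G^F/Z)$, lift it to an automorphism $\hat\alpha$ of the universal cover $\G^F$ of $\G^F/Z$ using the functoriality of universal central extensions — every automorphism of $\G^F/Z$ lifts to one of $\G^F$ normalising $Z$ (the kernel of $\G^F\twoheadrightarrow\G^F/Z$ is characteristic-compatible in the sense that the lift permutes the covers, but all covers factor through the universal one). Then $\hat\alpha\in\aut(\G^F)$ is induced by some element $x\in\wt{\G}^F\mathcal{A}$ by the $Z$-trivial isomorphism; since $\hat\alpha$ stabilises $Z$, so does the coset of $x$, i.e.\ one may choose $x\in(\wt{\G}^F\mathcal{A})_Z$, and its image in $\aut(\G^F/Z)$ is $\alpha$. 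Combining the centraliser equality with this surjectivity and the first isomorphism theorem yields $(\wt{\G}^F\mathcal{A})_Z/\z(\wt{\G}^F)\simeq\aut(\G^F/Z)$.

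The main obstacle I expect is bookkeeping around the exceptional cases and the precise statement of the lifting property: one must be careful that ``$\G^F$ is the universal covering group of $\G^F/\z(\G^F)$'' genuinely excludes all the small groups where $\G^F$ is not perfect or where extra exceptional graph/diagonal automorphisms intervene (e.g.\ ${}^2\mathbf{B}_2$, ${}^2\mathbf{G}_2$, ${}^2\mathbf{F}_4$ and the very small $\SL_2$, $\Sp_4$, $\mathbf{G}_2$, ${}^2\mathbf{A}_2$ over $\mathbb{F}_2$, $\mathbb{F}_3$ cases), and that the identification of $\wt{\G}^F\mathcal{A}$ as inducing exactly $\aut(\G^F)$ holds verbatim in the remaining cases; this is exactly the content cited from \cite{GLS} and \cite{Cab-Spa13}, \cite{Spa12}, \cite{Cab-Spa19}. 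The argument that $g\mapsto[x,g]$ being a homomorphism into an abelian group kills it on a perfect group, and the universal-cover lifting of automorphisms, are then routine once the hypotheses are correctly in place.
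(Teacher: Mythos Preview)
Your proof is correct and follows the same skeleton as the paper's: start from the $Z=1$ case $\wt{\G}^F\mathcal{A}/\z(\wt{\G}^F)\simeq\aut(\G^F)$ (with the centraliser identification $\c_{\wt{\G}^F\mathcal{A}}(\G^F)=\z(\wt{\G}^F)$, which both you and the paper take from \cite{GLS}, \cite{Spa12}, \cite{Cab-Spa19}), then pass to the quotient by $Z$. The difference is in how that passage is made. The paper invokes \cite[Corollary 5.1.4(b)]{GLS} directly to obtain the isomorphism $\aut(\G^F)_Z\simeq\aut(\G^F/Z)$, from which $(\wt{\G}^F\mathcal{A})_Z/\z(\wt{\G}^F)\simeq\aut(\G^F/Z)$ follows at once; it then \emph{deduces} the centraliser equality from this isomorphism via a third-isomorphism-theorem count. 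You instead prove both ingredients by hand: injectivity via the commutator-homomorphism argument and perfectness of $\G^F$, and surjectivity via the lifting property of the universal central extension. In effect you are reproving \cite[Corollary 5.1.4(b)]{GLS} from first principles. Your route is more self-contained and makes explicit exactly where perfectness and the universal-cover hypothesis enter, at the cost of being longer; the paper's route is quicker but hides the mechanism inside the citation. Both are valid, and your caveats about the small exceptional groups are well placed---these are precisely the cases excluded by the hypothesis that $\G^F$ is the universal covering group of $\G^F/\z(\G^F)$.
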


\begin{proof}
By the above paragraph, we know that $\wt{\G}^F\mathcal{A}/\c_{\wt{\G}^F\mathcal{A}}(\G^F)\simeq \aut(\G^F)$ and therefore, using the fact that $\c_{\wt{\G}^F\mathcal{A}}(\G^F)=\z(\wt{\G}^F)$ (see the argument used in \cite[Proposition 3.4 (a)]{Spa12}, \cite[Theorem 2.4]{Cab-Spa19} and ultimately \cite[Theorem 2.5.1]{GLS}), we obtain $(\wt{\G}^F\mathcal{A})_Z/\z(\wt{\G}^F)\simeq \aut(\G^F)_Z$. Then, by \cite[Corollary 5.1.4 (b)]{GLS}, it follows that
\[\left(\wt{\G}^F\mathcal{A}\right)_Z/\z\left(\wt{\G}^F\right)\simeq \aut\left(\G^F\right)_Z\simeq \aut\left(\G^F/Z\right).\]
On the other hand, since
\[\aut\left(\G^F/Z\right)\simeq \dfrac{\left(\wt{\G}^F\mathcal{A}\right)_Z/Z}{\c_{\left(\wt{\G}^F\mathcal{A}\right)_Z/Z}\left(\G^F/Z\right)},\]
the third isomorphism theorem yields the desired isomorphism.
\end{proof}

\subsection{Good primes and $e$-split Levi subgroups}
\label{sec:Good primes and Levi}

For the rest of this section we consider the following setting.

\begin{notation}
\label{notation}
Let $\G$ be a connected reductive linear algebraic group defined over an algebraic closure $\mathbb{F}$ of a finite field of characteristic $p$ and $F:\G\to \G$ a Frobenius endomorphism defining an $\mathbb{F}_q$-structure on $\G$, for a power $q$ of $p$. Consider a prime $\ell$ different from $p$ and denote by $e$ the multiplicative order of $q$ modulo $\ell$ (modulo $4$ if $\ell=2$). All blocks are considered with respect to the prime $\ell$.
\end{notation}

In what follows we make some restrictions on the prime $\ell$. First, recall that $\ell$ is a \emph{good prime} for $\G$ if it is good for each simple factor of $\G$, while the conditions for the simple factors are as follows:
\begin{align*}
{\rm \bf A}_n&: \text{every prime is good}
\\
{\rm \bf B}_n, {\rm \bf C}_n, {\rm \bf D}_n&: \ell\neq 2
\\
{\rm \bf G}_2, {\rm \bf F}_4, {\rm \bf E}_6, {\rm \bf E}_7&: \ell\neq 2,3
\\
{\rm \bf E}_8&: \ell\neq 2,3,5.
\end{align*}
We say that $\ell$ is a \emph{bad prime} for $\G$ if it is not a good prime. Next, we introduce the set of primes $\Gamma(\G,F)$ from \cite[Notation 1.1]{Cab-Eng94}.

\begin{defin}
\label{def:Very good primes}
We denote by $\gamma(\G,F)$ the set of primes $\ell$ such that: $\ell$ is odd, $\ell\neq p$, $\ell$ is good for $\G$ and $\ell$ doesn't divide $|\z(\G)^F:\z^\circ(\G)^F|$. Let $(\G^*,F^*)$ be in duality with $(\G,F)$ and set $\Gamma(\G,F):=(\gamma(\G,F)\cap \gamma(\G^*,F^*))\setminus\{3\}$ if $\G_{\rm ad}^F$ has a component of type $^3\mathbf{D}_4(q^m)$ and $\Gamma(\G,F):=\gamma(\G,F)\cap \gamma(\G^*,F^*)$ otherwise.
\end{defin}

\begin{rmk}
\label{rmk:Very good primes go to Levi subgroups}
 Notice that, if $\ell\in\Gamma(\G,F)$, then $\ell\in\Gamma(\G^*,F^*)$ and $\ell\in\Gamma(\H,F)$ for every $F$-stable connected reductive subgroup $\H$ of $\G$ containing an $F$-stable maximal torus of $\G$ (see \cite[Proposition 13.12]{Cab-Eng04}). In particular, if $\ell\in\Gamma(\G,F)$ and $\L$ is an $F$-stable Levi subgroup of $\G$, then $\ell\in\Gamma(\L,F)$.
\end{rmk}

Using \cite[Table 13.11]{Cab-Eng04}, we describe the primes $\ell\in\Gamma(\G,F)$ when $\G$ is simple of simply connected type with Frobenius endomorphism $F$ defining an $\mathbb{F}_q$-structure on $\G$:
\begin{align*}
{\rm \bf A}_n(q)&: \ell\nmid 2q(n+1,q-1), 
\\
^2{\rm \bf A}_n(q)&: \ell\nmid 2q(n+1,q+1), 
\\
{\rm \bf B}_n(q), {\rm \bf C}_n(q), {\rm \bf D}_n(q), ^2{\rm \bf D}_n(q)&: \ell\neq 2,p
\\
^3{\rm \bf D}_4(q), {\rm \bf G}_2(q), {\rm \bf F}_4(q), {\rm \bf E}_6(q), ^2{\rm \bf E}_6(q), {\rm \bf E}_7(q)&: \ell\neq 2,3,p
\\
{\rm \bf E}_8(q)&: \ell\neq 2,3,5,p.
\end{align*}
As a consequence, if a connected reductive group $\G$ has no simple components of type ${\rm \bf A}$, then $\ell\in\Gamma(\G,F)$ if and only if $\ell$ is good for $\G$ and $\ell\neq p$.

In this paper we make use of the terminology of Sylow $\Phi_e$-theory introduced in \cite{Bro-Mal92} (see also \cite{Bro-Mal-Mic93}). For a set of positive integers $E$, we say that an $F$-stable Levi subgroup $\T$ of $\G$ is a $\Phi_E$-torus if its order polynomial is of the form $P_{(\T,F)}=\prod_{n\in E}\Phi_n^{a_n}$ for some integers $a_n$ and where $\Phi_n$ denotes the $n$-th cyclotomic polynomial (see \cite[Definition 13.3]{Cab-Eng04}). The centralisers of $\Phi_E$-tori are called $E$-split Levi subgroups. If $E=\{e\}$, we call $\Phi_{\{e\}}$-tori and $\{e\}$-split Levi subgroups simply $\Phi_e$-tori and $e$-split Levi subgroups respectively. When $\ell\in\Gamma(\G,F)$, some significant consequences on the structure of $e$-split Levi subgroups can be drawn.

\begin{lem}
\label{lem:e-split Levi}
Let $\L$ be an $F$-stable Levi subgroup of $\G$.
\begin{enumerate}
\item Let $E$ be a set of positive integers. Then $\L$ is $E$-split if and only if $\L=\c_\G(\z^\circ(\L)_{\Phi_E})$.
\item Set $E_{q,\ell}:=\{e\cdot\ell^m\mid m\in\mathbb{N}\}$. If $\L=\c^\circ_\G\left(\z^\circ(\L)^F_\ell\right)$, then $\L$ is $E_{q,\ell}$-split. The converse holds provided that $\ell\in\Gamma(\G,F)$.
\end{enumerate}
\end{lem}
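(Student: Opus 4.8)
The two statements are really about how the condition "$\L$ is a centraliser of a torus (resp. an $\ell$-subgroup of the centre)" interacts with the order polynomial. The plan is to treat (i) first, since (ii) will be deduced from (i) by relating the $\ell$-part of $\z^\circ(\L)^F$ with the $\Phi_{E_{q,\ell}}$-part of $\z^\circ(\L)$ under the hypothesis $\ell\in\Gamma(\G,F)$.

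For (i), recall the general fact (see \cite[Proposition 13.10]{Cab-Eng04} or the standard theory of $d$-split Levi subgroups in \cite{Bro-Mal92}) that $\c_\G(\z^\circ(\L)_{\Phi_E})$ is always an $E$-split Levi subgroup containing $\L$, and that $\L$ is $E$-split precisely when $\L=\c_\G(\S)$ for some $\Phi_E$-torus $\S$. So I would argue as follows. If $\L=\c_\G(\z^\circ(\L)_{\Phi_E})$, then $\L$ is the centraliser of the $\Phi_E$-torus $\z^\circ(\L)_{\Phi_E}$, hence $E$-split by definition. Conversely, if $\L$ is $E$-split, write $\L=\c_\G(\S)$ with $\S$ a $\Phi_E$-torus; then $\S\leq\z^\circ(\L)$, and since $\S$ is a $\Phi_E$-torus, $\S\leq\z^\circ(\L)_{\Phi_E}$. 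This gives the inclusion $\c_\G(\z^\circ(\L)_{\Phi_E})\leq\c_\G(\S)=\L$, while the reverse inclusion $\L\leq\c_\G(\z^\circ(\L)_{\Phi_E})$ is automatic (everything in $\L$ centralises $\z^\circ(\L)$). Hence $\L=\c_\G(\z^\circ(\L)_{\Phi_E})$, as wanted. The only subtle point is verifying $\z^\circ(\L)_{\Phi_E}$ is genuinely the maximal $\Phi_E$-subtorus of $\z^\circ(\L)$ and that its centraliser is again a Levi subgroup — but this is exactly the content of the $\Phi_E$-torus formalism recalled just before the lemma.

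For (ii), the first implication requires no restriction on $\ell$: if $\L=\c^\circ_\G(\z^\circ(\L)^F_\ell)$, I want to show $\L$ is $E_{q,\ell}$-split, i.e. $\L=\c_\G(\z^\circ(\L)_{\Phi_{E_{q,\ell}}})$ by part (i). The key input is the description of the $\ell$-part of $\T^F$ for an $F$-stable torus $\T$ in terms of the cyclotomic factors of $P_{(\T,F)}$: a prime $\ell$ with $e=\mathrm{ord}_\ell(q)$ divides $\Phi_n(q)$ (to a positive power, roughly) exactly when $n\in E_{q,\ell}=\{e\ell^m : m\in\mathbb{N}\}$. Consequently the $\ell$-part of $\z^\circ(\L)^F$ "sees" precisely the $\Phi_{E_{q,\ell}}$-part of $\z^\circ(\L)$, so $\c^\circ_\G(\z^\circ(\L)^F_\ell)$ and $\c_\G(\z^\circ(\L)_{\Phi_{E_{q,\ell}}})$ should coincide. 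For the converse, i.e. assuming $\ell\in\Gamma(\G,F)$ and $\L$ being $E_{q,\ell}$-split to deduce $\L=\c^\circ_\G(\z^\circ(\L)^F_\ell)$, one needs more: that the centraliser of the $\ell$-group $\z^\circ(\L)^F_\ell$ is already connected and equals the centraliser of the torus $\z^\circ(\L)_{\Phi_{E_{q,\ell}}}$. This is where the hypothesis $\ell\in\Gamma(\G,F)$ enters — goodness and the condition $\ell\nmid|\z(\G)^F:\z^\circ(\G)^F|$ (inherited by all Levi subgroups by Remark \ref{rmk:Very good primes go to Levi subgroups}) guarantee that $\z^\circ(\L)^F_\ell$ is large enough and that centralisers of $\ell$-subgroups behave like centralisers of tori; this is essentially \cite[Proposition 13.16]{Cab-Eng04} or the results in \cite{Cab-Eng94}, and I would cite those for the technical heart.

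The main obstacle is precisely this last point in (ii): controlling the discrepancy between $\c_\G$ of the finite $\ell$-group $\z^\circ(\L)^F_\ell$ and $\c_\G$ of the ambient torus $\z^\circ(\L)_{\Phi_{E_{q,\ell}}}$, and ensuring connectedness of the former. For a general prime the $\ell$-torsion of $\z^\circ(\L)^F$ can fail to topologically generate the relevant subtorus, or its centraliser can be disconnected; the $\Gamma(\G,F)$ hypothesis is exactly tailored to rule this out, so the proof should consist of carefully invoking the corresponding statement from \cite{Cab-Eng94} or \cite{Cab-Eng04} rather than a new argument.
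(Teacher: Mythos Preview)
Your proposal is correct and follows essentially the same approach as the paper. For (i) your argument is identical to the paper's (take a $\Phi_E$-torus $\S$ with $\L=\c_\G(\S)$, note $\S\leq\z^\circ(\L)_{\Phi_E}$, and sandwich); for (ii) the paper simply cites \cite[Proposition 13.19]{Cab-Eng04} outright, whereas you sketch the heuristic behind that result before pointing to the same literature --- so your write-up is more expansive but not genuinely different.
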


\begin{proof}
The first statement follows directly from the definition. In fact, since $\z^\circ(\L)$ is a torus, we deduce that $\z^\circ(\L)_{\Phi_E}$ is a $\Phi_E$-torus and therefore $\c_\G(\z^\circ(\L)_{\Phi_E})$ is $E$-split. Conversely, assume that $\L$ is $E$-split. Then there exists a $\Phi_E$-torus $\T$ such that $\L=\c_\G(\T)$. Since $\T$ is abelian, we deduce that $\T\leq \z(\L)$. Then, as $\T$ is connected, we have $\T\leq \z^\circ(\L)$ and therefore $\T\leq \z^\circ(\L)_{\Phi_E}$ because $\T=\T_{\Phi_E}$. By \cite[Proposition 1.21]{Dig-Mic91}, we conclude that $\L=\c_\G(\z^\circ(\L))\leq \c_\G(\z^\circ(\L)_{\Phi_E})\leq \c_\G(\T)=\L$. For the second statement see \cite[Proposition 13.19]{Cab-Eng04}.
\end{proof}

\begin{lem}
\label{prop:Good primes}
Let $\ell\in\Gamma(\G,F)$ and consider  an $\ell$-subgroup $Y$ of $\G^F$. Then:
\begin{enumerate}
\item $\c_\G(Y)^F=\c_\G^\circ(Y)^F$;
\item if $Y$ is abelian, then $Y\leq \z^\circ(\c_\G^\circ(Y))$; 
\item if $Y$ is abelian, then $\c_\G^\circ(Y)$ is an $E_{q,\ell}$-split Levi subgroup of $\G$.
\end{enumerate}
\end{lem}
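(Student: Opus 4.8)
These are the standard properties of centralisers of $\ell$-subgroups under the hypothesis $\ell\in\Gamma(\G,F)$ (see \cite[Propositions 13.16 and 13.19]{Cab-Eng04}); I indicate how I would organise the three items.

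\textbf{Reducing (iii) to (ii).} Put $\H:=\c_\G^\circ(Y)$. Since $Y\leq\G^F$ is $F$-stable, so are $\H$ and the torus $\z^\circ(\H)$, and $Z:=\z^\circ(\H)^F_\ell$ is the Sylow $\ell$-subgroup of the finite abelian group $\z^\circ(\H)^F$. By (ii) we have $Y\leq\z^\circ(\H)$, and as $Y$ is an $\ell$-subgroup of $\G^F$ this forces $Y\leq Z$, hence $\c_\G^\circ(Z)\leq\c_\G^\circ(Y)=\H$; conversely $Z\leq\z^\circ(\H)\leq\z(\H)$, so $\H$ centralises $Z$ and, being connected, $\H\leq\c_\G^\circ(Z)$. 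Thus $\H=\c_\G^\circ\big(\z^\circ(\H)^F_\ell\big)$, and combining this with the obvious inclusions $\H\leq\c_\G^\circ(\z^\circ(\H))\leq\c_\G^\circ\big(\z^\circ(\H)^F_\ell\big)$ gives $\H=\c_\G^\circ(\z^\circ(\H))$, so that $\H$ is an $F$-stable Levi subgroup of $\G$. Lemma \ref{lem:e-split Levi}(2) then shows that $\H$ is an $E_{q,\ell}$-split Levi subgroup of $\G$, which is (iii).

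\textbf{Proof of (ii).} Since $\ell\neq p$, every element of $Y$ is semisimple, so the Zariski closure of the abelian group $Y$ is a diagonalisable subgroup of $\G$ and hence $Y$ is contained in a maximal torus $\T$ of $\G$. Then $\T$ is connected and centralises $Y$, so $\T\leq\H:=\c_\G^\circ(Y)$; in particular $\H$ is an $F$-stable connected reductive subgroup of $\G$ of maximal rank, and $Y\leq\T\leq\z(\H)$ because, by construction, $Y$ centralises $\H$. As $Y\leq\G^F$, this means $Y\leq\z(\H)^F$. Being $F$-stable and of maximal rank, $\H$ contains an $F$-stable maximal torus of $\G$, so Remark \ref{rmk:Very good primes go to Levi subgroups} gives $\ell\in\Gamma(\H,F)$, and Definition \ref{def:Very good primes} then yields $\ell\nmid|\z(\H)^F:\z^\circ(\H)^F|$. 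Therefore the $\ell$-group $Y\leq\z(\H)^F$ must be contained in $\z^\circ(\H)^F\leq\z^\circ(\H)$, which is (ii).

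\textbf{Proof of (i), and the main obstacle.} The inclusion $\c_\G^\circ(Y)^F\subseteq\c_\G(Y)^F$ is trivial, and by Lang's theorem applied to the connected group $\c_\G^\circ(Y)$ the reverse inclusion is equivalent to $\big(\c_\G(Y)/\c_\G^\circ(Y)\big)^F=1$. Given $g\in\c_\G(Y)^F$, its semisimple and unipotent parts again lie in $\c_\G(Y)^F$, and a unipotent element of the centraliser of a set of semisimple elements always lies in the identity component; so one is reduced to proving that a semisimple element $s\in\c_\G(Y)^F$ lies in $\c_\G^\circ(Y)$. If $Y$ is abelian this is immediate: $\langle Y,s\rangle$ is then an abelian group of semisimple elements, hence contained in a maximal torus $\T'$, and $s\in\T'\leq\c_\G^\circ(Y)$ since $\T'$ is connected and centralises $Y$. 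In particular $\c_\G(y)^F=\c_\G^\circ(y)^F$ for every $\ell$-element $y\in\G^F$, and since $\c_\G(Y)^F=\bigcap_{y\in Y}\c_\G^\circ(y)^F$ one can try to bootstrap to arbitrary $Y$ (for instance by passing to $\c_\G^\circ(\z(Y))$, which contains $Y$ by the abelian case). The genuine obstacle is precisely here: for a non-abelian $Y$ one cannot fit $Y$ and $s$ into a common maximal torus, and one must really show that the Frobenius acts without nontrivial fixed points on the finite group $\c_\G(Y)/\c_\G^\circ(Y)$. This is the point at which the full strength of $\ell\in\Gamma(\G,F)$ — goodness of $\ell$, the index conditions on $\z(\G)^F/\z^\circ(\G)^F$ and on its dual, and the exclusion of the type ${}^3\mathbf{D}_4$ exception at $\ell=3$ — is used, and for this step I would invoke \cite[Proposition 13.16]{Cab-Eng04} rather than reprove it. Everything else is the elementary bookkeeping above together with Lemma \ref{lem:e-split Levi}.
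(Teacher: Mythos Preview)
Your argument is correct and follows essentially the same route as the paper: it too cites \cite[Proposition 13.16]{Cab-Eng04} for (i), uses the $\Gamma$-index condition $\ell\nmid|\z(\H)^F:\z^\circ(\H)^F|$ for (ii), and deduces (iii) from (ii) via $\H=\c_\G^\circ(\z^\circ(\H)^F_\ell)$ and Lemma~\ref{lem:e-split Levi}. The only organisational difference is that the paper quotes \cite[Proposition 13.16(ii)]{Cab-Eng04} at the outset to know $\c_\G^\circ(Y)$ is a Levi subgroup and then uses (i) to place $Y$ inside $\z(\c_\G^\circ(Y))^F$, whereas you obtain both of these facts directly from your maximal-torus argument in (ii), making (ii) and (iii) independent of (i).
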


\begin{proof}
First notice that $\c_\G^\circ(Y)$ is a Levi subgroup of $\G$ by \cite[Proposition 13.16 (ii)]{Cab-Eng04}. The first statement is \cite[Proposition 13.16 (i)]{Cab-Eng04}. Assume now that $Y$ is abelian and notice that $Y\leq \c_\G(Y)^F=\c_\G^\circ(Y)^F$. Then $Y\leq \z(\c_\G^\circ(Y))$. By (i) we know that $\c^\circ_\G(Y)$ is a Levi subgroup of $\G$ and hence $\ell\in\Gamma(\c_\G^\circ(Y),F)$ by Remark \ref{rmk:Very good primes go to Levi subgroups}. In particular $\ell$ does not divide $|\z(\c^\circ_\G(Y))^F:\z^\circ(\c_\G^\circ(Y))^F|$ and so $Y\leq \z^\circ(\c_\G^\circ(Y))$. Now, $\L:=\c_\G^\circ(Y)$ is an $F$-stable Levi subgroup with $Y\leq \z^\circ(\L)$. Then $Y\leq \z^\circ(\L)^F_\ell\leq \z^\circ(\L)$ and \cite[Proposition 1.21]{Dig-Mic91} implies that $\L=\c_\G^\circ(\z^\circ(\L))\leq\c_\G^\circ(\z^\circ(\L)^F_\ell)\leq \c_\G^\circ(Y)=\L$. By Lemma \ref{lem:e-split Levi} it follows that $\L$ is $E_{q,\ell}$-split in $\G$.
\end{proof}

We say that a prime $\ell$ is large for $(\G,F)$ if there exists a unique integer $e_0$ such that $\Phi_{e_0}$ divides the order polinomial $P_{(\G,F)}$ and $\ell$ divides $\Phi_{e_0}(q)$ (see \cite[Definition 5.1]{Bro-Mal-Mic93} and \cite[Section 2.1]{Mal14}). In this case we also say that $\ell$ is $(\G,F,e_0)$-adapted (see \cite[Definition 5.3]{Bro-Mal-Mic93}). When $\G$ is semisimple, then $e_0$ coincides with $e$. Observe that if $\ell$ is large, then the Sylow $\ell$-subgroups of $\G^F$ are abelian \cite[Theorem 25.14]{Mal-Tes}. Moreover, if $\ell$ is large for $\G$, then $\ell\in\Gamma(\G,F)$ by \cite[Lemma 2.1]{Mal14}.

For any finite $\ell$-group $H$ and positive integer $n$ we define the subgroup $\Omega_n(H):=\langle h\in H\mid h^{\ell^n}=1\rangle$. In particular, when $H$ is abelian, $\Omega_1(H)$ is the largest $\ell$-elementary abelian subgroup of $H$.

\begin{prop}
\label{prop:e-split Levi large primes}
Suppose that $\ell$ is large for $\G$ and consider an $e$-split Levi subgroup $\L$ of $\G$ and an $\ell$-subgroup $Y$ of $\G^F$. Then:
\begin{enumerate}
\item if $\ell$ is $(\G,F,e_0)$-adapted for some integer $e_0$, then $\c_\G^\circ(Y)$ is an $e_0$-split Levi subgroup of $\G$;
\item if $\S:=\z^\circ(\L)_{\Phi_e}$, then $\S\leq \z(\G)$ if and only if $\S_\ell^F\leq \z(\G)$ if and only if $\Omega_1(\S_\ell^F)\leq \z(\G)$;
\item $\L=\c_\G^\circ((\z^\circ(\L)_{\Phi_e})^F_\ell)=\c_\G^\circ(\Omega_1((\z^\circ(\L)_{\Phi_e})^F_\ell))$.
\end{enumerate}
\end{prop}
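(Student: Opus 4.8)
The plan is to deduce all three statements from the structural results of Lemma \ref{lem:e-split Levi}, Lemma \ref{prop:Good primes} and Proposition \ref{prop:e-split Levi large primes} (i), keeping in mind that since $\ell$ is large for $\G$ we have $\ell\in\Gamma(\G,F)$, the Sylow $\ell$-subgroups of $\G^F$ are abelian, and — because $\ell$ is $(\G,F,e_0)$-adapted with $e_0=e$ whenever $\G$ is semisimple — the unique relevant cyclotomic polynomial dividing $P_{(\G,F)}$ with $\ell\mid\Phi_{e_0}(q)$ is $\Phi_e$. First I would record the elementary observation that $\S=\z^\circ(\L)_{\Phi_e}$ is a $\Phi_e$-torus with $\L=\c_\G(\S)$ by Lemma \ref{lem:e-split Levi}(i), and that $\S^F_\ell$ is an abelian $\ell$-subgroup of $\G^F$ whose $\ell$-part is controlled by the fact that, $\ell$ being large, the full Sylow $\ell$-subgroup sits inside a torus of the form $\S$ for suitable $\L$. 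The key point is that the ``$\ell$-shadow'' $\S^F_\ell$ of $\S$ retains enough information to recover $\S$ up to taking connected centralisers.

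For statement (i), I would simply invoke Proposition \ref{prop:e-split Levi large primes}(i) (or rather its proof): $\c_\G^\circ(Y)$ is a Levi subgroup by Lemma \ref{prop:Good primes}, and since $\ell$ is $(\G,F,e_0)$-adapted any $\Phi_n$ occurring in its order polynomial with $\ell\mid\Phi_n(q)$ has $n=e_0$; combining Lemma \ref{prop:Good primes}(iii), which gives that $\c_\G^\circ(Y)$ is $E_{q,\ell}$-split, with the uniqueness of $e_0$ upgrades ``$E_{q,\ell}$-split'' to ``$e_0$-split''. For statement (ii), the implications $\S\leq\z(\G)\Rightarrow\S^F_\ell\leq\z(\G)\Rightarrow\Omega_1(\S^F_\ell)\leq\z(\G)$ are trivial, so the content is the reverse: assuming $\Omega_1(\S^F_\ell)\leq\z(\G)$ I want $\S\leq\z(\G)$. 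Here I would argue that $\c_\G(\Omega_1(\S^F_\ell))=\G$ forces, via the adaptedness of $\ell$ and a dimension/root-datum count, that the $\Phi_e$-part of $\z^\circ(\L)$ cannot contribute any non-central torus: if $\S\not\leq\z(\G)$ then $\S$ contains a non-trivial $\Phi_e$-subtorus $\S_0$ with $\c_\G(\S_0)\subsetneq\G$, and since $\ell$ is large the $\ell$-torsion $\Omega_1((\S_0)^F_\ell)$ is already non-central and detects $\S_0$ through $\c_\G^\circ$, contradicting $\Omega_1(\S^F_\ell)\leq\z(\G)$. Statement (iii) then follows by the same detection principle: by Lemma \ref{lem:e-split Levi}(i) we have $\L=\c_\G(\S)$, the chain $\L=\c_\G^\circ(\S)\subseteq\c_\G^\circ(\S^F_\ell)\subseteq\c_\G^\circ(\Omega_1(\S^F_\ell))$ is automatic, and applying statement (ii) inside every $e$-split Levi overgroup (equivalently, applying Proposition \ref{prop:e-split Levi large primes}(i) to $Y=\Omega_1(\S^F_\ell)$ to see $\c_\G^\circ(\Omega_1(\S^F_\ell))$ is $e$-split and contains $\S$ in its central torus) forces equality throughout; here one uses \cite[Proposition 1.21]{Dig-Mic91} exactly as in the proof of Lemma \ref{prop:Good primes}.

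The main obstacle I anticipate is the reverse implication in (ii): one must genuinely use that $\ell$ is large, not merely in $\Gamma(\G,F)$, so that the only cyclotomic factor ``visible'' to the $\ell$-adic torsion of a $\Phi_e$-torus is $\Phi_e$ itself and so that $\Omega_1$ of the $\ell$-part of a non-central $\Phi_e$-subtorus is still non-central with the same connected centraliser. Making this precise will require a careful comparison of $|\S^F|$, the $\ell$-part $|\S^F_\ell|$, and the valuation $v_\ell(\Phi_e(q))$, together with the fact (from $\ell$ large) that $v_\ell(\Phi_e(q))$ is ``as large as it can be'' relative to the rank of $\S$, so that passing to $\Omega_1$ does not collapse the torus below central level. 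Once this detection lemma is in place, (i) and (iii) are formal consequences of Lemma \ref{lem:e-split Levi}, Lemma \ref{prop:Good primes} and \cite[Proposition 1.21]{Dig-Mic91}.
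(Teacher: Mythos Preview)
Your overall strategy is correct and matches the paper's. Parts (i) and (iii) proceed essentially as you outline: for (i) the paper cites \cite[Proposition 2.4(1)]{Bro-Mal-Mic93} directly rather than upgrading $E_{q,\ell}$-split to $e_0$-split, but your route works too; for (iii) the paper phrases the argument as an induction on $\dim(\G)$, which unfolds to exactly your idea of applying (ii) inside $\K:=\c_\G^\circ(\Omega_1(\S^F_\ell))$ to get $\S\leq\z(\K)$ and hence $\K\leq\c_\G(\S)=\L$.

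The one place where you have not supplied the mechanism is (ii), and you correctly flag this as the main obstacle. The paper's concrete device is the following. Set $\T_e:=\z^\circ(\G)_{\Phi_e}$ and use the quotient $\pi_e:\G\to\G/\T_e$; then $\S\nleq\z(\G)$ is equivalent to $\S\nleq\T_e$, i.e.\ $\pi_e(\S)\neq 1$ is a nontrivial $\Phi_e$-torus. The key structural fact (from \cite[Proposition 3.3]{Bro-Mal92}) is that for any $\Phi_e$-torus $\T$ the group $\T^F_\ell$ is \emph{homocyclic}: a direct product of copies of $C_{\ell^a}$ where $\ell^a\|\Phi_e(q)$. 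So one picks $y\in\pi_e(\S)^F_\ell$ of maximal order $\ell^a$, lifts it to $x\in\S^F_\ell$ (using $(\S/\T_e)^F_\ell=\pi_e(\S^F_\ell)$ as in \cite[Lemma 13.17(i)]{Cab-Eng04}), observes that $x$ also has order $\ell^a$ by homocyclicity, and then $s:=x^{\ell^{a-1}}\in\Omega_1(\S^F_\ell)$ has $\pi_e(s)=y^{\ell^{a-1}}\neq 1$, so $\Omega_1(\S^F_\ell)\nleq\T_e$. Your ``valuation comparison'' intuition is exactly right, but the homocyclic structure of $\Phi_e$-tori is the statement that makes it go through; without it one cannot rule out that passing to $\Omega_1$ collapses everything into $\T_e$.
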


\begin{proof}
By \cite[Proposition 13.16 (ii)]{Cab-Eng04} and Lemma \ref{prop:Good primes} (ii) we know that $\c^\circ_\G(Y)$ is an $F$-stable Levi subgroup and $Y\leq \z^\circ(\c_\G^\circ(Y))$. Then \cite[Proposition 2.4 (1)]{Bro-Mal-Mic93} implies that $\c^\circ_\G(Y)$ is an $e_0$-split Levi subgroup. This proves (i).

Next, set $\S:=\z^\circ(\L)_{\Phi_e}$. It is enough to show that $\S\nleq \z(\G)$ implies $\Omega_1(\S_\ell^F)\nleq\z(\G)$. Since $\S$ is a $\Phi_e$-torus, we have $\S\nleq\z(\G)$ if and only if $\S\nleq \z^\circ(\G)_{\Phi_e}$ while, using the fact that $\ell$ is large, we deduce that $\z(\G)_\ell^F=(\z^\circ(\G)_{\Phi_e})_\ell^F$ and therefore $\Omega_1(\S_\ell^F)\nleq\z(\G)$ if and only if $\Omega_1(\S_\ell^F)\nleq\z^\circ(\G)_{\Phi_e}$. Hence, in order to obtain (ii) we need to show that $\S\nleq \z^\circ(\G)_{\Phi_e}$ implies $\Omega_1(\S_\ell^F)\nleq\z^\circ(\G)_{\Phi_e}$. Assume that $\S\nleq \z^\circ(\G)_{\Phi_e}=:\T_e$ and consider the canonical morphism $\pi_e:\G\to \G/\T_e$. Observe, that $\T_e\leq \S$ and that $\S$ and $\pi_e(\S)\neq 1$ are $\Phi_{e}$-tori. If $\ell^{a}$ is the largest power of $\ell$ dividing $\Phi_{e}(q)$, then $\T^F_\ell$ is the direct product of copies of $C_{\ell^{a}}$ for every $\Phi_{e}$-torus $\T$ (see \cite[Proposition 3.3]{Bro-Mal92}). Let $y\in\pi_e(\S)^F_\ell$ be an element of order $\ell^a$. Proceeding as in the proof of \cite[Lemma 13.17 (i)]{Cab-Eng04} and noticing that $(\S/\T_e)^F=\S^F/\T_e^F$, we deduce that $\pi_e(\S)_\ell^F=\pi_e(\S^F_\ell)$ and hence there exists $x\in\S_\ell^F$ such that $\pi_e(x)=y$. Now, the order of $y$ divides the order of $x$ while the order of $x$ must be less or equal than $\ell^a$ by the description of $\S^F_\ell$ given above. We conclude that $x$ has order $\ell^a$. Then $s:=x^{\ell^{a-1}}\in\Omega_1(\S_\ell^F)$ and $\pi_e(s)=y^{\ell^{a-1}}\neq 1$. This shows that $\Omega_1(\S_\ell^F)\nleq \T_e=\z^\circ(\G)_{\Phi_e}$.

To prove (iii), we proceed by induction on the dimension of $\G$. Set $\S:=\z^\circ(\L)_{\Phi_e}$ and notice that $\L=\c_\G(\S)\leq\c_\G^\circ(\S_\ell^F)\leq \c_\G^\circ(\Omega_1(\S_\ell^F))$. We need to show that $\K:=\c_\G^\circ(\Omega_1(\S_\ell^F))\leq \L$. Observe that $\K$ is a Levi subgroup of $\G$ by \cite[Proposition 13.16 (ii)]{Cab-Eng04}. If $\S\leq \z(\G)$, then $\K=\G=\L$. Therefore, we can assume $\S\nleq\z(\G)$. By the above paragraph, we obtain $\Omega_1(\S_\ell^F)\nleq \z(\G)$ and therefore $\dim(\K)<\dim(\G)$. Noticing that $\ell$ is large for $\K$ and that $\L$ is an $e$-split Levi subgroup of $\K$, the inductive hypothesis yields $\L=\c_\K^\circ(\Omega_1(\S_\ell^F))$. The result follows by noticing that $\c^\circ_\K(\Omega_1(\S_\ell^F))=\K$.
\end{proof}

\subsection{Deligne--Lusztig induction and blocks}
\label{sec:Deligne-Lusztig induction and blocks}

Let $\G$, $F$, $q$, $\ell$ and $e$ be as in Notation \ref{notation} and consider an $F$-stable Levi subgroup of a (not necessarily $F$-stable) parabolic subgroup $\P$ of $\G$. By tensoring with a $(\G^F,\L^F)$-bimodules arising from the $\ell$-adic cohomology of Deligne--Lusztig varieties, Deligne--Lusztig \cite{Del-Lus76} (in the case where $\L$ is a maximal torus) and Lusztig \cite{Lus76} (in the general case) have defined a map
\[\R_{\L\leq \P}^\G:\mathbb{Z}\irr\left(\L^F\right)\to\mathbb{Z}\irr\left(\G^F\right)\]
with adjoint
\[^*\R_{\L\leq \P}^\G:\mathbb{Z}\irr\left(\G^F\right)\to\mathbb{Z}\irr\left(\L^F\right)\]
that we call Deligne--Lusztig induction and restriction respectively. Notice that these maps are often referred to simply as Lusztig induction and restriction, and the terms Deligne--Lusztig induction and restriction are only used when considering the case of a maximal torus. Nonetheless we believe that the contribution of Deligne should be acknowledged. It is expected that the map $\R_{\L\leq \P}^\G$ does not depend on the choice of the parabolic subgroup $\P$ and this would, for instance, follow from the Mackey formula which has been proved whenever $\G^F$ does not have components of type $^2\mathbf{E}_6(2)$, $\mathbf{E}_7(2)$ or $\mathbf{E}_8(2)$ \cite{Bon-Mic10}. For simplicity, we just write $\R_\L^\G$ when the results are known not to depend on the choice of $\P$. Similar remarks apply for Deligne--Lusztig restriction.

Recall that $(\L,\lambda)$ is an $e$-cuspidal pair of $(\G,F)$ (or simply of $\G$ when no confusion arises) if $\L$ is an $e$-split Levi subgroup of $\G$ and $\lambda\in\irr(\L^F)$ satisfies $^*\R_{\M\leq \Q}^\L(\lambda)=0$ for every $e$-split Levi subgroup $\M<\L$ and every parabolic subgroup $\Q$ of $\L$ containing $\M$ as Levi complement.

To fix our notation, we now review the parametrisation of blocks given in \cite{Cab-Eng99}. Let's assume $\ell\geq 5$ with $\ell\geq 7$ if $\G$ has a component of type $\mathbf{E}_8$. Then for every $B\in\Bl(\G^F)$ there exists a unique $e$-cuspidal pair $(\L,\lambda)$ up to $\G^F$-conjugation such that $\lambda$ lies in a rational Lusztig series associated with an $\ell$-regular semisimple element and all the irreducible constituents of $\R_{\L\leq \P}^\G(\lambda)$ belongs to the block $B$ for every parabolic subgroup $\P$ of $\G$ having $\L$ as Levi complement. In this case we write $B=b_{\G^F}(\L,\lambda)$. Moreover, \cite[Theorem 2.5]{Cab-Eng99} implies that $\bl(\lambda)^{\G^F}=B$ whenever $\ell\in\Gamma(\G,F)$ (see Lemma \ref{lem:e-split Levi}). See \cite{Kes-Mal15} for a generalisation of these results to all primes.

\section{$(e,\ell')$-pairs and transitivity}
\label{sec:transitivity}

In this section we provide new evidence for a conjecture proposed by Cabanes and Enguehard in \cite[Notation 1.11]{Cab-Eng99}. Consider $\G$, $F$, $q$, $\ell$ and $e$ as in Notation \ref{notation}. We start by defining the notion of $e$-pair and $(e,s)$-pair.

\begin{defin}
\label{def:(e,s)-pair}
An $e$-pair of $(\G,F)$ (or simply of $\G$ when no confusion arises) is a pair $(\L,\lambda)$ where $\L$ is an $e$-split Levi subgroup of $\G$ and $\lambda\in\irr(\L^F)$. For any semisimple element $s\in\G^{*F^*}$, we say that an $e$-pair $(\L,\lambda)$ is an \emph{$(e,s)$-pair} of $(\G,F)$ if $\lambda\in\E(\L^F,[s'])$ for some $s'\in\L^{*F^*}$ that is $\G^{*F^*}$-conjugate to $s$. Finally, we say that $(\L,\lambda)$ is an \emph{$(e,\ell')$-pair} if it is an $(e,s)$-pair for some $\ell$-regular semisimple element $s\in\G^{*F^*}$.
\end{defin}

If $\mathcal{P}_e(\G,F)$ is the set of $e$-pairs of $(\G,F)$, then there exists a binary relation on $\mathcal{P}_e(\G,F)$ denoted by $\leq_e$ (see \cite[Notation 1.11]{Cab-Eng99}). Namely, write $(\L,\lambda)\leq_e(\K,\kappa)$ provided that $\L\leq\K$ are $e$-split Levi subgroups of $\G$ and there exists a parabolic subgroup $\P$ of $\K$ containing $\L$ as a Levi complement such that $\kappa$ is an irreducible constituent of the generalised character $\R_{\L\leq \P}^\K(\lambda)$. Noticing that Deligne--Lusztig induction sends characters to generalised characters, we observe that the relation $\leq_e$ might not be transitive at first glance. We denote by $\ll_e$ the transitive closure of $\leq_e$. Since $\mathcal{P}_e(\G,F)$ is finite, 
we deduce that two $e$-pairs $(\L,\lambda)$ and $(\K,\kappa)$ satisfy $(\L,\lambda)\ll_e(\K,\kappa)$ if and only if there exist a finite number of $e$-pairs $(\L_i,\lambda_i)$, with $i=1,\dots,n$, such that
\[(\L,\lambda)\leq_e(\L_1,\lambda_1)\leq_e\dots\leq_e(\L_n,\lambda_n)\leq_e(\K,\kappa).\]
With this notation, a pair $(\L,\lambda)$ is $e$-cuspidal if and only if it is a minimal element in the poset $(\mathcal{P}_e(\G,F),\ll_e)$. We denote by $\CP_e(\G,F)$ the subset of $\mathcal{P}_e(\G,F)$ consisting of $e$-cuspidal pairs. Observe that, by \cite[Proposition 15.7]{Cab-Eng04} the relations $\leq_e$ and $\ll_e$ restrict to the set of $(e,s)$-pairs for every $s\in\G^{*F^*}_{\rm ss}$. A minimal element in the induced poset of $(e,s)$-pairs is called \emph{$(e,s)$-cuspidal}.

The following conjecture has been proposed in \cite[Notation 1.11]{Cab-Eng99} and is inspired by \cite[Theorem 3.11]{Bro-Mal-Mic93}.

\begin{conj}[Cabanes--Enguehard]
\label{conj:Transitive closure}
The relation $\leq_e$ is transitive and therefore coincides with $\ll_e$.
\end{conj}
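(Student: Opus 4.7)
My strategy is to reduce the conjecture to the transitivity statement for \emph{unipotent} characters proved in \cite[Theorem 3.11]{Bro-Mal-Mic93}, using Jordan decomposition. Given a chain $(\L,\lambda)\leq_e(\M,\mu)\leq_e(\K,\kappa)$, transitivity of Deligne--Lusztig induction (valid whenever the Mackey formula holds, hence under the running hypothesis on $\G^F$ via \cite{Bon-Mic10}) gives
\[
\R_\L^\K(\lambda)\;=\;\R_\M^\K\bigl(\R_\L^\M(\lambda)\bigr),
\]
and the task is to show that $\kappa$ occurs with non-zero multiplicity on the right. The only obstruction is cancellation: writing $\R_\L^\M(\lambda)=\mu+\sum_i n_i\mu_i$, the contributions $\R_\M^\K(\mu_i)$ could in principle cancel the appearance of $\kappa$ inside $\R_\M^\K(\mu)$.

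To rule out cancellation, I would restrict attention to a single rational Lusztig series. By the theorem of Broué--Michel, Deligne--Lusztig induction preserves rational Lusztig series, so if $\mu\in\E(\M^F,[s])$ with $s\in\M^{*F^*}$ (which may be arranged in $\L^{*F^*}$ by hypothesis on $(\L,\lambda)$), then all relevant $\mu_i$ and $\kappa$ lie in the same series $[s]$. Passing to a regular embedding $\G\hookrightarrow\wt{\G}$ if necessary, I would further assume that the centralisers $\c_{\H^*}(s)$ are connected for $\H\in\{\L,\M,\K,\G\}$, so that Jordan decomposition supplies bijections $\E(\H^F,[s])\to\E(\c_{\H^*}(s)^{F^*},[1])$. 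By the Morita-theoretic version of Jordan decomposition due to Bonnafé--Rouquier, these bijections are compatible (up to a sign) with Deligne--Lusztig induction along the chain $\c_{\L^*}(s)\leq\c_{\M^*}(s)\leq\c_{\K^*}(s)$ inside $\c_{\G^*}(s)$. The non-cancellation of $\kappa$ in $\R_\L^\K(\lambda)$ would then be translated into non-cancellation of the corresponding unipotent character along the transferred chain.

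The crux, and the step I expect to be the hardest, is to verify that this transferred chain is again a chain of $e'$-split Levi subgroups of $\c_{\G^*}(s)$ for a suitable integer $e'$ tied to the $\ell$-adic valuation of the order of $s$, so that the Broué--Malle--Michel transitivity for unipotent characters can be invoked inside $\c_{\G^*}(s)$. This requires a careful tracking of the cyclotomic factorisations of the orders of $s$ and of the ambient and centraliser tori, and it is the point where the reduction is most delicate. When $s$ is $\ell$-regular and the cyclotomic factors attached to $s$ interact cleanly with $\Phi_e$, the correspondence can be made to work uniformly, which I expect corresponds to the "good series" case yielding the partial solution advertised in the introduction and to Proposition \ref{prop:Transitivity in good series}. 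The remaining obstruction — $\ell$-singular $s$, or mismatches between the cyclotomic structures of $\G$ and of $\c_{\G^*}(s)$ — is presumably why the full conjecture remains open and only a restricted version is established in the paper.
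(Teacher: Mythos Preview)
The statement is a conjecture; the paper does not prove it in full but only establishes partial cases (Proposition~\ref{prop:e-HC theory for connected reductive} for $(e,\ell')$-cuspidal pairs, Corollary~\ref{cor:Transitive closure} above $e$-cuspidal pairs, and Proposition~\ref{prop:Transitivity in good series} for series attached to semisimple elements $s$ whose order avoids bad primes and is coprime to $|\z(\G)^F:\z^\circ(\G)^F|$). Your sketch correctly identifies the overall shape of the argument behind Proposition~\ref{prop:Transitivity in good series}: reduce to the unipotent case via a Jordan-type construction and invoke \cite[Theorem~3.11]{Bro-Mal-Mic93}.

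However, you misidentify the main obstruction. The step you flag as hardest --- showing that the transferred chain consists of $e'$-split Levi subgroups for some new $e'$ --- is not the difficulty: by \cite[Proposition~2.12]{Hol22}, if $\L^*$ is $e$-split in $\G^*$ then $\c_{\L^*}^\circ(s)$ is $e$-split in $\c_{\G^*}^\circ(s)$ for the \emph{same} $e$, so no change of $e$ and no delicate tracking of cyclotomic factorisations is needed. The paper also does not work on the dual side inside $\c_{\G^*}(s)$, where disconnectedness would obstruct Deligne--Lusztig theory; instead, when $\c_{\G^*}^\circ(s)$ is a Levi subgroup of $\G^*$, one passes to the dual Levi $\G(s)\leq\G$ and uses the bijection of \cite[Theorem~8.27]{Cab-Eng04}, which transports $\leq_e$ back and forth (Lemmas~\ref{lem:Jordan decomposition for l-elements} and~\ref{lem:Jordan decomposition for l-elements, order relation}). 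The genuine obstruction to the full conjecture is that $\c_{\G^*}^\circ(s)$ is a Levi subgroup of $\G^*$ only when no bad prime divides $o(s)$; for general $s$ no such $\G(s)$ exists and the reduction collapses. Your regular-embedding step does not circumvent this: it makes $\c_{\wt{\G}^*}(s)$ connected but still not a Levi subgroup when $s$ is quasi-isolated, so the Bonnafé--Rouquier compatibility you invoke is unavailable precisely in the residual case.
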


In this section, we show that this conjecture holds when considering $(e,\ell')$-cuspidal pairs in groups of simply connected type under certain assumptions on $\ell$. Before proceeding with the proof of this result, we point out an important consequence of Conjecture \ref{conj:Transitive closure}. Let $(\L,\lambda)$ be an $e$-pair of $\G$. If Conjecture \ref{conj:Transitive closure} holds, then
\[\left\lbrace\chi\in\irr\left(\G^F\right)\hspace{1pt}\middle|\hspace{1pt}(\L,\lambda)\ll_e(\G,\chi)\right\rbrace=\E\left(\G^F,(\L,\lambda)\right),\]
where $\E(\G^F,(\L,\lambda))$ is the \emph{$e$-Harish-Chandra series} determined by $(\L,\lambda)$, that is the set of irreducible constituents of $\R_{\L\leq \P}^\G(\lambda)$ for every parabolic subgroup $\P$ of $\G$ having $\L$ as a Levi complement. In addition, if Deligne--Lusztig induction does not depend on the choice of a parabolic subgroup, then
\[\left\lbrace\chi\in\irr\left(\G^F\right)\hspace{1pt}\middle|\hspace{1pt}(\L,\lambda)\ll_e(\G,\chi)\right\rbrace=\irr\left(\R_\L^\G(\lambda)\right),\]
where we recall that, for any finite group $X$ and $\chi\in\mathbb{Z}\irr(X)$, we denote by $\irr(\chi)$ the set of irreducible constituent of $\chi$. Because this remark is used multiple times in Section \ref{sec:Brauer-Lusztig blocks and e-HC series}, we introduce the following condition.

\begin{cond}
\label{cond:Transitivity}
Consider $\G$, $F$, $q$, $\ell$ and $e$ as in Notation \ref{notation} and assume that Deligne--Lusztig induction does not depend on the choice of parabolic subgroups and
\[\left\lbrace\kappa\in\irr\left(\K^F\right)\hspace{1pt}\middle|\hspace{1pt}(\L,\lambda)\ll_e(\K,\kappa)\right\rbrace=\irr\left(\R_\L^\K(\lambda)\right)\]
for every $F$-stable Levi subgroup $\K$ of $\G$ and every $(e,\ell')$-cuspidal pair $(\L,\lambda)$ of $\K$.
\end{cond}

Observe that Conjecture \ref{conj:Transitive closure} is known for $(e,1)$-pairs by \cite[3.11]{Bro-Mal-Mic93} while Condition \ref{cond:Transitivity} has been proved for $\G$ simple of exceptional simply connected type and good primes in \cite[Theorem 1.1]{Hol22}. Exceptional simple groups and bad primes have been considered in \cite[Theorem 1.4]{Kes-Mal13}. Moreover Condition \ref{cond:Transitivity} is known to hold for groups with connected centre and good primes $\ell\geq 5$ by \cite[Proposition 2.2.4]{Eng13}. Proposition \ref{prop:e-HC theory for connected reductive} below extends these results and shows that Condition \ref{cond:Transitivity} holds for every connected reductive group $\G$ with $[\G,\G]$ simply connected and good primes $\ell\geq 5$. In the next section we extend this result to $e$-pairs associated with $\ell$-singular semisimple elements (see Corollary \ref{cor:Transitive closure}). Moreover, in Proposition \ref{prop:Transitivity in good series} we prove Conjecture \ref{conj:Transitive closure} inside $e$-Harish-Chandra series associated to certain semisimple elements. Notice that our proof does not depend on \cite{Eng13} in any way.

\begin{lem}
\label{lem:e-HC theory for connected reductive, reduction to semisimple}
Let $\L$ be an $e$-split Levi subgroup of a connected reductive group $\G$ and consider $\G_0:=[\G,\G]$ and $\L_0:=\L\cap \G_0$. \begin{enumerate}
\item Let $\lambda_0\in\irr(\L_0^F)$ and $\chi_0\in\irr(\G_0^F)$. If $(\L_0,\lambda_0)\leq_e(\G_0,\chi_0)$ and $\chi\in\irr(\G^F\mid \chi_0)$, then there exists $\lambda\in\irr(\L^F\mid \lambda_0)$ such that $(\L,\lambda)\leq_e(\G,\chi)$.

\item Let $\lambda\in\irr(\L^F)$ and $\chi\in\irr(\G^F)$. If $(\L,\lambda)\leq_e(\G,\chi)$ and $\lambda_0\in\irr(\lambda_{\L_0^F})$, then there exists $\chi_0\in\irr(\chi_{\G_0^F})$ such that $(\L_0,\lambda_0)\leq_e(\G_0,\chi_0)$.
\end{enumerate}
\end{lem}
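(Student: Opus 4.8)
\textbf{Proof plan for Lemma \ref{lem:e-HC theory for connected reductive, reduction to semisimple}.}

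The plan is to reduce both statements to the compatibility of Deligne--Lusztig induction with the central quotient $\G\to\G/\z$ (equivalently, with the inclusion $\G_0\hookrightarrow\G$ of the derived subgroup) together with a Clifford-theoretic argument using that $\G^F/\G_0^F\z(\G)^F$ is abelian and that $\G^F$ normalises $\G_0^F$. The key geometric input is the well-known commutation formula relating $\R_{\L\leq\P}^\G$ on $\G$ and $\R_{\L_0\leq\P_0}^{\G_0}$ on $\G_0=[\G,\G]$: restriction of characters from $\G^F$ to $\G_0^F$ intertwines the two Deligne--Lusztig maps (this follows from the fact that the Deligne--Lusztig variety for $\G$ and for $\G_0$ relative to the same parabolic coincide up to the action of the finite abelian group $\G^F/\G_0^F\z(\G)^F$, so the $(\G^F,\L^F)$-bimodule restricts to the $(\G_0^F,\L_0^F)$-bimodule). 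First I would record this intertwining formula precisely, noting that $\L_0=\L\cap\G_0$ is indeed an $e$-split Levi subgroup of $\G_0$ (since $\z^\circ(\L)_{\Phi_e}$ maps into a $\Phi_e$-torus of $\G_0$ whose centraliser is $\L_0$), and that every parabolic $\P$ of $\G$ with Levi complement $\L$ restricts to a parabolic $\P_0=\P\cap\G_0$ of $\G_0$ with Levi complement $\L_0$.

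For part (i): given $(\L_0,\lambda_0)\leq_e(\G_0,\chi_0)$, so $\chi_0$ is a constituent of $\R_{\L_0\leq\P_0}^{\G_0}(\lambda_0)$, pick any $\chi\in\irr(\G^F\mid\chi_0)$. Choose $\lambda\in\irr(\L^F\mid\lambda_0)$ lying under $\chi$ with respect to the Deligne--Lusztig adjunction — concretely, one shows $\langle{}^*\R_{\L\leq\P}^\G(\chi),\lambda\rangle\neq 0$ for some such $\lambda$. The mechanism: by Frobenius reciprocity along $\G_0^F\leq\G^F$ and the intertwining formula, $\langle\R_{\L\leq\P}^\G(\ind_{\L_0^F}^{\L^F}\lambda_0),\chi\rangle$ equals a sum of terms $\langle\R_{\L_0\leq\P_0}^{\G_0}({}^x\lambda_0),\chi_0\rangle$ over a transversal, which is nonzero because the $x=1$ term is (all the characters here are restrictions/inductions of each other along abelian quotients, so positivity of one inner product survives). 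Hence some $\lambda$ above $\lambda_0$ has $\chi$ as a constituent of $\R_{\L\leq\P}^\G(\lambda)$, giving $(\L,\lambda)\leq_e(\G,\chi)$. Part (ii) is the mirror image: from $(\L,\lambda)\leq_e(\G,\chi)$ one restricts everything to $\G_0^F$ via the intertwining formula, so $\chi_{\G_0^F}$ (a sum of $\G^F$-conjugates of irreducibles) appears in $\R_{\L_0\leq\P_0}^{\G_0}(\lambda_{\L_0^F})$; expanding $\lambda_{\L_0^F}$ into its constituents (which form a single $\L^F$-orbit, all conjugate under $\L^F$, hence under $\G^F$) and using that $\R$ commutes with the relevant conjugation, one finds that for the prescribed $\lambda_0$ some $\G^F$-conjugate — and then, after conjugating $\chi$ back, some actual constituent $\chi_0$ of $\chi_{\G_0^F}$ — satisfies $(\L_0,\lambda_0)\leq_e(\G_0,\chi_0)$.

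The main obstacle I anticipate is bookkeeping the Clifford theory cleanly: one must track the various $\G^F$- and $\L^F$-conjugation actions on the constituents of $\lambda_{\L_0^F}$ and of $\chi_{\G_0^F}$ and make sure the nonvanishing inner product is transported to the \emph{specified} constituent $\lambda_0$ (resp. $\chi_0$) in the statement, not merely to some conjugate of it — this is where one invokes that $\G^F$ acts transitively on the $\G_0^F$-constituents of a fixed $\chi$ and that Deligne--Lusztig induction is $\G^F$-equivariant, so conjugating the whole configuration by an element of $\G^F$ moves an arbitrary constituent to the desired one. A secondary technical point is checking that the intertwining formula applies with the chosen parabolic (no Mackey-formula issue arises since we only ever compare $\R$ for $\G$ with $\R$ for $\G_0$ along the \emph{same} $\P$ and $\P_0$, and the relation $\leq_e$ is defined with an explicit choice of $\P$). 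Everything else is routine adjunction and positivity.
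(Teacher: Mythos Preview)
Your approach is correct and essentially the same as the paper's: both rely on the commutation of Deligne--Lusztig induction with ordinary induction/restriction along $\G_0^F\leq\G^F$ (the paper cites \cite[Proposition 3.3.24]{Gec-Mal20} for the identities $\R_\L^\G\circ\ind_{\L_0^F}^{\L^F}=\ind_{\G_0^F}^{\G^F}\circ\R_{\L_0}^{\G_0}$ and ${}^*\R_{\L_0}^{\G_0}\circ\res_{\G_0^F}^{\G^F}=\res_{\L_0^F}^{\L^F}\circ{}^*\R_\L^\G$). The Clifford-theoretic conjugation bookkeeping you flag as the main obstacle is in fact unnecessary: for part (ii) the paper argues via the adjoint identity, so that $\lambda_0$ --- the \emph{specified} constituent --- is immediately seen to lie in $\res_{\L_0^F}^{\L^F}({}^*\R_\L^\G(\chi))={}^*\R_{\L_0}^{\G_0}(\chi_{\G_0^F})$, and the desired $\chi_0$ drops out directly without any conjugation.
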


\begin{proof}
First observe that $\L_0$ is an $e$-split Levi subgroup of $\G_0$. By \cite[Proposition 3.3.24]{Gec-Mal20} (see also the proof of \cite[Corollary 3.3.25]{Gec-Mal20}) and since $\G=\z^\circ(\G)\G_0$, it follows that
\begin{equation}
\label{eq:e-HC theory for connected reductive, reduction to semisimple, 1}
\R_\L^\G\circ {\rm Ind}_{\L_0^F}^{\L^F}={\rm Ind}_{\G_0^F}^{\G^F}\circ \R_{\L_0}^{\G_0}
\end{equation}
and
\begin{equation}
\label{eq:e-HC theory for connected reductive, reduction to semisimple, 2}
{^*\R}_{\L_0}^{\G_0}\circ {\rm Res}_{\G_0^F}^{\G^F}={\rm Res}^{\L^F}_{\L_0^F}\circ {^*\R}_\L^\G.
\end{equation}
Suppose first that $(\L_0,\lambda_0)\leq_e(\G_0,\chi_0)$ and consider $\chi\in\irr(\G_0^F\mid \chi_0)$. Then $\chi$ is an irreducible constituent of ${\rm Ind}_{\G_0^F}^{\G^F}(\R_{\L_0}^{\G_0}(\lambda_0))$ and by \eqref{eq:e-HC theory for connected reductive, reduction to semisimple, 1} we can find $\lambda\in\irr(\L^F\mid \lambda_0)$ such that $(\L,\lambda)\leq_e(\G,\chi)$.

Suppose now that $(\L,\lambda)\leq_e(\G,\chi)$ and let $\lambda_0$ be an irreducible constituent of $\lambda_{\L_0^F}$. Since Deligne--Lusztig induction and restriction are adjoint with respect to the usual scalar product, we deduce that $\lambda_0$ is an irreducible constituent of ${\rm Res}_{\L_0^F}^{\L^F}({^*\R}_\L^\G(\chi))$. By \eqref{eq:e-HC theory for connected reductive, reduction to semisimple, 2} there exists $\chi_0\in\irr(\chi_{\G_0^F})$ such that $\lambda_0$ is a constituent of ${^*\R}_{\L_0}^{\G_0}(\chi_0)$ and therefore $(\L_0,\lambda_0)\leq_e(\G_0,\chi_0)$.
\end{proof}

The following result shows that Condition \ref{cond:Transitivity} holds when $\G$ has only components of classical types and $\ell\geq 5$ or when $\G$ is simple, $\K=\G$ and $\lambda$ lies in a rational Lusztig series associated with a quasi-isolated element. Recall that a semisimple element $s$ of a reductive group $\G$ is called \emph{quasi-isolated} if $\c_\G(s)$ is not contained in any proper Levi subgroup of $\G$.

\begin{lem}
\label{lem:e-HC theory for connected reductive, quasi-isolated}
Let $\G$ be connected reductive, $\chi\in\irr(\G^F)$ and consider an $e$-cuspidal pair $(\L,\lambda)\ll_e(\G,\chi)$, where $\lambda\in\E(\L^F,[s])$ for some $\ell$-regular semisimple element $s\in\L^{*F^*}$. Suppose that $\ell\geq 5$ is good for $\G$ and that the Mackey formula holds for $(\G,F)$. If either $\G$ has only components of classical types and $F$ does not induce the triality automorphism on components of type $\mathbf{D}_4$ or $\G$ is simple and $s$ is quasi-isolated in $\G^*$, then $(\L,\lambda)\leq_e(\G,\chi)$.
\end{lem}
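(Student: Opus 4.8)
The plan is to reduce the transitivity statement $(\L,\lambda)\ll_e(\G,\chi)\Rightarrow(\L,\lambda)\leq_e(\G,\chi)$ to a statement that can be checked inside the Lusztig series $\E(\G^F,[s])$, where it becomes a question about unipotent characters of $\c_{\G^*}(s)$ via Jordan decomposition. First I would invoke the compatibility of Deligne--Lusztig induction with Jordan decomposition: by work of Bonnaf\'e--Rouquier (and Cabanes--Enguehard in the relevant generality), there is a Morita equivalence---or at least a bijection of Lusztig series $\E(\L^F,[s])\to\E(\c_{\L^*}^*(s)^F,[1])$ intertwining $\R_\L^\G$ with $\R_{\c_{\L^*}^*(s)}^{\c_{\G^*}^*(s)}$ up to sign---that sends the $(e,s)$-cuspidal pair $(\L,\lambda)$ to a genuine $e$-cuspidal unipotent pair $(\M,\mu)$ in the (possibly disconnected) group $\c_{\G^*}^*(s)$. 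Under this correspondence the relations $\leq_e$ and $\ll_e$ on $(e,s)$-pairs translate to the analogous relations on $e$-pairs of $\c_{\G^*}^*(s)$ in unipotent series. Here the hypotheses of the lemma are exactly what guarantees the correspondence behaves well: when $s$ is quasi-isolated in $\G^*$ with $\G$ simple, $\c_{\G^*}^*(s)$ is an explicitly understood (small) list of groups; when $\G$ has only classical components (no triality), $\c_{\G^*}^*(s)$ is again a product of classical-type groups, and $\ell\ge 5$ good ensures we avoid all the pathologies tied to bad primes and to $\ell\mid|\z(\G)^F:\z^\circ(\G)^F|$.

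**Key steps.** After the reduction, the statement to prove is the transitivity of $\leq_e$ on unipotent $e$-pairs in a group that is a product of classical-type groups (or one of finitely many quasi-isolated centralisers). For this I would appeal to the results of Brou\'e--Malle--Michel: \cite[Theorem 3.11]{Bro-Mal-Mic93} establishes exactly that $\leq_e$ is transitive on unipotent pairs, i.e.\ the $e$-Harish-Chandra series of unipotent characters form a partition and every unipotent constituent of $\R_\L^\G(\lambda)$ for $e$-cuspidal unipotent $\lambda$ is reached in one step. The disconnectedness of $\c_{\G^*}^*(s)$ is a genuine complication but a mild one here: one passes to the connected component $\c_{\G^*}^\circ(s)^{F^*}$, uses Clifford theory (precisely in the style of Lemma \ref{lem:e-HC theory for connected reductive, reduction to semisimple}, restriction/induction along a normal subgroup with abelian quotient) to descend $\leq_e$ and then lift it back, and notes that the component group acts compatibly with $e$-split Levi subgroups. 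When $\G$ has classical components I would additionally cite \cite[Theorem 1.4]{Kes-Mal13} or \cite[Theorem 1.1]{Hol22} as needed for any exceptional-type factors that can appear inside $\c_{\G^*}^*(s)$ even when $\G$ itself is classical---though for $\G$ of classical type no such factors arise, so this is only relevant on the quasi-isolated branch where $\G$ is simple and the centraliser list is finite and can be checked directly.

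**Main obstacle.** The hard part will be keeping the bookkeeping of Jordan decomposition honest across two places where it is delicate: first, ensuring that the bijection $\E(\L^F,[s])\to\E(\c_{\L^*}^*(s)^F,[1])$ can be chosen functorially in $\L$ (i.e.\ compatibly as $\L$ ranges over $e$-split Levi subgroups of $\G$ between the cuspidal one and $\G$) so that a chain $(\L,\lambda)\leq_e(\L_1,\lambda_1)\leq_e\dots\leq_e(\G,\chi)$ genuinely transports to a chain in the centraliser; and second, handling the sign ambiguity and the non-connectedness of $\c_{\G^*}^*(s)$ so that ``irreducible constituent of $\R$'' is preserved in both directions. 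The quasi-isolated hypothesis in the simple case, and the classical-with-no-triality hypothesis in general, are precisely the situations in which these compatibilities are available in the literature (Bonnaf\'e--Rouquier Morita equivalences, Enguehard's and Cabanes--Enguehard's analyses), so once the reduction is set up the remaining work is to quote \cite{Bro-Mal-Mic93}, \cite{Kes-Mal13} and a Clifford-theoretic passage between $\c_{\G^*}^*(s)$ and its identity component, with the Mackey formula assumption used to guarantee that $\R_\L^\G$ is parabolic-independent throughout.
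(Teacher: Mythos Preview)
Your strategy is correct and shares its core with the paper's: reduce via Jordan decomposition to the unipotent case and invoke \cite[Theorem 3.11]{Bro-Mal-Mic93}. The tactical execution differs, however, and the paper's route sidesteps precisely the obstacle you flag as hardest.

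The paper does not work directly inside $\c_{\G^*}(s)$. Instead it first passes to a regular embedding $\G\hookrightarrow\wt{\G}$; in $\wt{\G}$ the centre is connected, so centralisers of semisimple elements in $\wt{\G}^*$ are connected and the Jordan decomposition is a genuine bijection intertwining Deligne--Lusztig induction (this is what \cite[Theorem 4.7.2 and Corollary 4.7.8]{Gec-Mal20} provide). Hence in $\wt{\G}$ the transitivity of $\leq_e$ reduces cleanly to the unipotent statement of \cite[3.11]{Bro-Mal-Mic93}, with a short list of exceptions ($\mathbf{E}_6$, $\mathbf{E}_7$, ${}^3\mathbf{D}_4$) handled by \cite[Theorem 1.1]{Hol22}. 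The descent from $\wt{\G}$ back to $\G$ is then a single citation, \cite[Proposition 5.2]{Cab-Eng99}, which packages the Clifford-theoretic passage you would otherwise have to redo by hand.

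In effect, the regular embedding trades your ``Clifford theory between $\c_{\G^*}(s)$ and $\c_{\G^*}^\circ(s)$'' for ``Clifford theory between $\wt{\G}^F$ and $\G^F$'', and the latter is already in the literature. Your approach would work, but the functoriality-in-$\L$ of the Jordan bijection that you correctly identify as delicate is exactly what the connected-centre hypothesis buys for free. One minor point: the Bonnaf\'e--Rouquier Morita equivalences you mention are not quite the right tool here, since when $s$ is quasi-isolated there is no proper Levi to reduce to; what is needed (and what Geck--Malle supply) is the compatibility of the Jordan bijection itself with $\R_\L^\G$, not the block-level Morita statement.
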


\begin{proof}
Consider a regular embedding $i:\G\to \wt{\G}$. By applying \cite[3.11]{Bro-Mal-Mic93} together with \cite[Theorem 4.7.2 and Corollary 4.7.8]{Gec-Mal20} 
to $\wt{\G}$, it follows that Conjecture \ref{conj:Transitive closure} holds in $\wt{\G}$ unless $s$ is quasi-isolated in $\G$ and $\G$ is simple of simply connected type $\mathbf{E}_6$ or $\mathbf{E}_7$ or $\G^F={^3\mathbf{D}}_4(q)$. However, in these excluded cases the result holds by \cite[Theorem 1.1]{Hol22} and we can therefore assume that Conjecture \ref{conj:Transitive closure} holds in $\wt{\G}$. Now, the result follows by applying \cite[Proposition 5.2]{Cab-Eng99}.
\end{proof}

We can now prove the main result of this section. Recall that for a connected reductive group $\G$, we say that $\G$ is simply connected if the semisimple group $[\G,\G]$ is simply connected.

\begin{prop}
\label{prop:e-HC theory for connected reductive}
Let $\G$ be a simply connected reductive group, $\chi\in\irr(\G^F)$ and consider an $(e,\ell')$-cuspidal pair $(\L,\lambda)\ll_e(\G,\chi)$. If $\ell\geq 5$ is good for $\G$ and the Mackey formula holds for $(\G,F)$, then $(\L,\lambda)\leq_e(\G,\chi)$.
\end{prop}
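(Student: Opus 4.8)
The plan is to reduce the general simply connected case to the situations already settled in Lemma~\ref{lem:e-HC theory for connected reductive, quasi-isolated} by combining a Jordan-decomposition argument with a reduction to simple factors. First I would use the fact that $\G$ simply connected means $\G = \z^\circ(\G)\cdot\G_0$ with $\G_0 = [\G,\G]$ a direct product of simple simply connected groups permuted by $F$, so that $\G_0^F$ is a direct product of finite reductive groups of the form $\H^{F^k}$ with $\H$ simple simply connected. Via Lemma~\ref{lem:e-HC theory for connected reductive, reduction to semisimple}, the validity of $(\L,\lambda)\leq_e(\G,\chi)$ transfers between $\G$ and $\G_0$ (one direction needs the constituent-wise control of induction and restriction between $\G^F$ and $\G_0^F$), and since $\leq_e$ and $\ll_e$ respect direct products coordinate-wise, it suffices to prove the statement for $\G$ \emph{simple} simply connected. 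Throughout, the hypothesis $\ell\geq 5$ good for $\G$ together with $\ell\in\Gamma(\G,F)$ (automatic in type $\neq\mathbf{A}$, and for type $\mathbf{A}$ one reduces to the quasi-isolated analysis as below) ensures the block-theoretic machinery of \cite{Cab-Eng99} and the Mackey formula are available.

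For $\G$ simple simply connected, I would argue by induction on $\dim\G$, using Jordan decomposition of the $(e,\ell')$-cuspidal pair. Write $\lambda\in\E(\L^F,[s])$ with $s\in\L^{*F^*}$ an $\ell$-regular semisimple element, and let $\G^*$ be dual to $\G$. If $s$ is quasi-isolated in $\G^*$, then Lemma~\ref{lem:e-HC theory for connected reductive, quasi-isolated} applies directly (the cases $\mathbf{E}_6$, $\mathbf{E}_7$, $^3\mathbf{D}_4$ being covered there via \cite{Hol22}), and we are done. If $s$ is \emph{not} quasi-isolated, then $\c_{\G^*}(s)$ is contained in a proper Levi subgroup $\M^*$ of $\G^*$; let $\M$ be the corresponding proper Levi subgroup of $\G$. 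The chain $(\L,\lambda)\ll_e(\G,\chi)$ together with the classification of $e$-cuspidal pairs in a fixed Lusztig series (via \cite[Proposition 15.7]{Cab-Eng04}, the relation restricts to $(e,s)$-pairs) lets me assume $\L\leq\M$, so that $(\L,\lambda)$ is an $(e,\ell')$-cuspidal pair of $\M$ and $\M$ is an $e$-split Levi with $\dim\M<\dim\G$. Applying the inductive hypothesis inside $\M$ gives transitivity there; then I need to compare $e$-Harish-Chandra induction from $\L$ through $\M$ to $\G$ with direct induction from $\L$ to $\G$. This is where I would invoke Jordan decomposition for Lusztig induction in the form of \cite[Theorem~4.7.2 and Corollary~4.7.8]{Gec-Mal20} together with \cite[Proposition~5.2]{Cab-Eng99} to push the $\wt\G$-level transitivity down to $\G$, exactly as in the proof of Lemma~\ref{lem:e-HC theory for connected reductive, quasi-isolated}; the point is that once $s$ lives in a proper Levi, the relevant $e$-Harish-Chandra theory takes place in $\c_{\G^*}(s)$-type data where transitivity for unipotent-like pieces is known by \cite[3.11]{Bro-Mal-Mic93}.

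The main obstacle I anticipate is handling type $\mathbf{A}$ (and $^2\mathbf{A}$), where $\ell$ may divide $(n+1,q\mp 1)$, so $\ell\notin\Gamma(\G,F)$ and the clean block parametrisation of \cite{Cab-Eng99} is not directly available; here one must either restrict to the subcase where $\ell\in\Gamma(\G,F)$ holds anyway, or argue more carefully using the quasi-isolated analysis (in type $\mathbf{A}$ every semisimple element of $\G^*=\PGL_{n+1}$ lifting to $\GL$ is either already in a proper Levi or the centraliser structure is controlled) combined with Lemma~\ref{lem:e-HC theory for connected reductive, quasi-isolated}. A secondary technical point is ensuring the reduction to simple factors in the first paragraph interacts correctly with the regular embedding: one needs that an $(e,\ell')$-cuspidal pair of $\G$ restricts to a (product of) $(e,\ell')$-cuspidal pair(s) of the simple factors, which follows from compatibility of Lusztig restriction with the decomposition $\G_0 = \prod_i \H_i$ and the fact that $\ell$-regularity of $s$ is inherited by its projections. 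Once these are in place, the induction closes and yields $(\L,\lambda)\leq_e(\G,\chi)$ as claimed.
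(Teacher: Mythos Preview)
Your overall architecture is close to the paper's, but the non-quasi-isolated step contains a genuine gap. When $s$ is not quasi-isolated you take a proper Levi $\M^*\supseteq\c_{\G^*}(s)$ and assert that the dual $\M$ is an $e$-split Levi of $\G$ with $\L\leq\M$. Neither is true in general: one only knows $\M$ is $F$-stable, and there is no reason for $\L^*\leq\M^*$ (only $\c_{\L^*}(s)\leq\M^*$). Even granting both, your plan would produce $(\L,\lambda)\leq_e(\M,\mu)\leq_e(\G,\chi)$ for some $\mu$, which is merely an instance of $(\L,\lambda)\ll_e(\G,\chi)$ and brings you no closer to $(\L,\lambda)\leq_e(\G,\chi)$; the references you invoke at this point (\cite[4.7.2, 4.7.8]{Gec-Mal20}, \cite[Proposition~5.2]{Cab-Eng99}) concern the passage between $\G$ and a regular embedding $\wt\G$, not between $\G$ and an intermediate $e$-split Levi. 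The paper's argument is different: it sets $\L_1^*:=\L^*\cap\G_1^*$ (an $e$-split Levi of $\G_1$, not of $\G$) and uses that $\epsilon\R_{\G_1}^\G$ and $\epsilon\R_{\L_1}^\L$ are \emph{bijections} on the Lusztig series $\E(-,[s])$ by \cite[Theorem~8.27]{Cab-Eng04}. This transports the relation $(\L,\lambda)\ll_e(\G,\chi)$ to $(\L_1,\lambda_1)\ll_e(\G_1,\chi_1)$ inside $\G_1$, and after applying the inductive hypothesis there, the bijectivity of $\R_{\G_1}^\G$ on the series (together with transitivity of $\R$) gives directly that $\chi$ is a constituent of $\pm\R_\L^\G(\lambda)$.

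Two further points. First, your induction is phrased over \emph{simple} simply connected groups, but a proper Levi $\G_1$ of a simple group is no longer simple; the paper runs the induction on $\dim\G$ over all simply connected reductive $\G$ and performs the reductions to $\G_0=[\G,\G]$ and then to simple factors \emph{after} the quasi-isolated reduction (noting that quasi-isolatedness descends to $\G_0^*$ by \cite[Proposition~2.3]{Bon05}). Second, in passing from $\G_0$ back to $\G$ via Lemma~\ref{lem:e-HC theory for connected reductive, reduction to semisimple}, one only recovers some $\lambda'\in\irr(\L^F\mid\lambda_0)$ with $(\L,\lambda')\leq_e(\G,\chi)$, not the original $\lambda$; the paper closes this by invoking \cite[Theorem~4.1]{Cab-Eng99} to conclude $\lambda'=\lambda^g$ for some $g\in\n_\G(\L)^F$. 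Finally, your anticipated obstacle about type~$\mathbf{A}$ and $\ell\notin\Gamma(\G,F)$ is a red herring: the hypotheses here are only $\ell\geq 5$ good and Mackey, and the cited use of \cite[Theorem~4.1]{Cab-Eng99} requires no more than that.
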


\begin{proof}
Let $(\G^*,F^*)$ be dual to $(\G,F)$ and let $\L^*$ be the $e$-split Levi subgroup of $\G^*$ corresponding to $\L$. Consider an $\ell$-regular semisimple element $s\in\L^{*F^*}$ such that $\lambda\in\E(\L^F,[s])$ and notice that $\chi\in\E(\G^F,[s])$ because $(\L,\lambda)\ll_e(\G,\chi)$ (see \cite[Proposition 15.7]{Cab-Eng04}). By induction on $\dim(\G)$, we show that $s$ is quasi-isolated in $\G^*$. Suppose that $\G_1$ is a proper $F$-stable Levi subgroup of $\G$ such that $\c_{\G^*}(s)\leq \G_1^*$. Observe that $\G_1$ is simply connected by \cite[Proposition 12.14]{Mal-Tes}. Set $\L_1^*:=\c_{\G_1^*}(\z^\circ(\L^*)_{\Phi_e})=\c_{\G^*}(\z^\circ(\L^*)_{\Phi_e})\cap \G_1^*=\L^*\cap \G_1^*$ and notice that its dual $\L_1\leq \L$ is an $e$-split Levi subgroup of $\G_1$ and that $\c_{\L^*}(s)\leq \L^*\cap \G_1^*=\L_1^*$. By \cite[Theorem 8.27]{Cab-Eng04} there exist unique $\lambda_1\in\E(\L_1^F,[s])$ and $\chi_1\in\E(\G_1^F,[s])$ such that $\lambda=\pm\R_{\L_1}^\L(\lambda_1)$ and $\chi=\pm\R_{\G_1}^\G(\chi_1)$. Since $(\L,\lambda)\ll_e(\G,\chi)$, it follows by the transitivity of Deligne--Lusztig induction that $(\L_1,\lambda_1)\ll_e(\G_1,\chi_1)$. A similar argument also shows that $\lambda_1$ is $e$-cuspidal. Since $\dim(\G_1)<\dim(\G)$, we obtain $(\L_1,\lambda_1)\leq_e(\G_1,\chi_1)$. This shows that $\chi_1$ is an irreducible constituent of $\R_{\L_1}^{\G_1}(\lambda_1)$ and, because all constituents of $\R_{\L_1}^{\G_1}(\lambda_1)$ are contained in $\E(\G_1^F,[s])$ and $\R_{\G_1}^{\G}$ induces a bijection between $\E(\G_1^F,[s])$ and $\E(\G^F,[s])$ (see \cite[Theorem 8.27]{Cab-Eng04}), we conclude that $\chi$ is an irreducible constituent of $\pm\R_{\G_1}^\G(\R_{\L_1}^{\G_1}(\lambda_1))=\pm\R_\L^\G(\lambda)$. Hence $(\L,\lambda)\leq_e(\G,\chi)$ and we may assume that $s$ is quasi-isolated in $\G^*$.

Let $\G_0:=[\G,\G]$ and $\L_0:=\L\cap \G_0$. By assumption, there exist $e$-split Levi subgroups $\L_i$ of $\G$ containing $\L$ and characters $\lambda_i\in\irr(\L_i^F)$ such that $(\L,\lambda)\leq_e(\L_1,\lambda_1)\leq_e\cdots\leq_e(\G,\chi)$. If we define $\L_{i,0}:=\L_i\cap\G_0$, then a repeated application of Lemma \ref{lem:e-HC theory for connected reductive, reduction to semisimple} yields characters $\lambda_0\in\irr(\lambda_{\L_0^F})$, $\lambda_{i,0}\in\irr(\lambda_{i,\L_{i,0}^F})$ and $\chi_0\in\irr(\chi_{\G_0^F})$ such that $(\L_0,\lambda_0)\leq_e(\L_{1,0},\lambda_{1,0})\leq_e\cdots \leq_e(\G_0,\chi_0)$. Then $(\L_0,\lambda_0)\ll_e(\G_0,\chi_0)$ with $(\L_0,\lambda_0)$ an $(e,\ell')$-cuspidal pair. Moreover, if the result is true for $\G_0$, then $(\L_0,\lambda_0)\leq_e(\G_0,\chi_0)$ and using Lemma \ref{lem:e-HC theory for connected reductive, reduction to semisimple} we find $\lambda'\in\irr(\L^F\mid \lambda_0)$ such that $(\L,\lambda')\leq_e(\G,\chi)$. Then \cite[Theorem 4.1]{Cab-Eng99} shows that $\lambda'^g=\lambda$, for some $g\in \n_\G(\L)^F$, and hence $(\L,\lambda)=(\L,\lambda')^g\leq_e(\G,\chi)^g=(\G,\chi)$. Notice that the inclusion $\G_0\to\G$ induces a dual morphism $\G^*\to\G_0^*$ and that, if $s\in\G^{*F^*}_{\rm ss}$ is quasi-isolated in $\G^*$, then the corresponding element $s_0\in\G^{*F^*}_{0,\rm ss}$ is quasi-isolated in $\G_0^*$ by \cite[Proposition 2.3]{Bon05}. Without loss of generality we can hence assume $\G=[\G,\G]$.

Now, $\G$ is a direct product of simple algebraic groups $\H_1,\dots, \H_n$ (see \cite[Proposition 1.4.10]{Mar91}). The action of $F$ induces a permutation on the set of simple components $\H_i$. For every orbit of $F$ we denote by $\G_j$, $j=1,\dots, t$, the direct product of the simple components in such an orbit. Then $\G_j$ is $F$-stable and
\[\G^F=\G_1^F\times \cdots\times \G_t^F.\]
Define $\L_j:=\L\cap \G_j$ and observe that $\L_j$ is an $e$-split Levi subgroup of $\G_j$ and that
\[\L^F=\L_1^F\times \cdots \times\L_j^F.\]
Then we can write $\chi=\chi_1\times \cdots \times\chi_t$ and $\lambda=\lambda_1\times\cdots\times \lambda_t$, with $\chi_j\in\irr(\G_j^F)$ and $\lambda_j\in\irr(\L_j^F)$. Since $\R_\L^\G=\R_{\L_1}^{\G_1}\times\cdots\times \R_{\L_t}^{\G_t}$ (see \cite[Proposition 10.9 (ii)]{Dig-Mic91}), eventually considering intermediate $e$-split Levi subgroups, the fact that $(\L,\lambda)\ll_e(\G,\chi)$ implies that $(\L_j,\lambda_j)\ll_e(\G_j,\chi_j)$ for every $j$. Noticing that $\G^{*F^*}=\G_1^{*F^*}\times \cdots\times \G_t^{*F^*}$, we can write $s=s_1\times \cdots\times s_t$ for some $\ell$-regular semisimple elements $s_j\in\G_j^{*F^*}$. Moreover, since $s$ is quasi-isolated in $\G^*$, it follows that $s_j$ is quasi-isolated in $\G_j^*$. Without loss of generality, we may thus assume that $F$ is transitive on the set of simple components $\H_i$ or equivalently that $t=1$. 

Now, consider a simple component $\H$ of $\G$ and observe that there are isomorphisms
\begin{equation}
\label{eq:e-HC theory for connected reductive}
\G^F\simeq \H^{F^n}
\end{equation}
and
\begin{equation}
\label{eq:e-HC theory for connected reductive 2}
\G^{*F^*}\simeq \H^{*{F^*}^n}
\end{equation}
where $n$ is the number of simple components $\H_i$ of $\G$. Let $\M:=\L\cap \H$ and notice that $\M$ is an $e$-split Levi subgroup of $(\H,F^n)$ and that the isomorphism from \eqref{eq:e-HC theory for connected reductive} restricts to an isomorphism
\begin{equation}
\label{eq:e-HC theory for connected reductive 3}
\L^F\simeq \M^{F^n}.
\end{equation}
Let $\psi\in\irr(\H^{F^n})$ correspond to $\chi\in\irr(\G^F)$ via the isomorphism \eqref{eq:e-HC theory for connected reductive} and similarly $\mu\in\irr(\M^{F^n})$ correspond to $\lambda\in\irr(\L^F)$ via the isomorphism \eqref{eq:e-HC theory for connected reductive 3}. Since $(\L,\lambda)<<_e(\G,\chi)$, we deduce that $(\M,\mu)<<_e(\H,\psi)$. Moreover, as $s$ is quasi-isolated in $\G^*$, it follows that the semisimple element $t\in\H^{*{F^*}^n}$ obtained via the isomorphism \eqref{eq:e-HC theory for connected reductive 2} is quasi-isolated in $\H^*$. Finally, Lemma \ref{lem:e-HC theory for connected reductive, quasi-isolated} implies that $(\M,\mu)\leq_e(\H,\psi)$ and we hence conclude that $(\L,\lambda)\leq_e(\G,\chi)$.
\end{proof}

Since the hypotheses of the above proposition are inherited by Levi subgroups, it follows that Condition \ref{cond:Transitivity} holds whenever $\G$ is a simply connected reductive group.

\begin{cor}
\label{cor:e-HC theory for connected reductive}
Let $\G$ be a simply connected reductive group. Then Condition \ref{cond:Transitivity} holds for every $F$-stable Levi subgroup $\K$ of $\G$ and every $(e,\ell')$-cuspidal pair $(\L,\lambda)$ of $\K$.
\end{cor}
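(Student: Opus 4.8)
The plan is to deduce the statement from Proposition \ref{prop:e-HC theory for connected reductive} by checking that its hypotheses are stable under passing to $F$-stable Levi subgroups. First I would fix an $F$-stable Levi subgroup $\K$ of $\G$ and record three facts: $\K$ is again simply connected by \cite[Proposition 12.14]{Mal-Tes} (the same fact already invoked for the subgroup $\G_1$ in the proof of Proposition \ref{prop:e-HC theory for connected reductive}); the prime $\ell\geq 5$ remains good for $\K$, since goodness depends only on the root system and the root datum of $\K$ sits inside that of $\G$ as a closed subsystem; and the Mackey formula holds for $(\K,F)$ whenever it holds for $(\G,F)$ — concretely because it is known \cite{Bon-Mic10} precisely when the ambient finite group has no rational component of type $^2\mathbf{E}_6(2)$, $\mathbf{E}_7(2)$ or $\mathbf{E}_8(2)$, a property obviously inherited by $\K^F$. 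Note in passing that the Mackey formula for $(\K,F)$ guarantees that $\R_\L^\K$ is independent of the chosen parabolic subgroup, which is the first half of Condition \ref{cond:Transitivity}.

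Next I would establish the set equality in Condition \ref{cond:Transitivity} for $\K$ and an arbitrary $(e,\ell')$-cuspidal pair $(\L,\lambda)$ of $\K$ by proving the two inclusions separately. The inclusion $\irr(\R_\L^\K(\lambda))\subseteq\{\kappa\in\irr(\K^F)\mid(\L,\lambda)\ll_e(\K,\kappa)\}$ is immediate from the definitions: a constituent $\kappa$ of $\R_\L^\K(\lambda)$ satisfies $(\L,\lambda)\leq_e(\K,\kappa)$ (take any parabolic subgroup of $\K$ with Levi complement $\L$, using parabolic-independence), and $(\L,\lambda)\leq_e(\K,\kappa)$ implies $(\L,\lambda)\ll_e(\K,\kappa)$ since $\ll_e$ is the transitive closure of $\leq_e$. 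For the reverse inclusion I would simply invoke Proposition \ref{prop:e-HC theory for connected reductive} with $\K$ in the role of $\G$: if $(\L,\lambda)\ll_e(\K,\kappa)$, then since $\K$ is simply connected, $\ell\geq 5$ is good for $\K$ and the Mackey formula holds for $(\K,F)$, that proposition yields $(\L,\lambda)\leq_e(\K,\kappa)$, i.e. $\kappa\in\irr(\R_\L^\K(\lambda))$. Combining the two inclusions with the parabolic-independence noted above gives Condition \ref{cond:Transitivity} for $\K$ and $(\L,\lambda)$, and since $\K$ and $(\L,\lambda)$ were arbitrary the condition holds.

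I do not expect a genuine obstacle here: all of the representation-theoretic substance sits in Proposition \ref{prop:e-HC theory for connected reductive}, and the corollary is essentially an inheritance statement. The only points requiring a modicum of care are (i) that the Mackey formula — and hence the parabolic-independence implicit in writing $\R_\L^\K$ — descends to $F$-stable Levi subgroups, and (ii) that the corollary is to be read, exactly as Proposition \ref{prop:e-HC theory for connected reductive} is, under the standing hypotheses that $\ell\geq 5$ is good for $\G$ and that the Mackey formula holds for $(\G,F)$. It is also worth noting explicitly that $(e,\ell')$-cuspidality of $(\L,\lambda)$ \emph{inside} $\K$ is exactly the input Proposition \ref{prop:e-HC theory for connected reductive} requires, since both Definition \ref{def:(e,s)-pair} and the relation $\leq_e$ are intrinsic to the chosen ambient group.
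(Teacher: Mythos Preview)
Your proposal is correct and follows the same approach as the paper: the paper's proof is a single sentence observing that the hypotheses of Proposition \ref{prop:e-HC theory for connected reductive} are inherited by Levi subgroups, and you have simply spelled out this inheritance (simply connectedness via \cite[Proposition 12.14]{Mal-Tes}, goodness of $\ell$, and the Mackey formula via \cite{Bon-Mic10}) in more detail than the paper does.
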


\section{Brauer--Lusztig blocks and $e$-Harish-Chandra series}
\label{sec:Brauer-Lusztig blocks and e-HC series}

In this section we prove Theorem \ref{thm:Main Brauer--Lusztig blocks} and hence obtain a description of the distribution of characters into blocks for finite reductive groups in non-defining characteristic under suitable assumptions on $\ell$.

As already mentioned in Section \ref{sec:Deligne-Lusztig induction and blocks}, under certain assumptions on $\ell$, the results of \cite[Theorem 4.1]{Cab-Eng99} show that for every block $B\in\Bl(\G^F)$ there exists a unique $\G^F$-conjugacy class of $e$-cuspidal pairs $(\L,\lambda)$ such that $\lambda\in\E(\L^F,[s])$ for some $\ell$-regular semisimple element $s\in\L^{*F^*}$ and every irreducible constituent of $\R_{\L\leq \P}^\G(\lambda)$ is contained in $\irr(B)$ for every parabolic subgroup $\P$ of $\G$ containing $\L$ as Levi complement. \cite[Theorem 4.1]{Cab-Eng99} also provides a characterisation of the set of so-called $\ell'$-characters in the block $B$ as
\begin{equation}
\label{eq:Cabanes-Enguehard and Brauer-Lusztig blocks}
\E\left(\G^F,\ell'\right)\cap \irr(B)=\left\lbrace\chi\in\irr\left(\G^F\right)\hspace{1pt}\middle|\hspace{1pt} (\L,\lambda)\ll_e (\G,\chi)\right\rbrace.
\end{equation}
On the other hand, \cite[Theorem 2.2]{Bro-Mic89} shows that
\[\irr(B)=\coprod\limits_{t}\E\left(\G^F,B,[st]\right),\]
where $t$ runs over the elements of $\c_{\G^*}(s)_\ell^{F^*}$ up to conjugation and $\E(\G^F,B,[st])$ is a Brauer--Lusztig block (see Definition \ref{def:Brauer-Lusztig blocks}). In particular, in order to obtain all the characters in $\irr(B)$, we have to describe the Brauer--Lusztig blocks $\E(\G^F,B,[st])$. Now, by using Corollary \ref{cor:e-HC theory for connected reductive}, the equality \eqref{eq:Cabanes-Enguehard and Brauer-Lusztig blocks} can be restated as
\[\E(\G^F,B,[s])=\E(\G^F,(\L,\lambda))\]
showing that those Brauer--Lusztig blocks associated with $\ell$-regular semisimple elements coincide with $e$-Harish-Chandra series. Our aim is to provide a similar description for arbitrary Brauer--Lusztig blocks and remove the restriction on $s$ being $\ell$-regular. 

\subsection{$e$-Harish-Chandra series and $\ell$-blocks}
\label{sec:e-Harish-Chandra and blocks}

Throughout this section we assume the following conditions.

\begin{hyp}
\label{hyp:Brauer--Lusztig blocks}
Let $\G$, $F:\G\to \G$, $q$, $\ell$ and $e$ be as in Notation \ref{notation}. Assume that:
\begin{enumerate}
\item $\ell\in\Gamma(\G,F)$ with $\ell\geq 5$ and the Mackey formula hold for $(\G,F)$;
\item Condition \ref{cond:Transitivity} holds for $(\G,F)$.
\end{enumerate}
\end{hyp}

Observe that the Mackey formula and Condition \ref{cond:Transitivity} are expected to hold for any connected reductive group. Moreover, it follows from Remark \ref{rmk:Very good primes go to Levi subgroups} that Hypothesis \ref{hyp:Brauer--Lusztig blocks} is inherited by $F$-stable Levi subgroups.


\begin{rmk}
\label{rmk:Brauer-Lusztig for simple simply connected}
Suppose that $[\G,\G]$ is of simply connected type and has no irreducible rational component of type ${^2 \mathbf{E}}_6(2)$, $\mathbf{E}_7(2)$, $\mathbf{E}_8(2)$ and consider $\ell\in\Gamma(\G,F)$ with $\ell\geq 5$. Then Hypothesis \ref{hyp:Brauer--Lusztig blocks} is satisfied. In fact, under our assumption, the Mackey formula holds by \cite{Bon-Mic10} while Condition \ref{cond:Transitivity} holds by Corollary \ref{cor:e-HC theory for connected reductive}.
\end{rmk}




We now start working towards a proof of Theorem \ref{thm:Main Brauer--Lusztig blocks}. First, we show how to associate to every $(e,s)$-pair an $(e,s_{\ell'})$-pair via Jordan decomposition. This can be used to extend some of the results of \cite{Cab-Eng99} from $(e,\ell')$-pairs to arbitrary $e$-pairs.

\begin{lem}
\label{lem:Jordan decomposition for l-elements}
Assume Hypothesis \ref{hyp:Brauer--Lusztig blocks} (i). If $(\L,\lambda)$ is an $(e,s)$-pair of $\G$ with $s\in\L^*$, then there exists an $E_{q,\ell}$-split Levi subgroup $\G(s_\ell)$ of $\G$, an $(e,s_{\ell'})$-pair $(\L(s_\ell),\lambda(s_\ell))$ of $\G(s_\ell)$ and a linear character $\wh{s_\ell}$ of $\L(s_\ell)^F$ such that $\lambda=\epsilon_\L\epsilon_{\L(s_\ell)}\R_{\L(s_\ell)}^\L(\lambda(s_\ell)\cdot\wh{s_\ell})$. In particular $\L(s_\ell)$ is an $E_{q,\ell}$-split Levi subgroup of $\G$.
\end{lem}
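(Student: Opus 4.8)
The plan is to perform the $\ell$-part of a Jordan decomposition on the dual side inside $\L^*$ and transport it through Deligne--Lusztig induction to the character $\lambda$. By the definition of an $(e,s)$-pair we may assume $s\in\L^{*F^*}$, and we write $s=s_\ell s_{\ell'}$ for the decomposition into $\ell$-part and $\ell'$-part; both lie in $\L^{*F^*}$, being powers of $s$. Now set $\L^*(s_\ell):=\c_{\L^*}^\circ(s_\ell)$ and $\G^*(s_\ell):=\c_{\G^*}^\circ(s_\ell)$. Since $\langle s_\ell\rangle$ is an abelian $\ell$-subgroup of $\L^{*F^*}$ and, by Remark \ref{rmk:Very good primes go to Levi subgroups}, $\ell\in\Gamma(\L^*,F^*)$ and $\ell\in\Gamma(\G^*,F^*)$, Lemma \ref{prop:Good primes}(iii) shows that $\L^*(s_\ell)$ and $\G^*(s_\ell)$ are $E_{q,\ell}$-split Levi subgroups of $\L^*$ and $\G^*$; let $\L(s_\ell)$ and $\G(s_\ell)$ be the Levi subgroups of $\G$ dual to them. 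Because $s_\ell\in\L^*$ commutes with the central torus $\z^\circ(\L^*)$, the $\Phi_e$-torus $\S^*:=\z^\circ(\L^*)_{\Phi_e}$ is contained in $\G^*(s_\ell)$, and since the centraliser of a torus in a connected group is connected one gets $\c_{\G^*(s_\ell)}(\S^*)=\c_{\G^*}^\circ(s_\ell)\cap\L^*=\c_{\L^*}^\circ(s_\ell)=\L^*(s_\ell)$; hence $\L(s_\ell)$ is an $e$-split Levi subgroup of $\G(s_\ell)$. Being $e$-split it is $E_{q,\ell}$-split in $\G(s_\ell)$, which is $E_{q,\ell}$-split in $\G$, so by transitivity of $E_{q,\ell}$-split Levi subgroups $\L(s_\ell)$ is $E_{q,\ell}$-split in $\G$; this gives the last assertion.

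For the character side, the central $\ell$-element $s_\ell\in\z(\L^*(s_\ell))^{F^*}$ determines a linear character $\wh{s_\ell}\in\irr(\L(s_\ell)^F)$ for which multiplication by $\wh{s_\ell}$ is a bijection $\E(\L(s_\ell)^F,[s_{\ell'}])\to\E(\L(s_\ell)^F,[s])$. On the other hand $\c_{\L^*}(s)$ is contained in $\L^*(s_\ell)$ (this is the delicate point, see below), so \cite[Theorem 8.27]{Cab-Eng04} applies and the map $\epsilon_\L\epsilon_{\L(s_\ell)}\R_{\L(s_\ell)}^\L$ restricts to a bijection $\E(\L(s_\ell)^F,[s])\to\E(\L^F,[s])$. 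As $\lambda\in\E(\L^F,[s])$, there is a unique $\mu\in\E(\L(s_\ell)^F,[s])$ with $\lambda=\epsilon_\L\epsilon_{\L(s_\ell)}\R_{\L(s_\ell)}^\L(\mu)$; putting $\lambda(s_\ell):=\mu\cdot\wh{s_\ell}^{-1}\in\E(\L(s_\ell)^F,[s_{\ell'}])$ yields $\lambda=\epsilon_\L\epsilon_{\L(s_\ell)}\R_{\L(s_\ell)}^\L(\lambda(s_\ell)\cdot\wh{s_\ell})$, and $(\L(s_\ell),\lambda(s_\ell))$ is an $(e,s_{\ell'})$-pair of $\G(s_\ell)$, as required.

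The main obstacle is exactly the containment $\c_{\L^*}(s)\subseteq\L^*(s_\ell)=\c_{\L^*}^\circ(s_\ell)$: \emph{a priori} the centraliser $\c_{\L^*}(s_\ell)$ of the $\ell$-element $s_\ell$ may be disconnected, so \cite[Theorem 8.27]{Cab-Eng04} cannot be applied verbatim. This is where Hypothesis \ref{hyp:Brauer--Lusztig blocks}(i) is used: Lemma \ref{prop:Good primes}(i) together with $\ell\in\Gamma(\L^*,F^*)$ gives $\c_{\L^*}(s_\ell)^{F^*}=\c_{\L^*}^\circ(s_\ell)^{F^*}$, whence $\c_{\L^*}(s)^{F^*}\subseteq\c_{\L^*}(s_\ell)^{F^*}=\L^*(s_\ell)^{F^*}$, so the Jordan-decomposition bijection can be run at the level of finite groups of rational points. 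The cleanest way to make this rigorous is, as in the proof of Proposition \ref{prop:e-HC theory for connected reductive}, to pass through a regular embedding $\L\hookrightarrow\wt\L$: over $\wt\L$ the centralisers of semisimple elements of $\wt\L^*$ are connected, so the displayed factorisation holds with no caveat, and \cite[Proposition 5.2]{Cab-Eng99} transports it down to $\L$ while identifying $\wt\L(s_\ell)$ and $\wt\G(s_\ell)$ with the Levi subgroups constructed above. Tracking the Lusztig sign $\epsilon_\L\epsilon_{\L(s_\ell)}$ through these identifications, and checking compatibility of the $\wh{z}$-twist with restriction along the regular embedding, is the only genuinely technical part; the remainder is routine bookkeeping with $E$-split Levi subgroups.
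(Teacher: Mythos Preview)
Your argument is correct and follows essentially the same route as the paper: define $\G(s_\ell)$ and $\L(s_\ell)$ as duals of $\c_{\G^*}^\circ(s_\ell)$ and $\c_{\L^*}^\circ(s_\ell)$, verify the $E_{q,\ell}$-split and $e$-split properties, and apply the Lusztig bijection of \cite[Theorem 8.27]{Cab-Eng04} inside $\L$ together with the $\wh{s_\ell}$-twist. Your direct argument that $\L^*(s_\ell)=\c_{\G^*(s_\ell)}(\z^\circ(\L^*)_{\Phi_e})$ is fine; the paper simply quotes \cite[Proposition 2.12]{Hol22} for this.

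The one place you over-complicate matters is the ``delicate point''. You correctly observe that $\c_{\L^*}(s_\ell)^{F^*}=\c_{\L^*}^\circ(s_\ell)^{F^*}$ via Lemma~\ref{prop:Good primes}(i), but then propose a regular-embedding workaround. This detour is unnecessary: since $s_\ell$ is a power of $s$ one has $\c_{\L^*}^\circ(s)\leq\c_{\L^*}^\circ(s_\ell)$ automatically, and combining this with the equality on $F^*$-points gives $\c_{\L^*}^\circ(s)\c_{\L^*}(s)^{F^*}\subseteq\c_{\L^*}^\circ(s_\ell)\c_{\L^*}(s_\ell)^{F^*}=\c_{\L^*}^\circ(s_\ell)$, which is precisely the hypothesis needed for \cite[Theorem 8.27]{Cab-Eng04} (together with \cite[Proposition 8.26]{Cab-Eng04}). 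The paper proceeds exactly this way, with no appeal to a regular embedding.
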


\begin{proof}
Under our assumptions, Lemma \ref{prop:Good primes} (iii) implies that $\c_{\G^*}^\circ(s_\ell)$ is an $E_{q,\ell}$-stable Levi subgroup of $\G^*$. By \cite[Proposition 2.12]{Hol22} we know that $\c_{\L^*}^\circ(s_\ell)$ is an $e$-split Levi subgroup of $\c_{\G^*}^\circ(s_\ell)$. As $\ell\in\Gamma(\G,F)$, Remark \ref{rmk:Very good primes go to Levi subgroups} implies that $\ell\in\Gamma(\L^*,F^*)$ and therefore $\c_{\L^*}^\circ(s_\ell)^F=\c_{\L^*}(s_\ell)^F$ by Lemma \ref{prop:Good primes} (i). Recalling that $s_\ell$ is a power of $s$, it follows that $\c_{\L^*}^\circ(s)\c_{\L^*}(s)^F\subseteq\c_{\L^*}^\circ(s_\ell)\c_{\L^*}(s_\ell)^F=\c_{\L^*}^\circ(s_\ell)$. Let $\G(s_\ell)$ be an $E_{q,\ell}$-split Levi subgroup of $\G$ in duality with $\c_{\G^*}^\circ(s_\ell)$ and $\L(s_\ell)$ an $e$-split Levi subgroup of $\G(s_\ell)$ in duality with $\c_{\L^*}^\circ(s_\ell)$. By \cite[Proposition 8.26]{Cab-Eng04} and \cite[Theorem 8.27]{Cab-Eng04} there exists a unique character $\lambda(s_\ell)\in\E(\L(s_\ell)^F,[s_{\ell'}])$ such that
\[\lambda=\epsilon_\L\epsilon_{\L(s_\ell)}\R_{\L(s_\ell)}^\L\left(\wh{s_\ell}\cdot \lambda(s_\ell)\right),\]
where $\wh{s_\ell}$ is the linear character corresponding to $s_\ell\in\z(\c_{\L^*}(s_\ell)^{F^*})$. To conclude, notice that $\L(s_\ell)$ is an $E_{q,\ell}$-split Levi subgroup of $\G(s_\ell)$. Since $\G(s_\ell)$ is $E_{q,\ell}$-split in $\G$ we deduce that $\L(s_\ell)$ is $E_{q,\ell}$-split in $\G$.
\end{proof}

Next, we show that the relation $\ll_e$ is preserved under the construction of Lemma \ref{lem:Jordan decomposition for l-elements}. Moreover we consider $e$-cuspidality and a property related to the \emph{Jordan criterion} (J) introduced by Cabanes and Enguehard (see \cite[Proposition 1.10]{Cab-Eng99}). The final part of the following statement could be seen as a partial converse of \cite[Proposition 1.10]{Cab-Eng99}.

\begin{lem}
\label{lem:Jordan decomposition for l-elements, order relation}
Assume Hypothesis \ref{hyp:Brauer--Lusztig blocks} (i). Let $(\L,\lambda)$ and $(\K,\kappa)$ be two $(e,s)$-pairs and consider the corresponding $(e,s_{\ell'})$-pairs $(\L(s_\ell),\lambda(s_\ell))$ and $(\K(s_\ell),\kappa(s_\ell))$ given by Lemma \ref{lem:Jordan decomposition for l-elements}. Then $(\L,\lambda)\leq_e(\K,\kappa)$ if and only if $(\L(s_\ell),\lambda(s_\ell))\leq_e(\K(s_\ell),\kappa(s_\ell))$. In particular $(\L,\lambda)\ll_e(\K,\kappa)$ if and only if $(\L(s_\ell),\lambda(s_\ell))\ll_e(\K(s_\ell),\kappa(s_\ell))$. Moreover $(\L,\lambda)$ is $e$-cuspidal in $\G$ if and only if $(\L(s_\ell),\lambda(s_\ell))$ is $e$-cuspidal in $\G(s_\ell)$ and $\z^\circ(\L)_{\Phi_e}=\z^\circ(\L(s_\ell))_{\Phi_e}$.
\end{lem}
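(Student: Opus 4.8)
The plan is to reduce the whole statement to the compatibility of Deligne--Lusztig induction with the Jordan decomposition of characters, using the groups $\G(s_\ell)$, $\L(s_\ell)$, $\K(s_\ell)$ and the linear character $\wh{s_\ell}$ furnished by Lemma \ref{lem:Jordan decomposition for l-elements} (as is implicit, I only treat the case $\L\le\K$, the one in which $\le_e$ is meaningful). First I would record the structural facts: $\L(s_\ell)$ and $\K(s_\ell)$ are Levi subgroups of $\G(s_\ell)$ in duality with $\c_{\L^*}^\circ(s_\ell)\le\c_{\K^*}^\circ(s_\ell)\le\c_{\G^*}^\circ(s_\ell)$, so that $\L(s_\ell)\le\K(s_\ell)$; moreover $\L(s_\ell)$ is a Levi subgroup of $\L$, whence $\z^\circ(\L)\le\z^\circ(\L(s_\ell))$ because the centre of a connected reductive group lies in and centralises each of its Levi subgroups; and the two copies of $\wh{s_\ell}$ (on $\L(s_\ell)^F$ and on $\K(s_\ell)^F$) are compatible under restriction. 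Substituting $\lambda=\pm\R_{\L(s_\ell)}^\L(\lambda(s_\ell)\cdot\wh{s_\ell})$ from Lemma \ref{lem:Jordan decomposition for l-elements} and invoking transitivity of Deligne--Lusztig induction (available since the Mackey formula is in force, Hypothesis \ref{hyp:Brauer--Lusztig blocks}(i)), one obtains the core identity
\[\R_\L^\K(\lambda)=\pm\,\R_{\K(s_\ell)}^\K\left(\R_{\L(s_\ell)}^{\K(s_\ell)}\left(\lambda(s_\ell)\cdot\wh{s_\ell}\right)\right),\]
both sides being equal to $\pm\,\R_{\L(s_\ell)}^\K(\lambda(s_\ell)\cdot\wh{s_\ell})$.

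Next, since by \cite[Theorem 8.27]{Cab-Eng04} the map $\R_{\K(s_\ell)}^\K$ restricts to a sign-preserving bijection $\E(\K(s_\ell)^F,[s])\to\E(\K^F,[s])$ sending $\kappa(s_\ell)\cdot\wh{s_\ell}$ to $\pm\kappa$, and since every constituent of $\R_{\L(s_\ell)}^{\K(s_\ell)}(\lambda(s_\ell)\cdot\wh{s_\ell})$ lies in $\E(\K(s_\ell)^F,[s])$, the core identity shows that $\kappa$ is a constituent of $\R_\L^\K(\lambda)$ if and only if $\kappa(s_\ell)\cdot\wh{s_\ell}$ is a constituent of $\R_{\L(s_\ell)}^{\K(s_\ell)}(\lambda(s_\ell)\cdot\wh{s_\ell})$. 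Tensoring by the linear character $\wh{s_\ell}^{-1}$ of $\K(s_\ell)^F$, which commutes with $\R_{\L(s_\ell)}^{\K(s_\ell)}$, turns the latter condition into $(\L(s_\ell),\lambda(s_\ell))\le_e(\K(s_\ell),\kappa(s_\ell))$; this proves the equivalence for $\le_e$. The equivalence for $\ll_e$ then follows by passing to chains: every link of a chain witnessing $(\L,\lambda)\ll_e(\K,\kappa)$ is again an $(e,s)$-pair by \cite[Proposition 15.7]{Cab-Eng04}, so the $\le_e$-equivalence applies link by link; conversely, a chain of $(e,s_{\ell'})$-pairs in $\G(s_\ell)$ lifts term by term to a chain of $(e,s)$-pairs in $\G$, using that $(\M,\mu)\mapsto(\M(s_\ell),\mu(s_\ell))$ is inclusion-preserving and, restricted to $(e,s)$-pairs whose Levi lies between $\L$ and $\K$, invertible --- the Levi lifting a given $\M'$ being $\c_\G(\z^\circ(\M')_{\Phi_e})$, with characters transported through $\R$ and $\cdot\wh{s_\ell}$.

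For the assertion on $e$-cuspidality, recall that $(\L,\lambda)$ is $e$-cuspidal in $\G$ precisely when no $(e,s)$-pair $(\M,\mu)$ with $\M\lneq\L$ satisfies $(\M,\mu)\ll_e(\L,\lambda)$. Applying the $\ll_e$-equivalence with $\K=\L$, such an $\M$ exists if and only if either there is one with $\M(s_\ell)\lneq\L(s_\ell)$ --- equivalently, $(\L(s_\ell),\lambda(s_\ell))$ fails to be $e$-cuspidal in $\G(s_\ell)$ --- or there is one with $\M(s_\ell)=\L(s_\ell)$ but $\M\lneq\L$. The second alternative is controlled by the torus equality: one always has $\z^\circ(\L)_{\Phi_e}\le\z^\circ(\L(s_\ell))_{\Phi_e}$, and if this inclusion is strict then $\M:=\c_\G(\z^\circ(\L(s_\ell))_{\Phi_e})$ satisfies $\M\lneq\c_\G(\z^\circ(\L)_{\Phi_e})=\L$ together with $\M(s_\ell)=\L(s_\ell)$, while $\mu:=\pm\R_{\L(s_\ell)}^\M(\lambda(s_\ell)\cdot\wh{s_\ell})$ satisfies $\R_\M^\L(\mu)=\pm\lambda$ by transitivity of $\R$, so that $(\M,\mu)\le_e(\L,\lambda)$; conversely, if $\M\lneq\L$ is $e$-split with $\M(s_\ell)=\L(s_\ell)$, then, $\M(s_\ell)$ being a Levi subgroup of $\M$, one gets $\z^\circ(\L)_{\Phi_e}\lneq\z^\circ(\M)_{\Phi_e}\le\z^\circ(\M(s_\ell))_{\Phi_e}=\z^\circ(\L(s_\ell))_{\Phi_e}$. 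Hence neither alternative occurs exactly when $(\L(s_\ell),\lambda(s_\ell))$ is $e$-cuspidal in $\G(s_\ell)$ and $\z^\circ(\L)_{\Phi_e}=\z^\circ(\L(s_\ell))_{\Phi_e}$, which is the desired statement.

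I expect the main obstacle to be this last part: cleanly separating the two ways in which $e$-cuspidality of $(\L,\lambda)$ can fail --- non-cuspidality of $\lambda(s_\ell)$ in $\G(s_\ell)$ on the one hand, and $\L$ being a non-minimal $e$-split Levi subgroup with the same $\Phi_e$-central torus as $\L(s_\ell)$ on the other --- and pinning the second down to the torus equality, as well as verifying that the Jordan-decomposition correspondence on $e$-split Levi subgroups behaves well enough on chains for the transitive-closure argument. The remaining ingredients (the core identity, the Lusztig-series bijection of \cite[Theorem 8.27]{Cab-Eng04}, and the behaviour of $\R$ under twisting by linear characters) are standard.
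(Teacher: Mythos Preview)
Your proposal is correct and follows essentially the same route as the paper: the core identity via transitivity of $\R$, the bijection $\E(\K(s_\ell)^F,[s])\to\E(\K^F,[s])$ from \cite[Theorem 8.27]{Cab-Eng04}, the twist by $\wh{s_\ell}$ (the paper cites \cite[10.2]{Bon06}), and the cuspidality argument via the intermediate Levi $\c_\G(\z^\circ(\L(s_\ell))_{\Phi_e})$ all match. The only cosmetic difference is that the paper dispatches the $\ll_e$-equivalence and the lifting of $e$-split Levis from $\G(s_\ell)$ to $\G$ by citing \cite[Proposition 2.12]{Hol22} directly, rather than writing out the inverse $\M'\mapsto\c_\G(\z^\circ(\M')_{\Phi_e})$ as you do.
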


\begin{proof}
First, using \cite[Proposition 2.12]{Hol22} notice that every $e$-split Levi subgroup of $\G(s_\ell)$ is of the form $\M(s_\ell)$ for some $e$-split Levi subgroup $\M$ of $\G$. Then, since $\ll_e$ is a pre-order on a finite poset, proceeding by induction it is enough to show that $(\L,\lambda)\leq_e(\K,\kappa)$ if and only if $(\L(s_\ell),\lambda(s_\ell))\leq_e(\K(s_\ell),\kappa(s_\ell))$.

Suppose first that $(\L,\lambda)\leq_e(\K,\kappa)$ and assume without loss of generality that $s\in\L^*$. We know that $\kappa$ is an irreducible constituent of $\R_{\L}^{\K}(\lambda)$. By the transitivity of Deligne--Lusztig induction (see \cite[Theorem 3.3.6]{Gec-Mal20}), we have
\[\R_{\L}^{\K}(\lambda)=\epsilon_{\L}\epsilon_{\L(s_\ell)}\R_{\L(s_\ell)}^{\K}\left(\wh{s_\ell}\cdot\lambda(s_\ell)\right)=\epsilon_\L\epsilon_{\L(s_\ell)}\R_{\K(s_\ell)}^{\K}\left(\R_{\L(s_\ell)}^{\K(s_\ell)}\left(\wh{s_\ell}\cdot\lambda(s_\ell)\right)\right).\]
Moreover, by \cite[Proposition 15.7]{Cab-Eng04}, every irreducible constituent of $\R_{\L(s_\ell)}^{\K(s_\ell)}(\wh{s_\ell}\cdot\lambda(s_\ell))$ is contained in $\E(\K(s_\ell)^F,[s])$. Then, since
\[\epsilon_{\K}\epsilon_{\K(s_\ell)}\R_{\K(s_\ell)}^{\K}:\E(\K(s_\ell)^F,[s])\to \E(\K^F,[s])\]
is a bijection, we deduce that $\wh{s_\ell}\cdot\kappa(s_\ell)$ is an irreducible constituent of $\R_{\L(s_\ell)}^{\K(s_\ell)}(\wh{s_\ell}\cdot \lambda(s_\ell))$. It follows that $\kappa(s_\ell)$ is an irreducible constituent of $\R_{\L(s_\ell)}^{\K(s_\ell)}(\lambda(s_\ell))$ (see \cite[10.2]{Bon06}). The reverse implication follows from a similar argument.

Next suppose that $(\L,\lambda)$ is $e$-cuspidal in $\G$ and let $(\M_{s_\ell},\mu_{s_\ell})\ll_e(\L(s_\ell),\lambda(s_\ell))$. By \cite[Proposition 2.12]{Hol22} we can find an $e$-split Levi subgroup $\M$ of $\G$ such that $\M_{s_\ell}=\M(s_\ell)$ and we define $\mu:=\epsilon_{\M(s_\ell)}\epsilon_{\M}\R_{\M(s_\ell)}^\M(\mu_{s_\ell}\wh{s_\ell})$. Then $(\M_{s_\ell},\mu_{s_\ell})=(\M(s_\ell),\mu(s_\ell))$ and the above paragraph implies $(\M,\mu)\ll_e(\L,\lambda)$. Since $(\L,\lambda)$ is $e$-cuspidal, we deduce that $(\M,\mu)=(\L,\lambda)$ and therefore $(\M_{s_\ell},\mu_{s_\ell})=(\L(s_\ell),\lambda(s_\ell))$. This shows that $(\L(s_\ell),\lambda(s_\ell))$ is $e$-cuspidal. Furthermore if $\z^\circ(\L)_{\Phi_e}<\z^\circ(\L(s_\ell))_{\Phi_e}$, then $\L(s_\ell)_e:=\c_\G^\circ(\z^\circ(\L(s_\ell))_{\Phi_e})$ is an $e$-split Levi subgroup of $\G$ strictly contained in $\L$ and containing $\L(s_\ell)$. In this case $(\L(s_\ell)_e,\lambda(s_\ell)_e)\leq_e(\L,\lambda)$ for $\lambda(s_\ell)_e:=\epsilon_{\L(s_\ell)}\epsilon_{\L(s_\ell)_e}\R_{\L(s_\ell)}^{\L(s_\ell)_e}(\wh{s_\ell}\cdot\lambda(s_\ell))$ which is a contradiction. Hence $\z^\circ(\L)_{\Phi_e}=\z^\circ(\L(s_\ell))_{\Phi_e}$.

Conversely, assume that $(\L(s_\ell),\lambda(s_\ell))$ is $e$-cuspidal in $\G(s_\ell)$ and that $\z^\circ(\L)_{\Phi_e}=\z^\circ(\L(s_\ell))_{\Phi_e}$. Let $(\M,\mu)\ll_e(\L,\lambda)$ and observe that $(\M(s_\ell),\mu(s_\ell))\ll_e(\L(s_\ell),\lambda(s_\ell))$. Since $(\L(s_\ell),\lambda(s_\ell))$ is $e$-cuspidal, it coincides with $(\M(s_\ell),\mu(s_\ell))$. In particular $\M(s_\ell)=\L(s_\ell)$ and, recalling that $\z^\circ(\M)\leq \z^\circ(\M(s_\ell))$ and hence $\z^\circ(\M)_{\Phi_e}\leq \z^\circ(\M(s_\ell))_{\Phi_e}$, we deduce that $\L=\c_\G^\circ(\z^\circ(\L)_{\Phi_e})=\c_\G^\circ(\z^\circ(\L(s_\ell))_{\Phi_e})=\c_\G^\circ(\z^\circ(\M(s_\ell))_{\Phi_e})\leq \c_\G^\circ(\z^\circ(\M)_{\Phi_e})=\M$. Thus $\L=\M$ and $(\L,\lambda)$ is $e$-cuspidal.
\end{proof}

The argument used in the proofs of Lemma \ref{lem:Jordan decomposition for l-elements} and Lemma \ref{lem:Jordan decomposition for l-elements, order relation} can be used to obtain the following result of independent interest.

\begin{prop}
\label{prop:Transitivity in good series}
Let $s\in\G^{*F^*}$ be a semisimple element such that $o(s)$ is not divisible by bad primes and $(o(s),|\z(\G)^F:\z^\circ(\G)^F|)=1$. Then Conjecture \ref{conj:Transitive closure} holds in $\E(\G^F,[s])$ for every $e\geq 1$.
\end{prop}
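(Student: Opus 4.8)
The plan is to follow the Jordan--decomposition strategy of the proofs of Lemma~\ref{lem:Jordan decomposition for l-elements} and Lemma~\ref{lem:Jordan decomposition for l-elements, order relation}, but peeling off the \emph{whole} semisimple element $s$ rather than only its $\ell$-part: this should reduce the transitivity of $\leq_e$ inside $\E(\G^F,[s])$ to the transitivity of $\leq_e$ among \emph{unipotent} $e$-pairs, which is known by \cite[3.11]{Bro-Mal-Mic93}. Concretely, one writes $\G(s)$ for a group dual to $\c_{\G^*}^\circ(s)$ and, for each $e$-split Levi subgroup $\L$ of $\G$ with corresponding Levi $\L^*\ni s$, one writes $\L(s)$ for the Levi subgroup of $\G(s)$ dual to $\c_{\L^*}^\circ(s)$ and $\wh{s}$ for the linear character of $\L(s)^F$ attached to $s$. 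What made the two cited lemmas work was the assumption $\ell\in\Gamma(\G,F)$, which guaranteed that $\c_{\G^*}^\circ(s_\ell)$ (and each $\c_{\L^*}^\circ(s_\ell)$) is a Levi subgroup of $\G^*$ and that passing to $F^*$-fixed points kills its component group. So the first thing to do is to check that the two arithmetic conditions on $o(s)$ deliver the analogous facts for the full element $s$: that $\c_{\G^*}^\circ(s)$ and each $\c_{\L^*}^\circ(s)$ are $F^*$-stable Levi subgroups, and that $\c_{\G^*}(s)^{F^*}=\c_{\G^*}^\circ(s)^{F^*}$, likewise inside each $\L^*$.

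For the Levi property I would reduce to the simple factors of $\G^*$ and use the standard fact that a non-central semisimple element of a simple group whose centraliser is not contained in a proper Levi subgroup has order divisible by a bad prime (and that type $\mathbf{A}$ has no such elements at all); since $o(s)$ is prime to the bad primes, in each simple factor $s$ is either central or non-isolated, and an induction on dimension --- a non-isolated element satisfies $\c_{\G^*}^\circ(s)=\c_{\M^*}^\circ(s)$ for a proper Levi $\M^*\supseteq\c_{\G^*}(s)$ --- shows $\c_{\G^*}^\circ(s)$ is a Levi subgroup. For the fixed-point equality, Lang's theorem gives $\c_{\G^*}(s)^{F^*}/\c_{\G^*}^\circ(s)^{F^*}\cong A_{\G^*}(s)^{F^*}$ where $A_{\G^*}(s)=\c_{\G^*}(s)/\c_{\G^*}^\circ(s)$ is a subquotient of $\z(\G^*_{\mathrm{sc}})$ of exponent dividing $o(s)$; combining this with the duality linking $\z(\G^*_{\mathrm{sc}})$ with $\z(\G)^F/\z^\circ(\G)^F$ and the coprimality hypothesis should force $A_{\G^*}(s)^{F^*}=1$, and here I would lean on Bonnafé's analysis of (quasi-)isolated classes in \cite{Bon05} to pin down the behaviour of $A_{\G^*}(s)$ under $F^*$. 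Both properties are inherited by $F$-stable Levi subgroups of $\G$ (Remark~\ref{rmk:Very good primes go to Levi subgroups}, \cite[Proposition~13.12]{Cab-Eng04}), and by (the argument of) \cite[Proposition~2.12]{Hol22} the assignment $\L\mapsto\L(s)$ is a bijection between the $e$-split Levi subgroups of $\G$ carrying a character of $\E(\L^F,[s])$ and all the $e$-split Levi subgroups of $\G(s)$.

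Granting these facts, the arguments of Lemma~\ref{lem:Jordan decomposition for l-elements} and Lemma~\ref{lem:Jordan decomposition for l-elements, order relation} transcribe verbatim with $s_\ell$ replaced by $s$ and $s_{\ell'}$ by $1$: using \cite[Proposition~8.26]{Cab-Eng04} and \cite[Theorem~8.27]{Cab-Eng04} one attaches to every $e$-pair $(\L,\lambda)$ with $\lambda\in\E(\L^F,[s])$ a \emph{unipotent} $e$-pair $(\L(s),\lambda(s))$ of $\G(s)$ with $\lambda=\epsilon_\L\epsilon_{\L(s)}\R_{\L(s)}^\L(\wh{s}\cdot\lambda(s))$, and, via the transitivity of Deligne--Lusztig induction, the bijectivity of $\epsilon_\K\epsilon_{\K(s)}\R_{\K(s)}^\K$ on Lusztig series and \cite[10.2]{Bon06}, one checks that $(\L,\lambda)\leq_e(\K,\kappa)$ if and only if $(\L(s),\lambda(s))\leq_e(\K(s),\kappa(s))$ --- hence the same for $\ll_e$, since these relations preserve $\E(\cdot,[s])$ by \cite[Proposition~15.7]{Cab-Eng04}. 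Thus $\leq_e$ and $\ll_e$ agree on the $(e,s)$-pairs of $\G$ exactly when they agree on the unipotent $e$-pairs of $\G(s)$, i.e.\ exactly when Conjecture~\ref{conj:Transitive closure} holds for $(e,1)$-pairs in $\G(s)$, which is \cite[3.11]{Bro-Mal-Mic93}; nothing in the argument constrains $e$, so it goes through for every $e\geq1$. I expect the genuine obstacle to be the structural bookkeeping of the second paragraph --- proving that ``$o(s)$ prime to bad primes and to $|\z(\G)^F:\z^\circ(\G)^F|$'' really yields ``$\c_{\G^*}^\circ(s)$ is a Levi subgroup with $\c_{\G^*}(s)^{F^*}=\c_{\G^*}^\circ(s)^{F^*}$'', together with the compatibility of this with passage to $e$-split Levi subgroups; the rest is a faithful copy of the two lemmas.
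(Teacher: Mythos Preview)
Your proposal is correct and follows essentially the same strategy as the paper: peel off the full element $s$ (instead of $s_\ell$), transfer $\leq_e$ and $\ll_e$ to unipotent $e$-pairs of $\G(s)$ via the mechanism of Lemmas~\ref{lem:Jordan decomposition for l-elements} and~\ref{lem:Jordan decomposition for l-elements, order relation}, and conclude by \cite[3.11]{Bro-Mal-Mic93}. The only difference is in the ``structural bookkeeping'' you flag: the paper dispatches the Levi property of $\c_{\G^*}^\circ(s)$ by a repeated (prime-by-prime) application of \cite[Proposition~13.16]{Cab-Eng04}, and the equality $\c_{\G^*}(s)^{F^*}=\c_{\G^*}^\circ(s)^{F^*}$ by the single reference \cite[Lemma~11.2.1~(iii)]{Dig-Mic20}, rather than the more elaborate reduction to simple factors and analysis of $A_{\G^*}(s)$ that you sketch.
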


\begin{proof}
Let $(\L,\lambda)\ll_e(\K,\kappa)$ be $(e,s)$-pairs. Replacing $s$ with a $\G^F$-conjugate, we can assume that $s\in\L^*$. Since no bad prime divide $o(s)$, a repeated application of \cite[Proposition 13.16 (i)]{Cab-Eng04} shows that $\c_{\G^*}^\circ(s)$ is an $F$-stable Levi subgroup. Moreover, since $(o(s),|\z(\G)^F:\z^\circ(\G)^F|)=1$, it follows that $\c_{\G^*}^\circ(s)^F=\c_{\G^*}(s)^F$ (see \cite[Lemma 11.2.1 (iii)]{Dig-Mic20}). Now proceeding as in Lemma \ref{lem:Jordan decomposition for l-elements} we construct unipotent $e$-pairs $(\L(s),\lambda(s))$ and $(\K(s),\kappa(s))$ of $\G(s)$. Arguing as in Lemma \ref{lem:Jordan decomposition for l-elements, order relation} we deduce from $(\L,\lambda)\ll_e(\K,\kappa)$ that $(\L(s),\lambda(s))\ll_e(\K(s),\kappa(s))$. Applying \cite[Theorem 3.11]{Bro-Mal-Mic93} we get $(\L(s),\lambda(s))\leq_e(\K(s),\kappa(s))$ and again proceeding as in the proof of Lemma \ref{lem:Jordan decomposition for l-elements, order relation} we conclude that $(\L,\lambda)\leq_e(\K,\kappa)$.
\end{proof}

The following lemma is a fundamental ingredient to understand the distribution of characters into blocks. The proof is based on an idea used first in \cite{Cab-Eng94} in order to deal with unipotent blocks. Notice that, if $\ell\in\Gamma(\G,F)$ and $\L$ is an $e$-split Levi subgroup of $\G$, then $\L^F=\c_{\G^F}(Q)$ for some abelian $\ell$-subgroup $Q\leq \G^F$ by Lemma \ref{lem:e-split Levi} together with Lemma \ref{prop:Good primes} (i). Therefore, block induction from $\L^F$ to $\G^F$ is defined by \cite[Theorem 4.14]{Nav98}.

\begin{lem}
\label{lem:Jordan decomposition for l-elements, blocks}
Assume Hypothesis \ref{hyp:Brauer--Lusztig blocks} (i). Let $(\K,\kappa)$ be an $(e,s)$-pair of $\G$ and consider the $(e,s_{\ell'})$-pair $(\K(s_\ell),\kappa(s_\ell))$ given by Lemma \ref{lem:Jordan decomposition for l-elements}. Consider an $(e,s_{\ell'})$-cuspidal pair $(\L,\lambda)$ of $\K(s_\ell)$ such that $\bl(\kappa(s_\ell))=b_{\K(s_\ell)^F}(\L,\lambda)$. Then $\bl(\kappa)=\bl(\lambda)^{\K^F}$.
\end{lem}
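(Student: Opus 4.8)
The plan is to chain three compatibilities of $\ell$-block induction: with the Cabanes--Enguehard parametrisation of blocks, with twisting by a linear character of $\ell$-power order, and with Deligne--Lusztig induction from an $E_{q,\ell}$-split Levi subgroup. After conjugating $s$ by an element of $\K^F$ we may assume $s\in\K^*$, so that Lemma~\ref{lem:Jordan decomposition for l-elements} gives $\kappa=\epsilon_\K\epsilon_{\K(s_\ell)}\R_{\K(s_\ell)}^\K(\kappa(s_\ell)\cdot\wh{s_\ell})$, where $\K(s_\ell)$ is an $E_{q,\ell}$-split Levi subgroup of $\G$ and $\wh{s_\ell}$ is a linear character of $\K(s_\ell)^F$ whose order divides that of the $\ell$-element $s_\ell$, hence is an $\ell$-power.

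First I would verify that every block induction below is defined and that transitivity applies. Since $\K$ is an $e$-split Levi of $\G$, Remark~\ref{rmk:Very good primes go to Levi subgroups} gives $\ell\in\Gamma(\K,F)$, and $\K(s_\ell)$, being $E_{q,\ell}$-split in $\G$ and contained in $\K$, is $E_{q,\ell}$-split in $\K$; so Lemma~\ref{lem:e-split Levi}(ii) together with Lemma~\ref{prop:Good primes}(i) yields $\K(s_\ell)^F=\c_{\K^F}(Q_0)$ for $Q_0:=\z^\circ(\K(s_\ell))_\ell^F$. As $(\L,\lambda)$ is $e$-cuspidal in $\K(s_\ell)$, the subgroup $\L$ is $e$-split in $\K(s_\ell)$ and $\L^F=\c_{\K(s_\ell)^F}(Q_1)$ for some abelian $\ell$-subgroup $Q_1$. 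Because $Q_0\leq\z(\K(s_\ell)^F)$ commutes with $Q_1$, the group $D:=\langle Q_0,Q_1\rangle$ is an abelian $\ell$-subgroup of $\K^F$ with $\c_{\K^F}(D)=\c_{\K^F}(Q_0)\cap\c_{\K^F}(Q_1)=\c_{\K(s_\ell)^F}(Q_1)=\L^F$; hence $\bl(\lambda)^{\K^F}$, $\bl(\lambda)^{\K(s_\ell)^F}$ and $\bl(\kappa(s_\ell))^{\K^F}$ are all defined by \cite[Theorem 4.14]{Nav98}, and transitivity of block induction gives $\bl(\lambda)^{\K^F}=\big(\bl(\lambda)^{\K(s_\ell)^F}\big)^{\K^F}$.

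Then I would run the three compatibilities. (a) Since $\ell\in\Gamma(\K(s_\ell),F)$ and $(\L,\lambda)$ is $(e,s_{\ell'})$-cuspidal in $\K(s_\ell)$ with $\bl(\kappa(s_\ell))=b_{\K(s_\ell)^F}(\L,\lambda)$, the consequence of \cite[Theorem 2.5]{Cab-Eng99} recalled in Section~\ref{sec:Deligne-Lusztig induction and blocks} gives $\bl(\lambda)^{\K(s_\ell)^F}=\bl(\kappa(s_\ell))$, whence $\bl(\lambda)^{\K^F}=\bl(\kappa(s_\ell))^{\K^F}$. (b) As $\wh{s_\ell}$ has $\ell$-power order it is trivial on the $\ell$-regular elements of $\K(s_\ell)^F$, so $\kappa(s_\ell)$ and $\kappa(s_\ell)\cdot\wh{s_\ell}$ agree on $\ell$-regular classes, reduce to the same Brauer character, and hence lie in the same $\ell$-block; thus $\bl\big(\kappa(s_\ell)\cdot\wh{s_\ell}\big)^{\K^F}=\bl(\kappa(s_\ell))^{\K^F}=\bl(\lambda)^{\K^F}$. (c) Finally, $\K(s_\ell)=\c_\K^\circ(\z^\circ(\K(s_\ell))_\ell^F)$ is $E_{q,\ell}$-split in $\K$, so Deligne--Lusztig induction $\R_{\K(s_\ell)}^\K$ carries characters in an $\ell$-block $b$ of $\K(s_\ell)^F$ to constituents lying in $b^{\K^F}$; applying this to $b=\bl(\kappa(s_\ell)\cdot\wh{s_\ell})$ and to the constituent $\kappa$ of $\pm\R_{\K(s_\ell)}^\K(\kappa(s_\ell)\cdot\wh{s_\ell})$ yields $\bl(\kappa)=\bl(\lambda)^{\K^F}$, as desired.

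The main obstacle is step (c): one needs the compatibility of Deligne--Lusztig induction with $\ell$-block induction for $E_{q,\ell}$-split (equivalently $\ell$-split) Levi subgroups. Unlike the sharper statement for $e$-split Levi subgroups obtained later in Corollary~\ref{cor:e-Harish-Chandra series and blocks}, this is already available from the classical block theory of finite reductive groups, and I would cite it through \cite[Theorem 2.5]{Cab-Eng99} together with \cite{Bro-Mic89} (compatibility of $\R$ with the block decomposition for centralisers of $\ell$-tori); alternatively one can phrase it via the well-definedness of the induced map on $\ell$-blocks. The remaining steps (a), (b) and the definedness/transitivity verifications are essentially bookkeeping once Lemma~\ref{lem:Jordan decomposition for l-elements} is in hand.
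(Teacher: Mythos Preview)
Your approach and the paper's coincide in steps (a) and (b) and in all the bookkeeping on definedness and transitivity of block induction. The only substantive difference is how you handle step (c). As stated in \cite{Cab-Eng99}, Theorem~2.5 gives the block compatibility of $\R_{\K(s_\ell)}^\K$ only for characters in $\E(\K(s_\ell)^F,\ell')$; this is precisely why Proposition~\ref{prop:e-Harish-Chandra series and blocks} and Corollary~\ref{cor:e-Harish-Chandra series and blocks} are later described as \emph{extensions} of that result to $\ell$-singular series. Your character $\kappa(s_\ell)\cdot\wh{s_\ell}\in\E(\K(s_\ell)^F,[s])$ is not $\ell'$ in general, so the direct citation does not apply, and \cite{Bro-Mic89} does not supply the missing case either.

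The paper avoids this by applying \cite[Theorem~2.5]{Cab-Eng99} only to the $\ell'$-character $\kappa(s_\ell)$, obtaining a block $b=\bl(\kappa(s_\ell))^{\K^F}$ containing all constituents of $\R_{\K(s_\ell)}^\K(\kappa(s_\ell))$, and then linking $b$ to $\bl(\kappa)$ via the decomposition map and Brauer's second main theorem: using that $d^1$ commutes with $\R_{\K(s_\ell)}^\K$ \cite[Proposition~21.4]{Cab-Eng04} and your observation $d^1(\kappa(s_\ell))=d^1(\wh{s_\ell}\cdot\kappa(s_\ell))$ (which is exactly your step (b)), one gets
\[
d^1\left(\R_{\K(s_\ell)}^\K(\kappa(s_\ell))\right)=\R_{\K(s_\ell)}^\K\left(d^1(\wh{s_\ell}\cdot\kappa(s_\ell))\right)=\epsilon_\K\epsilon_{\K(s_\ell)}\,d^1(\kappa),
\]
and since $d^1(\kappa)\in\mathbb{N}\irr(\bl(\kappa))$ this forces $\bl(\kappa)=b$. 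So the two arguments share the same key idea; the paper simply arranges it so that \cite[Theorem~2.5]{Cab-Eng99} is invoked only where it literally applies, rather than appealing to a general block-compatibility for $\ell$-singular characters that, at this point in the paper, has not yet been established.
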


\begin{proof}
Since $\K(s_\ell)$ is an $E_{q,\ell}$-split Levi subgroup of $\K$, \cite[Theorem 2.5]{Cab-Eng99} implies that all irreducible constituents of $\R_{\K(s_\ell)}^\K(\kappa(s_\ell))$ are contained in a unique block $b$ of $\K^F$. Moreover, under our assumption, Lemma \ref{lem:e-split Levi} implies that $\K(s_\ell)=\c_\K^\circ(\z(\K(s_\ell))^F_\ell)$ and therefore $b=\bl(\kappa(s_\ell))^{\K^F}$. Similarly $\bl(\kappa(s_\ell))=\bl(\lambda)^{\K(s_\ell)^F}$ and so $b=\bl(\lambda)^{\K^F}$ by the transitivity of block induction.
It remains to show that $b=\bl(\kappa)$. In order to do so, we apply Brauer's second Main Theorem (see \cite[Theorem 5.8]{Cab-Eng04}). Then, it suffices to show that $d^1(\R_{\K(s_\ell)}^\K(\kappa(s_\ell)))$ has an irreducible constituent in $\bl(\kappa)$. By \cite[Proposition 21.4]{Cab-Eng04} and since $\R_{\K(s_\ell)}^{\K}$ and $^*\R_{\K(s_\ell)}^\K$ are adjoint, it follows that
\begin{align*}
d^1\left(\R_{\K(s_\ell)}^\K(\kappa(s_\ell))\right)&=\R_{\K(s_\ell)}^\K\left(d^1(\kappa(s_\ell))\right)
\\
&=\R_{\K(s_\ell)}^\K\left(d^1(\wh{s_\ell}\cdot\kappa(s_\ell))\right)
\\
&=\epsilon_\K\epsilon_{\K(s_\ell)}d^1(\kappa).
\end{align*}
Since by Brauer's second Main Theorem $d^1(\kappa)\in\mathbb{N}\irr(\bl(\kappa))$, the proof is now complete.
\end{proof}

As a corollary we deduce that the construction given in Lemma \ref{lem:Jordan decomposition for l-elements} preserves the decomposition of characters into blocks.

\begin{cor}
\label{cor:Jordan decomposition for l-elements, blocks}
Assume Hypothesis \ref{hyp:Brauer--Lusztig blocks} (i). Let $\L$ be an $e$-split Levi subgroup of $\G$ and consider $s\in\L^{*F^*}_{ss}$. For $i=1,2$, let $\lambda_i\in\E(\L^F,[s])$ and consider $\lambda_i(s_\ell)\in\E(\L(s_\ell)^F,[s_{\ell'}])$ given by Lemma \ref{lem:Jordan decomposition for l-elements}. If $\lambda_1(s_\ell)$ and $\lambda_2(s_\ell)$ are in the same block of $\L(s_\ell)^F$, then $\lambda_1$ and $\lambda_2$ are in the same block of $\L^F$.
\end{cor}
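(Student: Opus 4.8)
The plan is to bootstrap from Lemma \ref{lem:Jordan decomposition for l-elements, blocks} together with the Cabanes--Enguehard block parametrisation recalled in Section \ref{sec:Deligne-Lusztig induction and blocks}, using throughout that every ingredient in sight is compatible with conjugation. Since $s\in\L^{*F^*}$, both $(\L,\lambda_1)$ and $(\L,\lambda_2)$ are $(e,s)$-pairs of $\G$ with $s\in\L^*$, and the construction of Lemma \ref{lem:Jordan decomposition for l-elements} applied to either of them produces the same $E_{q,\ell}$-split Levi subgroup $\L(s_\ell)$ of $\G$ (it depends only on $\L$ and $s_\ell$, via duality with $\c_{\L^*}^\circ(s_\ell)$), together with the characters $\lambda_i(s_\ell)\in\E(\L(s_\ell)^F,[s_{\ell'}])$ satisfying $\lambda_i=\epsilon_\L\epsilon_{\L(s_\ell)}\R_{\L(s_\ell)}^\L(\lambda_i(s_\ell)\cdot\wh{s_\ell})$.

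First I would choose, for $i=1,2$, an $(e,s_{\ell'})$-cuspidal pair $(\M_i,\mu_i)$ of $\L(s_\ell)$ with $\bl(\lambda_i(s_\ell))=b_{\L(s_\ell)^F}(\M_i,\mu_i)$. Such a pair exists and is unique up to $\L(s_\ell)^F$-conjugation: the block parametrisation applies since $\ell\in\Gamma(\L(s_\ell),F)$ by Remark \ref{rmk:Very good primes go to Levi subgroups} with $\ell\geq 5$ (and automatically $\ell\geq 7$ on any component of type $\mathbf{E}_8$, as such $\ell$ is good), while the pair is $(e,s_{\ell'})$-cuspidal because $\lambda_i(s_\ell)$ lies in the Lusztig series of the $\ell$-regular element $s_{\ell'}$. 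By hypothesis $\bl(\lambda_1(s_\ell))=\bl(\lambda_2(s_\ell))$, so uniqueness forces $(\M_1,\mu_1)=(\M_2,\mu_2)^g$ for some $g\in\L(s_\ell)^F\subseteq\L^F$.

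Next I would apply Lemma \ref{lem:Jordan decomposition for l-elements, blocks} to the $(e,s)$-pair $(\L,\lambda_i)$ of $\G$ (playing the role of $(\K,\kappa)$ there) together with the $(e,s_{\ell'})$-cuspidal pair $(\M_i,\mu_i)$ of $\L(s_\ell)$, which yields $\bl(\lambda_i)=\bl(\mu_i)^{\L^F}$ for $i=1,2$, with block induction from $\M_i^F$ to $\L^F$ defined as explained before that lemma. Finally, from $(\M_1,\mu_1)=(\M_2,\mu_2)^g$ with $g\in\L^F$ we get $\bl(\mu_1)=\bl(\mu_2)^g$, and since Brauer block induction is equivariant under conjugation by elements of $\L^F$ while any such conjugation fixes every block of $\L^F$, we deduce $\bl(\lambda_1)=\bl(\mu_1)^{\L^F}=\bl(\mu_2)^{\L^F}=\bl(\lambda_2)$, i.e.\ $\lambda_1$ and $\lambda_2$ lie in the same block of $\L^F$. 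I do not expect a real obstacle: the genuine content, namely that the Jordan-decomposition construction of Lemma \ref{lem:Jordan decomposition for l-elements} transports block induction correctly, is already contained in Lemma \ref{lem:Jordan decomposition for l-elements, blocks}, and the only point requiring care is matching the two $(e,s_{\ell'})$-cuspidal pairs up to $\L^F$-conjugation, which is immediate from $\L(s_\ell)^F\subseteq\L^F$.
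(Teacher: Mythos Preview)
Your proposal is correct and follows essentially the same route as the paper. The only difference is cosmetic: the paper picks a single $e$-cuspidal pair $(\M,\mu)$ for the common block $c=\bl(\lambda_1(s_\ell))=\bl(\lambda_2(s_\ell))$ and applies Lemma \ref{lem:Jordan decomposition for l-elements, blocks} twice with that same pair, whereas you choose $(\M_i,\mu_i)$ separately for $i=1,2$ and then argue they are $\L(s_\ell)^F$-conjugate; this extra conjugation step is harmless but unnecessary.
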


\begin{proof}
Let $c$ be the block of $\L(s_\ell)$ containing $\lambda_1(s_\ell)$ and $\lambda_2(s_\ell)$ and consider an $e$-cuspidal pair $(\M,\mu)$ such that $c=b_{\L(s_\ell)^F}(\M,\mu)$. Then, Lemma \ref{lem:Jordan decomposition for l-elements, blocks} implies that $\bl(\lambda_1)=\bl(\mu)^{\L^F}=\bl(\lambda_2)$.
\end{proof}


The next result can be seen as an extension of \cite[Theorem 2.5]{Cab-Eng99} to $(e,s)$-pairs with $s$ not necessarily $\ell$-regular.

\begin{prop}
\label{prop:e-Harish-Chandra series and blocks}
Assume Hypothesis \ref{hyp:Brauer--Lusztig blocks} (i). Let $\K$ be an $e$-split Levi subgroup of $\G$ and $(\L,\lambda)$ an $e$-pair of $\K$. Then there exists a block $b$ of  $\hspace{1pt}$ $\K^F$ such that $\R_\L^\K(\lambda)\in\mathbb{Z}\irr(b)$. Moreover $b=\bl(\lambda)^{\K^F}$.
\end{prop}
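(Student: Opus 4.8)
The plan is to reduce the general $(e,s)$-pair case to the $(e,\ell')$-pair case, where the desired statement is essentially \cite[Theorem 2.5]{Cab-Eng99} applied inside $\K^F$, and then transport the conclusion through the Jordan-decomposition machinery developed in Lemma \ref{lem:Jordan decomposition for l-elements}, Lemma \ref{lem:Jordan decomposition for l-elements, order relation} and Lemma \ref{lem:Jordan decomposition for l-elements, blocks}. First I would fix a semisimple element $s\in\L^{*F^*}$ with $\lambda\in\E(\L^F,[s])$ (after replacing by a conjugate, assume $s\in\L^*$) and form the associated $(e,s_{\ell'})$-pair $(\L(s_\ell),\lambda(s_\ell))$ of $\L(s_\ell)$ via Lemma \ref{lem:Jordan decomposition for l-elements}, so that $\lambda=\epsilon_\L\epsilon_{\L(s_\ell)}\R^\L_{\L(s_\ell)}(\lambda(s_\ell)\cdot\wh{s_\ell})$. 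Since $(\L(s_\ell),\lambda(s_\ell))$ lies in a Lusztig series attached to the $\ell$-regular element $s_{\ell'}$, by \cite[Theorem 4.1]{Cab-Eng99} there is a well-defined $(e,\ell')$-cuspidal pair $(\M,\mu)$ of $\L(s_\ell)$ with $\bl(\lambda(s_\ell))=b_{\L(s_\ell)^F}(\M,\mu)$.

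Next I would prove the statement for the $(e,\ell')$-pair $(\L(s_\ell),\lambda(s_\ell))$ inside $\K(s_\ell)^F$, i.e.\ that $\R^{\K(s_\ell)}_{\L(s_\ell)}(\lambda(s_\ell))\in\mathbb Z\irr(b')$ for the block $b'=\bl(\lambda(s_\ell))^{\K(s_\ell)^F}$. Here the point is that all constituents of $\R^{\K(s_\ell)}_{\L(s_\ell)}(\lambda(s_\ell))$ lie in the $e$-Harish-Chandra series $\E(\K(s_\ell)^F,(\M,\mu))$ (using Condition \ref{cond:Transitivity} via Hypothesis \ref{hyp:Brauer--Lusztig blocks}, since $\K(s_\ell)$ is an $F$-stable Levi subgroup of $\G$ and $(\M,\mu)$ is $(e,\ell')$-cuspidal there), and by \cite[Theorem 4.1]{Cab-Eng99} such a series lies in a single block, namely $b_{\K(s_\ell)^F}(\M,\mu)=\bl(\mu)^{\K(s_\ell)^F}=\bl(\lambda(s_\ell))^{\K(s_\ell)^F}$ by transitivity of block induction (block induction here is defined because $\ell\in\Gamma$ and $e$-split Levi subgroups are centralisers of abelian $\ell$-subgroups, cf.\ the remark before Lemma \ref{lem:Jordan decomposition for l-elements, blocks}). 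Then I would transport this along the Deligne--Lusztig induction $\R^{\K}_{\K(s_\ell)}$: since $\K(s_\ell)$ is an $E_{q,\ell}$-split Levi subgroup of $\K$, \cite[Theorem 2.5]{Cab-Eng99} guarantees that $\R^{\K}_{\K(s_\ell)}$ sends a character in the block $b'$ to a virtual character supported on a single block $b$ of $\K^F$, and by Lemma \ref{lem:e-split Levi} plus Lemma \ref{prop:Good primes}(i) one identifies $b=b'^{\,\K^F}=\bl(\lambda(s_\ell))^{\K^F}$. By transitivity of Deligne--Lusztig induction, $\R^{\K}_{\L}(\lambda)=\pm\R^{\K}_{\K(s_\ell)}\big(\R^{\K(s_\ell)}_{\L(s_\ell)}(\lambda(s_\ell)\cdot\wh{s_\ell})\big)$, and twisting by the linear character $\wh{s_\ell}$ of $\K(s_\ell)^F$ does not change the block-support argument (one uses that multiplication by $\wh{s_\ell}$ commutes appropriately, as in the computation in Lemma \ref{lem:Jordan decomposition for l-elements, blocks}), so $\R^{\K}_{\L}(\lambda)\in\mathbb Z\irr(b)$.

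Finally, for the identification $b=\bl(\lambda)^{\K^F}$, I would invoke Lemma \ref{lem:Jordan decomposition for l-elements, blocks}: applied with the $(e,s)$-pair $(\K,\kappa)$ replaced by appropriate pieces of $\R^\K_\L(\lambda)$ — more precisely, applied to any $(e,s)$-pair $(\K,\kappa)$ with $\kappa$ a constituent of $\R^\K_\L(\lambda)$, whose associated $(e,s_{\ell'})$-pair is $(\K(s_\ell),\kappa(s_\ell))$ with $\kappa(s_\ell)$ a constituent of $\R^{\K(s_\ell)}_{\L(s_\ell)}(\lambda(s_\ell))$ and hence with $\bl(\kappa(s_\ell))=b'=b_{\K(s_\ell)^F}(\M,\mu)$ — it yields $\bl(\kappa)=\bl(\mu)^{\K^F}=\bl(\lambda(s_\ell))^{\K^F}$, which by Lemma \ref{lem:Jordan decomposition for l-elements, blocks} applied to $(\L,\lambda)$ itself (with $\K$ there taken to be $\L$) equals $\bl(\lambda)^{\K^F}$; combined with the previous paragraph this gives $b=\bl(\lambda)^{\K^F}$. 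The main obstacle I anticipate is the bookkeeping around the linear twist $\wh{s_\ell}$ and making sure that all the block-induction maps ($\bl(\cdot)^{\K(s_\ell)^F}$, $\bl(\cdot)^{\K^F}$, and their composition) are actually defined and transitive in this non-connected-centre, $\ell$-singular generality — this is exactly where Hypothesis \ref{hyp:Brauer--Lusztig blocks}(i), the condition $\ell\in\Gamma(\G,F)$, and Lemma \ref{prop:Good primes} are doing the heavy lifting, and I would be careful to cite \cite[Theorem 2.5]{Cab-Eng99} and \cite[Theorem 4.1]{Cab-Eng99} only in ranges where they apply (hence the $\ell\geq5$, good-prime hypotheses), reducing the $\ell$-singular input entirely to the $\ell$-regular statements via Jordan decomposition rather than reproving block theory from scratch.
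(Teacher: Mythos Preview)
Your approach is essentially the paper's: Jordan-decompose via Lemma~\ref{lem:Jordan decomposition for l-elements}, work at the $\K(s_\ell)$ level with the $\ell'$-character $\lambda(s_\ell)$, and transport back to $\K^F$ using the $d^1$-computation underlying Lemma~\ref{lem:Jordan decomposition for l-elements, blocks}. The final identification $b=\bl(\lambda)^{\K^F}$ via the $(e,\ell')$-cuspidal pair $(\M,\mu)$ and transitivity of block induction is also exactly what the paper does.

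There is one genuine issue. To show that $\R_{\L(s_\ell)}^{\K(s_\ell)}(\lambda(s_\ell))$ is supported on a single block of $\K(s_\ell)^F$, you invoke Condition~\ref{cond:Transitivity} (via Hypothesis~\ref{hyp:Brauer--Lusztig blocks}) to place all constituents in the series $\E(\K(s_\ell)^F,(\M,\mu))$ and then apply \cite[Theorem~4.1]{Cab-Eng99}. But the proposition only assumes Hypothesis~\ref{hyp:Brauer--Lusztig blocks}~(i); Condition~\ref{cond:Transitivity} is part~(ii) and is \emph{not} available. The paper bypasses this entirely: since $\L(s_\ell)$ is $e$-split in $\K(s_\ell)$ and $\lambda(s_\ell)\in\E(\L(s_\ell)^F,\ell')$, \cite[Theorem~2.5]{Cab-Eng99} applies directly and gives a single block $b(s_\ell)$ with $b(s_\ell)=\bl(\lambda(s_\ell))^{\K(s_\ell)^F}$. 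So your detour through transitivity is not only unnecessary here, it invokes a hypothesis you do not have. The fix is to simply cite \cite[Theorem~2.5]{Cab-Eng99} at this point, as the paper does; the pair $(\M,\mu)$ is only needed later, for the identification step.

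A smaller remark: your sentence ``twisting by $\wh{s_\ell}$ does not change the block-support argument'' is precisely the content of Corollary~\ref{cor:Jordan decomposition for l-elements, blocks}, which the paper invokes explicitly. The twist moves $b(s_\ell)$ to $\wh{s_\ell}\cdot b(s_\ell)$, and then Corollary~\ref{cor:Jordan decomposition for l-elements, blocks} (not \cite[Theorem~2.5]{Cab-Eng99}, since the twisted constituents lie in $\E(\K(s_\ell)^F,[s])$ and need not be $\ell'$-characters) is what guarantees that their $\K^F$-lifts lie in a common block. Your gesture at Lemma~\ref{lem:Jordan decomposition for l-elements, blocks} is correct in spirit, but naming the corollary makes the argument watertight.
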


\begin{proof}
Let $s\in\L^{*F^*}$ such that $(\L,\lambda)$ is an $(e,s)$-pair. Consider the $(e,s_{\ell'})$-pair $(\L(s_\ell),\lambda(s_\ell))$ given by Lemma \ref{lem:Jordan decomposition for l-elements}. By \cite[Theorem 2.5]{Cab-Eng99}, there exists a block $b(s_\ell)$ of $\K(s_\ell)$ such that $\R_{\L(s_\ell)}^{\K(s_\ell)}(\lambda(s_\ell))\in\mathbb{Z}\irr(b(s_\ell))$. Furthermore $b(s_\ell)=\bl(\lambda(s_\ell))^{\K(s_\ell)^F}$ by Lemma \ref{lem:e-split Levi}. If we denote by $\wh{s_\ell}\cdot b(s_\ell)$ the block of $\K(s_\ell)$ consisting of those characters of the form $\wh{s_\ell}\cdot \xi$, for $\xi\in\irr(b(s_\ell))$, then
\begin{equation}
\label{eq:e-Harish-Chandra seres and blocks, 1}
\R_{\L(s_\ell)}^{\K(s_\ell)}\left(\wh{s_\ell}\cdot \lambda(s_\ell)\right)=\wh{s_\ell}\cdot \R_{\L(s_\ell)}^{\K(s_\ell)}(\lambda(s_\ell))\in\mathbb{Z}\irr\left(\wh{s_\ell}\cdot b(s_\ell)\right).
\end{equation}
By Corollary \ref{cor:Jordan decomposition for l-elements, blocks} and \eqref{eq:e-Harish-Chandra seres and blocks, 1} it follows that there exists a unique block $b$ of $\K^F$ such that
\[\R_\L^\K(\lambda)=\R_\L^\K\left(\R_{\L(s_\ell)}^\L\left(\wh{s_\ell}\cdot \lambda(s_\ell)\right)\right)=\R_{\L(s_\ell)}^\L\left(\R_{\L(s_\ell)}^{\K(s_\ell)}\left(\wh{s_\ell}\cdot \lambda(s_\ell)\right)\right)\in\mathbb{Z}\irr(b).\]
Next, set $c:=\bl(\lambda(s_\ell))$. Consider an $(e,\ell')$-cuspidal pair $(\M,\mu)$ such that $c=b_{\L(s_\ell)^F}(\M,\mu)$. Since $c=\bl(\mu)^{\L(s_\ell)^F}$ and $b(s_\ell)=c^{\K(s_\ell)^F}$ it follows that
\[b(s_\ell)=c^{\K(s_\ell)^F}=\bl(\mu)^{\K(s_\ell)^F}=b_{\K(s_\ell)^F}(\M,\mu).\]
Now, Lemma \ref{lem:Jordan decomposition for l-elements, blocks} implies that $\bl(\lambda)=\bl(\mu)^{\L^F}$ and that $b=\bl(\mu)^{\K^F}$. We conclude that $b=\bl(\mu)^{\K^F}=(\bl(\mu)^{\L^F})^{\K^F}=\bl(\lambda)^{\K^F}$ and this concludes the proof.
\end{proof}

The next corollary is basically a restatement of Proposition \ref{prop:e-Harish-Chandra series and blocks}.

\begin{cor}
\label{cor:e-Harish-Chandra series and blocks}
Assume Hypothesis \ref{hyp:Brauer--Lusztig blocks} (i), let $\L$ be an $e$-split Levi subgroup of $\G$ and $b_\L$ an $\ell$-block of $\L^F$. Then there exists a unique $\ell$-block $b_\G$ of $\G^F$ such that for every $\lambda\in\irr(b_\L)$, all irreducible constituents of $\R_\L^\G(\lambda)$ lie in $b_\G$. Moreover $b_\G=b_\L^{\G^F}$ via Brauer induction.
\end{cor}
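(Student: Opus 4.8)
The plan is to obtain the statement as an essentially formal consequence of Proposition \ref{prop:e-Harish-Chandra series and blocks} applied with $\K=\G$, the only additional ingredients being bookkeeping about block induction. First I would recall that, under Hypothesis \ref{hyp:Brauer--Lusztig blocks} (i), one has $\ell\in\Gamma(\G,F)$, so by Lemma \ref{lem:e-split Levi} together with Lemma \ref{prop:Good primes} (i) the subgroup $\L^F$ is of the form $\c_{\G^F}(Q)$ for some abelian $\ell$-subgroup $Q\leq\G^F$; hence, as noted in the discussion preceding Lemma \ref{lem:Jordan decomposition for l-elements, blocks}, Brauer induction of $\ell$-blocks from $\L^F$ to $\G^F$ is defined, and in particular $b_\L^{\G^F}$ is a single, well-defined $\ell$-block of $\G^F$. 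I would then simply set $b_\G:=b_\L^{\G^F}$.

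For existence and the ``moreover'' part, fix $\lambda\in\irr(b_\L)$, so that $\bl(\lambda)=b_\L$, and regard $(\L,\lambda)$ as an $e$-pair of $\K:=\G$. Proposition \ref{prop:e-Harish-Chandra series and blocks} then gives $\R_\L^\G(\lambda)\in\mathbb{Z}\irr\bigl(\bl(\lambda)^{\G^F}\bigr)=\mathbb{Z}\irr\bigl(b_\L^{\G^F}\bigr)=\mathbb{Z}\irr(b_\G)$. The crucial observation is that the block $b_\G$ produced this way depends only on $b_\L$ and not on the chosen $\lambda$, precisely because $\bl(\lambda)=b_\L$ for every $\lambda\in\irr(b_\L)$. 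Therefore all irreducible constituents of $\R_\L^\G(\lambda)$ lie in $b_\G$ uniformly in $\lambda\in\irr(b_\L)$, and $b_\G=b_\L^{\G^F}$ by construction.

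For uniqueness, suppose $b_\G'$ also satisfies the stated property; it suffices to produce one $\lambda\in\irr(b_\L)$ with $\R_\L^\G(\lambda)\neq 0$, for then $\R_\L^\G(\lambda)$ has an irreducible constituent lying simultaneously in $b_\G$ and in $b_\G'$, forcing $b_\G=b_\G'$. The existence of such a $\lambda$ rests on the standard fact that Deligne--Lusztig induction does not annihilate irreducible characters, which is available here since the Mackey formula is assumed. I do not anticipate any genuine obstacle: all the real work has already been carried out in Proposition \ref{prop:e-Harish-Chandra series and blocks} (which itself rests on the Jordan-type reduction of Lemma \ref{lem:Jordan decomposition for l-elements}, on Lemma \ref{lem:Jordan decomposition for l-elements, blocks}, on Brauer's second Main Theorem and on \cite[Theorem 2.5]{Cab-Eng99}). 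The mild points to check are only that $b_\L^{\G^F}$ is a well-defined single block and that it is independent of the auxiliary character $\lambda$, together with the routine non-vanishing of $\R_\L^\G$ needed to make the uniqueness clause meaningful.
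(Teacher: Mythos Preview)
Your argument is correct and is exactly the approach the paper takes: it states that the corollary ``is basically a restatement of Proposition \ref{prop:e-Harish-Chandra series and blocks}'' and gives no further proof. Your added remarks on well-definedness of $b_\L^{\G^F}$ and the routine non-vanishing of $\R_\L^\G(\lambda)$ (e.g.\ via the degree formula or the Mackey formula) simply make explicit what the paper leaves implicit.
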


Finally, we show that for every $e$-pair $(\K,\kappa)$ there exists a unique $e$-cuspidal pair $(\L,\lambda)$ up to $\K^F$-conjugation satisfying $(\L,\lambda)\leq_e(\K,\kappa)$. Observe that our next results also extends Proposition \ref{prop:e-HC theory for connected reductive} to $e$-pairs associated with $\ell$-singular semisimple elements.

\begin{prop}
\label{prop:e-Harish-Chandra series, disjointness}
Assume Hypothesis \ref{hyp:Brauer--Lusztig blocks}. Let $(\L,\lambda)\ll_e(\K,\kappa)$ be $e$-pairs of $\G$ such that $(\L,\lambda)$ is $e$-cuspidal. Then $(\L,\lambda)\leq_e(\K,\kappa)$. Moreover, if $(\L',\lambda')$ is another $e$-cuspidal pair of $\G$ satisfying $(\L',\lambda')\ll_e(\K,\kappa)$, then $(\L,\lambda)$ and $(\L',\lambda')$ are $\K^F$-conjugate.
\end{prop}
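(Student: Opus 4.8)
The plan is to reduce everything to the already-established $(e,\ell')$-case via the Jordan decomposition machinery developed in Lemma~\ref{lem:Jordan decomposition for l-elements}, Lemma~\ref{lem:Jordan decomposition for l-elements, order relation} and Lemma~\ref{lem:Jordan decomposition for l-elements, blocks}, combined with Condition~\ref{cond:Transitivity} (which holds by Hypothesis~\ref{hyp:Brauer--Lusztig blocks}). First I would fix $s\in\K^{*F^*}$ (up to $\G^F$-conjugation, one may arrange $s$ to lie in a dual of every Levi involved) so that $(\L,\lambda)$ and $(\K,\kappa)$ are $(e,s)$-pairs; since $(\L,\lambda)\ll_e(\K,\kappa)$, Lemma~\ref{lem:Jordan decomposition for l-elements, order relation} gives that the associated $(e,s_{\ell'})$-pairs satisfy $(\L(s_\ell),\lambda(s_\ell))\ll_e(\K(s_\ell),\kappa(s_\ell))$ inside $\K(s_\ell)$, and that $(\L(s_\ell),\lambda(s_\ell))$ is $e$-cuspidal with $\z^\circ(\L)_{\Phi_e}=\z^\circ(\L(s_\ell))_{\Phi_e}$.

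Next, since $(\L(s_\ell),\lambda(s_\ell))$ is an $(e,\ell')$-cuspidal pair of $\K(s_\ell)$ and $\K(s_\ell)$ is an $F$-stable Levi of $\G$, Condition~\ref{cond:Transitivity} applies: the transitive closure $\ll_e$ collapses to $\leq_e$, so $(\L(s_\ell),\lambda(s_\ell))\leq_e(\K(s_\ell),\kappa(s_\ell))$. Now I would run the reverse direction of Lemma~\ref{lem:Jordan decomposition for l-elements, order relation} to transport this single $\leq_e$-step back up: because the Jordan-decomposition construction is built from Deligne--Lusztig induction through $\K(s_\ell)\le\K$ and a linear-character twist, and because $\epsilon_\K\epsilon_{\K(s_\ell)}\R_{\K(s_\ell)}^\K$ restricts to a bijection $\E(\K(s_\ell)^F,[s])\to\E(\K^F,[s])$, one deduces $\kappa$ is an irreducible constituent of $\R_\L^\K(\lambda)$, i.e. $(\L,\lambda)\leq_e(\K,\kappa)$. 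This proves the first assertion.

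For the uniqueness-up-to-conjugacy statement, suppose $(\L',\lambda')$ is another $e$-cuspidal pair with $(\L',\lambda')\ll_e(\K,\kappa)$. First I would observe that both $(\L,\lambda)$ and $(\L',\lambda')$ must be $(e,s)$-pairs for the \emph{same} $s$ (up to $\G^{*F^*}$-conjugation), since $\ll_e$ preserves the rational Lusztig series by \cite[Proposition 15.7]{Cab-Eng04}. Applying Jordan decomposition again produces $(e,\ell')$-cuspidal pairs $(\L(s_\ell),\lambda(s_\ell))$ and $(\L'(s_\ell),\lambda'(s_\ell))$ of $\K(s_\ell)$, both $\ll_e$-below $(\K(s_\ell),\kappa(s_\ell))$. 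Here I would invoke the uniqueness part of the $(e,\ell')$-theory — namely \cite[Theorem 4.1]{Cab-Eng99} applied inside $\K(s_\ell)$ (note Hypothesis~\ref{hyp:Brauer--Lusztig blocks} is inherited by $F$-stable Levi subgroups, so $\ell\in\Gamma(\K(s_\ell),F)$ with $\ell\ge5$) — to conclude that $(\L(s_\ell),\lambda(s_\ell))$ and $(\L'(s_\ell),\lambda'(s_\ell))$ are conjugate by some $g\in\K(s_\ell)^F\subseteq\K^F$. Finally I would promote this conjugacy back to the original pairs: conjugating by $g$ identifies the two Jordan-decomposition data, and since the linear-character twist $\wh{s_\ell}$ and the sign $\epsilon$-factors are canonically determined, $(\L,\lambda)$ and $(\L',\lambda')^{g^{-1}}$ have identical Jordan decompositions; by the uniqueness in Lemma~\ref{lem:Jordan decomposition for l-elements} (or \cite[Theorem 8.27]{Cab-Eng04}) this forces $(\L,\lambda)$ and $(\L',\lambda')$ to be $\K^F$-conjugate.

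\textbf{Main obstacle.} The delicate point is the passage back and forth between an $e$-pair and its Jordan-decomposition counterpart while keeping track of \emph{conjugacy} rather than just the order relation — one must be careful that a $\K(s_\ell)^F$-conjugacy of the decomposed data really lifts to a $\K^F$-conjugacy of the original pairs, which requires the canonicity of $\G(s_\ell)$, $\L(s_\ell)$, $\wh{s_\ell}$ and the relevant sign factors (and implicitly that $\c_{\G^*}^\circ(s_\ell)$ is determined up to the right conjugacy). A secondary subtlety is ensuring that when $(\L',\lambda')$ is a second $e$-cuspidal pair, the semisimple element witnessing it can genuinely be taken to be the same $s$ as for $(\L,\lambda)$; this follows from compatibility of $\ll_e$ with Lusztig series, but it should be stated explicitly before the Jordan-decomposition argument is applied.
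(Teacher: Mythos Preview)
Your approach matches the paper's proof exactly for the first assertion and in overall strategy for the uniqueness part: reduce via Lemma~\ref{lem:Jordan decomposition for l-elements} and Lemma~\ref{lem:Jordan decomposition for l-elements, order relation} to the $(e,\ell')$-setting inside $\K(s_\ell)$, apply Condition~\ref{cond:Transitivity} and \cite[Theorem~4.1]{Cab-Eng99} there, then transport back.

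There is, however, a small gap in your lifting of the conjugacy. You write that once $(\L(s_\ell),\lambda(s_\ell))$ and $(\L'(s_\ell),\lambda'(s_\ell))$ are $\K(s_\ell)^F$-conjugate, ``the uniqueness in Lemma~\ref{lem:Jordan decomposition for l-elements} (or \cite[Theorem~8.27]{Cab-Eng04})'' forces $(\L,\lambda)$ and $(\L',\lambda')$ to be $\K^F$-conjugate. But \cite[Theorem~8.27]{Cab-Eng04} only tells you that $\lambda$ is determined by $\lambda(s_\ell)$ \emph{once $\L$ is fixed}; it says nothing about recovering the Levi $\L$ from $\L(s_\ell)$. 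Different $e$-split Levi subgroups could in principle produce the same $\L(s_\ell)$. What saves you here---and this is precisely the step the paper spells out---is the $e$-cuspidality: by Lemma~\ref{lem:Jordan decomposition for l-elements, order relation} (or directly \cite[Proposition~1.10]{Cab-Eng99}) one has $\z^\circ(\L)_{\Phi_e}=\z^\circ(\L(s_\ell))_{\Phi_e}$, so $\L=\c_\G(\z^\circ(\L(s_\ell))_{\Phi_e})$ is the smallest $e$-split Levi of $\K$ containing $\L(s_\ell)$, and likewise for $\L'$. Since this description is preserved by conjugation, $\L(s_\ell)^x=\L'(s_\ell)$ forces $\L^x=\L'$; then the character equality follows by applying $\R_{\L(s_\ell)}^{\L}$ and using that $\wh{s_\ell}$ is $\K(s_\ell)^F$-invariant. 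You correctly flagged this passage as the ``main obstacle'', but the tool you cite for it is not the right one.
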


\begin{proof}
Assume that $\lambda\in\E(\L^F,[s])$ for a semisimple element $s\in\L^{*F^*}$ and consider the $(e,s_{\ell'})$-pairs $(\L(s_\ell),\lambda(s_\ell))$ and $(\K(s_\ell),\kappa(s_\ell))$ given by Lemma \ref{lem:Jordan decomposition for l-elements}. Applying Lemma \ref{lem:Jordan decomposition for l-elements, order relation} together with our assumption, we deduce that $(\L(s_\ell),\lambda(s_\ell))\ll_e(\K(s_\ell),\kappa(s_\ell))$ and that $(\L(s_\ell),\lambda(s_\ell))$ is $e$-cuspidal in $\G(s_\ell)$. Now, Condition \ref{cond:Transitivity} shows that $(\L(s_\ell),\lambda(s_\ell))\leq_e(\K(s_\ell),\kappa(s_\ell))$ and we obtain $(\L,\lambda)\leq_e(\K,\kappa)$ by applying Lemma \ref{lem:Jordan decomposition for l-elements, order relation} one more time.

Next consider another $(e,s)$-cuspidal pair $(\L',\lambda')\ll_e(\K,\kappa)$. Let $\lambda'\in\E(\L',[s'])$ and notice that $s$ and $s'$ are $\K^{*F^*}$-conjugate by \cite[Proposition 15.7]{Cab-Eng04}. Replacing $(\L',\lambda')$ with a $\K^F$-conjugate we may assume that $s=s'$. As before consider the $(e,s_{\ell'})$-cuspidal pair $(\L'(s_\ell),\lambda'(s_\ell))$ and observe that $(\L'(s_\ell),\lambda'(s_\ell))\ll_e(\K(s_\ell),\kappa(s_\ell))$. By \cite[Theorem 4.1]{Cab-Eng99} it follows that $(\L(s_\ell),\lambda(s_\ell))$ and $(\L'(s_\ell),\lambda'(s_\ell))$ are $\K(s_\ell)^F$-conjugate. Since $\wh{s_\ell}$ is $\K(s_\ell)^F$-invariant, we deduce that $(\L(s_\ell),\wh{s_\ell}\cdot\lambda(s_\ell))$ and $(\L'(s_\ell),\wh{s_\ell}\cdot\lambda'(s_\ell))$ are $\K(s_\ell)^F$-conjugate. Write
\begin{equation}
\label{eq:e-Harish-Chandra series, disjointness}
\left(\L(s_\ell),\lambda(s_\ell)\cdot\wh{s}_\ell\right)=\left(\L'(s_\ell),\lambda'(s_\ell)\cdot\wh{s}_\ell\right)^x
\end{equation}
for some $x\in\K(s_\ell)^F$. An application of \cite[Proposition 1.10]{Cab-Eng99} with respect to the $e$-cuspidal characters $\lambda$ of $\L^F$ and $\lambda(s_\ell)\cdot\wh{s}_\ell$ of $\L(s_\ell)^F$ shows that $\z^\circ(\L^*)_{\Phi_e}=\z^\circ(\c_{\L^*}^\circ(s))_{\Phi_e}$ and that $\z^\circ(\c_{\L^*}^\circ(s_\ell))_{\Phi_e}=\z^\circ(\c_{\c_{\L^*}^\circ(s_\ell)}^\circ(s))_{\Phi_e}$ respectively. However $\c_{\L^*}^\circ(s)\leq\c_{\L^*}^\circ(s_\ell)$ and therefore we obtain $\z^\circ(\L^*)_{\Phi_e}=\z^\circ(\c_{\L^*}^\circ(s_\ell))_{\Phi_e}$. This shows that $\L$ is the smallest $e$-split Levi subgroup of $\K$ containing $\L(s_\ell)$. Since $\L'^x$ is an $e$-split Levi subgroup of $\K$ containing $\L'(s_\ell)^x=\L(s_\ell)$ we deduce that $\L'^x=\L$. Now, \eqref{eq:e-Harish-Chandra series, disjointness} implies $(\L,\lambda)=(\L',\lambda')^x$.
\end{proof}

Proposition \ref{prop:e-Harish-Chandra series, disjointness} shows in particular that Conjecture \ref{conj:Transitive closure} holds when working above $e$-cuspidal pairs.

\begin{cor}
\label{cor:Transitive closure}
Assume Hypothesis \ref{hyp:Brauer--Lusztig blocks}. Then the relation $\leq_e$ and $\ll_e$ from Conjecture \ref{conj:Transitive closure} coincide on the subset $\CP_e(\G,F)\times \mathcal{P}_e(\G,F)\subseteq\mathcal{P}_e(\G,F)\times \mathcal{P}_e(\G,F)$.
\end{cor}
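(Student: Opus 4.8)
The plan is to read the corollary off from Proposition \ref{prop:e-Harish-Chandra series, disjointness}. Since $\ll_e$ is by definition the transitive closure of $\leq_e$, the inclusion of relations $\leq_e\subseteq\ll_e$ holds on the whole of $\mathcal{P}_e(\G,F)\times\mathcal{P}_e(\G,F)$ with no hypothesis at all; so the only content of the statement is the reverse inclusion restricted to those pairs whose left-hand component is $e$-cuspidal.

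For that, I would take $(\L,\lambda)\in\CP_e(\G,F)$ and $(\K,\kappa)\in\mathcal{P}_e(\G,F)$ with $(\L,\lambda)\ll_e(\K,\kappa)$. Unwinding the definition of $\ll_e$ on the finite poset $\mathcal{P}_e(\G,F)$, there are $e$-pairs $(\L_i,\lambda_i)$ of $\G$ with
\[(\L,\lambda)\leq_e(\L_1,\lambda_1)\leq_e\cdots\leq_e(\L_n,\lambda_n)\leq_e(\K,\kappa),\]
and each $\L_i$ is an $e$-split Levi subgroup of $\G$ squeezed between $\L$ and $\K$. Thus $(\L,\lambda)$ and $(\K,\kappa)$ are honest $e$-pairs of $\G$, $(\L,\lambda)$ is $e$-cuspidal, and $(\L,\lambda)\ll_e(\K,\kappa)$ — precisely the hypotheses of the first assertion of Proposition \ref{prop:e-Harish-Chandra series, disjointness}, which is available here under Hypothesis \ref{hyp:Brauer--Lusztig blocks}. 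It yields $(\L,\lambda)\leq_e(\K,\kappa)$, i.e.\ the reverse inclusion, so the two relations agree on $\CP_e(\G,F)\times\mathcal{P}_e(\G,F)$. Equivalently, since $\K$ is an $F$-stable Levi subgroup of $\G$ and Hypothesis \ref{hyp:Brauer--Lusztig blocks} passes to $F$-stable Levi subgroups, one may instead apply Proposition \ref{prop:e-Harish-Chandra series, disjointness} inside $\K$; this uses that $e$-cuspidality of $(\L,\lambda)$ is intrinsic to the pair, independent of the ambient group.

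I do not expect a genuine obstacle at the level of this corollary: all the substance lives in Proposition \ref{prop:e-Harish-Chandra series, disjointness}, whose proof runs through the Jordan-decomposition reductions of Lemma \ref{lem:Jordan decomposition for l-elements} and Lemma \ref{lem:Jordan decomposition for l-elements, order relation} down to the $(e,\ell')$-cuspidal case handled by Condition \ref{cond:Transitivity}. The only thing to be careful about while packaging is the bookkeeping noted above — that the intermediate pairs witnessing $(\L,\lambda)\ll_e(\K,\kappa)$ are legitimate members of $\mathcal{P}_e(\G,F)$ — so that Proposition \ref{prop:e-Harish-Chandra series, disjointness} applies verbatim.
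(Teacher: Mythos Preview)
Your proposal is correct and follows exactly the same approach as the paper: both reduce the corollary to a direct application of Proposition \ref{prop:e-Harish-Chandra series, disjointness}, noting that the inclusion $\leq_e\subseteq\ll_e$ is automatic and the reverse on $\CP_e(\G,F)\times\mathcal{P}_e(\G,F)$ is precisely the first assertion of that proposition. The paper's proof is a single sentence to this effect; your additional remarks about the intermediate chain and the alternative of working inside $\K$ are accurate but not needed.
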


\begin{proof}
If $((\L,\lambda),(\K,\kappa))\in\CP_e(\G,F)\times\mathcal{P}_e(\G,F)$, then Proposition \ref{prop:e-Harish-Chandra series, disjointness} implies that $(\L,\lambda)\leq_e(\K,\kappa)$ if and only if $(\L,\lambda)\ll_e(\K,\kappa)$.
\end{proof}

As an immediate consequence of Proposition \ref{prop:e-Harish-Chandra series, disjointness} we deduce that the set $\irr(\K^F)$ is a disjoint union of $e$-Harish-Chandra series. This should be compared with the classical Harish-Chandra theory (see \cite[Corollary 3.1.17]{Gec-Mal20}) and with the analogous result for unipotent characters \cite[Theorem 4.6.20]{Gec-Mal20}. These two results, can be recovered by considering $(1,s)$-pairs and $(e,1)$-pairs respectively.

\begin{cor}
\label{cor:e-Harish-Chandra, disjointness}
Let $\G$ be a connected reductive group with a Frobenius endomorphism $F$ defining an $\mathbb{F}_q$-structure on $\G$ and consider an integer $e\geq 1$. For every $e$-split Levi subgroup $\K$ of $\G$ there is a partition
\[\irr(\K^F)=\coprod\limits_{(\L,\lambda)}\E\left(\K^F,(\L,\lambda)\right),\]
where the union runs over a $\K^F$-transversal in the set of $e$-cuspidal pairs of $\K$, provided that there exists a prime $\ell$ such that Hypothesis \ref{hyp:Brauer--Lusztig blocks} is satisfied with respect to $(\G,F,q,\ell,e)$.
\end{cor}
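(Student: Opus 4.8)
The plan is to derive the partition directly from Proposition \ref{prop:e-Harish-Chandra series, disjointness}, applied with the $e$-split Levi subgroup $\K$ playing the role of $\G$. Fix a prime $\ell$ for which Hypothesis \ref{hyp:Brauer--Lusztig blocks} holds for $(\G,F,q,\ell,e)$. Since that hypothesis is inherited by $F$-stable Levi subgroups (via Remark \ref{rmk:Very good primes go to Levi subgroups}, as noted right after its statement) and $\K$ is an $e$-split Levi subgroup of itself, Hypothesis \ref{hyp:Brauer--Lusztig blocks} holds for $(\K,F,q,\ell,e)$ too, so Proposition \ref{prop:e-Harish-Chandra series, disjointness} and the attendant description of $e$-Harish-Chandra series are available for $\K$.

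First I would show that the $e$-Harish-Chandra series of $\K$ cover $\irr(\K^F)$. Given $\chi\in\irr(\K^F)$, view $(\K,\chi)$ as an $e$-pair of $\K$. As $\mathcal{P}_e(\K,F)$ is finite and $\ll_e$ is a pre-order on it, descending below $(\K,\chi)$ one reaches a minimal element, which is $e$-cuspidal in $\K$ by the characterisation of $e$-cuspidality recalled in Section \ref{sec:transitivity}; hence there is an $e$-cuspidal pair $(\L,\lambda)$ of $\K$ with $(\L,\lambda)\ll_e(\K,\chi)$. Proposition \ref{prop:e-Harish-Chandra series, disjointness} upgrades this to $(\L,\lambda)\leq_e(\K,\chi)$, i.e.\ $\chi$ is an irreducible constituent of $\R_\L^\K(\lambda)$ --- which is unambiguous under Hypothesis \ref{hyp:Brauer--Lusztig blocks} --- so $\chi\in\E(\K^F,(\L,\lambda))$. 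Thus $\irr(\K^F)$ is the union of the sets $\E(\K^F,(\L,\lambda))$ as $(\L,\lambda)$ ranges over the $e$-cuspidal pairs of $\K$.

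Next I would check disjointness. If some $\chi\in\irr(\K^F)$ lay in $\E(\K^F,(\L,\lambda))\cap\E(\K^F,(\L',\lambda'))$ for two $e$-cuspidal pairs $(\L,\lambda)$ and $(\L',\lambda')$ of $\K$, then $(\L,\lambda)\leq_e(\K,\chi)$ and $(\L',\lambda')\leq_e(\K,\chi)$, so in particular $(\L,\lambda)\ll_e(\K,\chi)$ and $(\L',\lambda')\ll_e(\K,\chi)$; the second assertion of Proposition \ref{prop:e-Harish-Chandra series, disjointness} then forces $(\L,\lambda)$ and $(\L',\lambda')$ to be $\K^F$-conjugate. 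Hence the series attached to distinct $\K^F$-orbits of $e$-cuspidal pairs are pairwise disjoint, and choosing one representative per orbit yields the asserted partition $\irr(\K^F)=\coprod_{(\L,\lambda)}\E(\K^F,(\L,\lambda))$.

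Finally I would remark that neither the indexing set (the $\K^F$-orbits of $e$-cuspidal pairs of $\K$) nor the individual series $\E(\K^F,(\L,\lambda))$ refer to $\ell$: the prime enters only through the block-theoretic machinery underlying Proposition \ref{prop:e-Harish-Chandra series, disjointness} (via Lemma \ref{lem:Jordan decomposition for l-elements} and Corollary \ref{cor:e-HC theory for connected reductive}), so the resulting partition depends only on $(\G,F,e)$ even though its existence is proved under the auxiliary hypothesis on $\ell$. There is essentially no obstacle left here: all the content --- reducing $\ll_e$ to $\leq_e$ above $e$-cuspidal pairs and the uniqueness up to $\K^F$-conjugacy --- already lies in Proposition \ref{prop:e-Harish-Chandra series, disjointness}, and the only mild points to verify are the inheritance of Hypothesis \ref{hyp:Brauer--Lusztig blocks} by $\K$ and the combinatorial descent to a minimal, hence $e$-cuspidal, pair inside the finite pre-ordered set $\mathcal{P}_e(\K,F)$.
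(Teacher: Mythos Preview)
Your proposal is correct and follows essentially the same route as the paper, which states the corollary as an immediate consequence of Proposition~\ref{prop:e-Harish-Chandra series, disjointness}; you have simply spelled out the covering argument (descent to a minimal, hence $e$-cuspidal, pair in the finite pre-ordered set) and the disjointness argument (the uniqueness clause of that proposition). One small simplification: there is no need to replace $\G$ by $\K$ as the ambient group, since Proposition~\ref{prop:e-Harish-Chandra series, disjointness} is already stated for $e$-pairs $(\L,\lambda)\ll_e(\K,\kappa)$ of $\G$ with $\K$ an arbitrary $e$-split Levi, and $e$-cuspidality of $(\L,\lambda)$ is intrinsic to $\L$ and $\lambda$; but your reduction via inheritance of Hypothesis~\ref{hyp:Brauer--Lusztig blocks} is equally valid.
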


Observe that although the proof of the above corollary depends on the choice of a certain prime $\ell$, its statement does not. This is due to the fact that our result is obtained as a consequence of $\ell$-modular representation theoretic techniques. Nonetheless, a partition of characters into $e$-Harish-Chandra series is expected to hold without the restrictions considered here.

Next, combining Corollary \ref{cor:e-Harish-Chandra, disjointness} and Proposition \ref{prop:e-Harish-Chandra series and blocks} we can describe all the characters in the blocks of $\K^F$ in terms of $e$-Harish-Chandra series.

\begin{theo}
\label{thm:Blocks are unions of e-HC series}
Assume Hypothesis \ref{hyp:Brauer--Lusztig blocks}. Let $\K$ be an $e$-split Levi subgroup of $\G$ and $b$ a block of $\K^F$. Then
\[\irr\left(b\right)=\coprod\limits_{(\L,\lambda)}\E\left(\K^F,(\L,\lambda)\right),\]
where the union runs over the $\K^F$-conjugacy classes of $e$-cuspidal pairs $(\L,\lambda)$ of $\K$ such that $\bl(\lambda)^{\K^F}=b$.
\end{theo}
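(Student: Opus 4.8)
The plan is to combine the block-theoretic input of Proposition \ref{prop:e-Harish-Chandra series and blocks} with the partition into $e$-Harish-Chandra series furnished by Corollary \ref{cor:e-Harish-Chandra, disjointness}. First I would invoke Corollary \ref{cor:e-Harish-Chandra, disjointness}, which is available since Hypothesis \ref{hyp:Brauer--Lusztig blocks} holds for $(\G,F)$ and hence, by Remark \ref{rmk:Very good primes go to Levi subgroups} and the remark that Hypothesis \ref{hyp:Brauer--Lusztig blocks} is inherited by $F$-stable Levi subgroups, also for the $e$-split Levi subgroup $\K$. This gives
\[\irr\left(\K^F\right)=\coprod_{(\L,\lambda)}\E\left(\K^F,(\L,\lambda)\right),\]
the union running over a $\K^F$-transversal of $e$-cuspidal pairs of $\K$. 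Since any block $b$ of $\K^F$ satisfies $\irr(b)\subseteq\irr(\K^F)$, it suffices to show that each $e$-Harish-Chandra series $\E(\K^F,(\L,\lambda))$ is entirely contained in a single block, and that this block is precisely $\bl(\lambda)^{\K^F}$.

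The second point is exactly Proposition \ref{prop:e-Harish-Chandra series and blocks} applied with $\G$ replaced by $\K$ (legitimate, again by the inheritance of Hypothesis \ref{hyp:Brauer--Lusztig blocks} to $\K$, and noting that only part (i) of the hypothesis is needed there): for the $e$-pair $(\L,\lambda)$ of $\K$ there is a block $b'$ of $\K^F$ with $\R_\L^\K(\lambda)\in\mathbb{Z}\irr(b')$ and $b'=\bl(\lambda)^{\K^F}$. In particular every irreducible constituent of $\R_\L^\K(\lambda)$ — that is, every member of $\E(\K^F,(\L,\lambda))$, using that under Hypothesis \ref{hyp:Brauer--Lusztig blocks} Deligne--Lusztig induction is independent of the parabolic — lies in $\bl(\lambda)^{\K^F}$. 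Therefore $\E(\K^F,(\L,\lambda))\subseteq\irr(\bl(\lambda)^{\K^F})$. Conversely, since the series partition $\irr(\K^F)$ and each series is inside one block, the block $b$ is the disjoint union of those series $\E(\K^F,(\L,\lambda))$ for which $\bl(\lambda)^{\K^F}=b$; intersecting the global partition with $\irr(b)$ yields
\[\irr(b)=\coprod_{(\L,\lambda)}\E\left(\K^F,(\L,\lambda)\right)\]
with the union over $\K^F$-classes of $e$-cuspidal pairs satisfying $\bl(\lambda)^{\K^F}=b$. Here one uses that Brauer induction $\bl(\lambda)^{\K^F}$ is well defined: by Lemma \ref{lem:e-split Levi} together with Lemma \ref{prop:Good primes}(i), $\L^F=\c_{\K^F}(Q)$ for some abelian $\ell$-subgroup $Q$, so block induction from $\L^F$ is defined, and it is $\K^F$-conjugation invariant, so the condition $\bl(\lambda)^{\K^F}=b$ depends only on the $\K^F$-class of $(\L,\lambda)$.

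The only genuine subtlety — and the step I would be most careful about — is the passage from ``every series is contained in some block'' to ``every block is a union of whole series,'' i.e. checking that no block straddles parts of distinct series; but this is automatic once the partition of $\irr(\K^F)$ into series is known and each series sits inside one block, since blocks also partition $\irr(\K^F)$, so the two partitions are compatible and the finer one (series) refines the coarser one (blocks). I would also double-check that the transversal of $e$-cuspidal pairs appearing in Corollary \ref{cor:e-Harish-Chandra, disjointness} can be chosen so that, restricted to those with $\bl(\lambda)^{\K^F}=b$, it remains a transversal for the $\K^F$-action on such pairs — this is immediate since the defining condition is $\K^F$-invariant. No further obstacle arises; the theorem is essentially a bookkeeping consequence of the two cited results.
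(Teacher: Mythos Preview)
Your proposal is correct and follows essentially the same route as the paper: both combine Corollary \ref{cor:e-Harish-Chandra, disjointness} (the partition of $\irr(\K^F)$ into $e$-Harish-Chandra series) with Proposition \ref{prop:e-Harish-Chandra series and blocks} (each series lies in $\bl(\lambda)^{\K^F}$) to obtain the block decomposition. The paper cites Proposition \ref{prop:e-Harish-Chandra series, disjointness} separately for disjointness, whereas you extract it from the partition in Corollary \ref{cor:e-Harish-Chandra, disjointness}; this is an immaterial difference since the latter is itself a consequence of the former.
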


\begin{proof}
Proposition \ref{prop:e-Harish-Chandra series and blocks} shows that $\E(\K^F,(\L,\lambda))\subseteq \irr(b)$ for every $e$-cuspidal pair $(\L,\lambda)$ such that $\bl(\lambda)^{\K^F}=b$. On the other hand, if $k\in\irr(b)$, then by Corollary \ref{cor:e-Harish-Chandra, disjointness} there exists an $e$-cuspidal pair $(\L,\lambda)$ of $\K$ such that $k\in\E(\K^F,(\L,\lambda))$. Moreover, applying Proposition \ref{prop:e-Harish-Chandra series and blocks} once more, it follows that $b=\bl(\kappa)=\bl(\lambda)^{\K^F}$. Finally, the union is disjoint by Proposition \ref{prop:e-Harish-Chandra series, disjointness}.
\end{proof}

In the following remark we compare Theorem \ref{thm:Blocks are unions of e-HC series} and \cite[Theorem (iii)]{Cab-Eng94}.

\begin{rmk}
Let $B$ be a unipotent block and consider a unipotent $e$-cuspidal pair $(\L_0,\lambda_0)$ such that $B=b_{\G^F}(\L_0,\lambda_0)$. In this situation, \cite[Theorem (iii)]{Cab-Eng94} shows that every irreducible character of $B$ occurs in some $\R_{\G(s)}^\G(\chi_s\cdot\wh{s})$ where $s\in\G^{*F^*}$ is an $\ell$-element, $\chi_s\in\E(\G(s)^F,[1])$ and $(\L_s,\lambda_s)\leq_e(\G(s),\chi_s)$ for some $e$-cuspidal pair $(\L_s,\lambda_s)$ of $\G(s)$ satisfying $(\L_s,\lambda_s)\sim (\L_0,\lambda_0)$ (see \cite[Definition 3.4]{Cab-Eng94}). As usual, here we denote by $\G(s)$ a Levi subgroup in duality with $\c^\circ_{\G^*}(s)$. This description can be recovered from the partition obtained in Theorem \ref{thm:Blocks are unions of e-HC series} under Hypothesis \ref{hyp:Brauer--Lusztig blocks}. To prove our claim, let $\chi\in\irr(B)$ and notice that by Theorem \ref{thm:Blocks are unions of e-HC series} there exists an $e$-cuspidal pair $(\L,\lambda)$ of $\G$ such that $\bl(\lambda)^{\G^F}=B$ and $\chi\in\E(\G^F,(\L,\lambda))$. If $s\in\L^{*F^*}$ and $\lambda\in\E(\L^F,[s])$, then $s$ is an $\ell$-element by \cite{Bro-Mic89}. Consider the unipotent $e$-pairs $(\L(s),\lambda(s))$ and $(\G(s),\chi(s))$ of $\G(s)$ given by Lemma \ref{lem:Jordan decomposition for l-elements}. By definition, $\chi$ occurs in $\R_{\G(s)}^\G(\chi(s)\cdot \wh{s})$. Next, applying Lemma \ref{lem:Jordan decomposition for l-elements, order relation} we deduce that $(\L(s),\lambda(s))\leq_e(\G(s),\chi(s))$ and that $(\L(s),\lambda(s))$ is $e$-cuspidal in $\G(s)$. By \cite[Proposition 3.5 (i)]{Cab-Eng94} there exists a unipotent $e$-cuspidal pair $(\M,\mu)$ of $\G$ such that $(\L(s),\lambda(s))\sim(\M,\mu)$. Proceeding as at the end of the proof of \cite[Theorem 4.4 (iii)]{Cab-Eng94}, we conclude that $(\M,\mu)$ is $\G^F$-conjugate to $(\L_0,\lambda_0)$ and by replacing $(\L,\lambda)$ with a $\G^F$-conjugate we may assume $(\L(s),\lambda(s))\sim (\L_0,\lambda_0)$ as required by \cite[Theorem (iii)]{Cab-Eng94}.

Conversely, given a unipotent $e$-cuspidal pair $(\L_s,\lambda_s)\leq (\G(s),\chi_s)$ and an irreducible constituent $\chi$ of $\R_{\G(s)}^\G(\chi_s\cdot \wh{s})$ as in \cite[Theorem (iii)]{Cab-Eng94}, we obtain an $e$-cuspidal pair $(\L,\lambda)$ of $\G$ such that $\bl(\lambda)^{\G^F}=B$ and $\chi\in\E(\G^F,(\L,\lambda))$ as follows. Consider $\L:=\c_\G^\circ(\z^\circ(\L_s)_{\Phi_e})$ and define $\lambda:=\epsilon_{\L(s_\ell)}\epsilon_\L\R_{\L(s_\ell)}^\L(\lambda_s\cdot \wh{s})$. Then $(\L_s,\lambda_s)$ coincides with the pair $(\L(s),\lambda(s))$ obtained from $(\L,\lambda)$ as in Lemma \ref{lem:Jordan decomposition for l-elements}. Moreover, $\z^\circ(\L(s))_{\Phi_e}=\z^\circ(\L)_{\Phi_e}$ and since $(\L_s,\lambda_s)$ is $e$-cuspidal in $\G(s)$ Lemma \ref{lem:Jordan decomposition for l-elements, order relation} implies that $(\L,\lambda)$ is an $e$-cuspidal pair of $\G^F$. Under our assumption, $(\G(s),\chi_s)=(\G(s),\chi(s))$ and the relation $(\L_s,\lambda_s)\leq_e(\G(s),\chi_s)$ implies $(\L,\lambda)\leq_e(\G,\chi)$ thanks to Lemma \ref{lem:Jordan decomposition for l-elements, order relation}. This shows that $\chi\in\E(\G^F,(\L,\lambda))$. Finally, since by assumption $\chi\in\irr(B)$ we conclude that $\bl(\lambda)^{\G^F}=B$ by applying Corollary \ref{cor:e-Harish-Chandra series and blocks}.
\end{rmk}

\subsection{Brauer--Lusztig blocks}

We now extend Theorem \ref{thm:Blocks are unions of e-HC series} in order to obtain Theorem \ref{thm:Main Brauer--Lusztig blocks}. To start, following Broué, Fong and Srinivasan, we define the Brauer--Lusztig blocks of $\G^F$. 

\begin{defin}[Broué--Fong--Srinivasan]
\label{def:Brauer-Lusztig blocks}
A \emph{Brauer--Lusztig block} of $\G^F$ is any non-empty set of the form
\[\E\left(\G^F,B,[s]\right):=\E\left(\G^F,[s]\right)\cap \irr(B),\]
where $B$ is a block of $\G^F$ and $s$ is a semisimple element of $\G^{*F^*}$. In this case, we say that $(\G,B,[s])$ is the associated \emph{Brauer--Lusztig triple} of $\G^F$. Moreover, we denote by $\BL(\G,F)$ the set of all Brauer--Lusztig triples of $\G^F$. We also define the set
\[\BL^\vee(\G,F):=\coprod\limits_{\L\leq \G}\BL(\L,F),\]
where $\L$ runs over all $e$-split Levi subgroups of $\G$.
\end{defin}

Next, assume $\ell\in\Gamma(\G,F)$. Recall from the discussion preceding Lemma \ref{lem:Jordan decomposition for l-elements, blocks} that, for every $e$-split Levi subgroup $\L$ of $\G$ and $b\in\Bl(\L^F)$, the Brauer induced block $b^{\G^F}$ is defined. Then, we can introduce a partial order relation on $\BL^\vee(\G,F)$ by defining
\[(\L,b,[s])\leq (\K,c,[t])\]
if $\L\leq \K$, $b^{\K^F}=c$ and the semisimple elements $s$ and $t$ are conjugate by an element of $\K^{*F^*}$. If $(\L,b,[s])$ is a minimal element of the poset $(\BL^\vee(\G,F),\leq)$, then we say that $(\L,b,[s])$ is a \emph{cuspidal} Brauer--Lusztig triple.

In the next lemma we compare the relation $\leq$ on Brauer--Lusztig triples with the relations $\ll_e$ and $\leq_e$ on $e$-pairs.

\begin{lem}
\label{lem:Brauer--Lusztig triples vs pairs for all elements}
Assume Hypothesis \ref{hyp:Brauer--Lusztig blocks}. Let $\L$ and $\K$ be $e$-split Levi subgroups of $\G$ and consider semisimple elements $s\in\L^{*F^*}$ and $t\in\K^{*F^*}$. 
\begin{enumerate}
\item Let $\lambda\in\E(\L^F,b,[s])$ and $\kappa\in\E(\K^F,c,[t])$. If $(\L,\lambda)\ll_e(\K,\kappa)$, then $(\L,b,[s])\leq (\K,c,[t])$.
\item Let $\lambda\in\E(\L^F,b,[s])$. If $(\L,b,[s])$ is cuspidal, then $(\L,\lambda)$ is $e$-cuspidal.
\item If $(\L,b,[s])\leq (\K,c,[t])$, then for every $\lambda\in\E(\L^F,b,[s])$ there exists $\kappa\in\E(\K^F,c,[t])$ such that $(\L,\lambda)\leq_e(\K,\kappa)$.
\end{enumerate}
\end{lem}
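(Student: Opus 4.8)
The plan is to derive all three statements from the block-theoretic machinery already in place: chiefly Proposition~\ref{prop:e-Harish-Chandra series and blocks} and Corollary~\ref{cor:e-Harish-Chandra series and blocks}, which identify the block meeting $\R_\L^\K(\lambda)$ as the Brauer-induced block $\bl(\lambda)^{\K^F}$, together with the fact that $\leq_e$ and $\ll_e$ restrict to $(e,s)$-pairs (\cite[Proposition~15.7]{Cab-Eng04}). I would prove (i) directly, deduce (ii) from (i) by a minimality argument in $\BL^\vee(\G,F)$, and treat (iii) separately.

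For (i), the containment $\L\leq\K$ is immediate from the definition of $\ll_e$. Next, regard $(\L,\lambda)$ and $(\K,\kappa)$ as $e$-pairs of $\K$; since $\ll_e$ restricts to $(e,s)$-pairs by \cite[Proposition~15.7]{Cab-Eng04}, taking $s$ to be the label of $\lambda$ in $\L^{*F^*}$ shows that the label $t$ of $\kappa$ is $\K^{*F^*}$-conjugate to $s$. Finally, fix a chain $(\L,\lambda)=(\M_0,\mu_0)\leq_e(\M_1,\mu_1)\leq_e\cdots\leq_e(\M_n,\mu_n)=(\K,\kappa)$ realising $\ll_e$; applying Corollary~\ref{cor:e-Harish-Chandra series and blocks} to the $i$-th step (inside $\M_{i+1}$) gives $\bl(\mu_{i+1})=\bl(\mu_i)^{\M_{i+1}^F}$, and iterating this together with transitivity of Brauer block induction --- legitimate because $\ell\in\Gamma(\G,F)$ makes every $e$-split Levi a centraliser of an abelian $\ell$-subgroup, cf.\ the discussion before Lemma~\ref{lem:Jordan decomposition for l-elements, blocks} and Remark~\ref{rmk:Very good primes go to Levi subgroups} --- yields $c=\bl(\kappa)=\bl(\lambda)^{\K^F}=b^{\K^F}$. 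These three facts together say exactly that $(\L,b,[s])\leq(\K,c,[t])$.

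Part (ii) I would prove by contradiction. If $(\L,\lambda)$ is not $e$-cuspidal it is not minimal for $\ll_e$, so there is an $e$-cuspidal pair $(\M,\mu)\ll_e(\L,\lambda)$ with $(\M,\mu)\neq(\L,\lambda)$, and a short telescoping argument forces $\M<\L$ properly (if $\M=\L$ the realising chain would be constant on first coordinates, hence constant, contradicting $\mu\neq\lambda$). Writing $\mu\in\E(\M^F,b',[s'])$ with $b'=\bl(\mu)$, part~(i) yields $(\M,b',[s'])\leq(\L,b,[s])$ in $\BL^\vee(\G,F)$; since $\M\neq\L$ this contradicts the minimality of $(\L,b,[s])$, so $(\L,\lambda)$ is $e$-cuspidal. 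For (iii), given $(\L,b,[s])\leq(\K,c,[t])$ and $\lambda\in\E(\L^F,b,[s])$, note that $(\L,\lambda)$ is an $e$-pair of $\K$ and that $\R_\L^\K(\lambda)\neq0$, since $\lambda$ is irreducible (its degree equals, up to sign, $|\K^F:\L^F|_{p'}\,\lambda(1)$; alternatively this follows from the Mackey formula, which holds by hypothesis). Pick an irreducible constituent $\kappa$ of $\R_\L^\K(\lambda)$, so that $(\L,\lambda)\leq_e(\K,\kappa)$ by definition. Then Corollary~\ref{cor:e-Harish-Chandra series and blocks} gives $\bl(\kappa)=\bl(\lambda)^{\K^F}=b^{\K^F}=c$, while \cite[Proposition~15.7]{Cab-Eng04} applied inside $\K$ gives $\kappa\in\E(\K^F,[s])=\E(\K^F,[t])$, as $s$ and $t$ are $\K^{*F^*}$-conjugate. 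Hence $\kappa\in\E(\K^F,[t])\cap\irr(c)=\E(\K^F,c,[t])$, which is what we want.

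Each individual step is short, so the only place that genuinely calls for care is the bookkeeping around Brauer block induction: one must check it is defined and transitive along the entire chain $\M_0\leq\cdots\leq\M_n$ of $e$-split Levi subgroups, which is exactly where the hypothesis $\ell\in\Gamma(\G,F)$ enters through the centraliser description of $e$-split Levis. The other delicate point is the elementary observation used in (ii) that a pair strictly $\ll_e$-below $(\L,\lambda)$ must live on a strictly smaller Levi subgroup --- this is what forces the comparison in $\BL^\vee(\G,F)$ to be strict and thereby contradict cuspidality. I expect these two points, rather than any substantial new argument, to be the main (modest) obstacles.
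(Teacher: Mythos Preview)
Your proof is correct, and parts (ii) and (iii) match the paper's argument essentially verbatim. The only genuine difference is in part (i): the paper does \emph{not} iterate along a chain realising $\ll_e$. Instead it picks an $e$-cuspidal pair $(\M,\mu)\ll_e(\L,\lambda)$, uses Proposition~\ref{prop:e-Harish-Chandra series, disjointness} to upgrade $\ll_e$ to $\leq_e$ for both $(\M,\mu)\leq_e(\L,\lambda)$ and $(\M,\mu)\leq_e(\K,\kappa)$, applies Proposition~\ref{prop:e-Harish-Chandra series and blocks} once for each to get $\bl(\lambda)=\bl(\mu)^{\L^F}$ and $\bl(\kappa)=\bl(\mu)^{\K^F}$, and then concludes by transitivity of block induction. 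Your approach---walking up the chain and applying Corollary~\ref{cor:e-Harish-Chandra series and blocks} (equivalently Proposition~\ref{prop:e-Harish-Chandra series and blocks}) at each step---is arguably more elementary, since it only uses Hypothesis~\ref{hyp:Brauer--Lusztig blocks}~(i) and never invokes Proposition~\ref{prop:e-Harish-Chandra series, disjointness} (which requires Condition~\ref{cond:Transitivity}). The paper's route, on the other hand, avoids the slight bookkeeping of checking that each $\M_i$ is $e$-split inside $\M_{i+1}$ and that Hypothesis~\ref{hyp:Brauer--Lusztig blocks} descends to every intermediate Levi, and it makes the role of the $e$-cuspidal pair already visible at this stage.
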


\begin{proof}
We start by proving (i). Let $(\L,\lambda)\ll_e(\K,\kappa)$. By \cite[Proposition 15.7]{Cab-Eng04}, we may assume $s=t$ and it is enough to show that $\bl(\lambda)^{\K^F}=\bl(\kappa)$. To see this, choose an $e$-cuspidal pair $(\M,\mu)\ll_e(\L,\lambda)$ and notice that $(\M,\mu)\ll_e(\K,\kappa)$. By Proposition \ref{prop:e-Harish-Chandra series, disjointness} we deduce that $(\M,\mu)\leq_e(\L,\lambda)$ and $(\M,\mu)\leq_e(\K,\kappa)$. Then, Proposition \ref{prop:e-Harish-Chandra series and blocks} implies that $\bl(\lambda)=\bl(\mu)^{\L^F}$ and $\bl(\kappa)=\bl(\mu)^{\K^F}$. By the transitivity of block induction, we conclude that $\bl(\kappa)=\bl(\lambda)^{\K^F}$. This proves (i) and (ii) is an immediate consequence. In fact, if $(\L,b,[s])$ is a cuspidal Brauer--Lusztig triple and we consider an $e$-cuspidal $(\M,\mu)\ll_e(\L,\lambda)$, then (i) shows that $(\M,\bl(\mu),[r])\leq (\L,b,[s])$, where $\mu\in\E(\M^F,[r])$. It follows that $\L=\M$ and that $(\L,\lambda)=(\M,\mu)$ is $e$-cuspidal.

Finally, let $(\L,b,[s])\leq (\K,c,[t])$ and consider $\lambda\in\E(\L^F,b,[s])$. Let $\kappa$ be an irreducible constituent of $\R_\L^\K(\lambda)$ so that $(\L,\lambda)\leq_e(\K,\kappa)$. We need to show that $\kappa\in\E(\K^F,c,[t])$. By \cite[Proposition 15.7]{Cab-Eng04} we have $\kappa\in\E(\K^F,[s])=\E(\K^F,[t])$. Moreover, applying Proposition \ref{prop:e-Harish-Chandra series and blocks}, we obtain $\bl(\kappa)=\bl(\lambda)^{\K^F}=b^{\K^F}=c$. We conclude that $\kappa\in\E(\K^F,c,[t])$.
\end{proof}

Finally, we are able to prove the main result of this section which provides a slightly more general version of Theorem \ref{thm:Main Brauer--Lusztig blocks}.

\begin{theo}
\label{thm:Brauer-Lusztig blocks are unions of e-HC series}
Assume Hypothesis \ref{hyp:Brauer--Lusztig blocks}. Let $(\K,c,[t])\in\BL^\vee(\G,F)$. Then
\begin{equation}
\label{eq:Brauer-Lusztig blocks are unions of e-HC series}
\E\left(\K^F,c,[t]\right)=\coprod\limits_{(\L,\lambda)}\E\left(\K^F,(\L,\lambda)\right),
\end{equation}
where the union runs over the $\K^F$-conjugacy classes of $(e,t)$-cuspidal pairs $(\L,\lambda)$ of $\K$ with $\lambda\in\E(\L^F,[s_\lambda])$ such that $(\L,\bl(\lambda),[s_\lambda])\leq(\K,c,[t])$.
\end{theo}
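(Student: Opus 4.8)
The preceding subsection has been arranged precisely so that this statement reduces to bookkeeping: Theorem \ref{thm:Blocks are unions of e-HC series} already partitions every block of $\K^F$ into $e$-Harish-Chandra series, and all that remains is to intersect that partition with a rational Lusztig series and to re-express the surviving index set in the language of Brauer--Lusztig triples. The plan is therefore, first, to apply Theorem \ref{thm:Blocks are unions of e-HC series} to the block $c$ of $\K^F$ (legitimate since Hypothesis \ref{hyp:Brauer--Lusztig blocks} is inherited by the $e$-split Levi subgroup $\K$); second, to observe that each $e$-Harish-Chandra series occurring there lies inside a single rational Lusztig series, so that intersecting with $\E(\K^F,[t])$ either keeps or deletes each series wholesale; and third, to identify the surviving pairs with the $(e,t)$-cuspidal pairs $(\L,\lambda)$ satisfying $(\L,\bl(\lambda),[s_\lambda])\leq(\K,c,[t])$.

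\textbf{Execution.} Applying Theorem \ref{thm:Blocks are unions of e-HC series} I would write
\[\irr(c)=\coprod_{(\L,\lambda)}\E\left(\K^F,(\L,\lambda)\right),\]
the union over $\K^F$-classes of $e$-cuspidal pairs of $\K$ with $\bl(\lambda)^{\K^F}=c$, and then intersect with $\E(\K^F,[t])$ via $\E(\K^F,c,[t])=\E(\K^F,[t])\cap\irr(c)$. For a fixed such pair $(\L,\lambda)$, choosing a semisimple $s_\lambda\in\L^{*F^*}$ with $\lambda\in\E(\L^F,[s_\lambda])$, every constituent $\kappa$ of $\R_\L^\K(\lambda)$ satisfies $(\L,\lambda)\leq_e(\K,\kappa)$, so $(\K,\kappa)$ is again an $(e,s_\lambda)$-pair by \cite[Proposition 15.7]{Cab-Eng04}; hence $\E(\K^F,(\L,\lambda))\subseteq\E(\K^F,[s_\lambda])$. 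Since distinct rational Lusztig series of $\K^F$ are disjoint, $\E(\K^F,[t])\cap\E(\K^F,(\L,\lambda))$ equals $\E(\K^F,(\L,\lambda))$ when $s_\lambda$ is $\K^{*F^*}$-conjugate to $t$, and is empty otherwise. This exhibits $\E(\K^F,c,[t])$ as the disjoint union (a sub-union of a disjoint union, cf.\ Proposition \ref{prop:e-Harish-Chandra series, disjointness}) of the series $\E(\K^F,(\L,\lambda))$ over $\K^F$-classes of $e$-cuspidal pairs with $\bl(\lambda)^{\K^F}=c$ and $s_\lambda$ conjugate to $t$ in $\K^{*F^*}$.

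\textbf{Matching the index sets.} The last step is to recognise this index set as the one in the statement. By Definition \ref{def:(e,s)-pair}, an $e$-cuspidal pair $(\L,\lambda)$ of $\K$ with $\lambda\in\E(\L^F,[s_\lambda])$ and $s_\lambda$ being $\K^{*F^*}$-conjugate to $t$ is exactly an $(e,t)$-cuspidal pair of $\K$; and for such a pair the conditions $\L\leq\K$ and ``$s_\lambda$, $t$ are $\K^{*F^*}$-conjugate'' hold by construction, so by the definition of $\leq$ on $\BL^\vee(\G,F)$ the relation $(\L,\bl(\lambda),[s_\lambda])\leq(\K,c,[t])$ amounts to the single requirement $\bl(\lambda)^{\K^F}=c$. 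This is the remaining condition from the previous step, so the two index sets coincide. (Lemma \ref{lem:Brauer--Lusztig triples vs pairs for all elements} furnishes the same dictionary and could be quoted in place of the direct unpacking.)

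\textbf{Expected obstacle.} I do not anticipate a genuinely hard step: the real content lies in Theorem \ref{thm:Blocks are unions of e-HC series} and in the transitivity results of Section \ref{sec:transitivity}. The only points demanding care are the compatibility of Deligne--Lusztig induction with rational Lusztig series invoked in the execution step, and the translation between the partial order on Brauer--Lusztig triples and the relation $\ll_e$ on $e$-pairs used to match the index sets; both are routine once the correspondences of Lemma \ref{lem:Brauer--Lusztig triples vs pairs for all elements} are in hand.
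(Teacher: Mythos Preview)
Your proposal is correct and follows essentially the same route as the paper's proof: both establish the inclusion $\E(\K^F,(\L,\lambda))\subseteq\E(\K^F,c,[t])$ via \cite[Proposition 15.7]{Cab-Eng04} and Proposition \ref{prop:e-Harish-Chandra series and blocks}, the reverse inclusion via Corollary \ref{cor:e-Harish-Chandra, disjointness}, and disjointness via Proposition \ref{prop:e-Harish-Chandra series, disjointness}. The only organisational difference is that you quote Theorem \ref{thm:Blocks are unions of e-HC series} as a black box and then intersect with the Lusztig series, whereas the paper re-invokes those same ingredients directly; this is a matter of packaging, not substance.
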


\begin{proof}
Consider an $e$-cuspidal pair $(\L,\lambda)$ such that $(\L,\bl(\lambda),[s])\leq (\K,c,[t])$, where $s\in\L^{*F^*}_{\rm ss}$ and $\lambda\in\E(\L^F,[s])$. Since $s$ and $t$ are $\K^{*F^*}$-conjugate, \cite[Proposition 15.7]{Cab-Eng04} implies that $\E(\K^F,(\L,\lambda))\subseteq \E(\K^F,[t])$. Moreover, using the fact that $c=\bl(\lambda)^{\K^F}$, Proposition \ref{prop:e-Harish-Chandra series and blocks} shows that the $e$-Harish-Chandra series $\E(\K^F,(\L,\lambda))$ is contained in $\irr(c)$. This shows that the union on the right hand side of \eqref{eq:Brauer-Lusztig blocks are unions of e-HC series} is contained in the Brauer--Lusztig block $\E(\K^F,c,[t])$. Moreover the union is disjoint by Proposition \ref{prop:e-Harish-Chandra series, disjointness}. To conclude, let $\kappa\in\E(\K^F,c,[t])$ and notice that there exists an $e$-cuspidal pair $(\L,\lambda)$ of $\K$ such that $\kappa\in\irr(\R_\L^\K(\lambda))$ by Corollary \ref{cor:e-Harish-Chandra, disjointness}. If $\lambda\in\E(\L^F,[s])$, then $s$ and $t$ are $\K^{*F^*}$-conjugate by \cite[Proposition 15.7]{Cab-Eng04}. Moreover, $c=\bl(\kappa)=\bl(\lambda)^{\K^F}$ by Proposition \ref{prop:e-Harish-Chandra series and blocks}. It follows that $(\L,\bl(\lambda),[s])\leq (\K,c,[t])$.
\end{proof}


As we have mentioned before, the results obtained by Cabanes and Enguehard have been extended to all primes by Kessar and Malle in \cite{Kes-Mal15} and the reader might wonder why we are not considering this more general situation. Unfortunately, many of the techniques used in this section fail for bad primes and a different proof needs to be found in this case.

\subsection{Defect zero characters and $e$-cuspidal pairs}

Recall that for an irreducible character $\chi$ of a finite group $G$, the $\ell$-defect of $\chi$ is the non-negative integer $d(\chi)$ defined by $\ell^{d(\chi)}\chi(1)_\ell=|G|_\ell$. Our next result shows a necessary condition for an $e$-cuspidal character of $\G$ to be of $\ell$-defect zero.

\begin{prop}
\label{prop:Minimal Brauer--Lusztig triple general}
Suppose that $\ell$ is large for $\G$ and $(\G,F,e)$-adapted with $\z(\G^*)^{F^*}_\ell=1$. If $\chi$ is an $e$-cuspidal pair of $\G$, then $\chi$ has $\ell$-defect zero. In particular $\chi\in\E(\G^F,\ell')$.
\end{prop}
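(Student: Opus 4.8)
Throughout I read the hypothesis ``$\chi$ is an $e$-cuspidal pair of $\G$'' as ``$(\G,\chi)$ is an $e$-cuspidal pair'', i.e.\ $\chi\in\irr(\G^F)$ is $e$-cuspidal; the plan is to show that $\bl(\chi)$ is a block of $\ell$-defect zero, the membership $\chi\in\E(\G^F,\ell')$ dropping out en route. \emph{Step 1: reduce to an $\ell$-regular series.} Write $\chi\in\E(\G^F,[s])$ with $s\in\G^{*F^*}$ semisimple and apply Lemma \ref{lem:Jordan decomposition for l-elements} with $\L=\G$: this produces an $E_{q,\ell}$-split Levi subgroup $\G(s_\ell)$ of $\G$, an $(e,s_{\ell'})$-pair $(\G(s_\ell),\chi(s_\ell))$ and a linear character $\wh{s_\ell}$ of $\G(s_\ell)^F$ with $\chi=\pm\R_{\G(s_\ell)}^{\G}(\chi(s_\ell)\cdot\wh{s_\ell})$. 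Because $\ell$ is large and $(\G,F,e)$-adapted, $\Phi_e$ is the unique cyclotomic polynomial dividing $P_{(\G,F)}$ with $\ell\mid\Phi_e(q)$; since $\ell\mid\Phi_{e\ell^{m}}(q)$ for every $m\geq1$, no $\Phi_{e\ell^{m}}$ with $m\geq1$ divides $P_{(\G,F)}$, so every $\Phi_{E_{q,\ell}}$-torus of $\G$ is a $\Phi_e$-torus and the $E_{q,\ell}$-split Levi subgroups of $\G$ are exactly the $e$-split ones. In particular $\G(s_\ell)$ is $e$-split, and Lemma \ref{lem:Jordan decomposition for l-elements, order relation}, applied to the $e$-cuspidal pair $(\G,\chi)$, yields $\z^\circ(\G(s_\ell))_{\Phi_e}=\z^\circ(\G)_{\Phi_e}\leq\z(\G)$; hence $\G(s_\ell)=\c_\G\big(\z^\circ(\G(s_\ell))_{\Phi_e}\big)=\G$ (Lemma \ref{lem:e-split Levi}), so $\c_{\G^*}^\circ(s_\ell)=\G^*$ and $s_\ell\in\z(\G^*)^{F^*}_\ell=1$. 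Thus $s$ is $\ell$-regular and $\chi\in\E(\G^F,[s])\subseteq\E(\G^F,\ell')$, which is the ``in particular''.

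\emph{Step 2: $\z^\circ(\G)_{\Phi_e}=1$.} As $\ell$ is large, $\ell\in\Gamma(\G,F)$ (\cite[Lemma 2.1]{Mal14}), hence $\ell\in\gamma(\G^*,F^*)$ and $\ell\nmid|\z(\G^*)^{F^*}:\z^\circ(\G^*)^{F^*}|$, so $\z^\circ(\G^*)^{F^*}_\ell=\z(\G^*)^{F^*}_\ell=1$. The central tori $\z^\circ(\G)$ and $\z^\circ(\G^*)$ have equal order polynomials (both equal $P_{(\G,F)}/P_{[\G,\G]}$, using that dual reductive groups, and semisimple groups dual to one another, have coinciding order polynomials), so $\ell\nmid|\z^\circ(\G)^F|$; since $\Phi_e$ is the only cyclotomic divisor $\Phi_d$ of $P_{(\G,F)}$ with $\ell\mid\Phi_d(q)$ and $\ell\mid\Phi_e(q)$, this forces $\Phi_e\nmid P_{\z^\circ(\G)}$, i.e.\ $\z^\circ(\G)_{\Phi_e}=1$. \emph{Step 3: conclude.} By Step 1 and Proposition \ref{prop:e-Harish-Chandra series, disjointness}, $(\G,\chi)$ is the unique $e$-cuspidal pair below $(\G,\chi)$, so by Theorem \ref{thm:Blocks are unions of e-HC series} one has $\bl(\chi)=b_{\G^F}(\G,\chi)$; for large $\ell$ a defect group of this block is $\G^F$-conjugate to $(\z^\circ(\G)_{\Phi_e})^F_\ell$ (the specialisation of the generic $\Phi_e$-defect group attached to the $e$-cuspidal datum; cf.\ \cite{Bro-Mal-Mic93}, \cite{Cab-Eng94}, \cite{Cab-Eng99}), which is trivial by Step 2. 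Hence $\bl(\chi)$ has defect zero and $\chi$, being its unique irreducible character, has $\ell$-defect zero.

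\emph{On the main difficulty.} The crux is Step 1: one has to combine the two Jordan-decomposition lemmas with the coincidence ``$E_{q,\ell}$-split $=$ $e$-split'' — valid precisely because $\ell$ is $(\G,F,e)$-adapted — in order to force the $\ell$-part $s_\ell$ of the underlying semisimple element to be central, hence trivial. After that the remaining content is a polynomial identity ($P_{\z^\circ(\G)}=P_{\z^\circ(\G^*)}$) together with the description of defect groups of $e$-cuspidal blocks for large primes. If one prefers to avoid citing that defect-group description, Step 3 may be replaced by a degree count: Jordan decomposition of character degrees gives $\chi(1)=[\G^{*F^*}:\c_{\G^*}(s)^{F^*}]_{p'}\,\psi(1)$ with $\psi$ a unipotent character of $\c_{\G^*}(s)^{F^*}$; $e$-cuspidality of $\chi$ forces $\z^\circ(\c_{\G^*}(s))_{\Phi_e}=1$ (otherwise $\chi=\pm\R_{\M}^{\G}(\mu)$ for a proper $e$-split Levi $\M$ by \cite[Theorem 8.27]{Cab-Eng04}, against cuspidality), and the unipotent case of the statement applied to $\c_{\G^*}(s)$ then gives $\psi(1)_\ell=|\c_{\G^*}(s)^{F^*}|_\ell$, whence $\chi(1)_\ell=|\G^F|_\ell$; here the delicate point is handling a possibly disconnected centraliser $\c_{\G^*}(s)$.
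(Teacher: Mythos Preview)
Your approach is correct but genuinely different from the paper's. The paper works via a degree computation on the dual side: the Jordan decomposition of character degrees together with the unipotent defect-group description \cite[Theorem (ii)]{Cab-Eng94} reduces the claim to $\z^\circ(\c_{\G^*}^\circ(s))^{F^*}_\ell=1$, and this follows because its centraliser in $\G^*$ is an $e$-split Levi containing $\c_{\G^*}^\circ(s)$ (Proposition~\ref{prop:e-split Levi large primes}(i)) while \cite[Proposition 1.10]{Cab-Eng99} forces any such Levi to equal $\G^*$; membership in $\E(\G^F,\ell')$ is then deduced \emph{a posteriori} from defect zero via \cite{His90}. Your Step~1 reverses the order and establishes $s_\ell=1$ directly, via the neat observation that for $(\G,F,e)$-adapted primes ``$E_{q,\ell}$-split'' coincides with ``$e$-split'', so that Lemma~\ref{lem:Jordan decomposition for l-elements, order relation} forces $\G(s_\ell)=\G$; this is an elegant route to the ``in particular''. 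Two caveats: Lemmas~\ref{lem:Jordan decomposition for l-elements}--\ref{lem:Jordan decomposition for l-elements, order relation} are stated under Hypothesis~\ref{hyp:Brauer--Lusztig blocks}(i), which is slightly stronger than what is assumed here (though their proofs only use $\ell\in\Gamma(\G,F)$, so this is cosmetic); and your Step~3 defect-group claim for $b_{\G^F}(\G,\chi)$ is cited imprecisely---extracting the statement you need for an arbitrary $(e,\ell')$-cuspidal character with $\L=\G$ from the references given takes a little more work than indicated, and your Step~2 then has to transfer the hypothesis from the dual group to $\G$ itself. The alternative degree-count you sketch at the end is, in fact, essentially the paper's own argument.
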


\begin{proof}
Let $s\in\G^{*F^*}$ such that $\chi\in\E(\G^F,[s])$. By Jordan decomposition (see \cite[Theorem 2.6.22 and Remark 2.6.26]{Gec-Mal20}), $\chi$ corresponds to a unique $\chi(s)\in\E(\c_{\G^*}(s)^{F^*},1)$ lying over some unipotent character $\chi^\circ(s)\in\E(\c_{\G^*}^\circ(s)^{F^*},1)$. Notice that
\[\chi(1)_\ell=\dfrac{|\G^F|_\ell}{|\c_{\G^*}(s)^{F^*}|_\ell}\chi(s)(1)_\ell.\]
Since $\ell$ is large for $\G$ we deduce that $\ell$ does not divide $|\z(\G^*)^{F^*}:\z^\circ(\G^*)^{F^*}|$ by \cite[Lemma 2.1]{Mal14}. Now, since $|\c_{\G^*}(s)^F:\c_{\G^*}^\circ(s)^F|$ divides $|\z(\G^*)^{F^*}:\z^\circ(\G^*)^{F^*}|$ by \cite[Lemma 11.2.1 (iii)]{Dig-Mic20}, Clifford's theorem implies
\begin{equation}
\label{eq:Minimal Brauer--Lusztig triple general 1}
\chi(1)_\ell=\dfrac{|\G^F|_\ell}{|\c_{\G^*}^\circ(s)^{F^*}|_\ell}\chi^\circ(s)(1)_\ell.
\end{equation}
Set $\H:=\c_{\G^*}^\circ(s)$ and notice that, by \cite[Theorem (ii)]{Cab-Eng94}, the block $\bl(\chi^\circ(s))$ has defect group $D\in\syl_\ell(\c_{\H}^\circ([\H,\H])^{F^*})$. Since $\H=\z^\circ(\H)[\H,\H]$, it follows that $\c_{\H}^\circ([\H,\H])=\z^\circ(\H)$. Thus $D\leq \z(\H)^{F^*}\leq \z(\H^{F^*})$ and, using \cite[Theorem 9.12]{Nav98}, we obtain
\[\chi^\circ(s)(1)_\ell=|\H^{F^*}:D|_\ell.\]
This implies
\begin{equation}
\label{eq:Minimal Brauer--Lusztig triple general 2}
\chi^\circ(s)(1)_\ell=|\H^{F^*}:\z^\circ(\H)^{F^*}|_\ell.
\end{equation}
Combining \eqref{eq:Minimal Brauer--Lusztig triple general 1} and \eqref{eq:Minimal Brauer--Lusztig triple general 2} we see that it is enough to show that $Z:=\z^\circ(\H)^{F^*}_\ell=1$. To do so, observe that $\z^\circ(\G^*)_{\Phi_e}=\z^\circ(\H)_{\Phi_e}$ by \cite[Proposition 1.10]{Cab-Eng99}. In particular, for every $e$-split Levi subgroup $\K^*$ of $\G^*$ containing $\H$, we have $\K^*=\G^*$. Notice that $\H\leq \c_{\G^*}(Z)$ and that $\c_{\G^*}(Z)$ is an $e$-split Levi subgroup of $\G^*$ by Proposition \ref{prop:e-split Levi large primes} (i). Therefore $\c_{\G^*}(Z)=\G^*$ and $Z\leq \z(\G^*)^{F^*}_\ell=1$. This shows that $\chi$ has defect zero. To conclude, we notice that $\chi$ is the only character in its block $B$ while using \cite{His90} we obtain $\irr(B)\cap \E(\G^F,\ell')\neq \emptyset$. It follows that $\chi\in\E(\G^F,\ell')$.
\end{proof}

\section{New conjectures for finite reductive groups}
\label{sec:New conjectures}

For any prime number $\ell$, it is expected that the $\ell$-modular representation theory of a finite group is strongly determined by its $\ell$-local structure. This belief is supported by numerous results and conjectural evidences. If $\G^F$ is a finite reductive group defined over $\mathbb{F}_q$ with $\ell$ not dividing $q$, then its $\ell$-local structure is closely related to its $e$-local structure where $e$ is the order of $q$ modulo $\ell$ (modulo $4$ if $\ell=2$). For instance, under suitable restrictions on the prime $\ell$, every $e$-split Levi subgroup $\L$ of $\G$ gives rise to an $\ell$-local subgroup of $\G^F$. Namely, $\L^F=\c_{\G^F}(\z(\L)^F_\ell)$ is the centraliser of an $\ell$-subgroup in $\G^F$ (see Lemma \ref{lem:e-split Levi} and Lemma \ref{prop:Good primes} (i)). Using this idea, we can then try to determine a link between the $\ell$-modular representation theory of $\G^F$ and the $e$-local structure of $\G$. In this section we propose new conjectures that can be seen as analogues for finite reductive groups of the so-called counting conjectures. In Section \ref{sec:Towards Dade's Projective Conjecture} we compare our statements with the counting conjectures and show that they coincide whenever the prime $\ell$ is large enough.

\subsection{Counting characters $e$-locally}
\label{sec:Counting characters e-locally}

Let $\G$, $F$, $q$, $\ell$ and $e$ be as in Notation \ref{notation}. Denote by $\CL(\G,F)$, or simply by $\CL(\G)$ when $F$ is clear from the context, the set of descending chains of $e$-split Levi subgroups $\sigma=\{\G=\L_0>\L_1\>\dots>\L_n\}$ and define the length of $\sigma$ as $|\sigma|:=n$. We denote by $\L(\sigma)$ the smallest term of the chain $\sigma$. Consider the subset $\CL(\G)_{>0}$ of all chains of positive length. Notice that $\G^F$ acts on $\CL(\G)$ and on $\CL(\G)_{>0}$ and denote by $\G^F_\sigma$ the stabiliser in $\G^F$ of any chain $\sigma$.

Next, using \cite[Theorem 2.5]{Cab-Eng99} and \cite[Theorem 3.4]{Kes-Mal15}, we associate to every block $b$ of $\G_\sigma^F$ a uniquely determined block of $\G^F$. In order to apply \cite[Theorem 3.4]{Kes-Mal15}, we assume that $\G$ is an $F$-stable Levi subgroup of a simple simply connected group whenever $\ell$ is bad for $\G$. Let $\L:=\L(\sigma)$ be the smallest term of $\sigma$ and consider a block $b_\L$ of $\L^F$ covered by $b$. By \cite[Theorem 2.5]{Cab-Eng99} and \cite[Theorem 3.4]{Kes-Mal15}, we deduce that there exists a unique block of $\G^F$, denoted by $\R_{\L}^\G(b_\L)$ (see \cite[Notation 2.6]{Cab-Eng99}), containing all irreducible constituents of $\R_{\L\leq \P}^\G(\lambda)$ for every parabolic subgroup $\P$ of $\G$ with Levi complement $\L$ and every $\lambda\in\irr(b_\L)\cap \E(\L^F,\ell')$. Since $b_\L$ is uniquely determined by $b$ up to $\G_\sigma^F$-conjugation, the block
\[\R_{\G_\sigma}^\G(b):=\R_\L^\G(b_\L)\]
of $\G^F$ is well defined. Now, for every $\ell$-block $B$ of $\G^F$ and $d\geq 0$, we denote by $\k^d(B_\sigma)$ the cardinality of the set
\[\irr^d(B_\sigma):=\left\lbrace\vartheta\in\irr\left(\G^F_\sigma\right)\hspace{1pt}\middle|\hspace{1pt} d(\vartheta)=d, \R_{\G_\sigma}^\G\left(\bl(\vartheta)\right)=B\right\rbrace.\]
Moreover, we denote by $\k^d(B)$ the cardinality of the set $\irr^d(B)$ consisting of the irreducible characters of $B$ with defect $d$ and by $\k_{\rm c}^d(B)$ be the number of $e$-cuspidal characters in $\irr^d(B)$.

With the above notation, we can present our conjecture which proposes a formula to count the number of characters of a given defect in a block in terms of $e$-local data. Notice that, since $e$-cuspidal characters are minimal with respect to the $e$-structure of $\G$, they should be interpreted as $e$-local objects. Here, we use the term $e$-local structure to indicate the collection of (proper) $e$-split Levi subgroups of $\G$ together with their normalisers and their intersections. 

\begin{conj}
\label{conj:Dade reductive}
Let $B$ be an $\ell$-block of $\G^F$ and $d\geq 0$. Then
\[\k^d(B)=\k_{\rm c}^d(B)+\sum\limits_\sigma(-1)^{|\sigma|+1}\k^d(B_\sigma)\]
where $\sigma$ runs over a set of representatives for the action of $\G^F$ on $\CL(\G)_{>0}$.
\end{conj}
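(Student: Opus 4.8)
Since Conjecture~\ref{conj:Dade reductive} is itself a conjecture, the route I would take is not to attack it head-on but to deduce it --- under Hypothesis~\ref{hyp:Brauer--Lusztig blocks}, and ultimately under the additional input of Parametrisation~\ref{para:Main iEBC} --- from the more structural Conjecture~\ref{conj:CTC reductive}, exactly as the classical counting conjectures are obtained from Sp\"ath's Character Triple Conjecture. Concretely, I would first set up the bookkeeping that, for each $\ell$-block $B$, each $d\geq 0$ and each sign $\epsilon=\pm$, produces a finite set $\CL^d(B)_\epsilon/\G^F$ of $\G^F$-orbits of quadruples $\omega$, each $\omega$ determining a $\G^F$-orbit $\omega^\bullet$ of pairs $(\sigma,\vartheta)$ with $\vartheta\in\irr(\G_\sigma^F)$ of $\ell$-defect $d$ and $\R_{\G_\sigma}^\G(\bl(\vartheta))=B$, and with the sign $\epsilon$ recording the parity of $|\sigma|$ (well defined since the chains occurring in a single orbit are $\G^F$-conjugate). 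The design goals are (a) for a fixed chain $\sigma$ of positive length, $\k^d(B_\sigma)$ equals the number of $\omega$ sitting over $\sigma$, and (b) over the trivial chain $\sigma_0=\{\G\}$ the number of $\omega$ is $\k^d(B)-\k_{\rm c}^d(B)$, the $e$-cuspidal characters being deliberately excluded. It is precisely here that Theorem~\ref{thm:Main Brauer--Lusztig blocks} enters: it identifies the relevant part of $\irr(\G_\sigma^F)$ with $e$-Harish-Chandra series attached to $e$-cuspidal pairs, so that ``$\ell$-defect $d$, in the block corresponding to $B$'' becomes a condition one can propagate along a chain. Granting (a)--(b), a short computation with the alternating sum turns the identity of Conjecture~\ref{conj:Dade reductive} into the bare equality $|\CL^d(B)_+/\G^F|=|\CL^d(B)_-/\G^F|$, which is implied by --- indeed is much weaker than --- the existence of the $\aut_\mathbb{F}(\G^F)_B$-equivariant, character-triple-preserving bijection $\Lambda$ asserted in Conjecture~\ref{conj:CTC reductive}. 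This step is Theorem~\ref{thm:CTC reductive imples Dade reductive}.

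\textbf{Constructing $\Lambda$ from the parametrisation.} To produce the bijection $\Lambda$ I would reduce to the case where $\G$ is simply connected (via a regular embedding and the analysis of automorphisms and central subgroups of Section~\ref{sec:Automorphisms}, in particular Lemma~\ref{lem:Centralizer, automorphisms and central subgroups}) and then build $\Lambda$ one $e$-cuspidal pair $(\L,\lambda)$ at a time. For such a pair, Parametrisation~\ref{para:Main iEBC} supplies a defect-preserving, $\aut_\mathbb{F}(\G^F)_{(\L,\lambda)}$-equivariant bijection $\E(\G^F,(\L,\lambda))\to\irr(\n_\G(\L)^F\mid\lambda)$ together with a $\G^F$-block isomorphism of character triples. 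Applied iteratively along a chain $\sigma$ whose smallest term is $\L$ --- at each step passing from a character of one chain stabiliser to a character of the next, smaller one --- this attaches to a character of $\G^F$ in $\E(\G^F,(\L,\lambda))$ a coherent family of characters of all the stabilisers $\G_\sigma^F$, and Theorem~\ref{thm:Main Brauer--Lusztig blocks} guarantees that this construction respects the defect and block bookkeeping set up above. The sign-reversing matching $\Lambda$ is then obtained by the standard device on the poset of chains below a fixed $\n_\G(\L)$ (adjoining or deleting the top proper term), as in the Kn\"orr--Robinson reformulation of Dade's Conjecture. This two-part programme is what Theorem~\ref{thm:Main reduction} carries out.

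\textbf{The main obstacle.} The delicate point is not the chain combinatorics but the coherence of the $\G^F$-block isomorphisms of character triples along an entire chain, simultaneously with $\aut_\mathbb{F}(\G^F)$-equivariance: each application of Parametrisation~\ref{para:Main iEBC} is equivariant only for the stabiliser of the $e$-cuspidal pair it concerns, so composing several of them forces one to control how these stabilisers shrink down the chain and how the corresponding character triples compose. This is exactly where simple connectedness and the precise description of $\c_{\wt{\G}^F\mathcal{A}}(\G^F)$ (Lemma~\ref{lem:Centralizer, automorphisms and central subgroups}) become indispensable, and it is the heart of the proof of Theorem~\ref{thm:Main reduction}. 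A secondary limitation is that Theorem~\ref{thm:Main Brauer--Lusztig blocks}, and hence this whole approach, requires Hypothesis~\ref{hyp:Brauer--Lusztig blocks}, so the argument as sketched establishes Conjecture~\ref{conj:Dade reductive} only for $\ell\geq 5$ in $\Gamma(\G,F)$ with the usual $\mathbf{E}_\bullet(2)$ exclusions; the bad-prime case would require a substitute for the block-theoretic input of that theorem.
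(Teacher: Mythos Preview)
Your two-step strategy --- deduce Conjecture~\ref{conj:Dade reductive} from Conjecture~\ref{conj:CTC reductive} (this is Theorem~\ref{thm:CTC reductive imples Dade reductive}), then establish Conjecture~\ref{conj:CTC reductive} from Parametrisation~\ref{para:iEBC} (this is Theorem~\ref{thm:Reduction for CTC for groups of Lie type}) --- is exactly the paper's. A few details of your sketch diverge from the actual proofs: simple connectedness of $\G$ is a \emph{hypothesis} of Theorem~\ref{thm:Reduction for CTC for groups of Lie type}, not something one reduces to (a regular embedding does not produce it, and Lemma~\ref{lem:Centralizer, automorphisms and central subgroups} is invoked only later, in Corollary~\ref{cor:Equivalence iDade}); the sign-reversing involution on chains adds or deletes the $e$-cuspidal Levi $\M$ at the \emph{bottom} of the chain, not the top; and Parametrisation~\ref{para:iEBC} is not iterated along the chain but lifted in a single step from the smallest term to the full chain stabiliser via Proposition~\ref{prop:Constructing bijections over bijections with central quotients} and Proposition~\ref{prop:iEBC going up}.
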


Our statement can be considered as an adaptation of Dade's Conjecture to finite reductive groups. In fact, in Section \ref{sec:Towards Dade's Projective Conjecture} we show the two statements coincide when the prime $\ell$ is large for $\G$ (see Proposition \ref{prop:Equivalences Dade}). We also notice that by considering the contribution of characters of any defect, we can state a weak version of Conjecture \ref{conj:Dade reductive} which could be interpreted as an analogue of Alperin's Weight Conjecture in the formulation given by Kn\"orr and Robinson \cite[Theorem 3.8 and Theorem 4.6]{Kno-Rob89} (see also \cite[Theorem 10.7.1 (iv)]{Lin18II}). For this, we define $\k(B):=\sum_d\k^d(B)$, $\k_{\rm c}^d(B)=\sum_d\k_{\rm c}^d(B)$ and $\k(B_\sigma):=\sum_d\k^d(B_\sigma)$.

\begin{conj}
\label{conj:AWC reductive}
Let $B$ be an $\ell$-block of $\G^F$. Then
\[\k(B)=\k_{\rm c}(B)+\sum\limits_\sigma(-1)^{|\sigma|+1}\k(B_\sigma)\]
where $\sigma$ runs over a set of representatives for the action of $\G^F$ on $\CL(\G)_{>0}$.
\end{conj}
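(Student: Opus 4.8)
Since Conjecture \ref{conj:AWC reductive} is the weak form of Conjecture \ref{conj:Dade reductive} --- an analogue of Alperin's Weight Conjecture in the formulation of Kn\"orr--Robinson \cite{Kno-Rob89} --- the plan is simply to derive it from the latter by collapsing the defect grading. Fix an $\ell$-block $B$ of $\G^F$. Because $\G^F$ is finite, so is every chain stabiliser $\G_\sigma^F$, and hence only finitely many non-negative integers $d$ occur as $\ell$-defects of irreducible characters of $\G^F$ or of any $\G_\sigma^F$; thus each of the series $\k(B)=\sum_{d\geq 0}\k^d(B)$, $\k_{\rm c}(B)=\sum_{d\geq 0}\k_{\rm c}^d(B)$ and $\k(B_\sigma)=\sum_{d\geq 0}\k^d(B_\sigma)$ has only finitely many non-zero terms. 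Granting the identity of Conjecture \ref{conj:Dade reductive} for $B$ and every $d\geq 0$, and writing $\mathcal{T}$ for a $\G^F$-transversal in $\CL(\G)_{>0}$, one sums over $d$; since both the sum over $d$ and the alternating sum over $\mathcal{T}$ are finite they may be interchanged, giving
\[
\k(B)=\sum_{d\geq 0}\k^d(B)=\sum_{d\geq 0}\Bigl(\k_{\rm c}^d(B)+\sum_{\sigma\in\mathcal{T}}(-1)^{|\sigma|+1}\k^d(B_\sigma)\Bigr)=\k_{\rm c}(B)+\sum_{\sigma\in\mathcal{T}}(-1)^{|\sigma|+1}\k(B_\sigma),
\]
which is exactly Conjecture \ref{conj:AWC reductive}. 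The only thing to verify is therefore the elementary bookkeeping that everything in sight is finite and that a finite double sum may be reorganised.

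One could instead look for an unconditional proof by treating Conjecture \ref{conj:AWC reductive} as a genuine $\Phi_e$-analogue of Alperin's Weight Conjecture, with $e$-cuspidal characters playing the role of $\ell$-weights and chains of $e$-split Levi subgroups the role of chains of non-trivial $\ell$-subgroups. Carrying this out would require a homotopy equivalence between the Brown complex of $\G^F$ and the simplicial complex attached to $\CL(\G)_{>0}$ (the equivalence suspected in the paper), together with the local dictionary $\L^F=\c_{\G^F}(\z(\L)^F_\ell)$ and the block map $\R^\G_\L$ of Corollary \ref{cor:e-Harish-Chandra series and blocks}. I would not go this way: such an argument is at least as hard as the Dade-type statement itself, and the weak form is only asserted here as a consequence of the strong one.

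Accordingly, the real obstacle lies not in the deduction above --- there is none --- but in Conjecture \ref{conj:Dade reductive}, which remains open. What the present paper does with it is reduce \emph{that} conjecture, through Theorem \ref{thm:Main CTC reductive implies Dade reductive} and Theorem \ref{thm:Main reduction} (with the Clifford-theoretic input of Parametrisation \ref{para:Main iEBC}), to extendibility conditions for characters of $e$-split Levi subgroups of irreducible rational type; dropping the defect constraint from that reduction delivers Conjecture \ref{conj:AWC reductive} as well.
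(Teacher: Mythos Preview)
The statement is a \emph{conjecture} in the paper, not a theorem; there is no proof to compare against. The paper introduces it precisely as the weak form of Conjecture~\ref{conj:Dade reductive} obtained by summing over all defects $d$ (it even defines $\k(B):=\sum_d\k^d(B)$, $\k_{\rm c}(B):=\sum_d\k_{\rm c}^d(B)$ and $\k(B_\sigma):=\sum_d\k^d(B_\sigma)$ immediately before stating it), which is exactly the conditional derivation you give. Your assessment that the obstacle lies entirely in Conjecture~\ref{conj:Dade reductive}, and that the paper's contribution is to reduce \emph{that} to Parametrisation~\ref{para:Main iEBC} via Theorems~\ref{thm:Main CTC reductive implies Dade reductive} and~\ref{thm:Main reduction}, is accurate and matches the paper's own framing.
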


In Section \ref{sec:Towards Dade's Projective Conjecture} we show that Conjecture \ref{conj:AWC reductive} is equivalent to Alperin's Weight Conjecture for finite reductive groups and large primes. Indeed, in this case these two statment coincide with both Conjecture \ref{conj:Dade reductive} and Dade's Conjecture (see Proposition \ref{prop:Equivalences Dade}). As it was the case the Kn\"orr--Robinson reformulation of Alperin's Weight Conjecture, it is natural to ask whether the alternating sums presented in Conjecture \ref{conj:Dade reductive} and Conjecture \ref{conj:AWC reductive} can be expressed as the Euler characteristic of a chain complex. Such a complex has been constructed by using Bredon cohomology for Alperin's Weight Conjecture (see \cite{Sym05} and \cite{Lin05}).

\begin{rmk}
Assume that $\k^d_{\rm c}(B)\neq 0$ for some $d\geq 0$ and let $\chi$ be an $e$-cuspidal character in $\irr^d(B)$. If $\ell$ is large for $\G$ and $(\G,F,e)$-adapted with $\z(\G^*)^{F^*}_\ell=1$, then Proposition \ref{prop:Minimal Brauer--Lusztig triple general} implies that $\chi$ has defect zero and therefore $d=0$ and $\irr^d(B)=\irr(B)=\{\chi\}$. As a consequence $\k^d(B)=\k^d_{\rm c}(B)=1=\k(B)=\k_{\rm c}(B)$ and, we must have
\[\sum\limits_\sigma(-1)^{|\sigma|}\k^d(B_\sigma)=0.\]
We claim that in this case we even have $\k(B_\sigma)=0$ for every $\sigma\in\CL(\G)_{>0}$. In fact, if $\sigma$ has smallest term $\L:=\L(\sigma)<\G$ and $\k^d(B_\sigma)\neq 0$, then there exists a block $b_\L$ of $\L^F$ such that $\irr(B)$ contains all constituent of $\R_\L^\G(\lambda)$ for any $\lambda\in\irr(b_\L)\cap \E(\L^F,\ell')$. Since $\chi$ is the only character in $B$, it follows that $(\L,\lambda)\leq_e(\G,\chi)$. Since $\chi$ is $e$-cuspidal, this is a contradiction and hence $\k^d(B_\sigma)=0$ for every $\sigma\in\CL(\G)_{>0}$. Notice that, if $\ell$ is $(\G,F,e_0)$-adapted for some $e_0\neq e$, then $\G$ has no proper $e$-split Levi subgroups and Conjecture \ref{conj:Dade reductive} holds trivially.
\end{rmk}



\subsection{Introducing $\G^F$-block isomorphisms of character triples}
\label{sec:CTC reductive}

Counting conjectures for finite groups provide numerical evidence that is believed to be consequence of an underlying structural theory. In this regard, Broué's Abelian Defect Group Conjecture \cite{Bro90} proposes a structural explanation when considering the case of blocks with abelian defect groups. In a similar fashion, although perhaps on a more superficial level, the introduction by Isaacs, Malle and Navarro \cite{Isa-Mal-Nav07} of the so-called inductive conditions for the counting conjectures has initiated a study of stronger conjectures which suggest a way to control Clifford theory via the use of relations on the set of character triples. This idea has been exploited further in \cite[Theorem 7.1]{Nav-Spa14I} for the Alperin--McKay Conjecture (see also \cite{Ros-iMcK} for the simpler McKay Conjecture) and in \cite[Conjecture 1.2]{Spa17} for Dade's Conjecture. In this section, we introduce $\G^F$-block isomorphisms of character triples in the context of Conjecture \ref{conj:Dade reductive} and Conjecture \ref{conj:AWC reductive}. The equivalence relation on character triples that we consider here is denoted by $\iso{\G^F}$ and has been introduced in \cite[Definition 3.6]{Spa17}. We refer the reader to that paper for further details. Before proceeding further, we mentioned that in order to obtain the conditions on defect groups necessary to define $\G^F$-block isomorphisms of character triples we must assume $\ell\in\Gamma(\G,F)$.

Recall from Section \ref{sec:transitivity} that, for a finite reductive group $\G^F$, we denote by $\CP_e(\G,F)$ the set of all $e$-cuspidal pairs $(\L,\lambda)$ of $(\G,F)$ and by $\CP_e(\G,F)_<$ the subset of $e$-cuspidal pairs $(\L,\lambda)$ with $\L<\G$. When $\ell\in\Gamma(\G,F)$ and $B$ is a block of $\G^F$, we define the subsets $\CP_e(B)$ and $\CP_e(B)_<$ consisting of those pairs $(\L,\lambda)$ in $\CP_e(\G,F)$ and $\CP_e(\G,F)_<$ respectively such that $\bl(\lambda)^{\G^F}=B$. Recall that block induction is defined in this case as explained in the discussion preceding Lemma \ref{lem:Jordan decomposition for l-elements, blocks}. As in \cite[Definition 2.18]{Bro-Mal-Mic93}, let $\abirr(\L^F)$ be the set of (linear) characters of $\L^F$ containing $[\L,\L]^F$ in their kernel and, for a fixed character $\lambda\in\irr(\L^F)$, define
\[\ab(\lambda):=\left\lbrace\lambda\eta\hspace{1pt}\middle|\hspace{1pt}\eta\in\abirr\left(\L^F\right)\right\rbrace\]
and
\[\E\left(\G^F,(\L,\ab(\lambda))\right):=\bigcup\limits_{\lambda'\in\ab(\lambda)}\E\left(\G^F,(\L,\lambda')\right).\]
By \cite[Proposition 12.1]{Bon06}, observe that if $\lambda$ is $e$-cuspidal then every character in $\ab(\lambda)$ is $e$-cuspidal. Finally, for every $B\in \Bl(\G^F)$, $d\geq 0$ and $\epsilon\in\{+,-\}$ we define
\[\CL^d(B)_\epsilon:=\left\lbrace(\sigma,\M,\ab(\mu),\vartheta)\hspace{1pt}\middle|\hspace{1pt} \substack{\sigma\in\CL(\G)_\epsilon, (\M,\mu)\in\CP_e(B)_<\text{ with } \M\leq \L(\sigma),\\ \vartheta\in\irr^d\left(B_\sigma\hspace{1pt}\middle|\hspace{1pt} \E\left(\L(\sigma)^F,(\M,\ab(\mu))\right)\right)}\right\rbrace,\]
where $\CL(\G)_\epsilon$ is the subset of $\CL(\G)$ consisting of those chains $\sigma$ satisfying $(-1)^{|\sigma|}=\epsilon 1$ and the set $\irr^d(B_\sigma\mid \E(\L(\sigma)^F,(\M,\ab(\mu))))$ consists of those characters $\vartheta\in\irr(\G^F_\sigma)$ lying over some character in $\E(\L(\sigma)^F,(\M,\ab(\mu)))$ and such that $d(\vartheta)=d$ and $\bl(\vartheta)^{\G^F}=B$. Notice that the group $\G^F$ acts by conjugation on $\CL^d(B)_\epsilon$ and denote by $\CL^d(B)_\epsilon/\G^F$ the corresponding set of $\G^F$-orbits. As usual, for $(\sigma,\M,\ab(\mu),\vartheta)\in\CL^d(B)_\epsilon$ we denote the corresponding $\G^F$-orbit by $\overline{(\sigma,\M,\ab(\mu),\vartheta)}$. Moreover, for every $\omega\in\CL^d(B)_\epsilon/\G^F$ we denote by $\omega^\bullet$ the $\G^F$-orbit of pairs $(\sigma,\vartheta)$ such that $(\sigma,\M,\ab(\mu),\vartheta)\in\omega$ for some $e$-cuspidal pair $(\M,\mu)$.

With the notation introduced above, we can now present a more conceptual framework for the conjectures presented in the previous section by considering bijections inducing $\G^F$-block isomorphisms of character triples.

\begin{conj}
\label{conj:CTC reductive}
Let $\ell\in\Gamma(\G,F)$ and consider a block $B$ of $\G^F$ and $d\geq 0$. There exists an $\aut_\mathbb{F}(\G^F)_B$-equivariant bijection
\[\Lambda:\CL^d\left(B\right)_+/\G^F\to\CL^d\left(B\right)_-/\G^F\]
such that
\[\left(X_{\sigma,\vartheta},\G^F_\sigma,\vartheta\right)\iso{\G^F}\left(X_{\rho,\chi},\G^F_\rho,\chi\right)\]
for every $\omega\in\CL^d(B)_+/\G^F$, any $(\sigma,\vartheta)\in\omega^\bullet$, $(\rho,\chi)\in\Lambda(\omega)^\bullet$ and where $X:=\G^F\rtimes \aut_\mathbb{F}(\G^F)$.
\end{conj}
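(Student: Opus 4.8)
I would prove this conjecture under the (expected, and in many cases known) hypotheses of Theorem~\ref{thm:Main reduction}: that $\G$ is simply connected, that Hypothesis~\ref{hyp:Brauer--Lusztig blocks} holds, and that Parametrisation~\ref{para:Main iEBC} holds at $\ell$ for every irreducible rational component of every $e$-split Levi subgroup of $\G$. The plan is to use Theorem~\ref{thm:Main Brauer--Lusztig blocks} to stratify $\CL^d(B)_\pm$ by $e$-cuspidal data, then to use Parametrisation~\ref{para:Main iEBC} to push the relevant characters down to normalisers of $e$-cuspidal Levi subgroups, and finally to match the $+$ and $-$ contributions by a sign-reversing involution on chains. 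As a preliminary reduction, since $\G$ is simply connected it is a direct product of $F$-orbits of its simple factors, and $e$-split Levi subgroups, chains, Deligne--Lusztig induction, $\ell$-blocks and the relation $\iso{\G^F}$ all behave multiplicatively along such a decomposition (with the usual wreath-product bookkeeping for $\aut_\mathbb{F}(\G^F)$); hence it is enough to treat $\G$ of irreducible rational type, for which every $e$-split Levi subgroup is a product of irreducible rational components and Parametrisation~\ref{para:Main iEBC} is available throughout.

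Fix $B$ and $d$. By the very definition of $\CL^d(B)_\epsilon$ and by Theorem~\ref{thm:Brauer-Lusztig blocks are unions of e-HC series}, each quadruple has its $e$-cuspidal datum $(\M,\ab(\mu))$ in $\CP_e(B)_<$, and since $\aut_\mathbb{F}(\G^F)$ permutes $e$-Harish-Chandra series compatibly with their $e$-cuspidal data (a consequence of the uniqueness in Theorem~\ref{thm:Main Brauer--Lusztig blocks}), the sets $\CL^d(B)_\pm/\G^F$ decompose as disjoint unions over the $X_B$-orbits of such pairs, where $X=\G^F\rtimes\aut_\mathbb{F}(\G^F)$. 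Thus it suffices, for a fixed $(\M,\mu)\in\CP_e(B)_<$, to construct a bijection between the $+$ and $-$ quadruples with that datum which is equivariant for the stabiliser of $(\M,\ab(\mu))$ in $X$ and which induces $\G^F$-block isomorphisms of the triples $(X_{\sigma,\vartheta},\G^F_\sigma,\vartheta)$. Here, for a chain $\sigma$ with $\M\le\L(\sigma)$, the characters in play are the $\vartheta\in\irr(\G^F_\sigma)$ of defect $d$ with $\R_{\G_\sigma}^\G(\bl(\vartheta))=B$ lying over $\E(\L(\sigma)^F,(\M,\ab(\mu)))$.

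The heart of the argument is a descent to $\n_\G(\M)$, which furnishes the correspondence of characters underlying $\Lambda$. For such a $\sigma$ and each $\mu'\in\ab(\mu)$, Parametrisation~\ref{para:Main iEBC} applied inside $\L(\sigma)$ to the $e$-cuspidal pair $(\M,\mu')$ gives a bijection $\E(\L(\sigma)^F,(\M,\mu'))\to\irr\big(\n_{\L(\sigma)}(\M)^F\,\big|\,\mu'\big)$ that is equivariant for $\aut_\mathbb{F}(\L(\sigma)^F)_{(\M,\mu')}$ and induces $\iso{\L(\sigma)^F}$. Gathering these over $\mu'\in\ab(\mu)$ and composing with the Clifford correspondence for $\L(\sigma)^F\unlhd\G^F_\sigma$ --- keeping careful track of character stabilisers, which match because of this equivariance and because, working modulo $\n_{\G^F}(\M)$, one may take $\M$ to be normalised --- I would translate each such $\vartheta$ into a character of a suitable subgroup of $\G^F_\sigma$ lying over $\irr(\n_{\L(\sigma)}(\M)^F\mid\ab(\mu))$, carrying $(X_{\sigma,\vartheta},\G^F_\sigma,\vartheta)$ to the corresponding triple up to $\iso{\G^F}$. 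This step relies on the transitivity of $\iso{\G^F}$ and its compatibility with the Clifford correspondence, with induction and with inflation (all from \cite{Spa17}); and to promote ordinary to block isomorphisms and to identify the image of $\bl(\vartheta)$ under $\R_{\G_\sigma}^\G$ after the translation, it uses Proposition~\ref{prop:e-Harish-Chandra series and blocks} and Corollary~\ref{cor:e-Harish-Chandra series and blocks}. After this descent, the whole problem is expressed in terms of the chains of $e$-split Levi subgroups of $\G$ lying between $\M$ and $\G$, together with characters over $\irr(\n_\bullet(\M)^F\mid\ab(\mu))$.

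On this reduced poset I would define the bijection by toggling $\M$: replace $\sigma$ by $\sigma\cup\{\M\}$ when $\L(\sigma)>\M$, and by $\sigma\setminus\{\M\}$ when $\L(\sigma)=\M$ --- legitimate precisely because $\M<\G$. This involution changes $|\sigma|$ by one, hence exchanges the $+$ and $-$ strata, and the passage between $\G^F_\sigma$ and its subgroup $(\G^F_\sigma)_\M$ is compatible with defects, with the block map $\R_{\G_\sigma}^\G$ (again via Corollary~\ref{cor:e-Harish-Chandra series and blocks} applied inside the chain stabilisers) and with $\iso{\G^F}$ (the last via the normaliser-step lemmas of \cite{Spa17}). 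Reassembling the strata, and tracking the $\aut_\mathbb{F}(\G^F)_B$-action at every stage, produces the desired equivariant $\Lambda$ together with the asserted block isomorphisms of character triples. I expect the main obstacle to be the bookkeeping of character triples through the descent step: one must keep the underlying projective representations and $\ell$-blocks coherent while alternately invoking the bijections of Parametrisation~\ref{para:Main iEBC} inside the $\L(\sigma)$ and Clifford theory up to $\G^F_\sigma$, and in particular ensure that the character stabilisers match so that the local equivariant bijections glue --- all of this while simultaneously controlling the block relation $\R_{\G_\sigma}^\G$, for which Theorem~\ref{thm:Main Brauer--Lusztig blocks} is the essential input.
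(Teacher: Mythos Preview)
Your overall strategy matches the paper's: stratify $\CL^d(B)_\pm$ by the $e$-cuspidal datum $(\M,\ab(\mu))$, use a Parametrisation-type bijection at the level of $\L(\sigma)$ combined with Clifford theory to identify characters of $\G^F_\sigma$ with characters of $\n_{\G^F_\sigma}(\M)=\G^F_{\sigma\cup\{\M\}}$, and toggle $\M$ in the chain to swap parities. The paper carries this out in Theorem~\ref{thm:Reduction for CTC for groups of Lie type} via the map $\Delta$ and the bijections $\Omega_{(\M,\mu)}^{\L(\sigma),\G^F_\sigma}$.

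There is, however, a genuine gap in your preliminary reduction. Reducing to $\G$ of irreducible rational type is fine, but your claim that then ``every $e$-split Levi subgroup is a product of irreducible rational components and Parametrisation~\ref{para:Main iEBC} is available throughout'' is not correct. An $e$-split Levi $\K=\L(\sigma)$ satisfies $\K=\z^\circ(\K)[\K,\K]$; only $[\K,\K]$ (simply connected since $\G$ is, by \cite[Proposition 12.14]{Mal-Tes}) decomposes as a direct product of irreducible rational components, and the hypothesis only supplies Parametrisation~\ref{para:Main iEBC} for those components, not for $\K$. So you cannot directly invoke the bijection $\E(\L(\sigma)^F,(\M,\mu'))\to\irr(\n_{\L(\sigma)}(\M)^F\mid\mu')$ as you do. The paper does \emph{not} perform this reduction; instead, for each $\K$ it lifts the bijection in stages: first from the components to $\K_0=[\K,\K]$ by a direct-product and wreath-product argument (Proposition~\ref{prop:From rational components to K_0}), then from $\K_0^F$ to $\K^F$ through the central torus using a general lifting result for $N$-block isomorphisms (Proposition~\ref{prop:Constructing bijections over bijections with central quotients}) together with the compatibility $\irr(\K^F\mid\E(\K_0^F,(\L_0,\mathcal{Y}_0)))=\E(\K^F,(\L,\ab(\lambda)))$ (Lemma~\ref{lem:From K_0 to K, with lambda}, Corollary~\ref{cor:From K_0 to K}), and only then from $\K^F$ to $\G^F_\sigma$ (Proposition~\ref{prop:iEBC going up}). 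Your Clifford step ``for $\L(\sigma)^F\unlhd\G^F_\sigma$'' corresponds to this last stage alone; the passage through $\K_0$ and across the central torus is the substantive content you are missing.
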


In analogy with the inductive conditions for the counting conjectures, the above statement should be understood as a version of Conjecture \ref{conj:Dade reductive} compatible with Clifford theory and with the action of automorphisms. Although not completely satisfactory from a structural point of view, Conjecture \ref{conj:CTC reductive} suggest a deeper explanation for the numerical phenomena proposed in Conjecture \ref{conj:Dade reductive}. By considering the contribution given by characters of any defect, we could introduce $\G^F$-block isomorphisms in the context of Conjecture \ref{conj:AWC reductive}. In Section \ref{sec:Towards Dade's Projective Conjecture}, we show that Conjecture \ref{conj:CTC reductive} implies Sp\"ath's Character Triple Conjecture for finite reductive groups and large primes (see Proposition \ref{prop:Equivalence CTC}).

We now explain in more details the connection between Conjecture \ref{conj:Dade reductive} and Conjecture \ref{conj:CTC reductive}. First, we provide a more explicit description of the blocks and irreducible characters of stabilisers of chains.

\begin{lem}
\label{lem:Characters and blocks of chains normalizers}
Consider a chain of $e$-split Levi subgroups $\sigma\in \CL(\G)$ with final term $\L:=\L(\sigma)$. If $\ell\in\Gamma(\G,F)$, then:
\begin{enumerate}
\item every block of $\G^F_\sigma$ is $\L^F$-regular (see \cite[p.210]{Nav98}). In particular, for $b\in \Bl(\L^F)$, the induced block $b^{\G^F_\sigma}$ is defined and is the unique block of $\G^F_\sigma$ that covers $b$;
\item if $\vartheta\in\irr(\G_\sigma^F)$, then $\bl(\vartheta)^{\G^F}$ is defined and $\R_{\G_\sigma}^\G(\bl(\vartheta))=\bl(\vartheta)^{\G^F}$;
\item assume Hypothesis \ref{hyp:Brauer--Lusztig blocks}. There is a partition of the irreducible characters of $\G^F_\sigma$ given by
\[\irr\left(\G^F_\sigma\right)=\coprod\limits_{(\M,\mu)/\sim}\irr\left(\G^F_\sigma\hspace{1pt}\middle|\hspace{1pt} \E(\L^F,(\M,\mu))\right),\]
where the union runs over the $e$-cuspidal pairs $(\M,\mu)$ of $\L$ up to $\G^F_\sigma$-conjugation.
\end{enumerate}
\end{lem}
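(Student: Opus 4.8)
The plan is to deal with the three assertions in turn, obtaining (i) and (ii) from the block theory of $\G^F_\sigma$ relative to the normal subgroup $\L^F$, where $\L:=\L(\sigma)$, and (iii) by transporting the $e$-Harish-Chandra partition of $\irr(\L^F)$ up to $\G^F_\sigma$ via Clifford theory. The common starting point is that $\G^F_\sigma$ stabilises $\sigma$, hence normalises its smallest term $\L$, so $\L^F\trianglelefteq\G^F_\sigma$; moreover, since $\L$ is $e$-split it is also $E_{q,\ell}$-split (a $\Phi_e$-torus is a $\Phi_{E_{q,\ell}}$-torus, as $e\in E_{q,\ell}$), so Lemma \ref{lem:e-split Levi} (ii) together with Lemma \ref{prop:Good primes} (i) — using $\ell\in\Gamma(\G,F)$ — give $\L^F=\c_{\G^F}(Q)$ for $Q:=\z^\circ(\L)^F_\ell$, exactly as in the paragraph preceding Lemma \ref{lem:Jordan decomposition for l-elements, blocks}. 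The point is that $Q\le\z(\L^F)$ is canonically attached to $\L$, so $\G^F_\sigma$ (normalising $\L$, hence $\z^\circ(\L)$) also normalises $Q$: thus $Q$ is a normal $\ell$-subgroup of $\G^F_\sigma$ with $\c_{\G^F_\sigma}(Q)=\L^F$.

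Assertion (i) then follows from the theory of regular blocks relative to a normal subgroup (\cite[p.\ 210]{Nav98}): for a finite group $G$ with a normal $\ell$-subgroup $Q$ and $N:=\c_G(Q)$, every block of $G$ is $N$-regular, the induced block $b^G$ is defined for each $b\in\Bl(N)$, and $b^G$ is the unique block of $G$ covering $b$. For (ii), let $b:=\bl(\vartheta)$ and let $D$ be a defect group of $b$; as $Q\trianglelefteq\G^F_\sigma$ is an $\ell$-group, $Q\le O_\ell(\G^F_\sigma)\le D$, so $\c_{\G^F}(D)\le\c_{\G^F}(Q)=\L^F\le\G^F_\sigma$ and $b^{\G^F}$ is defined. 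By (i), $b$ covers a unique block $b_\L\in\Bl(\L^F)$ with $b=b_\L^{\G^F_\sigma}$, and $b_\L^{\G^F}$ is defined as well (again because $\L^F=\c_{\G^F}(Q)$), so transitivity of block induction gives $b^{\G^F}=(b_\L^{\G^F_\sigma})^{\G^F}=b_\L^{\G^F}$. Finally $b_\L^{\G^F}=\R_\L^\G(b_\L)=\R_{\G_\sigma}^\G(b)$ by \cite[Theorem 2.5]{Cab-Eng99} and the definition of $\R_\L^\G$ (see \cite[Notation 2.6]{Cab-Eng99} and Section \ref{sec:Counting characters e-locally}), which is (ii).

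For (iii) I would apply Corollary \ref{cor:e-Harish-Chandra, disjointness} to the $e$-split Levi subgroup $\L$ — legitimate since Hypothesis \ref{hyp:Brauer--Lusztig blocks} passes to $F$-stable Levi subgroups — to obtain $\irr(\L^F)=\coprod_{(\M,\mu)}\E(\L^F,(\M,\mu))$ over a transversal of the $\L^F$-classes of $e$-cuspidal pairs of $\L$. Since $\G^F_\sigma$ normalises $\L$, Deligne--Lusztig induction inside $\L$ is $\G^F_\sigma$-equivariant, so $\G^F_\sigma$ permutes these $e$-Harish-Chandra series by $\E(\L^F,(\M,\mu))^g=\E(\L^F,(\M^g,\mu^g))$, with two series in the same $\G^F_\sigma$-orbit exactly when the underlying $e$-cuspidal pairs are $\G^F_\sigma$-conjugate. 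Now for $\vartheta\in\irr(\G^F_\sigma)$, Clifford's theorem says $\vartheta_{\L^F}$ is a positive multiple of the sum over a single $\G^F_\sigma$-orbit $O\subseteq\irr(\L^F)$; as the series partition $\irr(\L^F)$ and are permuted by $\G^F_\sigma$, the orbit $O$ meets $\E(\L^F,(\M,\mu))$ precisely for the pairs $(\M,\mu)$ lying in one $\G^F_\sigma$-class. Hence $\vartheta\in\irr(\G^F_\sigma\mid\E(\L^F,(\M,\mu)))$ for exactly the $(\M,\mu)$ in that class, so the union over $\G^F_\sigma$-classes of $e$-cuspidal pairs of $\L$ is a partition of $\irr(\G^F_\sigma)$.

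The only delicate point I anticipate is in (i), namely checking that the ``regular block'' machinery cited from \cite[p.\ 210]{Nav98} genuinely applies; this reduces, via $\ell\in\Gamma(\G,F)$ and Lemmas \ref{lem:e-split Levi} and \ref{prop:Good primes}, to the assertion that $\z^\circ(\L)^F_\ell$ is a normal $\ell$-subgroup of $\G^F_\sigma$ whose centraliser in $\G^F_\sigma$ is exactly $\L^F$. Parts (ii) and (iii) are then routine: (ii) is a transitivity-of-block-induction argument, and (iii) is Clifford theory layered on top of Corollary \ref{cor:e-Harish-Chandra, disjointness}.
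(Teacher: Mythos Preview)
Your proposal is correct and follows essentially the same route as the paper: for (i) you both pass to $Q=\z^\circ(\L)^F_\ell$, use $\ell\in\Gamma(\G,F)$ to get $\L^F=\c_{\G^F}(Q)$, and deduce $\L^F$-regularity of every block of $\G^F_\sigma$ from the Navarro machinery; for (ii) you both reduce to $\R_\L^\G(b_\L)=b_\L^{\G^F}$ via \cite[Theorem 2.5]{Cab-Eng99}; and for (iii) you both layer Clifford theory on top of Corollary \ref{cor:e-Harish-Chandra, disjointness}. Your treatment of (ii) is in fact slightly more explicit than the paper's in verifying that $\bl(\vartheta)^{\G^F}$ is defined (via $Q\le D$ and $\c_{\G^F}(D)\le\G^F_\sigma$) and in invoking transitivity of block induction to identify it with $b_\L^{\G^F}$.
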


\begin{proof}
To prove the first statement, set $X:=\z^\circ(\L)^F_\ell$ and observe that $\L^F=\c^\circ_{\G}(X)^F=\c_\G(X)^F$ by Lemma \ref{lem:e-split Levi} and Lemma \ref{prop:Good primes} (i). In particular $X\leq \O_\ell(\G^F_\sigma)$. If $B\in\Bl(\G^F_\sigma)$ has defect group $D$, then $X\leq D$ by \cite[Theorem 4.8]{Nav98}. Thus $\c_{\G^F_\sigma}(D)\leq \c_{\G^F}(X)=\L^F$ and \cite[Lemma 9.20]{Nav98} shows that $B$ is $\L^F$-regular. In particular, if the block $B$ covers $b\in\Bl(\L^F)$, then $B=b^{\G^F_\sigma}$ by \cite[Theorem 9.19]{Nav98}. This proves (i). Moreover, by \cite[Theorem 2.5]{Cab-Eng99} we know that $\R_\L^\G(b)=b^{\G^F}$ and so, if $\vartheta\in\irr(\G_\sigma^F)$ and $\bl(\vartheta)$ covers $b$, we deduce that $\R_{\G_\sigma}^\G(\bl(\vartheta))=\R_\L^\G(b)=b^{\G^F}$. 

Next, as $\irr(\L^F)$ is the union of the $e$-Harish-Chandra series $\E(\L^F,(\M,\mu))$ by Corollary \ref{cor:e-Harish-Chandra, disjointness}, we deduce that every character $\chi\in \irr(\G^F_\sigma)$ lies over some character of an $e$-Harish-Chandra series $\E(\L^F,(\M,\mu))$, where $(\M,\mu)$ is an $e$-cuspidal pair of $\L$. To conclude we have to show that, if $(\M',\mu')$ is another $e$-cuspidal pair of $\L$, then $\irr(\G^F_\sigma\mid \E(\L^F,(\M,\mu)))$ and $\irr(\G^F_\sigma\mid \E(\L^F,(\M',\mu')))$ are disjoint unless $(\M,\mu)$ and $(\M',\mu')$ are $\G^F_\sigma$-conjugate. Suppose that $\chi$ is a character belonging to the intersection of $\irr(\G^F_\sigma\mid \E(\L^F,(\M,\mu)))$ and $\irr(\G^F_\sigma\mid \E(\L^F,(\M',\mu')))$. Let $\psi\in\E(\L^F,(\M,\mu))$ and $\psi'\in\E(\L^F,(\M',\mu'))$ lie below $\chi$ and consider $g\in \G^F_\sigma$ such that $\psi=\psi'^g$. Then, $\psi\in\E(\L^F,(\M,\mu))\cap \E(\L^F,(\M',\mu')^g)$ and Corollary \ref{cor:e-Harish-Chandra, disjointness} implies that $(\M,\mu)=(\M',\mu')^{gx}$, for some $x\in \L^F$. Since $gx\in\G^F_\sigma$ the proof is now complete.
\end{proof}

We can now prove Theorem \ref{thm:Main CTC reductive implies Dade reductive} as a consequence of the above considerations.

\begin{theo}
\label{thm:CTC reductive imples Dade reductive}
Assume Hypothesis \ref{hyp:Brauer--Lusztig blocks} and consider $B\in\Bl(\G^F)$ and $d\geq 0$. If Conjecture \ref{conj:CTC reductive} holds for $B$ and $d$, then Conjecture \ref{conj:Dade reductive} holds for $B$ and $d$.
\end{theo}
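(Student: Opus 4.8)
The only thing I take from Conjecture \ref{conj:CTC reductive} is the bare existence of the bijection $\Lambda$, which (both sides being finite) forces
\[
\left|\CL^d(B)_+/\G^F\right|=\left|\CL^d(B)_-/\G^F\right|;
\]
the $\G^F$-block isomorphisms of character triples carried by $\Lambda$ play no role in this particular implication. The whole argument then consists in evaluating the two sides of this equality in terms of the quantities appearing in Conjecture \ref{conj:Dade reductive}, the bridge being the map $\omega\mapsto\omega^\bullet$.

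The key step is to show that $\omega\mapsto\omega^\bullet$ is a bijection from $\CL^d(B)_\epsilon/\G^F$ onto the set of $\G^F$-orbits of pairs $(\sigma,\vartheta)$ with $\sigma\in\CL(\G)_\epsilon$ and $\vartheta\in\irr(\G^F_\sigma)$ which are \emph{admissible}, by which I mean $d(\vartheta)=d$, $\bl(\vartheta)^{\G^F}=B$, and $\vartheta$ lies over some character of an $e$-Harish-Chandra series $\E(\L(\sigma)^F,(\M,\ab(\mu)))$ with $\M\leq\L(\sigma)$ and $(\M,\mu)\in\CP_e(B)_<$. Surjectivity onto admissible orbits is immediate from the definition of $\CL^d(B)_\epsilon$. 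For injectivity, fix an admissible $(\sigma,\vartheta)$ and set $\L:=\L(\sigma)$: by Clifford's theorem the constituents of $\vartheta_{\L^F}$ form a single $\G^F_\sigma$-orbit, and by the disjointness of $e$-Harish-Chandra series in $\L$ (Corollary \ref{cor:e-Harish-Chandra, disjointness}) they lie over a single $\G^F_\sigma$-orbit of $e$-cuspidal pairs $(\M,\mu)$ of $\L$; since $\E(\L^F,(\M,\ab(\mu)))$ is the union of the series attached to the characters of $\ab(\mu)$, the $\G^F_\sigma$-orbit of $(\M,\ab(\mu))$ is thereby determined as well, so $(\sigma,\vartheta)$ recovers $\overline{(\sigma,\M,\ab(\mu),\vartheta)}$.

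To count admissible orbits, fix a set $\mathcal{R}_\epsilon$ of representatives for the $\G^F$-orbits on $\CL(\G)_\epsilon$. Since $\G^F_\sigma$ acts on $\irr(\G^F_\sigma)$ by inner automorphisms, hence trivially, the admissible orbits whose chain component is $\G^F$-conjugate to $\sigma$ are in bijection with the admissible characters $\vartheta\in\irr(\G^F_\sigma)$ themselves. If $|\sigma|>0$, then $\L(\sigma)<\G$, so for $\vartheta\in\irr^d(B_\sigma)$ and a constituent $\psi$ of $\vartheta_{\L(\sigma)^F}$ with $(\M,\mu)\ll_e(\L(\sigma),\psi)$ an $e$-cuspidal pair (Corollary \ref{cor:e-Harish-Chandra, disjointness}) we have $\M<\G$, and chaining $\bl(\psi)=\bl(\mu)^{\L(\sigma)^F}$ (Proposition \ref{prop:e-Harish-Chandra series and blocks}), Lemma \ref{lem:Characters and blocks of chains normalizers}(i)--(ii), Corollary \ref{cor:e-Harish-Chandra series and blocks} and transitivity of Brauer induction yields $B=\bl(\vartheta)^{\G^F}=\bl(\mu)^{\G^F}$, so $(\M,\mu)\in\CP_e(B)_<$ and $\vartheta$ is admissible; thus this orbit-set has cardinality $\k^d(B_\sigma)$. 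For the trivial chain $\sigma_0\in\CL(\G)_+$ we have $\G^F_{\sigma_0}=\L(\sigma_0)^F=\G^F$, so $\irr^d(B_{\sigma_0})=\irr^d(B)$ and admissibility of $\vartheta\in\irr^d(B)$ means $\vartheta\in\E(\G^F,(\M,\ab(\mu)))$ for some $(\M,\mu)\in\CP_e(B)_<$. By Theorem \ref{thm:Blocks are unions of e-HC series} each $\vartheta\in\irr(B)$ lies in a unique $e$-Harish-Chandra series $\E(\G^F,(\M_0,\mu_0))$ with $\bl(\mu_0)^{\G^F}=B$, and one checks using the same theorem that $\vartheta$ is admissible precisely when $\M_0<\G$, i.e.\ precisely when $\vartheta$ is not $e$-cuspidal; hence this orbit-set has cardinality $\k^d(B)-\k_{\rm c}^d(B)$.

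Putting the three counts together,
\begin{align*}
\left|\CL^d(B)_+/\G^F\right|&=\big(\k^d(B)-\k_{\rm c}^d(B)\big)+\sum_{\substack{\sigma\in\mathcal{R}_+\\ |\sigma|>0}}\k^d(B_\sigma),\\
\left|\CL^d(B)_-/\G^F\right|&=\sum_{\sigma\in\mathcal{R}_-}\k^d(B_\sigma),
\end{align*}
and equating these via $\Lambda$ and rearranging gives
\[
\k^d(B)=\k_{\rm c}^d(B)+\sum_{\sigma\in\mathcal{R}_-}\k^d(B_\sigma)-\sum_{\substack{\sigma\in\mathcal{R}_+\\ |\sigma|>0}}\k^d(B_\sigma).
\]
Since $(-1)^{|\sigma|+1}=+1$ for $\sigma\in\mathcal{R}_-$ and $(-1)^{|\sigma|+1}=-1$ for $\sigma\in\mathcal{R}_+$ of positive length, and since $\mathcal{R}_-\cup(\mathcal{R}_+\setminus\{\sigma_0\})$ is a set of representatives for the $\G^F$-orbits on $\CL(\G)_{>0}$, this is exactly the identity of Conjecture \ref{conj:Dade reductive}. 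The step that demands the most care is the central bijection: disentangling the passage between $\mu$ and $\ab(\mu)$, and verifying that admissibility of a character over a nontrivial chain is automatic while over $\sigma_0$ it selects precisely the non-$e$-cuspidal characters of $B$ --- it is here that Theorem \ref{thm:Main Brauer--Lusztig blocks} (in the form of Theorem \ref{thm:Blocks are unions of e-HC series}) together with the compatibility between $e$-Harish-Chandra series and $\ell$-blocks enters decisively.
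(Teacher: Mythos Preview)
Your proof is correct and follows essentially the same route as the paper's: both extract only the numerical equality $|\CL^d(B)_+/\G^F|=|\CL^d(B)_-/\G^F|$ from the bijection $\Lambda$, then identify the two sides with the quantities in Conjecture~\ref{conj:Dade reductive} using Lemma~\ref{lem:Characters and blocks of chains normalizers} and Theorem~\ref{thm:Blocks are unions of e-HC series}. The only organizational difference is that you make the map $\omega\mapsto\omega^\bullet$ explicit and prove it is a bijection onto admissible pairs, whereas the paper decomposes $|\CL^d(B)_\epsilon/\G^F|$ directly as a double sum over $\G^F$-classes of triples $(\sigma,\M,\ab(\mu))$ and characters, then collapses the inner sum via Lemma~\ref{lem:Characters and blocks of chains normalizers}(iii); your injectivity argument is precisely the content of that lemma, so the two presentations are equivalent.
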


\begin{proof}
If there exists a bijection between the sets $\CL^d(B)_+/\G^F$ and $\CL^d(B)_-/\G^F$, then
\begin{equation}
\label{eq:CTC reductive imples Dade reductive 1}
\sum\limits_{(\sigma,\M,\ab(\mu))/\G^F}(-1)^{|\sigma|}\left|\hspace{1pt}\irr^d\left(B_\sigma\hspace{1pt}\middle|\hspace{1pt}\E\left(\L(\sigma)^F,(\M,\ab(\mu))\right)\right)\hspace{1pt}\right|=0
\end{equation}
where the sum runs over $\G^F$-conjugacy classes of triples $(\sigma,\M,\ab(\mu))$ with $\sigma\in\CL(\G)$ and $(\M,\mu)\in\CP_e(B)_<$ with $\M\leq \L(\sigma)$. By Lemma \ref{lem:Characters and blocks of chains normalizers} (ii)-(iii) we deduce that
\begin{equation}
\label{eq:CTC reductive imples Dade reductive 2}
\sum\limits_{(\M,\ab(\mu))/\G^F_\sigma}\left|\hspace{1pt}\irr^d\left(B_\sigma\hspace{1pt}\middle|\hspace{1pt}\E\left(\L(\sigma)^F,(\M,\ab(\mu))\right)\right)\hspace{1pt}\right|=\k^d(B_\sigma)
\end{equation}
whenever $\sigma\in\CL(\G)_{>0}$. On the other hand, since we are only considering $(\M,\mu)\in\CP_e(B)_<$, the contribution given by the trivial chain $\{\G\}$ is
\begin{equation}
\label{eq:CTC reductive imples Dade reductive 3}
\sum\limits_{(\M,\ab(\mu))/\G^F}\left|\hspace{1pt}\irr^d\left(B\hspace{1pt}\middle|\hspace{1pt}\E\left(\G^F,(\M,\ab(\mu))\right)\right)\hspace{1pt}\right|=\k^d(B)-\k_{\rm c}^d(B).
\end{equation}
Using \eqref{eq:CTC reductive imples Dade reductive 2} and \eqref{eq:CTC reductive imples Dade reductive 3}, we deduce that \eqref{eq:CTC reductive imples Dade reductive 1} may be rewritten as
\begin{align*}
0&=\sum\limits_{\sigma/\G^F}(-1)^{|\sigma|}\sum\limits_{(\M,\ab(\mu))/\G^F_\sigma}\left|\hspace{1pt}\irr^d\left(B_\sigma\hspace{1pt}\middle|\hspace{1pt}\E\left(\L(\sigma)^F,(\M,\ab(\mu))\right)\right)\hspace{1pt}\right|
\\
&=\k^d(B)-\k^d_{\rm c}(B)+\sum\limits_{\sigma\in\CL(\G)_{>0}/\G^F}(-1)^{|\sigma|}\k^d(B_\sigma)
\end{align*}
and therefore Conjecture \ref{conj:Dade reductive} holds for $B$ and $d$.
\end{proof}

In the following concluding remark we discuss the definition of the sets of quadruples $\CL^d(B)_\epsilon$ and show that some of the conditions imposed above are redundant.

\begin{rmk}
If $(\M,\mu)\in\CP_e(B)$ and $\mu'\in\mathcal{Y}(\mu)$, then we have $\mathcal{Y}(\mu)=\mathcal{Y}(\mu')$ although it might happen that $(\M,\mu')\nin\CP_e(B)$. On the other hand, let $\sigma\in\CL(\G)$ with last term $\L(\sigma)$ and consider an $e$-cuspidal pair $(\M,\mu)$ of $\L(\sigma)$. If $\vartheta\in\irr(\G_\sigma^F\mid \E(\L(\sigma)^F,(\M,\mathcal{Y}(\mu))))$ and $\bl(\vartheta)^{\G^F}=B$, then there exists $\mu'\in\mathcal{Y}(\mu)$, so that $\mathcal{Y}(\mu)=\mathcal{Y}(\mu')$, such that $(\M,\mu')\in\CP_e(B)$. In fact, there exists $\mu'\in\mathcal{Y}(\mu)$ such that $\vartheta\in\irr(\G^F_\sigma\mid\E(\L(\sigma)^F,(\M,\mu')))$. By Proposition \ref{prop:e-Harish-Chandra series and blocks} every character of $\E(\L(\sigma)^F,(\M,\mu'))$ is contained in $\bl(\mu')^{\L(\sigma)^F}$. Then, applying Lemma \ref{lem:Characters and blocks of chains normalizers} (i) and using the transitivity of block induction, it follows that $\bl(\vartheta)=(\bl(\mu')^{\L(\sigma)^F})^{\G^F_\sigma}=\bl(\mu')^{\G^F_\sigma}$. We deduce that $\bl(\mu')^{\G^F}=\bl(\vartheta)^{\G^F}=B$ and hence $(\M,\mu')\in\CP_e(B)$. In particular, we have
\[\CL^d(B)_\epsilon=\left\lbrace(\sigma,\M,\mathcal{Y}(\mu),\vartheta)\hspace{1pt}\middle|\hspace{1pt} \substack{\sigma\in\CL(\G)_\epsilon, (\M,\mu)\in\CP_e(\L(\sigma),F)\text{ with }\M<\G,\\ \vartheta\in\irr^d\left(\G^F_\sigma\hspace{1pt}\middle|\hspace{1pt} \E\left(\L(\sigma)^F,(\M,\mathcal{Y}(\mu))\right)\right)\text{ with }\bl(\vartheta)^{\G^F}=B}\right\rbrace.\]
\end{rmk}

\subsection{A parametrisation of $e$-Harish-Chandra series}
\label{sec:Parametrisation of e-HC series}

In section \ref{sec:Brauer-Lusztig blocks and e-HC series} we have shown how to describe the characters in a block of a finite reductive group in terms of $e$-Harish-Chandra theory. More precisely, Theorem \ref{thm:Blocks are unions of e-HC series} shows that the set of characters of a block $B$ of $\G^F$ is the disjoint union of $e$-Harish-Chandra series $\E(\G^F,(\L,\lambda))$ associated to certain $e$-cuspidal pairs $(\L,\lambda)$. The next natural step to understand the distribution of characters in the block $B$ is to find a parametrisation of the characters in each series. Inspired by the results of \cite{Bro-Mal-Mic93} and by classical Harish-Chandra theory, we propose a parametrisation of the series $\E(\G^F,(\L,\lambda))$ in terms of data analogue to the one encoded in the relative Weyl group $W_\G(\L,\lambda)^F$. At the same time, this parametrisation suggests an explanation for the Clifford theoretic and cohomological requirements imposed by the inductive conditions for the counting conjectures.

\begin{para}
\label{para:iEBC}
Let $\ell\in\Gamma(\G,F)$ and consider an $e$-cuspidal pair $(\L,\lambda)$ of $\G$. There exists a defect preserving $\aut_\mathbb{F}(\G^F)_{(\L,\lambda)}$-equivariant bijection
\[\Omega^\G_{(\L,\lambda)}:\E\left(\G^F,(\L,\lambda)\right)\to\irr\left(\n_\G(\L)^F\hspace{1pt}\middle|\hspace{1pt} \lambda\right)\]
such that
\[\left(X_\vartheta,\G^F,\vartheta\right)\iso{\G^F}\left(\n_{X_\vartheta}(\L),\n_{\G^F}(\L),\Omega^\G_{(\L,\lambda)}(\vartheta)\right)\]
for every $\vartheta\in\E\left(\G^F,(\L,\lambda)\right)$ and where $X:=\G^F\rtimes \aut_\mathbb{F}(\G^F)$.
\end{para}

We say that Parametrisation \ref{para:iEBC} holds for $(\G,F)$ at the prime $\ell$ if it holds for every $e$-cuspidal pair $(\L,\lambda)$ of $\G$ where $q$ is the prime power associated to $F$ and $e$ is the order of $q$ modulo $\ell$.

As we have said before, the above parametrisation should provide an explanation for the inductive conditions for the counting conjectures for finite reductive groups. Analogously, in Section \ref{sec:Final} we show that Conjecture \ref{conj:CTC reductive}, and hence Conjecture \ref{conj:Dade reductive} and Conjecture \ref{conj:AWC reductive}, holds once we assume the existence of Parametrisation \ref{para:iEBC} (see Theorem \ref{thm:Reduction for CTC for groups of Lie type}). We are then left to prove Parametrisation \ref{para:iEBC}. The results of \cite{Ros-Clifford_automorphisms_HC} shows that the bijections $\Omega_{(\L,\lambda)}^\G$ can be constructed once we assume certain technical conditions on the extendibility of characters of $e$-split Levi subgroups. These conditions also appear in the proofs of the inductive conditions for the McKay, the Alperin--McKay and the Alperin Weight conjectures (see \cite{Mal-Spa16}, \cite{Cab-Spa17I}, \cite{Cab-Spa17II}, \cite{Cab-Spa19}, \cite{Bro-Spa20}, \cite{Spa21}, \cite{Bro22}).

\begin{rmk}
\label{rmk:Consequences of parametrisation}
To conclude this section we derive an interesting consequence of Parametrisation \ref{para:iEBC}. For every $e$-cuspidal pair $(\L,\lambda)$ of $\G$ and any $d\geq 0$ we denote by $\k^d(\G^F,(\L,\lambda))$ the number of characters $\chi\in\E(\G^F,(\L,\lambda))$ with $d(\chi)=d$ and by $\k^d(\n_\G(\L)^F,\lambda)$ the number of characters $\psi\in\irr(\n_\G(\L)^F\mid \lambda)$ with $d(\psi)=d$. Since the bijection $\Omega_{(\L,\lambda)}^\G$ preserves the defect of characters, assuming Parametrisation \ref{para:iEBC} we obtain
\begin{equation}
\label{eq:para consequence 1}
\k^d\left(\G^F,(\L,\lambda)\right)=\k^d\left(\n_\G(\L)^F,\lambda\right)
\end{equation}
On the other hand, under Hypothesis \ref{hyp:Brauer--Lusztig blocks}, the partition given by Theorem \ref{thm:Blocks are unions of e-HC series} implies that
\begin{equation}
\label{eq:para consequence 2}
\k^d(B)=\k^d_{\rm c}(B)+\sum\limits_{(\L,\lambda)}\k^d\left(\G^F,(\L,\lambda)\right)
\end{equation}
for any block $B$ of $\G^F$ and where $(\L,\lambda)$ runs over a set of representatives for the action of $\G^F$ on $\CP_e(B)_<$. Then, combining \eqref{eq:para consequence 1} and \eqref{eq:para consequence 2} we obtain
\begin{equation}
\label{eq:para consequence 3}
\k^d(B)=\k^d_{\rm c}(B)+\sum\limits_{(\L,\lambda)}\k^d\left(\n_\G(\L)^F,\lambda\right)
\end{equation}
where as before $(\L,\lambda)$ runs over a set of representatives for the action of $\G^F$ on $\CP_e(B)_<$. The formula given in \eqref{eq:para consequence 3} suggests another way of counting the number $\k^d(B)$ in terms of $e$-local data. In particular, if we believe Conjecture \ref{conj:Dade reductive}, then under the above hypothesis we must have
\begin{equation}
\sum\limits_{(\L,\lambda)}\k^d\left(\n_\G(\L)^F,\lambda\right)=\sum\limits_\sigma(-1)^{|\sigma|+1}\k^d(B_\sigma)
\end{equation}
where $(\L,\lambda)$ and $\sigma$ run over a set of representatives for the action of $\G^F$ on $\CP_e(B)_<$ and on $\CL(\G)_{>0}$ respectively.
\end{rmk}

\section{Counting characters via $e$-Harish-Chandra theory}
\label{sec:Final}

The results obtained in Section \ref{sec:Brauer-Lusztig blocks and e-HC series} together with the parametrisation proposed in Section \ref{sec:Parametrisation of e-HC series} constitute crucial properties of $e$-Harish-Chandra theory. As an application of these powerful tools we show that Conjecture \ref{conj:CTC reductive}, and hence Conjecture \ref{conj:Dade reductive} and Conjecture \ref{conj:AWC reductive}, holds if we assume Parametrisation \ref{para:iEBC} (see Theorem \ref{thm:Reduction for CTC for groups of Lie type}). First, we prove some preliminary showing how to lift isomorphisms of character triples.

\subsection{Bijections and $N$-block isomorphic character triples}

The following proposition is an adaptation of \cite[Proposition 2.10]{Ros22} to finite reductive groups. Recall that, for $Y\unlhd X$ and $\mathcal{S}\subseteq \irr(Y)$, we denote by $\irr(X\mid \mathcal{S})$ the set of irreducible characters of $X$ whose restriction to $Y$ has an irreducible constituent contained in $\mathcal{S}$. Moreover, we define $X_{\mathcal{S}}:=\{x\in X\mid \mathcal{S}^x=\mathcal{S}\}$. 

\begin{prop}
\label{prop:Constructing bijections over bijections with central quotients}
Let $K\leq G\leq A$ be finite groups with $G\unlhd A$, consider $A_0\leq A$. and set $H_0:=H\cap A_0$ for every $H\leq A$. Consider $\mathcal{S}\subseteq \irr(K)$ and $\mathcal{S}_0\subseteq \irr(K_0)$ and suppose there exists $K\leq V\leq X\leq \n_A(K)$ and $U\leq X_0$ such that:
\begin{enumerate}
\item $V\leq X_{\mathcal{S}}$. Moreover, if $x\in X$ and $\mathcal{S}\cap \mathcal{S}^x\neq \emptyset$, then $x\in V$;
\item $U\leq X_{0,\mathcal{S}_0}$. Moreover, if $x\in X_0$ and $\mathcal{S}_0\cap\mathcal{S}_0^x\neq\emptyset$, then $x\in K_0U$; 
\item $V=KU$.
\end{enumerate}
Assume there exists a $U$-equivariant bijection
\[\Psi:\mathcal{S}\to\mathcal{S}_0\]
such that
\[\left(X_\vartheta,K,\vartheta\right)\iso{K}\left(X_{0,\vartheta},K_0,\Psi(\vartheta)\right)\]
for every $\vartheta\in \mathcal{S}$. If $K\leq J\leq X\cap G$ and $\c_{X}(Q)\leq X_0$ for every radical $\ell$-subgroup $Q$ of $J_0$, then there exists an $\n_U(J)$-equivariant bijection
\[\Phi_J:\irr\left(J\hspace{1pt}\middle|\hspace{1pt} \mathcal{S}\right)\to\irr\left(J_0\hspace{1pt}\middle|\hspace{1pt} \mathcal{S}_0\right)\]
such that
\[\left(\n_X(J)_\chi,J,\chi\right)\iso{J}\left(\n_{X_0}(J)_\chi,J_0,\Phi_J(\chi)\right)\]
for every $\chi\in\irr(J\mid \mathcal{S})$.
\end{prop}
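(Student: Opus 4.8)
The plan is to assemble $\Phi_J$ from three ingredients: a matching of Clifford orbits coming from $\Psi$ and the modular law, the character‑triple bijections over stabilisers furnished by $\Psi$ via the formal calculus of $\iso{K}$ from \cite[Definition 3.6]{Spa17}, and finally an upgrade of the reference group from $K$ to $J$ using the radical‑centraliser hypothesis. Throughout I use that $K\le J\le X\le\n_A(K)$ forces $K\unlhd J$ and $K_0\unlhd J_0$, so both $\irr(J\mid\mathcal S)$ and $\irr(J_0\mid\mathcal S_0)$ decompose along Clifford orbits.

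\textbf{Step 1: orbit bookkeeping.} First I would record the elementary facts that make the two sides match. Since $U\le X_0\le A_0$, the modular law together with (iii) gives $J\cap KU=K(J\cap U)$; because $K$-conjugation fixes $\irr(K)$, hypothesis (i) then shows that two characters of $\mathcal S$ are $J$-conjugate if and only if they are $(J\cap U)$-conjugate, and the identical argument using (ii) shows that two characters of $\mathcal S_0$ are $J_0$-conjugate if and only if they are $(J_0\cap U)$-conjugate. As $U\le A_0$ we have $J\cap U=J_0\cap U$, so the $U$-equivariant bijection $\Psi$ induces a bijection between $J$-orbits on $\mathcal S$ and $J_0$-orbits on $\mathcal S_0$. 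The same manipulations, now combined with the $U$-equivariance of $\Psi$, yield for every $\vartheta\in\mathcal S$ that $X_{0,\vartheta}$ stabilises $\Psi(\vartheta)$ with $X_{0,\vartheta}=(X_0)_{\Psi(\vartheta)}$, that $J_\vartheta\cap A_0=(J_0)_{\Psi(\vartheta)}$, and (using that a character of $J_\vartheta$ lying over $\vartheta$ restricts to a multiple of $\vartheta$) that a normaliser element fixing such a character already lies in $X_\vartheta$. Thus the triple $(X_{0,\vartheta},K_0,\Psi(\vartheta))$ is genuinely a character triple and the subgroup $(J_0)_{\Psi(\vartheta)}$ is the one paired with $J_\vartheta$ under the isomorphism $X_\vartheta/K\simeq X_{0,\vartheta}/K_0$ carried by the $\iso{K}$-data.

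\textbf{Step 2: stabiliser‑level bijections and gluing.} For each $J$-orbit representative $\vartheta\in\mathcal S$ I would apply to $(X_\vartheta,K,\vartheta)\iso{K}(X_{0,\vartheta},K_0,\Psi(\vartheta))$ the standard compatibility of $\iso{K}$ with intermediate subgroups (for the chain $K\le J_\vartheta\le X_\vartheta$, with corresponding intermediate group $(J_0)_{\Psi(\vartheta)}$) and with the Clifford correspondence. This produces a suitably equivariant bijection $\irr(J_\vartheta\mid\vartheta)\to\irr\big((J_0)_{\Psi(\vartheta)}\mid\Psi(\vartheta)\big)$ whose induction to $J$ gives a bijection $\irr(J\mid\vartheta)\to\irr(J_0\mid\Psi(\vartheta))$ sending matched characters $\chi,\chi'$ to triples $(\n_X(J)_\chi,J,\chi)$ and $(\n_{X_0}(J)_\chi,J_0,\chi')$ that are $\iso{K}$-related (the overgroups come out as the full stabilisers in $X$, respectively in $X\cap A_0$, by Step 1). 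Taking the disjoint union over a transversal of $J$-orbits, invoking the orbit bijection of Step 1, and checking independence of all choices up to conjugacy produces $\Phi_J\colon\irr(J\mid\mathcal S)\to\irr(J_0\mid\mathcal S_0)$; its $\n_U(J)$-equivariance follows from the $U$-equivariance of $\Psi$ and of the stabiliser-level bijections, noting that $\n_U(J)$ normalises $J_0$ and hence acts on both sides.

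\textbf{Step 3: from $\iso{K}$ to $\iso{J}$ — the main obstacle.} At this point the triples are only $\iso{K}$-related, whereas the conclusion demands $\iso{J}$-relatedness. Upgrading the reference group is exactly where the hypothesis $\c_X(Q)\le X_0$ for every radical $\ell$-subgroup $Q$ of $J_0$ is consumed: the block-theoretic compatibility that $\iso{J}$ imposes beyond $\iso{K}$ concerns the blocks attached to the $\c(Q)$-type subgroups for radical $\ell$-subgroups $Q$ lying between $K_0$ and $J_0$, and the containment of the relevant centralisers in $A_0$ forces these compatibilities to hold, just as in the proof of \cite[Proposition 2.10]{Ros22}. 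I expect this to be the hard part: one has to match the radical subgroups and their Brauer correspondents on the two sides and verify that $\c_X(Q)\le X_0$ makes the additional conditions automatic. Once this is done, the remaining points — well-definedness, bijectivity, and the stated equivariance and block-isomorphism properties of $\Phi_J$ — follow from the formalism of $\iso{}$ and elementary Clifford theory assembled in the previous steps.
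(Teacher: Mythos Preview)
Your proposal is correct and follows essentially the same strategy as the paper: match Clifford orbits via hypotheses (i)--(iii), pass through the Clifford correspondents using the $\iso{K}$-machinery, and consume the radical-$\ell$-subgroup hypothesis to obtain the block conditions needed for $\iso{J}$. The only organisational difference is in the last two steps. The paper does not first produce $\iso{K}$-related triples at the $J$-level and then upgrade; instead it applies \cite[Proposition 2.9 (ii)]{Ros22} to obtain $\iso{J_\vartheta}$-related triples already at the Clifford-correspondent level $(J_\vartheta,J_{0,\vartheta})$, restricts the overgroup via \cite[Lemma 3.8]{Spa17}, and then invokes \cite[Proposition 2.8]{Ros22} which simultaneously handles the induction $J_\vartheta\to J$ and the passage to $\iso{J}$, with the centraliser condition $\c_X(Q)\le X_0$ entering exactly there. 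Your separation into ``induce first, upgrade after'' is a legitimate reordering, but the cited propositions package the two moves together, which avoids having to check separately that induction from $J_\vartheta$ preserves an $\iso{K}$-relation with overgroups $\n_X(J)_\chi$.
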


\begin{proof}
Consider an $\n_U(J)$-transversal $\mathbb{S}$ in $\mathcal{S}$ and define $\mathbb{S}_0:=\{\Psi(\vartheta)\mid \vartheta\in\mathbb{S}\}$. Since $\Psi$ is $U$-equivariant, it follows that $\mathbb{S}_0$ is an $\n_U(J)$-transversal in $\mathcal{S}_0$. For every $\vartheta\in\mathbb{S}$, with $\vartheta_0:=\Psi(\vartheta)\in\mathbb{S}_0$, we fix a pair of projective representations $(\mathcal{P}^{(\vartheta)},\mathcal{P}^{(\vartheta_0)}_0)$ giving $(X_\vartheta,K,\vartheta)\iso{K}(X_{0,\vartheta},K_0,\vartheta_0)$. Now, let $\mathbb{T}$ be an $\n_U(J)$-transversal in $\irr(J\mid \mathcal{S})$ such that every character $\chi\in\mathbb{T}$ lies above a character $\vartheta\in\mathbb{S}$ (this can be done by the choice of $\mathbb{S}$). Moreover, using Clifford's theorem together with hypotheses (i) and (iii), it follows that every $\chi\in\mathbb{T}$ lies over a unique $\vartheta\in\mathbb{S}$.

For $\chi\in\mathbb{T}$ lying over $\vartheta\in\mathbb{S}$, let $\psi\in\irr(J_\vartheta\mid \vartheta)$ be the Clifford correspondent of $\chi$ over $\vartheta$. Set $\vartheta_0:=\Psi(\vartheta)\in\mathbb{S}_0$ and consider the $\n_U(J)_\vartheta$-equivariant bijection $\sigma_{J_\vartheta}:\irr(J_\vartheta\mid \vartheta)\to\irr(J_{0,\vartheta}\mid \vartheta_0)$ induced by our choice of projective representations $(\mathcal{P}^{(\vartheta)},\mathcal{P}_0^{(\vartheta_0)})$. Let $\psi_0:=\sigma_{J_\vartheta}(\psi)$. Observe that $J_{0,\vartheta_0}=J_{0,\vartheta}$. To see this, notice that $U_\vartheta=U_{\vartheta_0}$ since $\Psi$ is $U$-equivariant and that $J_{0,\vartheta_0}\leq K_0U$ by (ii) above. Therefore $J_{0,\vartheta_0}\leq J_{0,\vartheta}$. On the other hand, since $(J\cap U)_\vartheta=(J\cap U)_{\vartheta_0}$ because $\Psi$ is $U$-equivariant and noticing that $J_{0,\vartheta}\leq J_0\cap V=K_0(J\cap U)$ by using (iii), it follows that $J_{0,\vartheta}\leq J_{0,\vartheta_0}$. Now $\Phi_J(\chi):=\psi^{J_0}$ is irreducible by the Clifford correspondence. We define
\[\Phi_J\left(\chi^x\right):=\Phi_J(\chi)^x\]
for every $\chi\in\mathbb{T}$ and $x\in\n_U(J)$. This defines an $\n_U(J)$-equivariant bijection $\Psi:\irr(J\mid \mathcal{S})\to\irr(J_0\mid \mathcal{S}_0)$.

To prove the condition on character triples, consider $\chi\in\irr(J\mid \mathcal{S})$, $\vartheta\in\irr(\chi_K)\cap \mathcal{S}$, $\psi\in\irr(J_\vartheta\mid \vartheta)$ and $\vartheta_0:=\Psi(\vartheta)$, $\psi_0:=\sigma_{J_\vartheta}(\psi)$ and $\chi_0:=\Phi_J(\chi)$ as in the previous paragraph. Since $(X_\vartheta,K,\vartheta)\iso{K}(X_{0,\vartheta},K_0,\vartheta_0)$, \cite[Proposition 2.9 (ii)]{Ros22} implies that
\[\left(\n_{X_\vartheta}(J_\vartheta)_\psi,J_\vartheta,\psi\right)\iso{J_\vartheta}\left(\n_{X_{0,\vartheta}}(J_\vartheta)_\psi,J_{0,\vartheta},\psi_0\right)\]
and, because $\n_X(J)_\vartheta\leq \n_{X_\vartheta}(J_\vartheta)$, \cite[Lemma 3.8]{Spa17} implies
\begin{equation}
\label{eq:Constructing bijections over bijections with central quotients 1}
\left(\n_X(J)_{\vartheta,\psi},J_\vartheta,\psi\right)\iso{J_\vartheta}\left(\n_{X_0}(J)_{\vartheta,\psi},J_{0,\vartheta},\psi_0\right).
\end{equation}
To conclude, observe that by hypothesis we have
\[\c_{\n_X(J)_\chi}(Q)\leq \n_{X_0}(J)_\chi\]
for every $\chi_0\in\irr(J_0\mid \mathcal{S})$ and $Q\in\delta(\bl(\chi_0))$ and therefore we can apply \cite[Proposition 2.8]{Ros22} which, together with \eqref{eq:Constructing bijections over bijections with central quotients 1}, yields
\[\left(\n_X(J)_\chi,J,\chi\right)\iso{J}\left(\n_{X_0}(J)_\chi,J_0,\chi_0\right).\]
The proof is now complete.
\end{proof}

\begin{rmk}
\label{rmk:Defect preservation}
Consider the setup of Proposition \ref{prop:Constructing bijections over bijections with central quotients}. Then, the bijection $\Phi_J$ is defect preserving if and only if $\Psi$ is defect preserving.
\end{rmk}

\begin{proof}
For $\chi\in\irr(J\mid \mathcal{S})$, let $\psi$ be the Clifford correspondent of $\chi$ over some $\vartheta\in\irr(\chi_K)\cap \mathcal{S}$ and let $\psi_0:=\sigma_{J_\vartheta}(\psi)$ and $\vartheta_0:=\Psi(\vartheta)$. If $\chi_0:=\Phi_J(\chi)=\psi_0^{J_0}$, then $d(\chi)=d(\psi)$ and $d(\chi_0)=d(\psi_0)$. By \cite[Proposition 2.9 (iii)]{Ros22} we deduce that $d(\psi)-d(\psi_0)=d(\vartheta)-d(\vartheta_0)$.
\end{proof}

In \cite[Lemma 3.8 (c)]{Spa17} it is shown that $N$-block isomorphisms of character triples are compatible with the action of inner automorphisms. It is straightforward to extend this compatibility to arbitrary automorphisms.

\begin{lem}
\label{lem:Basic properties of N-block isomorphism}
Suppose that $(H_1,M_1,\vartheta_1)\iso{N}(H_2,M_2,\vartheta_2)$ with $G=H_1N=H_2N$. If $\gamma\in\aut(G)$, then $(H_1^\gamma,M_1^\gamma,\vartheta_1^\gamma)\iso{N^\gamma}(H_2^\gamma,M_2^\gamma,\vartheta_2^\gamma)$.
\end{lem}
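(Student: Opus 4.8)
The statement is a routine transport-of-structure claim: an automorphism $\gamma$ of $G$ carries all the data defining the relation $\iso{N}$ to the corresponding data with everything conjugated by $\gamma$. The plan is simply to unwind the definition of $N$-block isomorphism of character triples from \cite[Definition 3.6]{Spa17} and check that each ingredient is preserved. Concretely, the relation $(H_1,M_1,\vartheta_1)\iso{N}(H_2,M_2,\vartheta_2)$ (with $G=H_1N=H_2N$, $M_i\unlhd H_i$, $M_i=H_i\cap N$, $\vartheta_i\in\irr(M_i)$) is witnessed by an isomorphism $\epsilon\colon H_1/M_1\to H_2/M_2$ together with a compatible pair of projective representations $(\mathcal{P}_1,\mathcal{P}_2)$ of $H_1$ and $H_2$ (or rather of the relevant stabilisers) whose factor sets match via $\epsilon$ and which additionally satisfy the block-theoretic condition that the associated characters lie in Brauer-corresponding blocks.

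First I would apply $\gamma$ to every piece of this data. Since $\gamma\in\aut(G)$ and $G=H_1N=H_2N$, we get $G=H_1^\gamma N^\gamma=H_2^\gamma N^\gamma$; since $M_i\unlhd H_i$ and $M_i=H_i\cap N$, applying $\gamma$ gives $M_i^\gamma\unlhd H_i^\gamma$ and $M_i^\gamma=H_i^\gamma\cap N^\gamma$; and $\vartheta_i^\gamma\in\irr(M_i^\gamma)$. The map $\epsilon$ transports to $\epsilon^\gamma\colon H_1^\gamma/M_1^\gamma\to H_2^\gamma/M_2^\gamma$ defined by $\epsilon^\gamma(\gamma(h)M_1^\gamma)=\gamma\big(\text{(a lift of }\epsilon(hM_1))\big)M_2^\gamma$, which is a well-defined group isomorphism because $\gamma$ is. The projective representations transport to $\mathcal{P}_i^\gamma:=\mathcal{P}_i\circ\gamma^{-1}$ (restricted to the appropriate $\gamma$-images of stabilisers), and one checks that the factor set of $\mathcal{P}_i^\gamma$ is the $\gamma$-transport of the factor set of $\mathcal{P}_i$, so the matching of factor sets via $\epsilon$ is preserved under $\gamma$; likewise the scalar-multiple condition relating $\mathcal{P}_1$ and $\mathcal{P}_2$ over $C_G$-elements is preserved because conjugation by $\gamma$ is a group automorphism sending centralisers to centralisers.

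The only genuinely non-formal point is the block-theoretic compatibility: one must observe that $\gamma$ induces a bijection on blocks, $b\mapsto b^\gamma$, compatible with Brauer correspondence, defect groups, and covering of blocks (all of which are defined purely in terms of the group-algebra structure, hence are $\aut(G)$-equivariant). Thus if $\bl(\cdot)$ of the relevant characters built from $(\mathcal{P}_1,\mathcal{P}_2)$ satisfy the required Brauer-correspondence relation, so do their $\gamma$-transports. I expect this to be the main (though still easy) obstacle, in the sense that it is the one place where one cannot merely say "conjugate everything" but must invoke functoriality of block theory under automorphisms. Assembling these observations, the pair $(\epsilon^\gamma,(\mathcal{P}_1^\gamma,\mathcal{P}_2^\gamma))$ witnesses $(H_1^\gamma,M_1^\gamma,\vartheta_1^\gamma)\iso{N^\gamma}(H_2^\gamma,M_2^\gamma,\vartheta_2^\gamma)$, which is the claim. (For $\gamma$ an inner automorphism of $G$ this recovers \cite[Lemma 3.8 (c)]{Spa17}, and the argument is literally the same; the point of the present lemma is only that nothing in the proof used innerness.)
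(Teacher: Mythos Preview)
Your proposal is correct and takes essentially the same approach as the paper: the paper's entire proof is the single sentence ``The claim follows directly from the definition of $\iso{N}$ (see \cite[Definition 3.6]{Spa17}),'' and your write-up simply unpacks what that sentence means, checking each ingredient of the definition is preserved under $\gamma$.
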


\begin{proof}
The claim follows directly from the definition of $\iso{N}$ (see \cite[Definition 3.6]{Spa17}).
\end{proof}

\subsection{Proof of Theorem \ref{thm:Main reduction}}

We now start working towards a proof of Theorem \ref{thm:Main reduction}. In order to apply the results on $e$-Harish-Chandra theory obtain in Section \ref{sec:Brauer-Lusztig blocks and e-HC series}, we assume throughout this section that Hypothesis \ref{hyp:Brauer--Lusztig blocks} holds for our choice of $\G$, $F$, $q$, $\ell$ and $e$ as in Notation \ref{notation}.

\begin{defin}
\label{def:Irreducible rational component}
Let $\G$ be a connected reductive group with Frobenius endomorphism $F:\G\to \G$. Recall that $[\G,\G]$ is the product of simple algebraic groups $\G_1,\dots, \G_n$ and that $F$ acts on the set $\{\G_1,\dots, \G_n\}$. For any orbit $\mathcal{O}$ of $F$, we denote by $\G_\mathcal{O}$ the product of those simple algebraic groups in the orbit $\mathcal{O}$. Notice that $\G_\mathcal{O}$ is $F$-stable and, by abuse of notation, denote by $F$ the restriction of $F$ to $\G_\mathcal{O}$. Then, we say that $(\G_\mathcal{O},F)$ is an \emph{irreducible rational component} of $(\G,F)$.
\end{defin}

Recall that a connected reductive group $\G$ is called simply connected if the semisimple algebraic group $[\G,\G]$ is simply connected.

\begin{prop}
\label{prop:From rational components to K_0}
Assume Hypothesis \ref{hyp:Brauer--Lusztig blocks} and suppose that $\G$ is simply connected. Consider an $e$-split Levi subgroup $\K$ of $\G$ and suppose that Parametrisation \ref{para:iEBC} holds at the prime $\ell$ for every irreducible rational component of $(\K,F)$. Let $\K_0:=[\K,\K]$ and consider an $e$-cuspidal pair $(\L_0,\lambda_0)$ of $\K_0$. Then there exists a defect preserving $\aut_\mathbb{F}(\K_0^F)_{(\L_0,\lambda_0)}$-equivariant bijection
\[\Omega^{\K_0}_{(\L_0,\lambda_0)}:\E\left(\K_0^F,(\L_0,\lambda_0)\right)\to\irr\left(\n_{\K_0}(\L_0)^F\hspace{1pt}\middle|\hspace{1pt} \lambda_0\right)\]
such that
\[\left(Y_\vartheta,\K_0^F,\vartheta\right)\iso{\K_0^F}\left(\n_{Y_\vartheta}(\L_0),\n_{\K_0}(\L_0),\Omega^{\K_0}_{(\L_0,\lambda_0)}(\vartheta)\right)
\]
for every $\vartheta\in\E(\K_0^F,(\L_0,\lambda_0))$ and where $Y:=\K_0^F\rtimes \aut_\mathbb{F}(\K_0^F)$.
\end{prop}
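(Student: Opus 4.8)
The plan is to realise $\Omega^{\K_0}_{(\L_0,\lambda_0)}$ as a direct product of the bijections furnished by Parametrisation \ref{para:iEBC} on the irreducible rational components of $(\K,F)$, after checking that all the objects involved decompose accordingly.

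Since $\G$, and hence the Levi subgroup $\K$, is simply connected (see \cite[Proposition 12.14]{Mal-Tes}), the semisimple group $\K_0=[\K,\K]$ is the direct product $\H_1\times\cdots\times\H_r$ of its irreducible rational components $(\H_i,F)$ in the sense of Definition \ref{def:Irreducible rational component}; each $\H_i$ is $F$-stable, so $\K_0^F=\prod_i\H_i^F$. Every $e$-split Levi subgroup of a direct product is the product of $e$-split Levi subgroups of the factors, so $\L_0=\prod_i\M_i$ with $\M_i:=\L_0\cap\H_i$ an $e$-split Levi subgroup of $\H_i$, and correspondingly $\lambda_0=\mu_1\times\cdots\times\mu_r$ with $\mu_i\in\irr(\M_i^F)$. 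Using the product formula $\R_{\L_0}^{\K_0}=\R_{\M_1}^{\H_1}\times\cdots\times\R_{\M_r}^{\H_r}$ (see \cite[Proposition 10.9 (ii)]{Dig-Mic91}, recalling that under Hypothesis \ref{hyp:Brauer--Lusztig blocks} Deligne--Lusztig induction does not depend on the chosen parabolic), one checks that each $(\M_i,\mu_i)$ is an $e$-cuspidal pair of $\H_i$, that
\[\E\left(\K_0^F,(\L_0,\lambda_0)\right)=\prod_{i=1}^r\E\left(\H_i^F,(\M_i,\mu_i)\right),\]
and that $\n_{\K_0}(\L_0)=\prod_i\n_{\H_i}(\M_i)$, whence $\irr(\n_{\K_0}(\L_0)^F\mid\lambda_0)=\prod_i\irr(\n_{\H_i}(\M_i)^F\mid\mu_i)$.

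Next I would apply Parametrisation \ref{para:iEBC}, which holds at $\ell$ for each $(\H_i,F)$ by hypothesis, to obtain defect preserving $\aut_\mathbb{F}(\H_i^F)_{(\M_i,\mu_i)}$-equivariant bijections $\Omega_i:\E(\H_i^F,(\M_i,\mu_i))\to\irr(\n_{\H_i}(\M_i)^F\mid\mu_i)$ inducing $\H_i^F$-block isomorphisms of character triples over $\H_i^F\rtimes\aut_\mathbb{F}(\H_i^F)$. These cannot be chosen independently, however: every element of $\aut_\mathbb{F}(\K_0^F)$ permutes the components $\H_i$, and $\aut_\mathbb{F}(\H_i^F)$ embeds into $\aut_\mathbb{F}(\K_0^F)$ as the subgroup acting trivially on the remaining factors. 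So one must pick a set of representatives for the action of $A:=\aut_\mathbb{F}(\K_0^F)_{(\L_0,\lambda_0)}$ on $\{\H_1,\dots,\H_r\}$, invoke Parametrisation \ref{para:iEBC} on each representative, and transport along a chosen element of $A$ on the other components. The transport is well defined because an element of $A$ fixing a representative $\H_i$ restricts to an element of $\aut_\mathbb{F}(\H_i^F)_{(\M_i,\mu_i)}$, under which $\Omega_i$ is equivariant; by construction the resulting family $(\Omega_i)_i$ is then $A$-equivariant. I would set $\Omega^{\K_0}_{(\L_0,\lambda_0)}:=\Omega_1\times\cdots\times\Omega_r$ under the identifications of the previous paragraph.

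It then remains to verify the three properties. Equivariance under $A$ is immediate from the compatible choice, and $\Omega^{\K_0}_{(\L_0,\lambda_0)}$ preserves defects since defects are additive under direct products and each $\Omega_i$ is defect preserving. For the character triple condition, one combines the component-wise $\H_i^F$-block isomorphisms: their external product is a $\K_0^F$-block isomorphism of character triples over $\prod_i(\H_i^F\rtimes\aut_\mathbb{F}(\H_i^F))$ directly from \cite[Definition 3.6]{Spa17}, and the elements of $Y=\K_0^F\rtimes\aut_\mathbb{F}(\K_0^F)$ that permute the components are handled by the compatible choice of the $\Omega_i$ together with Lemma \ref{lem:Basic properties of N-block isomorphism}. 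I expect this last point to be the main obstacle: reconciling the factor-wise character triple isomorphisms, which live over the smaller groups $\H_i^F\rtimes\aut_\mathbb{F}(\H_i^F)$, with the required isomorphism over the full group $Y$, whose automorphism part also permutes isomorphic rational components, is precisely what forces the $\Omega_i$ to be chosen by transport rather than independently, and the bookkeeping of the induced actions on the various stabilisers has to be carried out carefully.
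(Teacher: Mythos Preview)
Your proposal is correct and follows essentially the same approach as the paper: decompose $\K_0^F$ as the direct product of its irreducible rational components, apply Parametrisation \ref{para:iEBC} factorwise with compatible choices across isomorphic components, and assemble the product bijection. The step you flag as the main obstacle is exactly where the paper invokes \cite[Theorem 5.1 and Theorem 5.2]{Spa17}, which are the ready-made tools for passing from factorwise $\H_i^F$-block isomorphisms to a $\K_0^F$-block isomorphism over the full group $Y$ (handling both the direct product and the wreath-product contribution from automorphisms permuting isomorphic components); the paper organises this by grouping components into classes $A_l$ on which the cuspidal pairs are conjugate and then reducing via Lemma \ref{lem:Basic properties of N-block isomorphism} to a wreath-product situation, but this is just a repackaging of your transport-along-$A$ idea.
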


\begin{proof}
Since $\G$ is simply connected, we deduce that $\K_0$ is a simply connected semisimple group (see \cite[Proposition 12.14]{Mal-Tes}). By \cite[Proposition 1.4.10]{Mar91}, $\K_0$ is the direct product of simple algebraic groups $\K_1,\dots, \K_n$ and the action of $F$ induces a permutation on the set of simple components $\K_i$. For every orbit of $F$ we denote by $\H_j$, $j=1,\dots, t$, the direct product of the simple components in such an orbit. Then $\H_j$ is $F$-stable and
\[\K_0^F=\H_1^F\times \cdots\times \H_t^F,\]
where by abuse of notation we denote the restriction of $F$ to $\H_j$ again by $F$. Observe that the $(\H_j, F)$'s are the irreducible rational components of $(\K,F)$. Define $\M_j:=\L_0\cap \H_j$ and observe that $\M_j$ is an $e$-split Levi subgroup of $\H_j$ and that
\[\L_0^F=\M_1^F\times \cdots \times\M_t^F.\]
Then, we can write $\lambda_0=\mu_1\times \cdots \times\mu_t$ with $\mu_j\in\irr(\M_j^F)$. As $\R_{\L_0}^{\K_0}=\R_{\M_1}^{\H_1}\times\cdots\times \R_{\M_t}^{\H_t}$ (see \cite[Proposition 10.9 (ii)]{Dig-Mic91}), it follows that $(\M_j,\mu_j)$ is an $e$-cuspidal pair of $\H_j$ for every $j=1,\dots, t$ and, using our assumption, there exist bijections
\begin{equation}
\label{eq:From rational components to K_0, 1}
\Omega^{\H_j}_{(\M_j,\mu_j)}:\E\left(\H_j^F,(\M_j,\mu_j)\right)\to\irr\left(\n_{\H_j}(\M_j)^F\hspace{1pt}\middle|\hspace{1pt}\mu_j\right)
\end{equation}
as in Parametrisation \ref{para:iEBC}. Since $\R_{\L_0}^{\K_0}=\R_{\M_1}^{\H_1}\times\cdots\times \R_{\M_t}^{\H_t}$, we deduce that $\E(\K_0^F,(\L_0,\lambda_0))$ coincides with the set of characters of the form $\vartheta_1\times\dots\times\vartheta_t$ with $\psi_j\in\E(\H_j^F,(\M_j,\mu_j))$, while it is not hard to see that $\irr(\n_{\K_0}(\L_0)^F\mid \lambda_0)$ coincides with the set of characters of the form $\xi_1\times \dots\times \xi_t$ with $\xi_j\in\irr(\n_{\H_j}(\M_j)^F\mid \mu_j)$. Hence, we obtain a bijection
\begin{align*}
\Omega_{(\L_0,\lambda_0)}^{\K_0}:\E\left(\K_0^F,(\L_0,\lambda_0)\right)&\to\irr\left(\n_{\K_0}(\L_0)^F\hspace{1pt}\middle|\hspace{1pt} \lambda_0\right)
\\
\vartheta_1\times\dots\times \vartheta_t&\mapsto\Omega_{(\M_1,\mu_1)}^{\H_1}(\vartheta_1)\times \dots\times \Omega_{(\M_t,\mu_t)}^{\H_t}(\vartheta_t).
\end{align*}
We now show that $\Omega^{\K_0}_{(\L_0,\lambda_0)}$ satisfies the required properties. 

First, consider the partition $\{1,\dots, t\}=\coprod_lA_l$ given by $j,k\in A_l$ if there exists a bijective morphism $\varphi:\H_j\to\H_k$ commuting with $F$ such that $\varphi(\M_j,\mu_j)=(\M_k,\mu_k)$. Fix $j_l\in A_l$. By Lemma \ref{lem:Basic properties of N-block isomorphism}, we may assume without loss of generality that
\[\K_0^F=\bigtimes\limits_l\H_{A_l}^F\]
and
\[\Omega_{(\L_0,\lambda_0)}^{\K_0}=\bigtimes\limits_l\Omega^{\H_{A_l}}_{(\M_{A_l},\mu_{A_l})}\]
where $\H_{A_l}:=\H_{j_l}^{|A_l|}$, $\M_{A_l}:=\M_{j_l}^{|A_l|}$, $\mu_{A_l}=\mu_{j_l}^{\otimes|A_l|}$ and $\Omega^{\H_{A_l}}_{(\M_{A_l},\lambda_{A_l})}:=(\Omega^{\H_{j_l}}_{(\M_{j_l},\lambda_{j_l})})^{\otimes|A_l|}$. Fix $\vartheta=\times_l\vartheta_{A_l}$, with $\vartheta_{A_l}\in\E(\H_{A_l}^F,(\M_{A_l},\mu_{A_l}))$, and write $\xi:=\Omega^{\K_0}_{(\L_0,\lambda_0)}(\vartheta)=\times_l\xi_{A_l}$ with $\xi_{A_l}=\Omega^{\H_{A_l}}_{(\M_{A_l},\mu_{A_l})}(\vartheta_{A_l})$. Then, noticing that $\aut_{\mathbb{F}}(\K_0^F)=\bigtimes_l\aut_{\mathbb{F}}(\H_{A_l}^F)$, by \cite[Theorem 5.1]{Spa17} it is enough to check that
\begin{equation}
\label{eq:From rational components to K_0, 2}
\left(Y_{A_l,\vartheta_{A_l}},\H_{A_l}^F,\vartheta_{A_l}\right)\iso{\H_{A_l}^F}\left(\n_{Y_{A_l,\vartheta_{A_l}}}(\M_{A_l}),\n_{\H_{A_l}}(\M_{A_l})^F,\xi_{A_l}\right)
\end{equation}
where $Y_{A_l}:=\H_{A_l}^F\rtimes \aut_\mathbb{F}(\H_{A_l}^F)$. 

To prove \eqref{eq:From rational components to K_0, 2}, observe that $\vartheta_{A_l}$ is $\aut_{\mathbb{F}}(\H_{A_l}^F)_{(\M_{A_l},\mu_{A_l})}$-conjugate to a character of the form $\times_u\vartheta_u$ such that for every $u, v$ we have either $\vartheta_u=\vartheta_v$ or $\vartheta_u$ and $\vartheta_v$ are not $\aut_\mathbb{F}(\H_{A_l}^F)$-conjugate. By Lemma \ref{lem:Basic properties of N-block isomorphism}, we may assume without loss of generality that $\vartheta_{A_l}=\times_u\nu_u^{m_u}$, where for every $u\neq v$ the characters $\nu_u$ and $\nu_v$ are distinct and not $\aut_\mathbb{F}(\H_{A_l})$-conjugate while $m_u$ are non-negative integers such that $|A_l|=\sum_u m_u$. Then
\[\aut_\mathbb{F}(\H_{A_l}^F)_{\vartheta_{A_l}}=\bigtimes\limits_u \left(\aut_\mathbb{F}(\H_{j_l})_{\nu_u}\wr S_{m_u}\right)\]
and hence \eqref{eq:From rational components to K_0, 2} follows from the properties of the bijections \eqref{eq:From rational components to K_0, 1} by applying \cite[Theorem 5.2]{Spa17}. A similar argument shows that the bijection $\Omega^{\K_0}_{(\L_0,\lambda_0)}$ is $\aut_\mathbb{F}(\K_0^F)_{(\L_0,\lambda_0)}$-equivariant. Moreover $\Omega^{\K_0}_{(\L_0,\lambda_0)}$ preserves the defect of characters by the analogous property of the bijections \eqref{eq:From rational components to K_0, 1}.
\end{proof}

We now prove an easy lemma which we use to combine bijections $\Omega^{\K_0}_{(\L_0,\lambda_0)}$ given by Proposition \ref{prop:From rational components to K_0} for various $e$-cuspidal pairs $(\L_0,\lambda_0)$.

\begin{lem}
\label{lem:Stabilizer of character set}
Let $X\leq Y\leq Z$ be finite groups with $X,Y\unlhd Z$ and $Y/X$ abelian. Consider $\eta\in\irr(Y)$ and define the set $\mathcal{Y}:=\{\eta\nu\mid \nu\in\irr(Y/X)\}$. If $z\in Z$ and $\mathcal{Y}^z\cap \mathcal{Y}\neq \emptyset$, then $\mathcal{Y}^z=\mathcal{Y}$.
\end{lem}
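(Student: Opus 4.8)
The plan is to recognise $\mathcal{Y}$ as an orbit for a group action and then invoke the fact that orbits are either equal or disjoint. Concretely, identify $\irr(Y/X)$ with the set of those (necessarily linear) characters of $Y$ whose kernel contains $X$; under pointwise multiplication this is a finite abelian group acting on $\irr(Y)$ by $\nu\cdot\chi:=\chi\nu$, where $\chi\nu\in\irr(Y)$ since tensoring an irreducible character with a linear one yields an irreducible character. With this language $\mathcal{Y}=\irr(Y/X)\cdot\eta$ is precisely the orbit of $\eta$.

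Next I would analyse the effect of conjugation by $z\in Z$. Since $Y\unlhd Z$, conjugation by $z$ is an automorphism of $Y$, and since $X\unlhd Z$ it preserves $X$; hence it induces a permutation of $\irr(Y)$ that restricts to a group automorphism $\nu\mapsto\nu^z$ of $\irr(Y/X)$. Because characters of quotients pull back multiplicatively, $(\eta\nu)^z=\eta^z\nu^z$ for all $\nu\in\irr(Y/X)$, so
\[
\mathcal{Y}^z=\{\eta^z\nu^z\mid\nu\in\irr(Y/X)\}=\{\eta^z\mu\mid\mu\in\irr(Y/X)\}=\irr(Y/X)\cdot\eta^z,
\]
using that $\nu\mapsto\nu^z$ is a bijection of $\irr(Y/X)$. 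Thus $\mathcal{Y}^z$ is again an orbit, namely that of $\eta^z$.

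Finally, if $\mathcal{Y}^z\cap\mathcal{Y}\neq\emptyset$ then the orbits of $\eta^z$ and of $\eta$ meet, and two orbits of a group action are equal as soon as they intersect; therefore $\mathcal{Y}^z=\mathcal{Y}$. Equivalently, picking $\nu_1,\nu_2\in\irr(Y/X)$ with $\eta\nu_1=\eta^z\nu_2$ gives $\eta^z=\eta\,\nu_1\nu_2^{-1}\in\mathcal{Y}$, whence $\irr(Y/X)\cdot\eta^z=\irr(Y/X)\cdot\eta$. There is no genuine obstacle here; the only point requiring a word of justification is that conjugation by $z$ stabilises $\irr(Y/X)$ as a set, which is immediate from $X,Y\unlhd Z$, and that it acts multiplicatively on products of characters, which is a standard property of character tensor products.
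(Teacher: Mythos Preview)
Your proof is correct and follows essentially the same approach as the paper: both rely on the identities $(\eta\nu)^z=\eta^z\nu^z$ and the fact that $\irr(Y/X)$ is stable under multiplication and under conjugation by $z$, yielding $\eta^z\in\mathcal{Y}$ and hence $\mathcal{Y}^z\subseteq\mathcal{Y}$. Your orbit-theoretic phrasing is a clean conceptual repackaging of the paper's direct computation, and your ``equivalently'' clause at the end is exactly the paper's argument.
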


\begin{proof}
Suppose that $\eta\nu\in\mathcal{Y}^z\cap \mathcal{Y}$, then there exists $\nu_1\in\irr(Y/X)$ such that $\eta\nu=(\eta\nu_1)^z$. Since $Y/X$ is abelian we deduce that $\eta^z=\eta\nu(\nu_1^z)^{-1}$. Now, if $\eta\nu_2\in\mathcal{Y}$, then $(\eta\nu_2)^z=\eta^z\nu_2^z=\eta\nu(\nu_1^z)^{-1}\nu_2^z$. Noticing that $\nu(\nu_1^z)^{-1}\nu_2^z\in\irr(Y/X)$, we conclude that $\mathcal{Y}^z\subseteq\mathcal{Y}$ and the result follows.
\end{proof}

For every $e$-split Levi subgroup $\L_0$ of a connected reductive group $\K_0$ and every subset $\mathcal{Y}_0\subseteq \irr(\L_0^F)$ of $e$-cuspidal characters, we define $\E(\K_0^F,(\L_0,\mathcal{Y}_0))$ to be the union of the $e$-Harish-Chandra series $\E(\K_0^F,(\L_0,\nu))$ where $\nu\in\mathcal{Y}_0$.

\begin{cor}
\label{cor:From rational components to K_0}
Assume Hypothesis \ref{hyp:Brauer--Lusztig blocks} and suppose that $\G$ is simply connected. Consider an $e$-split Levi subgroup $\K$ of $\G$ and suppose that Parametrisation \ref{para:iEBC} holds at the prime $\ell$ for every irreducible rational component of $(\K,F)$. Let $(\L,\lambda)$ be an $e$-cuspidal pair of $\K$, set $\K_0:=[\K,\K]$ and $\L_0:=\L\cap \K_0$ and consider $\lambda_0\in\irr(\lambda_{\L_0^F})$. Define $\mathcal{Y}_0:=\{\lambda_0\xi\mid\xi\in\irr(\L_0^F/[\L,\L]^F)\}$. Then there exists a defect preserving $\aut_\mathbb{F}(\G^F)_{\K,\L,\mathcal{Y}_0}$-equivariant bijection
\[\Psi^{\K_0}_{(\L_0,\lambda_0)}:\E\left(\K_0^F,(\L_0,\mathcal{Y}_0)\right)\to\irr\left(\n_{\K_0}(\L_0)^F\hspace{1pt}\middle|\hspace{1pt} \mathcal{Y}_0\right)\]
such that
\[\left(Y_\vartheta,\K_0^F,\vartheta\right)\iso{\K_0^F}\left(\n_{Y_\vartheta}(\L_0),\n_{\K_0}(\L_0)^F,\Psi^{\K_0}_{(\L_0,\lambda_0)}(\vartheta)\right)
\]
for every $\vartheta\in\E(\K_0^F,(\L_0,\mathcal{Y}_0))$ and where $Y:=\K_0^F\rtimes \aut_\mathbb{F}(\K_0^F)$.
\end{cor}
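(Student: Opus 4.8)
The plan is to build $\Psi^{\K_0}_{(\L_0,\lambda_0)}$ by gluing together the bijections $\Omega^{\K_0}_{(\L_0,\lambda_0')}$ furnished by Proposition \ref{prop:From rational components to K_0}, as $\lambda_0'$ runs over the finitely many members of $\mathcal{Y}_0$. First I would record the structural facts that make this legitimate. Since $[\L,\L]$ is semisimple and contained in $\L_0=\L\cap\K_0$, one has $[\L,\L]=[\L_0,\L_0]$; hence $[\L,\L]^F$ is characteristic in $\L_0^F$ with abelian quotient, $\mathcal{Y}_0$ is the single coset $\lambda_0\cdot\irr(\L_0^F/[\L_0,\L_0]^F)$ of linear characters, and by \cite[Proposition 12.1]{Bon06} every $\lambda_0'\in\mathcal{Y}_0$ is $e$-cuspidal, so that $(\L_0,\lambda_0')$ is an $e$-cuspidal pair of $\K_0$ to which Proposition \ref{prop:From rational components to K_0} applies. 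Moreover $\K_0=[\K,\K]$ is simply connected by \cite[Proposition 12.14]{Mal-Tes}, so Hypothesis \ref{hyp:Brauer--Lusztig blocks} passes to $(\K_0,F)$. Finally, as $\K_0$ is characteristic in $\K$ and $\L_0=\L\cap\K_0$, restriction of automorphisms induces a homomorphism $\aut_\mathbb{F}(\G^F)_{\K,\L,\mathcal{Y}_0}\to A:=\aut_\mathbb{F}(\K_0^F)_{\L_0,\mathcal{Y}_0}$, so it suffices to construct an $A$-equivariant bijection with the stated properties and then let $\aut_\mathbb{F}(\G^F)_{\K,\L,\mathcal{Y}_0}$ act through this map.

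Next I would set up the disjoint decomposition. Put $N:=\n_{\K_0}(\L_0)^F$ and note that $A$ normalises $N$, so $N\rtimes A$ makes sense and contains $[\L,\L]^F\unlhd\L_0^F$ as normal subgroups. Lemma \ref{lem:Stabilizer of character set}, applied with $X=[\L,\L]^F$, $Y=\L_0^F$ and $Z$ either $N$ or $N\rtimes A$, shows that any conjugate of $\mathcal{Y}_0$ which meets $\mathcal{Y}_0$ equals $\mathcal{Y}_0$; in particular the $N$-orbits contained in $\mathcal{Y}_0$ are precisely the orbits of the stabiliser $N_{\mathcal{Y}_0}$. By Proposition \ref{prop:e-Harish-Chandra series, disjointness} the $e$-Harish-Chandra series $\E(\K_0^F,(\L_0,\lambda_0'))$, as $\lambda_0'$ runs over an $N$-transversal $\mathbb{T}$ of $\mathcal{Y}_0$, are pairwise disjoint and their union is $\E(\K_0^F,(\L_0,\mathcal{Y}_0))$, and by Clifford theory the same holds for the sets $\irr(N\mid\lambda_0')$ inside $\irr(N\mid\mathcal{Y}_0)$, indexed by the same orbits. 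Now $A$ permutes the $N$-orbits in $\mathcal{Y}_0$: within each $A$-orbit of them I would fix a representative $\tau_0\in\mathbb{T}$, use $\Omega^{\K_0}_{(\L_0,\tau_0)}$ as given, and for $\tau=\tau_0^a$ define $\Omega^{\K_0}_{(\L_0,\tau)}$ by transporting $\Omega^{\K_0}_{(\L_0,\tau_0)}$ along $a$. This is well defined because the stabiliser $A_{\tau_0}$ is contained in $\aut_\mathbb{F}(\K_0^F)_{(\L_0,\tau_0)}$ and $\Omega^{\K_0}_{(\L_0,\tau_0)}$ is equivariant for that group. Declaring $\Psi^{\K_0}_{(\L_0,\lambda_0)}$ to agree with $\Omega^{\K_0}_{(\L_0,\tau)}$ on $\E(\K_0^F,(\L_0,\tau))$ for every $\tau\in\mathbb{T}$ then yields a bijection $\E(\K_0^F,(\L_0,\mathcal{Y}_0))\to\irr(N\mid\mathcal{Y}_0)$ which is $A$-equivariant by construction.

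The remaining properties are inherited. Defect preservation of $\Psi^{\K_0}_{(\L_0,\lambda_0)}$ follows from that of the bijections $\Omega^{\K_0}_{(\L_0,\tau_0)}$. For $\vartheta\in\E(\K_0^F,(\L_0,\mathcal{Y}_0))$ lying in the piece attached to $\tau=\tau_0^a$, the required relation $(Y_\vartheta,\K_0^F,\vartheta)\iso{\K_0^F}(\n_{Y_\vartheta}(\L_0),\n_{\K_0}(\L_0)^F,\Psi^{\K_0}_{(\L_0,\lambda_0)}(\vartheta))$ is deduced from the corresponding relation for $\Omega^{\K_0}_{(\L_0,\tau_0)}$ at $\vartheta^{a^{-1}}$ by transporting along $a$, using Lemma \ref{lem:Basic properties of N-block isomorphism} together with the conjugacy-compatibility of $\iso{\K_0^F}$. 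I expect the only genuine difficulty to be organisational rather than conceptual: because each $\Omega^{\K_0}_{(\L_0,\lambda_0')}$ is equivariant only for the stabiliser of $\lambda_0'$, the crux is to thread the gluing over the coset $\mathcal{Y}_0$ so that it is simultaneously compatible with $\n_{\K_0}(\L_0)^F$ and with $A$ while still respecting the block-isomorphism condition — which is exactly the bookkeeping that Lemma \ref{lem:Stabilizer of character set} and Lemma \ref{lem:Basic properties of N-block isomorphism} are designed to handle.
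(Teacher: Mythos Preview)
Your proposal is correct and follows essentially the same route as the paper: both arguments glue the bijections $\Omega^{\K_0}_{(\L_0,\lambda_0')}$ from Proposition~\ref{prop:From rational components to K_0} over an appropriate transversal of $\mathcal{Y}_0$, using \cite[Proposition~12.1]{Bon06} for $e$-cuspidality of the twists, Proposition~\ref{prop:e-Harish-Chandra series, disjointness} for disjointness of the series, Lemma~\ref{lem:Stabilizer of character set} to control the stabiliser of $\mathcal{Y}_0$, and Lemma~\ref{lem:Basic properties of N-block isomorphism} to transport the character-triple isomorphisms. The only organisational difference is that the paper works throughout with $\aut_\mathbb{F}(\G^F)_{\K,\L,\mathcal{Y}_0}$ and chooses its transversal $\mathbb{T}$ in $\mathcal{Y}_0$ for the combined action of $\n_{\K_0}(\L)^F_{\mathcal{Y}_0}\rtimes\aut_\mathbb{F}(\G^F)_{\K,\L,\mathcal{Y}_0}$, whereas you first pass to $A=\aut_\mathbb{F}(\K_0^F)_{\L_0,\mathcal{Y}_0}$ via restriction and then decompose in two steps ($N$-orbits, then $A$-orbits of those); the resulting bijection and its properties are the same.
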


\begin{proof}
First observe that for every $\lambda_0\xi\in\mathcal{Y}_0$ the pair $(\L_0,\lambda_0\xi)$ is $e$-cuspidal in $\K_0$ (see \cite[Proposition 12.1]{Bon06}). Moreover, notice that $\L=\z(\K)\L_0$ and therefore $\n_\K(\L_0)=\n_\K(\L)$. Let $\mathbb{T}$ be an $\n_{\K_0}(\L)_{\mathcal{Y}_0}^F\rtimes \aut_\mathbb{F}(\G^F)_{\K,\L,\mathcal{Y}_0}$-transversal in $\mathcal{Y}_0$. For each $\lambda_0\xi\in\mathbb{T}$ consider an $\aut_{\mathbb{F}}(\G^F)_{\K,\L,\lambda_0\xi}$-transversal $\mathcal{T}_{\lambda_0\xi}$ in $\E(\K_0^F,(\L_0,\lambda_0\xi))$ and define $\mathcal{T}$ as the union of the sets $\mathcal{T}_{\lambda_0\xi}$ with $\lambda_0\xi\in\mathbb{T}$.

We claim that $\mathcal{T}$ is an $\aut_\mathbb{F}(\G^F)_{\K,\L, \mathcal{Y}_0}$-transversal in
\[\E\left(\K_0^F,(\L_0,\mathcal{Y}_0)\right).\]
Let $\chi\in\E(\K_0^F,(\L_0,\lambda_0\xi))$ with $\xi\in\irr(\L_0^F/[\L,\L]^F)$ and consider the unique $\lambda_0\wh{\xi}\in\mathbb{T}$ such that $(\lambda_0\xi)^{xy}=\lambda_0\wh{\xi}$ for some $x\in\n_{\K_0}(\L)_{\mathcal{Y}_0}^F$ and $y\in\aut_\mathbb{F}(\G^F)_{\K,\L,\mathcal{Y}_0}$. Noticing that $\chi^y=\chi^{xy}\in\E(\K_0^F,(\L_0,\lambda_0\wh{\xi}))$ we can find a unique $\wh{\chi}\in\mathcal{T}_{\lambda_0\wh{\xi}}$ such that $\chi^{yz}=\wh{\chi}$ for some $z\in\aut_\mathbb{F}(\G^F)_{\K,\L,\lambda_0\wh{\xi}}$. By Lemma \ref{lem:Stabilizer of character set} we obtain $\aut_\mathbb{F}(\G^F)_{\K,\L,\lambda_0\wh{\xi}}\leq \aut_\mathbb{F}(\G^F)_{\K,\L,\mathcal{Y}_0}$ and hence $yz\in\aut_\mathbb{F}(\G^F)_{\K,\L,\mathcal{Y}_0}$. Next, for $i=1,2$ consider $\chi_i\in\mathcal{T}_{\lambda_0\xi_i}$ with $\lambda\xi_i\in\mathbb{T}$ such that $\chi_1=\chi_2^y$ with $y\in\aut_\mathbb{F}(\G^F)_{\K,\L,\mathcal{Y}_0}$. In particular $\chi_1\in\E(\K_0^F,(\L_0,\lambda_0\xi_1))\cap \E(\K_0^F,(\L_0,\lambda_0\xi_2)^y)$ and Proposition \ref{prop:e-Harish-Chandra series, disjointness} implies that $\lambda_0\xi_1=(\lambda_0\xi_2)^{yx}$ for some $x\in\n_{\K_0}(\L)^F$. Moreover, Lemma \ref{lem:Stabilizer of character set} yields $x\in\n_{\K_0}(\L)^F_{\mathcal{Y}_0}$ and by the choice of $\mathbb{T}$ it follows that $\lambda_0\xi_1=\lambda_0\xi_2$. Now $yx\in\aut_\mathbb{F}(\G^F)_{\K,\L,\lambda_0\xi_1}$ satisfies $\chi_1=\chi_2^{yx}$ and the choice of $\mathcal{T}_{\lambda_0\xi_1}$ implies that $\chi_1=\chi_2$. This proves the claim.

Next, using Proposition \ref{prop:From rational components to K_0}, for every $\lambda_0\xi\in\mathbb{T}$, $\chi\in\mathcal{T}_{\lambda_0\xi}$ and $x\in\aut_\mathbb{F}(\G^F)_{\K,\L,\mathcal{Y}_0}$ we define
\[\Psi^{\K_0}_{(\L_0,\lambda_0)}\left(\chi^x\right):=\Omega_{(\L_0,\lambda_0\xi)}^{\K_0}(\chi)^x.\]
Noticing that $\Psi_{(\L_0,\lambda_0)}^{\K_0}(\mathcal{T})$ is an $\aut_\mathbb{F}(\G^F)_{\K,\L,\mathcal{Y}_0}$-transversal in
\[\irr\left(\n_{\K_0}(\L_0)^F\hspace{1pt}\middle|\hspace{1pt} \mathcal{Y}_0\right)\]
we deduce that $\Psi_{(\L_0,\lambda_0)}^{\K_0}$ is an $\aut_\mathbb{F}(\G^F)_{\K,\L,\mathcal{Y}_0}$-equivariant bijection. The remaining properties follow directly from the corresponding properties of the bijections $\Omega_{(\K_0,\lambda_0\xi)}^{\K_0}$ given by Proposition \ref{prop:From rational components to K_0}.
\end{proof}

Using \cite[Theorem 5.3]{Spa17} we rewrite the relations on character triples given by Corollary \ref{cor:From rational components to K_0} replacing $\K_0\rtimes \aut_\mathbb{F}(\K_0^F)$ with $(\G^F\rtimes\aut_\mathbb{F}(\G^F))_\K$.

\begin{cor}
\label{cor:From rational components to K_0, with G-automorphism}
Consider the setup of Corollary \ref{cor:From rational components to K_0}. Then
\[\left(X_\vartheta,\K_0^F,\vartheta\right)\iso{\K_0^F}\left(\n_{X_\vartheta}(\L_0),\n_{\K_0}(\L_0),\Psi^{\K_0}_{(\L_0,\lambda_0)}(\vartheta)\right)
\]
for every $\vartheta\in\E(\K_0^F,(\L_0,\mathcal{Y}_0))$ and where $X:=(\G^F\rtimes \aut_\mathbb{F}(\G^F))_\K$.
\end{cor}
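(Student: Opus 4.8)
The plan is to obtain the assertion directly from Corollary~\ref{cor:From rational components to K_0} by enlarging the ambient group from $Y = \K_0^F \rtimes \aut_\mathbb{F}(\K_0^F)$ to $X = (\G^F \rtimes \aut_\mathbb{F}(\G^F))_\K$, using \cite[Theorem 5.3]{Spa17}. The role of that theorem is precisely to let one replace an overgroup of a normal subgroup $N$ by another overgroup inducing a comparable group of automorphisms on $N$ without disturbing the relation $\iso{N}$; here $N=\K_0^F$.

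First I would record the required compatibility of the two overgroups. Since $X$ stabilises the $e$-split Levi subgroup $\K$ and $\K_0=[\K,\K]$ is characteristic in $\K$, the group $X$ normalises $\K_0$, hence its subgroup of $F$-fixed points $\K_0^F$; as $\K_0^F \leq \K^F \leq X$, this gives $\K_0^F \unlhd X$. Moreover, every element of $X$ is induced by a bijective algebraic endomorphism of $\G$ commuting with $F$ and stabilising $\K$ (the ambiguity by a power of $F$ is irrelevant here); restricting such a morphism to $\K_0$ yields a bijective algebraic endomorphism of $\K_0$ commuting with $F$, so the automorphism of $\K_0^F$ obtained by conjugation lies in $\aut_\mathbb{F}(\K_0^F)$. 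Thus the conjugation action gives a homomorphism $X \to \aut_\mathbb{F}(\K_0^F)$, with kernel $\c_X(\K_0^F)$, which restricts to the canonical map $\K_0^F \to \aut_\mathbb{F}(\K_0^F)$; in other words $X$ and $Y$ are overgroups of $\K_0^F$ with the automorphisms of $\K_0^F$ induced by $X$ all realised inside $Y$. For $\vartheta \in \E(\K_0^F,(\L_0,\mathcal{Y}_0))$, since inner automorphisms fix $\vartheta$, one has $Y_\vartheta = \K_0^F \rtimes \aut_\mathbb{F}(\K_0^F)_\vartheta$, and the stabiliser $X_\vartheta$ (resp. $\n_{X_\vartheta}(\L_0)$) maps compatibly to $Y_\vartheta$ (resp. $\n_{Y_\vartheta}(\L_0)$).

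With this in hand I would invoke \cite[Theorem 5.3]{Spa17} to transport the relation
\[
\left(Y_\vartheta,\K_0^F,\vartheta\right)\iso{\K_0^F}\left(\n_{Y_\vartheta}(\L_0),\n_{\K_0}(\L_0),\Psi^{\K_0}_{(\L_0,\lambda_0)}(\vartheta)\right)
\]
supplied by Corollary~\ref{cor:From rational components to K_0} to the same relation with $X_\vartheta$ in place of $Y_\vartheta$ and $\n_{X_\vartheta}(\L_0)$ in place of $\n_{Y_\vartheta}(\L_0)$; the bijection $\Psi^{\K_0}_{(\L_0,\lambda_0)}$, together with its defect‑preservation and $\aut_\mathbb{F}(\G^F)_{\K,\L,\mathcal{Y}_0}$‑equivariance, is left untouched. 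The one genuine thing to check is that the hypotheses of \cite[Theorem 5.3]{Spa17} are met in our situation — that $X$ and $Y$ are related in the way that theorem demands (a homomorphism restricting to the identity on $\K_0^F$, or an equality of induced automorphism groups on the relevant stabilisers) and that the block‑theoretic part of $\iso{\K_0^F}$, in particular the condition on defect groups of the blocks of $X_\vartheta$ and its subgroups, is insensitive to passing from $Y$ to $X$. This is where most of the care goes; the rest is routine bookkeeping with the stabilisers of $\K$, $\L_0$ and $\vartheta$.
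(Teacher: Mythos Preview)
Your proposal is correct and follows essentially the same route as the paper: verify that conjugation by $X$ lands in $\aut_\mathbb{F}(\K_0^F)$, then use \cite[Theorem 5.3]{Spa17} to transport the $\iso{\K_0^F}$-relation from the $Y$-side to the $X$-side. The only cosmetic difference is that the paper makes the matching of induced automorphism groups explicit by first restricting, via \cite[Lemma 3.8]{Spa17}, to the subgroup $U\leq Y_\vartheta$ whose image in $\aut(\K_0^F)$ coincides with that of $X_\vartheta$, and then applies \cite[Theorem 5.3]{Spa17}; this is exactly the ``genuine thing to check'' you flag as the content of the argument.
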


\begin{proof}
Fix $\vartheta$ as in the statement, let $Y:=\K_0^F\rtimes \aut_\mathbb{F}(\K_0^F)$ and consider the canonical maps
\[\epsilon:Y_\vartheta\to\aut(\K_0^F)\]
and
\[\wh{\epsilon}:X_\vartheta\to\aut(\K_0^F).\]
Define $U:=\wh{\epsilon}^{-1}(\epsilon(X_{\vartheta}))\leq Y_\vartheta$. By Corollary \ref{cor:From rational components to K_0} we know that
\[\left(Y_\vartheta,\K_0^F,\vartheta\right)\iso{\K_0^F}\left(\n_{Y_\vartheta}(\L_0),\n_{\K_0}(\L_0),\Psi^{\K_0}_{(\L_0,\lambda_0)}(\vartheta)\right)
\]
and applying \cite[Lemma 3.8]{Spa17} we obtain
\[\left(U,\K_0^F,\vartheta\right)\iso{\K_0^F}\left(\n_{U}(\L_0),\n_{\K_0}(\L_0),\Psi^{\K_0}_{(\L_0,\lambda_0)}(\vartheta)\right).
\]
Now \cite[Theorem 5.3]{Spa17} implies that
\[\left(X_\vartheta,\K_0^F,\vartheta\right)\iso{\K_0^F}\left(\n_{X_\vartheta}(\L_0),\n_{\K_0}(\L_0),\Psi^{\K_0}_{(\L_0,\lambda_0)}(\vartheta)\right)
\]
and this concludes the proof.
\end{proof}

Our next goal is to lift the bijection $\Psi^{\K_0}_{(\L_0,\lambda_0)}$ to a similar bijection $\Psi^{\K}_{(\L,\lambda)}$. To do so we need the following preliminary result.

\begin{lem}
\label{lem:From K_0 to K, with lambda}
Consider the setup of Corollary \ref{cor:From rational components to K_0} and recall that $\ab(\lambda)=\{\lambda\eta\mid \eta\in\abirr(\L^F)\}$. Then
\[\irr\left(\K^F\hspace{1pt}\middle|\hspace{1pt} \E(\K_0^F,(\L_0,\mathcal{Y}_0))\right)=\E\left(\K^F,(\L,\ab(\lambda))\right)\]
and
\[\irr\left(\n_\K(\L)^F\hspace{1pt}\middle|\hspace{1pt}\mathcal{Y}_0\right)=\irr\left(\n_\K(\L)^F\hspace{1pt}\middle|\hspace{1pt}\ab(\lambda)\right).\]
\end{lem}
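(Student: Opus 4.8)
The plan is to prove the two claimed equalities of character sets by carefully tracking how $e$-Harish-Chandra series behave under the passage between a connected reductive group $\K$ and its derived subgroup $\K_0 = [\K,\K]$, using that $\K = \z(\K)\K_0$ and hence $\K^F = \z(\K)^F \K_0^F$. The key algebraic facts I would invoke are: (a) $\L = \z(\K)\L_0$, so $\n_\K(\L) = \n_\K(\L_0)$ and $\n_\K(\L)^F = \z(\K)^F \n_{\K_0}(\L_0)^F$; (b) Lemma \ref{lem:e-HC theory for connected reductive, reduction to semisimple}, which says precisely that $\leq_e$ (and hence $\ll_e$, hence membership in an $e$-Harish-Chandra series) is compatible with $\mathrm{Ind}$ and $\mathrm{Res}$ along $\K_0^F \unlhd \K^F$; and (c) the description of $\ab(\lambda)$ and $\mathcal{Y}_0$ in terms of linear characters trivial on $[\L,\L]^F$. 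Throughout I would freely use Corollary \ref{cor:e-Harish-Chandra, disjointness} / Theorem \ref{thm:Blocks are unions of e-HC series} to pin down which series a character belongs to.

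For the first equality, I would argue both inclusions. For $\subseteq$: take $\chi \in \irr(\K^F)$ lying over some $\vartheta \in \E(\K_0^F,(\L_0,\nu))$ with $\nu \in \mathcal{Y}_0$, i.e.\ $(\L_0,\nu)\leq_e(\K_0,\vartheta)$ (using transitivity, Corollary \ref{cor:e-HC theory for connected reductive}); by Lemma \ref{lem:e-HC theory for connected reductive, reduction to semisimple}(i) there exists $\lambda' \in \irr(\L^F \mid \nu)$ with $(\L,\lambda')\leq_e(\K,\chi)$, so $\chi \in \E(\K^F,(\L,\lambda'))$. It remains to see $\lambda' \in \ab(\lambda)$: since $\lambda'$ and $\lambda$ both lie over characters in $\mathcal{Y}_0 = \{\lambda_0\xi : \xi \in \irr(\L_0^F/[\L,\L]^F)\}$, and all characters of $\L^F$ over a fixed $\K_0^F$-orbit of irreducibles of $\L_0^F$ differ by a linear character of $\L^F/\L_0^F$ (Clifford theory over the abelian quotient $\L^F/\L_0^F$, combined with the fact that $[\L,\L]^F \leq \L_0^F$ so twisting by $\abirr(\L^F)$ moves $\lambda_0$ only within $\mathcal{Y}_0$), one gets $\lambda' \in \ab(\lambda)$. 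For $\supseteq$: take $\chi \in \E(\K^F,(\L,\lambda\eta))$ with $\eta \in \abirr(\L^F)$ (note $(\L,\lambda\eta)$ is $e$-cuspidal by \cite[Proposition 12.1]{Bon06}); applying Lemma \ref{lem:e-HC theory for connected reductive, reduction to semisimple}(ii) to $(\L,\lambda\eta)\leq_e(\K,\chi)$ and to an irreducible constituent $(\lambda\eta)_0$ of $(\lambda\eta)|_{\L_0^F}$, one finds $\chi_0 \in \irr(\chi|_{\K_0^F})$ with $(\L_0,(\lambda\eta)_0)\leq_e(\K_0,\chi_0)$; since $(\lambda\eta)|_{\L_0^F} = \lambda|_{\L_0^F}$ (as $\eta$ is trivial on $[\L,\L]^F \supseteq$ the relevant part — more precisely $\eta|_{\L_0^F}$ is a linear character of $\L_0^F/[\L,\L]^F$, so $(\lambda\eta)_0 \in \mathcal{Y}_0$ after possibly adjusting the constituent), we conclude $\chi_0 \in \E(\K_0^F,(\L_0,\mathcal{Y}_0))$ and hence $\chi \in \irr(\K^F \mid \E(\K_0^F,(\L_0,\mathcal{Y}_0)))$.

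For the second equality I would run the same argument one level down, replacing $\K$ by $\n_\K(\L)$ and $\K_0^F$-restriction by $\n_{\K_0}(\L_0)^F = \n_\K(\L)^F \cap \K_0^F$-restriction; since $\n_\K(\L)^F/\n_{\K_0}(\L_0)^F$ is a quotient of $\z(\K)^F$ and in particular abelian with $[\L,\L]^F$ in the relevant kernels, the bookkeeping of linear twists is identical, and no $e$-Harish-Chandra theory is needed here — it is pure Clifford theory: $\irr(\n_\K(\L)^F \mid \mathcal{Y}_0)$ and $\irr(\n_\K(\L)^F \mid \ab(\lambda))$ are both exactly the set of characters of $\n_\K(\L)^F$ whose restriction to $\n_{\K_0}(\L_0)^F$ involves the $\n_{\K_0}(\L_0)^F$-orbit of $\lambda_0$, because $\mathcal{Y}_0$ is that orbit's worth of twists inside $\L_0^F$ and $\ab(\lambda)|_{\L_0^F}$ produces the same set. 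I expect the main obstacle to be the careful matching of the twisting groups: verifying that twisting $\lambda$ by an arbitrary $\eta \in \abirr(\L^F)$ and then restricting to $\L_0^F$ lands inside $\mathcal{Y}_0 = \{\lambda_0\xi : \xi \in \irr(\L_0^F/[\L,\L]^F)\}$ and, conversely, that every element of $\mathcal{Y}_0$ arises this way — this hinges on the surjectivity of the restriction map $\abirr(\L^F) \to \irr(\L_0^F/[\L,\L]^F)$ up to the $\L^F$-action, which follows since $\L^F/[\L,\L]^F$ is abelian and $\L_0^F/[\L,\L]^F$ is a subgroup of it, so characters extend. Once that identification is in hand, both displayed equalities are immediate from Lemma \ref{lem:e-HC theory for connected reductive, reduction to semisimple} and Clifford correspondence.
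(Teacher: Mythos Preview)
Your proposal is correct and follows essentially the same route as the paper. The paper's proof works directly from the compatibility formulas $\ind_{\K_0^F}^{\K^F}\circ\R_{\L_0}^{\K_0}=\R_\L^\K\circ\ind_{\L_0^F}^{\L^F}$ and $\res^{\K^F}_{\K_0^F}\circ\R_\L^\K=\R_{\L_0}^{\K_0}\circ\res^{\L^F}_{\L_0^F}$ (from \cite[Corollary 3.3.25]{Gec-Mal20}), which is exactly what Lemma \ref{lem:e-HC theory for connected reductive, reduction to semisimple} packages, and then handles the twist-matching via \cite[Problems 5.3 and 6.2]{Isa76} (extension of $\xi$ to $\wh{\xi}\in\abirr(\L^F)$ and transitivity of the $\irr(\L^F/\L_0^F)$-action on $\irr(\L^F\mid\lambda_0)$), precisely the mechanism you anticipate as the ``main obstacle''.
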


\begin{proof}
Let $\lambda_0\xi\in\mathcal{Y}_0$ and consider $\chi\in\irr(\K^F\mid \E(\K_0^F,(\L_0,\lambda_0\xi)))$. Since $\L^F/[\L,\L]^F$ is abelian, $\xi$ has an extension $\wh{\xi}\in\abirr(\L^F)$. By \cite[Corollary 3.3.25]{Gec-Mal20} and \cite[Problem 5.3]{Isa76} we obtain
\[\ind_{\K_0^F}^{\K^F}\left(\R_{\L_0}^{\K_0}(\lambda_0\xi)\right)=\R_\L^\K\left(\ind_{\L_0^F}^{\L^F}(\lambda_0\xi)\right)=\R_\L^\K\left(\ind_{\L_0^F}^{\L^F}(\lambda_0)\wh{\xi}\right).\]
Then, by \cite[Problem 6.2]{Isa76} there exists $\eta\in\irr(\L^F/\L_0^F)$ such that $\chi\in\E(\K^F,(\L,\lambda\eta\wh{\xi}))$ with $\eta\wh{\xi}\in\abirr(\L^F)$. Conversely, assume that $\chi\in\E(\K^F,(\L,\lambda\eta))$ with $\lambda\eta\in\ab(\lambda)$. Applying \cite[Corollary 3.3.25]{Gec-Mal20}, we obtain
\[\res^{\K^F}_{\K_0^F}\left(\R_\L^\K(\lambda\eta)\right)=\R_{\L_0}^{\K_0}\left(\res_{\L_0^F}^{\L^F}(\lambda\eta)\right).\]
By Clifford's theorem we deduce that $\res_{\K_0^F}^{\K^F}(\chi)$ has an irreducible constituent in $\R_{\L_0}^{\K_0}(\lambda_0^g\xi)$ for some $g\in \L^F$ and $\xi:=\eta_{\L_0^F}\in\irr(\L_0^F/[\L,\L]^F)$. This proves the first equality.

Next, consider $\psi\in\irr(\n_\K(\L)^F\mid \lambda\eta)$ with $\lambda\eta\in\ab(\lambda)$. Since $\lambda\eta$ lies above $\lambda_0\xi$, with $\xi:=\eta_{\L_0^F}\in\irr(\L_0^F/[\L,\L]^F)$, we deduce that $\psi\in\irr(\n_\K(\L)^F\mid \mathcal{Y}_0)$. Conversely, suppose that $\psi\in\irr(\n_\K(\L)^F\mid \lambda_0\xi)$ with $\lambda_0\xi\in\mathcal{Y}_0$ and consider an extension $\eta_1\in\abirr(\L^F)$ of $\xi$. By \cite[Problem 5.3 and Problem 6.2]{Isa76}, we conclude that there exists $\eta_2\in\irr(\L^F/\L_0^F)$ such that $\psi$ lies above $\lambda\eta_1\eta_2$. Since $\eta:=\eta_1\eta_2\in\abirr(\L^F)$ the result follows.
\end{proof}

\begin{cor}
\label{cor:From K_0 to K}
Assume Hypothesis \ref{hyp:Brauer--Lusztig blocks} and suppose that $\G$ is simply connected, let $\K$ be an $e$-split Levi subgroup of $\G$ and suppose that Parametrisation \ref{para:iEBC} holds at the prime $\ell$ for every irreducible rational component of $(\K,F)$. Let $(\L,\lambda)$ be an $e$-cuspidal pair of $\K$ and consider $\ab(\lambda)$ as defined in Section \ref{sec:CTC reductive}. Then there exists a defect preserving $\aut_\mathbb{F}(\G^F)_{\K,\L,\ab(\lambda)}$-equivariant bijection
\[\Psi^{\K}_{(\L,\lambda)}:\E\left(\K^F,(\L,\ab(\lambda))\right)\to\irr\left(\n_\K(\L)^F\hspace{1pt}\middle|\hspace{1pt} \ab(\lambda)\right)\]
such that
\[\left(X_\vartheta,\K^F,\vartheta\right)\iso{\K^F}\left(\n_{X_\vartheta}(\L),\n_\K(\L)^F,\Psi^{\K}_{(\L,\lambda)}(\vartheta)\right)\]
for every $\vartheta\in\E(\K^F,(\L,\ab(\lambda)))$ and where $X:=(\G^F\rtimes \aut_\mathbb{F}(\G^F))_\K$.
\end{cor}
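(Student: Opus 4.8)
The plan is to produce $\Psi^\K_{(\L,\lambda)}$ by lifting the bijection $\Psi^{\K_0}_{(\L_0,\lambda_0)}$ of Corollary~\ref{cor:From rational components to K_0} (in the form of Corollary~\ref{cor:From rational components to K_0, with G-automorphism}) from the semisimple group $\K_0^F$ up to $\K^F$, through a single application of Proposition~\ref{prop:Constructing bijections over bijections with central quotients}. First I would fix $\lambda_0\in\irr(\lambda_{\L_0^F})$, set $\mathcal{Y}_0:=\{\lambda_0\xi\mid\xi\in\irr(\L_0^F/[\L,\L]^F)\}$, and invoke Corollary~\ref{cor:From rational components to K_0, with G-automorphism} to get the defect-preserving bijection $\Psi^{\K_0}_{(\L_0,\lambda_0)}\colon\E(\K_0^F,(\L_0,\mathcal{Y}_0))\to\irr(\n_{\K_0}(\L_0)^F\mid\mathcal{Y}_0)$ together with its $\iso{\K_0^F}$-relation against $X:=(\G^F\rtimes\aut_\mathbb{F}(\G^F))_\K$. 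Then I would instantiate Proposition~\ref{prop:Constructing bijections over bijections with central quotients} with $A:=\G^F\rtimes\aut_\mathbb{F}(\G^F)$, $G:=\G^F$, $A_0$ the stabiliser in $A$ of the $F$-stable connected subgroup $\L_0$, $K:=\K_0^F$, $J:=\K^F$, the group there called $X$ equal to the above $X$, the character sets $\mathcal{S}:=\E(\K_0^F,(\L_0,\mathcal{Y}_0))$ and $\mathcal{S}_0:=\irr(\n_{\K_0}(\L_0)^F\mid\mathcal{Y}_0)$, the input bijection $\Psi:=\Psi^{\K_0}_{(\L_0,\lambda_0)}$, and the auxiliary subgroups $V:=X_{\mathcal{S}}$ and $U:=(X\cap A_0)_{\mathcal{Y}_0}$. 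With these choices $K_0=\n_{\K_0}(\L_0)^F$, $J_0=\n_\K(\L_0)^F=\n_\K(\L)^F$ (using $\L=\z^\circ(\K)\L_0$), $X_0=X\cap A_0$, $\n_X(\K^F)=X$, and — crucially — Lemma~\ref{lem:From K_0 to K, with lambda} identifies $\irr(J\mid\mathcal{S})=\E(\K^F,(\L,\ab(\lambda)))$ and $\irr(J_0\mid\mathcal{S}_0)=\irr(\n_\K(\L)^F\mid\ab(\lambda))$, so the output of the proposition has exactly the source and target asked for.

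The bulk of the work is verifying the hypotheses of Proposition~\ref{prop:Constructing bijections over bijections with central quotients}. For conditions (i) and (ii) I would argue that an element of $X$ (resp.\ of $X\cap A_0$) sending a character of $\mathcal{S}$ into $\mathcal{S}$ (resp.\ of $\mathcal{S}_0$ into $\mathcal{S}_0$) must, by the disjointness of $e$-Harish--Chandra series proved in Proposition~\ref{prop:e-Harish-Chandra series, disjointness}, conjugate the $e$-cuspidal datum $(\L_0,\lambda_0\xi)$ to one of the same form; hence it stabilises $\L_0$ modulo $\K_0^F$ (resp.\ modulo $\n_{\K_0}(\L_0)^F$), and then Lemma~\ref{lem:Stabilizer of character set} applied to $[\L,\L]^F\unlhd\L_0^F$ forces it to stabilise the family $\mathcal{Y}_0$, placing it in $V$ (resp.\ in $\n_{\K_0}(\L_0)^F U$). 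The equality $V=KU$ of (iii) falls out of the same analysis, and $U\le X_{0,\mathcal{S}_0}$ because stabilising $\L_0$ and $\mathcal{Y}_0$ entails stabilising $\mathcal{S}_0$. I would also need $\Psi^{\K_0}_{(\L_0,\lambda_0)}$ to be $U$-equivariant, which reduces — exactly as in the proof of Corollary~\ref{cor:From rational components to K_0} — to the fact that conjugation by an element of $U$ induces on $\K_0^F$ an automorphism lying in $\aut_\mathbb{F}(\G^F)_{\K,\L,\mathcal{Y}_0}$, the inner contributions of $\K_0^F$ and $\n_{\K_0}(\L_0)^F$ acting trivially on $\irr(\K_0^F)$ and $\irr(\n_{\K_0}(\L_0)^F)$.

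The last hypothesis to secure is $\c_X(Q)\le X\cap A_0$ for every radical $\ell$-subgroup $Q$ of $J_0=\n_\K(\L)^F$. Here I would use the canonical $\ell$-subgroup $Z_0:=\z^\circ(\L)^F_\ell$: it is normal in $\n_\K(\L)^F$ (which normalises $\z^\circ(\L)$), hence lies in $\O_\ell(\n_\K(\L)^F)\le Q$ by \cite[Theorem 4.8]{Nav98}; and $\c^\circ_\G(Z_0)=\L$ by the good-prime structure theory (Lemma~\ref{lem:e-split Levi} and Lemma~\ref{prop:Good primes}, using $\ell\in\Gamma(\G,F)$), so any element of $X$ centralising $Q$ centralises $Z_0$, therefore normalises $\c^\circ_\G(Z_0)=\L$ and with it $\L_0=\L\cap\K_0$, hence lies in $X\cap A_0$. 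Granting all of this, Proposition~\ref{prop:Constructing bijections over bijections with central quotients} produces an $\n_U(\K^F)$-equivariant bijection
\[\Psi^\K_{(\L,\lambda)}\colon\E\left(\K^F,(\L,\ab(\lambda))\right)\to\irr\left(\n_\K(\L)^F\hspace{1pt}\middle|\hspace{1pt}\ab(\lambda)\right)\]
satisfying $(\n_X(\K^F)_\vartheta,\K^F,\vartheta)\iso{\K^F}(\n_{X\cap A_0}(\K^F)_\vartheta,\n_\K(\L)^F,\Psi^\K_{(\L,\lambda)}(\vartheta))$; since $\n_X(\K^F)=X$ and $\n_{X\cap A_0}(\K^F)_\vartheta=X_\vartheta\cap\n_A(\L)=\n_{X_\vartheta}(\L)$ (again via $\L=\z^\circ(\K)\L_0$), this is precisely the $\iso{\K^F}$-relation in the statement. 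Defect preservation follows from Remark~\ref{rmk:Defect preservation}, and the full $\aut_\mathbb{F}(\G^F)_{\K,\L,\ab(\lambda)}$-equivariance from the $\n_U(\K^F)$-equivariance by the transversal argument of Corollary~\ref{cor:From rational components to K_0}, once one notes that any such automorphism stabilises $\L_0$ and carries $\mathcal{Y}_0$ to an $\n_\K(\L)^F$-conjugate.

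The step I expect to be the main obstacle is the group-theoretic bookkeeping: pinning down $V$, $U$ and $A_0$ so that condition (iii) of Proposition~\ref{prop:Constructing bijections over bijections with central quotients} holds exactly and so that the equivariance group in the conclusion comes out as $\aut_\mathbb{F}(\G^F)_{\K,\L,\ab(\lambda)}$ rather than a conjugate-twisted variant, and checking that the various normalisers behave as claimed when one passes between $\L$, $\L_0$ and $\K$. By contrast, the representation-theoretic inputs — disjointness of $e$-Harish--Chandra series, recovery of $\L$ from $\z^\circ(\L)^F_\ell$, and the Clifford-theoretic compatibility of $\ab(\lambda)$ between $\K_0^F$ and $\K^F$ in Lemma~\ref{lem:From K_0 to K, with lambda} — are already available.
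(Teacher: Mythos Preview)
Your proposal is correct and follows essentially the same route as the paper: apply Proposition~\ref{prop:Constructing bijections over bijections with central quotients} with the bijection of Corollary~\ref{cor:From rational components to K_0} (upgraded via Corollary~\ref{cor:From rational components to K_0, with G-automorphism}), using Proposition~\ref{prop:e-Harish-Chandra series, disjointness} and Lemma~\ref{lem:Stabilizer of character set} for conditions (i)--(iii), the radical-subgroup argument with $E=\z^\circ(\L)^F_\ell$ for the centraliser condition, Lemma~\ref{lem:From K_0 to K, with lambda} to identify source and target, and Remark~\ref{rmk:Defect preservation} for defect preservation. The only cosmetic difference is that the paper takes $A_0:=\n_A(\L)$ rather than the stabiliser of $\L_0$ (these agree inside $X$ since $\L=\z(\K)\L_0$), and phrases the final passage from $\aut_\mathbb{F}(\G^F)_{\K,\L,\mathcal{Y}_0}$-equivariance to $\aut_\mathbb{F}(\G^F)_{\K,\L,\ab(\lambda)}$-equivariance as a Frattini argument yielding $\aut_\mathbb{F}(\G^F)_{\K,\L,\ab(\lambda)}\leq \L^F\aut_\mathbb{F}(\G^F)_{\K,\L,\mathcal{Y}_0}$, which is exactly the content of your last sentence.
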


\begin{proof}
Define $\K_0:=[\K,\K]$, $\L_0:=\L\cap \K_0$, fix an irreducible constituent $\lambda_0$ of $\lambda_{\L_0^F}$ and set $\mathcal{Y}_0:=\{\lambda_0\xi\mid\xi\in\irr(\L_0^F/[\L,\L]^F)\}$. We apply Proposition \ref{prop:Constructing bijections over bijections with central quotients} with $A:=\G^F\rtimes \aut_\mathbb{F}(\G^F)$, $A_0:=\n_A(\L)$, $K:=\K_0^F$, $K_0=\n_{\K_0}(\L)^F=\n_{\K_0}(\L_0)^F$, $G:=\G^F$, $X:=(\G^F\rtimes \aut_\mathbb{F}(\G^F))_\K$, $\mathcal{S}:=\E(\K_0^F,(\L_0,\mathcal{Y}_0))$, $\mathcal{S}_0:=\irr(\n_{\K_0}(\L_0)^F\mid \mathcal{Y}_0)$, $V:=(\G^F\rtimes \aut_\mathbb{F}(\G^F))_{\K,\mathcal{S}}$ and $U:=(\G^F\rtimes \aut_\mathbb{F}(\G^F))_{\K,\L,\mathcal{Y}_0}$. Observe that assumptions (ii) and (iii) of Proposition \ref{prop:Constructing bijections over bijections with central quotients} are satisfied by Proposition \ref{prop:e-Harish-Chandra series, disjointness} and Lemma \ref{lem:Stabilizer of character set}. Consider the bijection between $\mathcal{S}$ and $\mathcal{S}_0$ given by Corollary \ref{cor:From rational components to K_0} and Corollary \ref{cor:From rational components to K_0, with G-automorphism}. In order to apply Proposition \ref{prop:Constructing bijections over bijections with central quotients} with $J:=\K^F$ we need to show that $\c_X(Q)\leq X_0$ for every radical $\ell$-subgroup $Q$ of $J_0=\n_{\K}(\L)^F$. By Lemma \ref{lem:e-split Levi} (ii), we know that $\L=\c_\G^\circ(E)$ with $E:=\z^\circ(\L)_\ell^F$ and hence $E\leq \O_\ell(\n_\K(\L)^F)$. Since $Q$ is a radical $\ell$-subgroup of $J_0$, it follows that $E\leq Q$ (see \cite[Proposition 1.4]{Dad92}) and therefore $\c_X(Q)\leq \c_X(E)\leq \n_X(E)=\n_X(\L)=X_0$. We can thus apply Proposition \ref{prop:Constructing bijections over bijections with central quotients} together with Corollary \ref{cor:From rational components to K_0}, Corollary \ref{cor:From rational components to K_0, with G-automorphism} and Lemma \ref{lem:From K_0 to K, with lambda} to obtain an $\aut_\mathbb{F}(\G^F)_{\K,\L,\mathcal{Y}_0}$-equivariant bijection
\[\Psi^{\K}_{(\L,\lambda)}:\E\left(\K^F,(\L,\ab(\lambda))\right)\to\irr\left(\n_\K(\L)^F\hspace{1pt}\middle|\hspace{1pt} \ab(\lambda)\right)\]
such that
\[\left(X_\vartheta,\K^F,\vartheta\right)\iso{\K^F}\left(\n_{X_\vartheta}(\L),\n_\K(\L)^F,\Psi^{\K}_{(\L,\lambda)}(\vartheta)\right)\]
for every $\vartheta\in\E(\K^F,(\L,\ab(\lambda)))$. Moreover, $\Psi^{\K}_{(\L,\lambda)}$ preserves the defect of characters by Remark \ref{rmk:Defect preservation}. To conclude, notice that by a Frattini argument and using Clifford's theorem and Lemma \ref{lem:Stabilizer of character set} we have
\[\aut_\mathbb{F}(\G^F)_{\K,\L,\ab(\lambda)}\leq \L^F\aut_\mathbb{F}(\G^F)_{\K,\L,\mathcal{Y}_0}\]
and therefore the bijection $\Psi^\K_{(\L,\lambda)}$ is $\aut_\mathbb{F}(\G^F)_{\K,\L,\ab(\lambda)}$-equivariant.
\end{proof}

Now, applying Proposition \ref{prop:Constructing bijections over bijections with central quotients}, we show how to lift the bijection given by Corollary \ref{cor:From K_0 to K} to a bijection
\[\Omega_{(\L,\lambda)}^{\K,H}:\irr\left(H\hspace{1pt}\middle|\hspace{1pt} \E(\K^F,(\L,\ab(\lambda)))\right)\to\irr\left(\n_H(\L)\hspace{1pt}\middle|\hspace{1pt}\ab(\lambda)\right)\]
for every $\K^F\leq H\leq \n_\G(\K)^F$. Notice that if $\sigma\in\CL(\G)$ has final term $\K$ and $H:=\G_\sigma^F$, then the above bijections gives a parametrisation of the characters considered in the definition of the set $\CL^d(B)_\epsilon$ of Conjecture \ref{conj:CTC reductive}.

\begin{prop}
\label{prop:iEBC going up}
Consider the setup of Corollary \ref{cor:From K_0 to K} and let $\K^F\leq H\leq \n_\G(\K)^F$. Then there exists a defect preserving $\aut_\mathbb{F}(\G^F)_{H,\K,\L,\ab(\lambda)}$-equivariant bijection
\[\Omega_{(\L,\lambda)}^{\K,H}:\irr\left(H\hspace{1pt}\middle|\hspace{1pt} \E\left(\K^F,(\L,\ab(\lambda))\right)\right)\to\irr\left(\n_H(\L)\hspace{1pt}\middle|\hspace{1pt} \ab(\lambda)\right)\]
such that
\[\left(\n_X(H)_\chi,H,\chi\right)\iso{H}\left(\n_X(H,\L)_\chi,\n_H(\L),\psi\right)\]
for every $\chi\in\irr(H\mid\E(\K^F,(\L,\ab(\lambda))))$ and where $X:=(\G^F\rtimes \aut_\mathbb{F}(\G^F))_\K$.
\end{prop}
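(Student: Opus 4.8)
The plan is to derive Proposition~\ref{prop:iEBC going up} from Corollary~\ref{cor:From K_0 to K} by a single application of Proposition~\ref{prop:Constructing bijections over bijections with central quotients}, with $K := \K^F$ playing the role of the smaller normal subgroup and $J := H$ the intermediate group. First I would fix the ambient data: set $A := \G^F \rtimes \aut_\mathbb{F}(\G^F)$, $A_0 := \n_A(\L)$, $G := \G^F$, $X := (\G^F \rtimes \aut_\mathbb{F}(\G^F))_\K$, $K := \K^F$, $K_0 := \n_\K(\L)^F$, $\mathcal{S} := \E(\K^F, (\L, \ab(\lambda)))$, $\mathcal{S}_0 := \irr(\n_\K(\L)^F \mid \ab(\lambda))$, $V := X_{\mathcal{S}}$, and $U := (\G^F \rtimes \aut_\mathbb{F}(\G^F))_{\K, \L, \ab(\lambda)}$. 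The bijection $\Psi$ of Proposition~\ref{prop:Constructing bijections over bijections with central quotients} is precisely $\Psi^{\K}_{(\L,\lambda)}$ from Corollary~\ref{cor:From K_0 to K}, which is $U$-equivariant and satisfies the required $\K^F$-block isomorphism of character triples with respect to $X_\vartheta$.

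Next I would verify the three structural hypotheses (i)--(iii). For (i): $V \leq X_{\mathcal{S}}$ is automatic, and the disjointness clause---if $x \in X$ with $\mathcal{S} \cap \mathcal{S}^x \neq \emptyset$ then $x \in V$---follows from Proposition~\ref{prop:e-Harish-Chandra series, disjointness}, since an element mapping one $e$-Harish-Chandra series (indexed by $\ab(\lambda)$) into another that meets it must stabilise the $\G^F$-class of the $e$-cuspidal pair up to the $\ab$-twist, hence stabilise $\mathcal{S}$ after adjusting by $\n_\K(\L)^F$; the fine-tuning here uses Lemma~\ref{lem:Stabilizer of character set}. For (ii): the analogous statement for $\mathcal{S}_0 \subseteq \irr(\n_\K(\L)^F \mid \ab(\lambda))$, namely that $x \in X_0$ with $\mathcal{S}_0 \cap \mathcal{S}_0^x \neq \emptyset$ forces $x \in \n_\K(\L)^F U$, again reduces to Lemma~\ref{lem:Stabilizer of character set} applied to the abelian quotient $\L^F / [\L,\L]^F$ together with a Frattini argument. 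For (iii): $V = KU$ is a Frattini-type identity, exactly as established at the end of the proof of Corollary~\ref{cor:From K_0 to K}; one should note $X_{\mathcal{S}}$ normalises $\K^F$ and acts on $\irr(\L^F)$ compatibly, and every stabiliser of $\mathcal{S}$ decomposes as $\K^F$ times an element stabilising $(\L, \ab(\lambda))$.

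The crucial remaining input is the radical-subgroup centraliser condition: for every radical $\ell$-subgroup $Q$ of $J_0 = \n_H(\L)$ one must show $\c_X(Q) \leq X_0 = \n_X(\L)$. This goes through exactly as in Corollary~\ref{cor:From K_0 to K}: by Lemma~\ref{lem:e-split Levi}(ii) we have $\L = \c_\G^\circ(E)$ with $E := \z^\circ(\L)^F_\ell$, and since $E \leq \O_\ell(\n_\K(\L)^F) \leq \O_\ell(\n_H(\L))$, every radical $\ell$-subgroup $Q$ of $\n_H(\L)$ contains $E$ by \cite[Proposition~1.4]{Dad92}; hence $\c_X(Q) \leq \c_X(E) \leq \n_X(E) = \n_X(\L) = X_0$. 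With all hypotheses in place, Proposition~\ref{prop:Constructing bijections over bijections with central quotients} (applied with $J := H$, which satisfies $K = \K^F \leq H \leq \n_\G(\K)^F \cap G$, noting $H \leq X \cap G$ since $H$ normalises $\K$) produces an $\n_U(H)$-equivariant bijection
\[
\Omega_{(\L,\lambda)}^{\K,H} : \irr\left(H \hspace{1pt}\middle|\hspace{1pt} \E(\K^F, (\L, \ab(\lambda)))\right) \to \irr\left(\n_H(\L) \hspace{1pt}\middle|\hspace{1pt} \ab(\lambda)\right)
\]
with the stated $H$-block isomorphism $(\n_X(H)_\chi, H, \chi) \iso{H} (\n_X(H, \L)_\chi, \n_H(\L), \psi)$; defect preservation follows from Remark~\ref{rmk:Defect preservation} since $\Psi^\K_{(\L,\lambda)}$ is defect preserving. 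Finally, to upgrade $\n_U(H)$-equivariance to full $\aut_\mathbb{F}(\G^F)_{H, \K, \L, \ab(\lambda)}$-equivariance one runs the same Frattini argument as at the close of Corollary~\ref{cor:From K_0 to K}, using Clifford's theorem and Lemma~\ref{lem:Stabilizer of character set} to write $\aut_\mathbb{F}(\G^F)_{H, \K, \L, \ab(\lambda)} \leq \n_H(\L) \cdot U$. I expect the main obstacle to be the bookkeeping in checking hypotheses (i)--(iii) and tracking precisely which stabiliser subgroups coincide---especially reconciling $\n_{X_{0,\vartheta_0}}(J_\vartheta)$-type identities across the abelian-twist family $\ab(\lambda)$---rather than any genuinely new idea; the block-theoretic and $e$-Harish-Chandra ingredients are all already available.
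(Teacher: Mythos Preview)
Your proposal is correct and follows essentially the same route as the paper: apply Proposition~\ref{prop:Constructing bijections over bijections with central quotients} to the bijection $\Psi^{\K}_{(\L,\lambda)}$ of Corollary~\ref{cor:From K_0 to K}, with the same choices of $A$, $A_0$, $G$, $K$, $X$, $\mathcal{S}$, $\mathcal{S}_0$, $U$, $V$, $J$, verify (i)--(iii) via Proposition~\ref{prop:e-Harish-Chandra series, disjointness} and Lemma~\ref{lem:Stabilizer of character set}, and check the radical-subgroup condition using $E=\z^\circ(\L)^F_\ell$ exactly as you describe. One minor simplification: the final Frattini step you propose to upgrade $\n_U(H)$-equivariance to $\aut_\mathbb{F}(\G^F)_{H,\K,\L,\ab(\lambda)}$-equivariance is unnecessary, since $\aut_\mathbb{F}(\G^F)_{H,\K,\L,\ab(\lambda)}$ already lies in $\n_U(H)$ by construction (it stabilises $\K$, $\L$, $\ab(\lambda)$ and $H$).
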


\begin{proof}
We apply Proposition \ref{prop:Constructing bijections over bijections with central quotients} to the bijection given by Corollary \ref{cor:From K_0 to K}. We consider $A:=\G^F\rtimes \aut_\mathbb{F}(\G^F)$, $G:=\G^F$, $K:=\K^F$, $A_0:=\n_A(\L)$, $X:=\n_A(\K)$, $\mathcal{S}:=\E(\K^F,(\L,\ab(\lambda)))$, $\mathcal{S}_0:=\irr(\n_\K(\L)^F\mid \ab(\lambda))$, $U:=X_{0,\ab(\lambda)}$, $V:=X_{\mathcal{S}}$ and $J:=H$. By Proposition \ref{prop:e-Harish-Chandra series, disjointness} and Lemma \ref{lem:Stabilizer of character set} we deduce that conditions (ii) and (iii) of Proposition \ref{prop:Constructing bijections over bijections with central quotients} hold. Next, let $Q$ be a radical $\ell$-subgroup of $\n_H(\L)$. Set $E:=\z^\circ(\L)^F_\ell$ and notice that under our assumptions $\L=\c_\G^\circ(E)$ by Lemma \ref{lem:e-split Levi}. Then $E\leq \O_\ell(\n_H(\L))\leq Q$ because $Q$ is radical and we conclude that
$\c_X(Q)\leq \c_X(E)\leq \n_X(\L)=X_0$. We can therefore apply Proposition \ref{prop:Constructing bijections over bijections with central quotients} to obtain an $\aut_\mathbb{F}(\G^F)_{H,\K,\L,\ab(\lambda)}$-equivariant bijection $\Omega_{(\L,\lambda)}^{\K,H}$ as in the statement. Moreover $\Omega_{(\L,\lambda)}^{\K,H}$ preserves the defect of characters by Remark \ref{rmk:Defect preservation}.
\end{proof}

\begin{rmk}
It is worth pointing out that, in the previous results, the assumption that $\K$ is an $e$-split Levi subgroup of $\G$ can be weakened by only requiring $\K$ to be an $F$-stable Levi subgroup of $\G$.
\end{rmk}

We can finally prove Theorem \ref{thm:Main reduction}.

\begin{theo}
\label{thm:Reduction for CTC for groups of Lie type}
Assume Hypothesis \ref{hyp:Brauer--Lusztig blocks} and suppose that $\G$ is simply connected. If Parametrisation \ref{para:iEBC} holds at the prime $\ell$ for every irreducible rational component of any $e$-split Levi subgroup of $(\G,F)$, then Conjecture \ref{conj:CTC reductive} holds with respect to the prime $\ell$.
\end{theo}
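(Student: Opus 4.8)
The plan is to construct $\Lambda$ from an elementary ``toggle'' operation on chains, transporting characters between the two ends of a chain by means of the bijections supplied by Parametrisation \ref{para:iEBC} through Proposition \ref{prop:iEBC going up}. Two preliminary observations are needed. First, if $(\M,\mu)$ is an $e$-cuspidal pair of $\G$ with $\M\le\K$ for some $e$-split Levi subgroup $\K$, then $(\M,\mu)$ is $e$-cuspidal in $\K$ (any $e$-split Levi subgroup of $\K$ lying below $\M$ is an $e$-split Levi subgroup of $\G$ below $\M$), and $\E(\M^F,(\M,\mu))=\{\mu\}$, hence $\E(\M^F,(\M,\ab(\mu)))=\ab(\mu)$. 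Second, given a quadruple $(\sigma,\M,\ab(\mu),\vartheta)\in\CL^d(B)_\epsilon$ with $\K:=\L(\sigma)$, one has $\M\le\K$, so either $\K=\M$ or $\K>\M$: in the latter case write $\sigma\cup\{\M\}$ for the chain obtained by appending $\M$, which satisfies $\L(\sigma\cup\{\M\})=\M$, $|\sigma\cup\{\M\}|=|\sigma|+1$ and $\G^F_{\sigma\cup\{\M\}}=\n_{\G^F_\sigma}(\M)$, while in the former case write $\sigma\setminus\{\M\}$ for the chain obtained by deleting the last term, which satisfies $\L(\sigma\setminus\{\M\})>\M$ and $\G^F_\sigma=\n_{\G^F_{\sigma\setminus\{\M\}}}(\M)$. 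These operations are mutually inverse and reverse the parity of $|\sigma|$.

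By Proposition \ref{prop:iEBC going up}, applied to the $e$-cuspidal pair $(\M,\mu)$ of $\K$ and to the group $\K^F\le\G^F_\sigma\le\n_\G(\K)^F$, there is a defect preserving bijection
\[\Omega_{(\M,\mu)}^{\K,\G^F_\sigma}\colon\irr\left(\G^F_\sigma\hspace{1pt}\middle|\hspace{1pt}\E(\K^F,(\M,\ab(\mu)))\right)\longrightarrow\irr\left(\n_{\G^F_\sigma}(\M)\hspace{1pt}\middle|\hspace{1pt}\ab(\mu)\right)\]
carrying the character-triple and equivariance data recorded there; since $\E(\M^F,(\M,\ab(\mu)))=\ab(\mu)$, when $\K=\M$ the target is exactly the set of characters of $\G^F_\sigma$ occurring over $\E(\M^F,(\M,\ab(\mu)))$. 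Using the block compatibility of $\iso{}$ together with Lemma \ref{lem:Characters and blocks of chains normalizers}(ii) and the transitivity of Brauer induction of blocks, one checks $\bl(\vartheta)^{\G^F}=B$ if and only if $\bl\left(\Omega_{(\M,\mu)}^{\K,\G^F_\sigma}(\vartheta)\right)^{\G^F}=B$, so $\Omega_{(\M,\mu)}^{\K,\G^F_\sigma}$ restricts to a bijection on the defect $d$ characters belonging to $B$. After fixing a $\G^F$-transversal of the pairs $(\M,\ab(\mu))$ with $(\M,\mu)\in\CP_e(B)_<$ together with a representative $\mu$ of each — harmless since $\aut_\mathbb{F}(\G^F)_B$ permutes $\CP_e(B)_<$ and $\ab(\mu)^a=\ab(\mu^a)$ — I would then define $\Lambda$ on $\overline{(\sigma,\M,\ab(\mu),\vartheta)}\in\CL^d(B)_+/\G^F$ to be the $\G^F$-orbit of $(\sigma\cup\{\M\},\M,\ab(\mu),\Omega_{(\M,\mu)}^{\L(\sigma),\G^F_\sigma}(\vartheta))$ if $\L(\sigma)>\M$, and the orbit of $(\sigma\setminus\{\M\},\M,\ab(\mu),(\Omega_{(\M,\mu)}^{\L(\sigma\setminus\{\M\}),\G^F_{\sigma\setminus\{\M\}}})^{-1}(\vartheta))$ if $\L(\sigma)=\M$. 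By the above this is well defined and lands in $\CL^d(B)_-/\G^F$; the same recipe defines a map in the opposite direction, and the two are mutually inverse, so $\Lambda$ is a bijection. The trivial chain $\{\G\}$, which by Theorem \ref{thm:Blocks are unions of e-HC series} accounts for precisely the non-$e$-cuspidal characters of $B$, is matched with the length-one chains $\{\G>\M\}$.

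The $\aut_\mathbb{F}(\G^F)_B$-equivariance of $\Lambda$ follows by combining the equivariance of $\Omega_{(\M,\mu)}^{\K,\G^F_\sigma}$ under $\aut_\mathbb{F}(\G^F)_{\G^F_\sigma,\K,\M,\ab(\mu)}$ from Proposition \ref{prop:iEBC going up} with the obvious equivariance of the toggle $\sigma\leftrightarrow\sigma\cup\{\M\}$, transferring equivariance from stabilisers to the whole group in the usual way and invoking Lemma \ref{lem:Basic properties of N-block isomorphism}. For the required isomorphism of character triples, fix $\omega\in\CL^d(B)_+/\G^F$, a representative $(\sigma,\M,\ab(\mu),\vartheta)$ with $\L(\sigma)>\M$ (the case $\L(\sigma)=\M$ being the same relation read backwards, by symmetry of $\iso{}$), and put $\rho:=\sigma\cup\{\M\}$, $\chi:=\Omega_{(\M,\mu)}^{\L(\sigma),\G^F_\sigma}(\vartheta)$, so that $(\rho,\chi)\in\Lambda(\omega)^\bullet$ and $\G^F_\rho=\n_{\G^F_\sigma}(\M)$. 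Since any element of $X:=\G^F\rtimes\aut_\mathbb{F}(\G^F)$ stabilising $\sigma$ stabilises its last term $\L(\sigma)$, both $X_{\sigma,\vartheta}$ and $X_{\rho,\chi}$ lie inside $(\G^F\rtimes\aut_\mathbb{F}(\G^F))_{\L(\sigma)}$; the relation $\left(X_{\sigma,\vartheta},\G^F_\sigma,\vartheta\right)\iso{\G^F}\left(X_{\rho,\chi},\G^F_\rho,\chi\right)$ in the sense of \cite[Definition 3.6]{Spa17} is then extracted from the character-triple statement of Proposition \ref{prop:iEBC going up} (and Corollary \ref{cor:From K_0 to K} when $\sigma$ is the trivial chain) by restricting along $X_{\sigma,\vartheta}$ and identifying the relevant stabilisers, exactly as in the passage from Corollary \ref{cor:From rational components to K_0} to Corollary \ref{cor:From rational components to K_0, with G-automorphism}, using \cite[Lemma 3.8]{Spa17} and \cite[Theorem 5.3]{Spa17}.

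The combinatorial skeleton here — the toggle on chains — is elementary, so contractibility of posets of $e$-split Levi subgroups never enters. The hard part will be the Clifford-theoretic bookkeeping of the last paragraph: matching the stabilisers $X_{\sigma,\vartheta}$ and $X_{\rho,\chi}$ with the normaliser groups produced by Proposition \ref{prop:iEBC going up}, tracking carefully the difference between the stabiliser of a single Levi subgroup and that of an entire chain, and choosing the representatives inside the classes $\ab(\mu)$ and the $\G^F$-orbit representatives coherently enough that $\Lambda$ is genuinely $\aut_\mathbb{F}(\G^F)_B$-equivariant; verifying the block-induction compatibility of $\Omega_{(\M,\mu)}^{\K,\G^F_\sigma}$ is a further point demanding care. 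Everything else — existence of the chain operations, parity, the defect condition — is formal once Theorem \ref{thm:Blocks are unions of e-HC series}, Corollary \ref{cor:e-Harish-Chandra, disjointness} and Proposition \ref{prop:iEBC going up} are in hand.
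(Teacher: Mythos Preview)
Your approach is essentially the same as the paper's: the toggle operation on chains combined with the bijections from Proposition~\ref{prop:iEBC going up} is exactly what the paper does. However, there is a genuine gap in your last paragraph, and the bookkeeping you flag as ``hard'' is not where the actual difficulty lies.

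Proposition~\ref{prop:iEBC going up} yields a relation of the form
\[
\left(\n_X(H)_\vartheta,\G^F_\sigma,\vartheta\right)\iso{\G^F_\sigma}\left(\n_X(H,\M)_\vartheta,\G^F_\rho,\chi\right),
\]
that is, $\iso{\G^F_\sigma}$, not $\iso{\G^F}$. Conjecture~\ref{conj:CTC reductive} requires the latter. The references you invoke do not bridge this gap: \cite[Lemma~3.8]{Spa17} restricts the outer groups of a given $\iso{N}$-relation, and \cite[Theorem~5.3]{Spa17} transports a relation along an isomorphism of automorphism groups. Neither enlarges the subscript. Your analogy with ``the passage from Corollary~\ref{cor:From rational components to K_0} to Corollary~\ref{cor:From rational components to K_0, with G-automorphism}'' is misleading, since in both of those corollaries the subscript stays $\K_0^F$; only the ambient group changes.

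The paper upgrades $\iso{\G^F_\sigma}$ to $\iso{\G^F}$ via \cite[Lemma~2.11]{Ros22}, which requires the additional verification that $\c_{\G^F X_{\sigma,\vartheta}}(D)\le X_{\rho,\chi}$ for a defect group $D$ of $\bl(\chi)$. This is established by showing that $E_i:=\z^\circ(\L_i)^F_\ell\le D$ for every term $\L_i$ of $\sigma$ (using that each $E_i\le\O_\ell(\G^F_\rho)$ and that $D$ is a radical $\ell$-subgroup of $\G^F_\rho$), whence any centraliser of $D$ normalises each $\L_i$ and so stabilises $\sigma$. This defect-group argument is the real content you are missing; without it the character-triple condition in Conjecture~\ref{conj:CTC reductive} is not established.
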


\begin{proof}
Consider an $\ell$-block $B$ of $\G^F$, $d\geq 0$ and define $A:=\aut_\mathbb{F}(\G^F)$ and $X:=\G^F\rtimes A$. Let $\mathcal{T}_{1,+}$ be an $A_B$-transversal in the set
\[\mathcal{S}_{1,+}:=\{(\sigma,\M,\ab(\mu))\mid \sigma\in\CL(\G)_+, (\M,\mu)\in\CP_e(B)_<\text{ with }\M\leq\L(\sigma)\}\]
and fix an $A_{(\sigma,\M,\ab(\mu))}$-transversal $\mathcal{T}_{2,+}^{(\sigma,\M,\ab(\mu))}$ in $\irr^d(B_\sigma\mid \E(\L(\sigma)^F,(\M,\ab(\mu))))$ for each $(\sigma,\M,\ab(\mu))\in \mathcal{T}_{1,+}$. By our choices
\[\mathcal{T}_+:=\left\lbrace\overline{(\sigma,\M,\ab(\mu),\vartheta)}\hspace{1pt}\middle|\hspace{1pt} (\sigma,\M,\ab(\mu))\in\mathcal{T}_{1,+}, \vartheta\in\mathcal{T}_{2,+}^{(\sigma,\M,\ab(\mu))}\right\rbrace\]
is an $A_B$-transversal in $\CL^d(B)_+/\G^F$.

We now fix $(\sigma,\M,\ab(\mu))\in\mathcal{S}_{1,+}$. If $\L(\sigma)=\M$, then define $\rho$ to be the chain obtained by deleting $\L(\sigma)$ from $\sigma$. Since $(\M,\mu)\in\CP_e(B)_<$ we have $\M<\G$ and hence the chain $\rho$ is non-empty. On the other hand if $\M<\L(\sigma)$, then define $\rho$ to be the chain obtained by adding $\M$ to $\sigma$. In this case the last term $\L(\rho)$ of $\rho$ coincides with $\M$. This construction yields an $A_B$-equivariant bijection
\[\Delta:\mathcal{S}_{1,+}\to\mathcal{S}_{1,-}\]
where
\[\mathcal{S}_{1,-}:=\{(\rho,\n,\ab(\nu))\mid \rho\in\CL(\G)_-, (\n,\nu)\in\CP_e(B)_< \text{ with } \n\leq \L(\rho)\}.\]
In particular, the image $\mathcal{T}_{1,-}:=\Delta(\mathcal{T}_{1,+})$ is an $A_B$-transversal in $\mathcal{S}_{1,-}$. Moreover, notice that if $\Delta((\sigma,\M,\ab(\mu)))=(\rho,\n,\ab(\nu))$, then we have $(\n,\ab(\nu))=(\M,\ab(\mu))$ and
\begin{equation}
\label{eq:Reduction for CTC for groups of Lie type,1}
A_{(\sigma,\M,\ab(\mu))}=A_{(\rho,\n,\ab(\nu))}.
\end{equation}

Next, consider $(\sigma,\M,\ab(\mu))\in\mathcal{T}_{1,+}$ and $(\rho,\M,\ab(\mu)):=\Delta((\sigma,\M,\ab(\mu)))\in\mathcal{T}_{1,-}$. Assume first that $\L(\sigma)=\M$. By Proposition \ref{prop:iEBC going up} applied with $H=\G^F_\rho$, we obtain a bijection
\[\Omega_{(\M,\mu)}^{\L(\rho),\G^F_\rho}:\irr\left(\G^F_\rho\hspace{1pt}\middle|\hspace{1pt} \E\left(\L(\rho)^F,(\M,\ab(\mu))\right)\right)\to\irr\left(\n_{\G^F_\rho}(\M)\hspace{1pt}\middle|\hspace{1pt} \ab(\mu)\right).\]
Since $\M=\L(\sigma)$, notice that $\n_{\G^F_\rho}(\M)=\G^F_\sigma$ and that $\irr(\G^F_\sigma\mid \E(\L(\sigma)^F,(\M,\ab(\mu))))=\irr(\n_{\G^F_\rho}(\M)\mid \ab(\mu))$. We define
\[\mathcal{T}_{2,-}^{(\rho,\M,\ab(\mu))}:=\left(\Omega_{(\M,\mu)}^{\L(\rho),\G^F_\rho}\right)^{-1}\left(\mathcal{T}_{2,+}^{(\sigma,\M,\ab(\mu))}\right).\]
Similarly, if $\M<\L(\sigma)$, then Proposition \ref{prop:iEBC going up} applied with $H=\G^F_\sigma$ yields a bijection
\[\Omega_{(\M,\mu)}^{\L(\sigma),\G^F_\sigma}:\irr\left(\G^F_\sigma\hspace{1pt}\middle|\hspace{1pt} \E\left(\L(\sigma)^F,(\M,\ab(\mu))\right)\right)\to\irr\left(\n_{\G^F_\sigma}(\M)\hspace{1pt}\middle|\hspace{1pt} \ab(\mu)\right).\]
Noticing that $\n_{\G^F_\sigma}(\M)=\G^F_\rho$ and recalling that the last term $\L(\rho)$ of $\rho$ coincides with $\M$, it follows that $\irr(\n_{\G^F_\sigma}(\M)\mid \ab(\mu))=\irr(\G^F_\rho\mid \E(\L(\rho)^F,(\M,\ab(\mu))))$. In this case we define
\[\mathcal{T}_{2,-}^{(\rho,\M,\ab(\mu))}:=\Omega_{(\M,\mu)}^{\L(\sigma),\G^F_\sigma}\left(\mathcal{T}_{2,+}^{(\sigma,\M,\ab(\mu))}\right).\]
Since $\mathcal{T}_{2,+}^{(\sigma,\M,\ab(\mu))}$ is an $A_{(\sigma,\M,\ab(\mu))}$-transversal in $\irr^d(B_\sigma\mid \E(\L(\sigma)^F,(\M,\ab(\mu))))$, it follows from Proposition \ref{prop:iEBC going up} and \eqref{eq:Reduction for CTC for groups of Lie type,1} that $\mathcal{T}_{2,-}^{(\rho,\M,\ab(\mu))}$ is an $A_{(\rho,\M,\ab(\mu))}$-transversal in $\irr^d(B_\rho\mid \E(\L(\rho)^F,(\M,\ab(\mu))))$. In particular, the set
\[\mathcal{T}_-:=\left\lbrace\overline{(\rho,\M,\ab(\mu),\chi)}\hspace{1pt}\middle|\hspace{1pt} (\rho,\M,\ab(\mu))\in\mathcal{T}_{1,-}, \chi\in\mathcal{T}_{2,-}^{(\rho,\M,\ab(\mu))}\right\rbrace\]
is an $A_B$-transversal in $\CL^d(B)_-/\G^F$ in bijection with $\mathcal{T}_+$. By setting
\[\Lambda\left(\overline{(\sigma,\M,\ab(\mu),\vartheta)}^x\right):=\overline{(\rho,\M,\ab(\mu),\chi)}^x,\]
for every $x\in A_B$ and every $(\sigma,\M,\ab(\mu),\vartheta)\in\mathcal{T}_+$ corresponding to $(\rho,\M,\ab(\mu),\chi)\in\mathcal{T}_-$, we obtain an $A_B$-equivariant bijection
\[\Lambda:\CL^d(B)_+/\G^F\to\CL^d(B)_-/\G^F.\]

It remains to show that $\Lambda$ yields $\G^F$-block isomorphisms of characters triples. Let $(\sigma,\M,\ab(\mu),\vartheta)$ and $(\rho,\M,\ab(\mu),\chi)$ be as above. Without loss of generality we may assume that $\M<\L(\sigma)$ and so $\L(\rho)=\M$. By the construction given in the previous paragraph and using Proposition \ref{prop:iEBC going up} and \cite[Lemma 3.8(b)]{Spa17}, we know that
\begin{equation}
\label{eq:Reduction for CTC for groups of Lie type,2}
\left(X_{\sigma,\vartheta},\G^F_\sigma,\vartheta\right)\iso{\G^F_\sigma}\left(X_{\rho,\chi},\G^F_\rho,\chi\right).
\end{equation}
First we show that
\begin{equation}
\label{eq:Reduction for CTC for groups of Lie type,3}
\left(X_{\sigma,\vartheta},\G^F_\sigma,\vartheta\right)\iso{\G^F}\left(X_{\rho,\chi},\G^F_\rho,\chi\right).
\end{equation}
To do so, applying \cite[Lemma 2.11]{Ros22}, it is enough to check that 
\begin{equation}
\label{eq:Reduction for CTC for groups of Lie type,4}
\c_{\G^FX_{\sigma,\vartheta}}(D)\leq X_{\rho,\chi}
\end{equation}
for some defect group $D$ of $\bl(\chi)$. By \eqref{eq:Reduction for CTC for groups of Lie type,2} we already know that $\c_{X_{\sigma,\vartheta}}(D)\leq X_{\rho,\chi}$ and hence it suffices to prove $\c_{\G^FX_{\sigma,\vartheta}}(D)\leq X_{\sigma,\vartheta}$. Write $\sigma=\{\G=\L_0>\cdots >\L_n=\L(\sigma)\}$ and set $E_i:=\z^\circ(\L_i)_\ell^F$. By the argument used at the end of the proof of Proposition \ref{prop:iEBC going up} and noticing that $\G^F_{\rho}\leq \G^F_\sigma$, we have $E_i\leq D$ and hence $\c_{\G^FX_{\sigma,\vartheta}}(D)\leq \c_{\G^FX_{\sigma,\vartheta}}(E_i)$ for every $i=0,\dots,n$. This implies that $\c_{\G^FX_{\sigma,\vartheta}}(D)\leq (\G^FX_{\sigma,\vartheta})_{\sigma}=X_{\sigma,\vartheta}$ and so we obtain \eqref{eq:Reduction for CTC for groups of Lie type,4}. We can now complete the proof by applying \cite[Lemma 2.11]{Ros22} to \eqref{eq:Reduction for CTC for groups of Lie type,2} in order to obtain \eqref{eq:Reduction for CTC for groups of Lie type,3}.
\end{proof}

\section{A connection with the local-global counting conjectures}
\label{sec:Towards Dade's Projective Conjecture}

In this section we show that if $\ell$ is large, $(\G,F,e)$-adapted and $\z(\G^*)^{F^*}_\ell=1$, then Conjecture \ref{conj:Dade reductive} is equivalent to Dade's Conjecture. Moreover, since in this case we are dealing with blocks of abelian defect, the main result of \cite{Kes-Mal13} implies that Conjecture \ref{conj:Dade reductive} and Dade's Conjecture are equivalent to Conjecture \ref{conj:AWC reductive} and Alperin's Weight Conjecture (in the Kn\"orr--Robinson reformulation) respectively. Notice that the above equivalences are not merely logical equivalences but holds block by block. Furthermore, under the same assumptions, we show that the Alperin--McKay Conjecture holds for a block $B$ of $\G^F$ if and only if
\[\k(B)=\sum\limits_{(\L,\lambda)}\k(\n_\G(\L)^F,\lambda)\]
where $(\L,\lambda)$ runs over a set of representatives for the action of $\G^F$ on $\CP_e(B)$ and $\k(\n_\G(\L)^F,\lambda)$ is the number of characters of $\irr(\n_\G(\L)^F\mid \lambda)$. Since the Alperin--McKay Conjecture holds in this case as a consequence of \cite[Theorem 5.24]{Bro-Mal-Mic93}, the above equality provides evidence for the validity of Parametrisation \ref{para:iEBC} as discussed in Remark \ref{rmk:Consequences of parametrisation}.

We need to make a small remark: in the discussion following \cite[Theorem 5.24]{Bro-Mal-Mic93} the authors claim that Dade's Conjecture holds for finite reductive groups with respect to large primes as a consequence of Broué's Isotypy Conjecture. Unfortunately, although Broue's Isotypy Conjecture logically implies Dade's Conjectures, it is not known whether this implication holds block by block (see also \cite[Remark 10.7.2]{Lin18II}). Therefore, at the time of writing Dade's Conjecture remains open for finite reductive groups and large primes. On the other hand, the connection established here with Conjecture \ref{conj:Dade reductive} could lead to a proof of such a result.

Under the above assumptions, we also show that Conjecture \ref{conj:CTC reductive} implies Sp\"ath's Character Triple Conjecture and, when $\G^F/\z(\G^F)$ is a nonabelian simple group with universal covering group $\G^F$, the inductive condition for Dade's Conjecture. Finally, we show that Parametrisation \ref{para:Main iEBC} implies the inductive Alperin--McKay condition.

\subsection{Statements of the conjectures}

Let $G$ be a finite group and $\ell$ a prime. We denote by $\irr^d(G)$ the set of irreducible characters $\chi\in\irr(G)$ with $\ell$-defect $d(\chi)=d$ and define $\irr^d(B):=\irr^d(G)\cap \irr(B)$ for any $\ell$-block $B$ of $G$. If $D$ is a defect group of $B$, then we define the defect of $B$ as the integer $d(B)$ such that $|D|=\ell^{d(B)}$. The set $\irr_0(B):=\irr^{d(B)}(B)$ consists of the characters of height zero in $B$. We denote by $\k(B)$, $\k^d(B)$ and $\k_0(B)$ the number of characters in the sets $\irr(B)$, $\irr^d(B)$ and $\irr_0(B)$ respectively. In \cite{Alp76}, Alperin introduced the following blockwise version of the McKay Conjecture.

\begin{conj}[Alperin--McKay Conjecture]
\label{conj:AM}
Let $G$ be a finite group and $B$ an $\ell$-block of $G$ with defect group $D$. Then
\[\k_0(B)=\k_0(b)\]
where $b$ is the Brauer correspondent of $B$ in $\n_G(D)$.
\end{conj}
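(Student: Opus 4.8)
In full generality Conjecture~\ref{conj:AM} is open; what follows is a plan for the case treated in this paper, namely $G=\G^F$ a finite reductive group with $\ell$ large for $\G$, $(\G,F,e)$-adapted and $\z(\G^*)^{F^*}_\ell=1$. In this regime the statement was announced in \cite[Theorem 5.24]{Bro-Mal-Mic93}, and the goal of the next section is to recover it from the $e$-Harish-Chandra machinery developed above. First I would fix an $\ell$-block $B$ of $\G^F$ and, using the parametrisation of blocks recalled in Section~\ref{sec:Deligne-Lusztig induction and blocks} together with the defect-zero statement of Proposition~\ref{prop:Minimal Brauer--Lusztig triple general} and Jordan decomposition, describe $\irr(B)$ as a union of $e$-Harish-Chandra series $\E(\G^F,(\L,\lambda))$ attached to $e$-cuspidal pairs $(\L,\lambda)$ with $\bl(\lambda)^{\G^F}=B$, as provided by Theorem~\ref{thm:Blocks are unions of e-HC series}. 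Since $\ell$ is large, the Sylow $\ell$-subgroups of $\G^F$, hence the defect groups of $B$, are abelian, so every character of $B$ has height zero and $\k_0(B)=\k(B)$; thus $\k_0(B)$ is just the total number of characters lying in the series below $B$.

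\textbf{Local side.} Next I would identify, for each such $(\L,\lambda)$, a defect group and its normaliser. Using Proposition~\ref{prop:e-split Levi large primes} one takes $D$ to be a Sylow $\ell$-subgroup of $\z^\circ(\L)^F$; since $D$ then contains $\Omega_1((\z^\circ(\L)_{\Phi_e})^F_\ell)$, Proposition~\ref{prop:e-split Levi large primes} gives $\c_\G^\circ(D)=\L$, hence $\c_{\G^F}(D)=\L^F$ by Lemma~\ref{prop:Good primes}, and $\n_{\G^F}(D)=\n_\G(\L)^F$ because centralising $D$ is the same as normalising $\L=\c_\G^\circ(D)$. A defect count using that $\lambda$ has $\ell$-defect zero shows $D$ is indeed a defect group of $B$. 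By Brauer's First Main Theorem the Brauer correspondent $b$ of $B$ in $\n_\G(\L)^F$ is the unique block with defect group $D$ inducing $B$; one then checks that $b$ covers $\bl(\lambda)$ and, using abelian defect together with the defect-zero property of $\lambda$, that $\irr_0(b)=\irr\bigl(\n_\G(\L)^F\mid\lambda\bigr)$.

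\textbf{Conclusion and main obstacle.} With these identifications, Conjecture~\ref{conj:AM} for $B$ amounts to the family of equalities $\lvert\E(\G^F,(\L,\lambda))\rvert=\lvert\irr(\n_\G(\L)^F\mid\lambda)\rvert$ over the pairs $(\L,\lambda)$ below $B$ --- that is, the numerical shadow of Parametrisation~\ref{para:iEBC} (and of the general Parametrisation~\ref{para:Main iEBC}). For unipotent $(\L,\lambda)$ this is \cite[Theorem 3.2, Theorem 5.24]{Bro-Mal-Mic93}; the general case follows by running the same count inside $\c_{\G^*}^\circ(s)$, where $\lambda\in\E(\L^F,[s])$, and transporting it through Jordan decomposition of characters and blocks as in Lemma~\ref{lem:Jordan decomposition for l-elements}, which is legitimate because $\z(\G^*)^{F^*}_\ell=1$ keeps the relevant centralisers connected. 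I expect the main obstacle to be the local bookkeeping in the second step: verifying that $D$ really is a Sylow subgroup of $\z^\circ(\L)^F$ and, above all, that the Brauer correspondent $b$ has exactly $\irr(\n_\G(\L)^F\mid\lambda)$ as its set of height-zero characters. This requires combining the Cabanes--Enguehard description of defect groups of the blocks $b_{\G^F}(\L,\lambda)$ with Clifford theory over the defect-zero character $\lambda$ inside $\n_\G(\L)^F$; once this is in place the remaining implications are formal.
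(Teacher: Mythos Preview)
The statement is a conjecture, and the paper does not prove it --- not even in the special case you treat. The paper's only contribution in this direction is Proposition~\ref{prop:Equivalence AM}, which shows that under the large-prime hypotheses Conjecture~\ref{conj:AM} for $B$ is \emph{equivalent} to the sum equality $\k(B)=\sum_{(\L,\lambda)}\k(\n_\G(\L)^F,\lambda)$, together with the subsequent Corollary, which deduces that equality by simply \emph{citing} \cite[Theorem~5.24]{Bro-Mal-Mic93} for Conjecture~\ref{conj:AM} itself. There is thus no ``paper's proof'' to compare your proposal against.

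Your proposal is more ambitious: you sketch an actual argument, reducing to the series-by-series equalities $|\E(\G^F,(\L,\lambda))|=|\irr(\n_\G(\L)^F\mid\lambda)|$ --- precisely the numerical content of Parametrisation~\ref{para:iEBC}, cf.\ Remark~\ref{rmk:Consequences of parametrisation} --- and then proposing to establish these via \cite[Theorem~3.2]{Bro-Mal-Mic93} for unipotent series plus Jordan decomposition. This is essentially the strategy of \cite[Section~5]{Bro-Mal-Mic93} itself, so you are in effect outlining the very proof the paper cites rather than a new one. Note that the paper explicitly leaves Parametrisation~\ref{para:iEBC} open (it is deferred to \cite{Ros-Clifford_automorphisms_HC}), so within this paper's framework the step you label ``Conclusion'' is not available; transporting the relative Weyl group count through $\c_{\G^*}^\circ(s)$ also requires compatibility of both the parametrisation and the $e$-split normaliser structure with Jordan decomposition, which is nontrivial and not supplied here.

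There is also a concrete slip in your ``Local side'': you write $\irr_0(b)=\irr(\n_\G(\L)^F\mid\lambda)$ for a single $\lambda$. In fact, as the proof of Proposition~\ref{prop:Equivalence AM} shows, all pairs in $\CP_e(B)$ share the same Levi $\L$ up to $\G^F$-conjugacy, and $\irr(b)=\coprod_{(\L,\lambda)\in\mathcal{T}}\irr(\n_\G(\L)^F\mid\lambda)$ over an $\n_\G(\L)^F$-transversal $\mathcal{T}$ of such pairs. Your ``family of equalities'' in the Conclusion step tacitly corrects this, but the description of $b$ in the Local side is wrong as stated.
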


Inspired by \cite{Isa-Mal-Nav07} and by the inductive Alperin--McKay condition for quasi-simple groups introduced in \cite{Spa13II}, a stronger version of the above conjecture has been considered in \cite{Nav-Spa14I}.

\begin{conj}[Inductive Alperin--McKay condition]
\label{conj:iAM}
Let $G$ be a finite group and $B$ an $\ell$-block of $G$ with defect group $D$ and Brauer correspondent $b$ in $\n_G(D)$. If $G\unlhd X$, then there exists an $X_{D,B}$-equivariant bijection
\[\Omega:\irr_0(B)\to\irr_0(b)\]
such that
\[\left(X_\chi,G,\chi\right)\iso{G}\left(\n_X(D)_\chi,\n_G(D),\Omega(\chi)\right)\]
for every $\chi\in\irr_0(B)$.
\end{conj}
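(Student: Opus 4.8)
The statement to establish is that, under the running hypotheses of this subsection --- $\ell$ large for $(\G,F)$, $(\G,F,e)$-adapted, $\z(\G^*)^{F^*}_\ell=1$, and $\G$ chosen so that Parametrisation \ref{para:Main iEBC} applies --- validity of Parametrisation \ref{para:Main iEBC} for $(\G,F)$ at $\ell$ forces Conjecture \ref{conj:iAM} to hold for every $\ell$-block $B$ of $\G^F$, with $X:=\G^F\rtimes\aut_\mathbb{F}(\G^F)$; the case of a general $\G^F\unlhd X$ then follows by restriction of $\iso{\G^F}$. I would first dispose of the defect-zero case, where $D=1$, $\n_{\G^F}(D)=\G^F$, $b=B$ and $\Omega=\mathrm{id}$ works, so from now on $d(B)>0$, and since $\ell$ is large the Sylow $\ell$-subgroups of $\G^F$ are abelian, hence $D$ is abelian.

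Next I would pin down the $\ell$-local datum attached to $B$. By the parametrisation of blocks of Cabanes--Enguehard and Broué--Malle--Michel (equivalently, by Theorem \ref{thm:Blocks are unions of e-HC series}) there is an $e$-cuspidal pair $(\L,\lambda)$, unique up to $\G^F$-conjugacy, with $\lambda\in\E(\L^F,\ell')$ and $B=b_{\G^F}(\L,\lambda)$. Set $\S:=\z^\circ(\L)_{\Phi_e}$ and $D:=\S_\ell^F$. Using the defect-group description for large primes (as in \cite{Cab-Eng94}, \cite[Theorem 5.24]{Bro-Mal-Mic93}) together with the vanishing $\z(\G^*)^{F^*}_\ell=1$ --- exactly the argument appearing in the proof of Proposition \ref{prop:Minimal Brauer--Lusztig triple general} --- one checks that $D$ is a defect group of $B$. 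Now Proposition \ref{prop:e-split Levi large primes} (iii) gives $\L=\c_\G^\circ(D)$, and since $\S$ is canonically attached to $\L$ this forces $\n_{\G^F}(D)=\n_{\G^F}(\L)$ and $\c_{\G^F}(D)=\L^F$ (the latter by Lemma \ref{prop:Good primes} (i)). By Brauer's first main theorem the Brauer correspondent $b$ is the unique block of $\n_{\G^F}(\L)$ with $b^{\G^F}=B$; applying Lemma \ref{lem:Characters and blocks of chains normalizers} (i) to the chain $\{\G>\L\}$, every block of $\n_{\G^F}(\L)$ is $\L^F$-regular, so $b$ is the unique block covering $\bl(\lambda)$, i.e. $b=\bl(\lambda)^{\n_{\G^F}(\L)}$, and $b^{\G^F}=B$ by transitivity of Brauer induction.

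With the local side in hand I would identify both character sets and build the bijection. Because $D$ is abelian, Brauer's Height Zero Conjecture (known in this situation, e.g. by \cite{Kes-Mal13}) gives $\irr_0(B)=\irr(B)$ and $\irr_0(b)=\irr(b)$. By Theorem \ref{thm:Blocks are unions of e-HC series}, $\irr(B)=\coprod_{(\L',\lambda')}\E(\G^F,(\L',\lambda'))$ over $\G^F$-classes of $e$-cuspidal pairs with $\bl(\lambda')^{\G^F}=B$; combining Lemma \ref{lem:Jordan decomposition for l-elements, order relation}, Proposition \ref{prop:Minimal Brauer--Lusztig triple general} and $\z(\G^*)^{F^*}_\ell=1$ one shows that every such $\L'$ is $\G^F$-conjugate to $\L$, so the pairs attached to $B$ may be written $(\L,\lambda')$ with $\lambda'$ ranging over a set $\Lambda_B$ of $\n_{\G^F}(\L)$-classes, each $\bl(\lambda')$ lying in the $\n_{\G^F}(\L)$-orbit of $\bl(\lambda)$. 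The same circle of ideas applied to the chain $\{\G>\L\}$ (Lemma \ref{lem:Characters and blocks of chains normalizers} (iii)) yields $\irr(b)=\coprod_{\lambda'\in\Lambda_B}\irr(\n_{\G^F}(\L)\mid\lambda')$. For each $\lambda'\in\Lambda_B$, Parametrisation \ref{para:Main iEBC} supplies a defect-preserving $\aut_\mathbb{F}(\G^F)_{(\L,\lambda')}$-equivariant bijection $\Omega^\G_{(\L,\lambda')}\colon\E(\G^F,(\L,\lambda'))\to\irr(\n_{\G^F}(\L)\mid\lambda')$ together with the $\iso{\G^F}$-relation on the associated character triples; assembling these over an $\aut_\mathbb{F}(\G^F)_B$-transversal of $\Lambda_B$ (as in the proof of Theorem \ref{thm:Reduction for CTC for groups of Lie type}, using Lemma \ref{lem:Basic properties of N-block isomorphism} and Sp\"ath's transitivity results for $\iso{\G^F}$) produces a single defect-preserving, $\aut_\mathbb{F}(\G^F)_B$-equivariant bijection $\Omega\colon\irr(B)\to\irr(b)$; restricting to height-zero characters (which here is all of both sides) and correcting by a Frattini argument gives an $X_{D,B}$-equivariant bijection $\Omega\colon\irr_0(B)\to\irr_0(b)$. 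Finally, for $\chi\in\irr_0(B)$ lying in $\E(\G^F,(\L,\lambda'))$ with $\psi:=\Omega(\chi)$, Parametrisation \ref{para:Main iEBC} gives $(X_\chi,\G^F,\chi)\iso{\G^F}(\n_{X_\chi}(\L),\n_{\G^F}(\L),\psi)$, and since $\n_{X_\chi}(\L)=\n_X(D)_\chi$ and $\n_{\G^F}(\L)=\n_{\G^F}(D)$ by the identification above, this is exactly $(X_\chi,\G^F,\chi)\iso{\G^F}(\n_X(D)_\chi,\n_{\G^F}(D),\Omega(\chi))$, which is the assertion of Conjecture \ref{conj:iAM}.

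The main obstacle, to my mind, lies in the middle step of the third paragraph: showing that \emph{all} $e$-cuspidal pairs attached to $B$ have, up to conjugacy, the same Levi $\L=\c_\G^\circ(D)$ --- this is where $\z(\G^*)^{F^*}_\ell=1$ and the large-prime hypothesis are genuinely used --- and then identifying the Brauer correspondent $b$ with $\coprod_{\lambda'}\irr(\n_{\G^F}(\L)\mid\lambda')$ so that the $\n_{\G^F}(\L)$-sides of the various instances of Parametrisation \ref{para:Main iEBC} glue to precisely $\irr_0(b)$ (and not to a larger or smaller union of blocks of $\n_{\G^F}(\L)$). A secondary technical point is the appeal to Brauer's Height Zero Conjecture to pass from $\irr$ to $\irr_0$: one could instead try to argue that the defect-preserving bijection $\Omega$ carries $\irr_0(B)$ onto $\irr_0(b)$ using $d(B)=d(b)=d(D)$ together with the existence of a full-defect character on each side, but some input of height-zero type seems unavoidable.
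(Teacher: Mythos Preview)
The displayed statement is a \emph{conjecture}; the paper does not prove it for arbitrary finite groups. What you have written is a proof of Proposition~\ref{prop:Equivalence iAM}, namely that Parametrisation~\ref{para:iEBC} implies Conjecture~\ref{conj:iAM} for $\G^F\unlhd\G^F\rtimes\aut_\mathbb{F}(\G^F)$ when $\ell$ is large and $(\G,F,e)$-adapted. With that understood, your argument follows the same outline as the paper's: identify $\n_{\G^F}(D)=\n_\G(\L)^F$, use abelian defect and \cite{Kes-Mal13} to replace $\irr_0$ by $\irr$, decompose both $\irr(B)$ and $\irr(b)$ over the same index set of $e$-cuspidal pairs supported on the single Levi $\L$, apply Parametrisation~\ref{para:iEBC} on each piece, and glue using an $\aut_\mathbb{F}(\G^F)_{\L,B}$-transversal.

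Two points of divergence are worth noting. First, for the identification $\n_{\G^F}(D)=\n_\G(\L)^F$ and $\aut_\mathbb{F}(\G^F)_D=\aut_\mathbb{F}(\G^F)_\L$ the paper simply quotes \cite[Lemma~4.16]{Cab-Eng99}, whereas you rederive it from Proposition~\ref{prop:e-split Levi large primes}. Second, and more substantively, the step showing that \emph{every} $(\M,\mu)\in\CP_e(B)$ has $\M$ conjugate to $\L$ is handled differently. You add the hypothesis $\z(\G^*)^{F^*}_\ell=1$ and appeal to Proposition~\ref{prop:Minimal Brauer--Lusztig triple general}; it is not clear how that proposition (which concerns $e$-cuspidal characters of $\G$ itself) yields the conclusion about Levi subgroups of proper $e$-cuspidal pairs. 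The paper instead argues as in the proof of Proposition~\ref{prop:Equivalence AM}: pass from $(\M,\mu)$ to the $(e,\ell')$-pair $(\M(s_\ell),\mu(s_\ell))$ via Lemma~\ref{lem:Jordan decomposition for l-elements}, observe that $(\M(s_\ell),\mu(s_\ell))\in\CP_e(B)$ by Lemma~\ref{lem:Jordan decomposition for l-elements, blocks}, conclude $\M(s_\ell)$ is $\G^F$-conjugate to $\L$ by \cite[Theorem~4.1]{Cab-Eng99}, and finally get $\M=\M(s_\ell)$ from \cite[Proposition~1.10]{Cab-Eng99} since $\G(s_\ell)$ is already $e$-split when $\ell$ is large. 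This route does not require $\z(\G^*)^{F^*}_\ell=1$, so the paper's Proposition~\ref{prop:Equivalence iAM} is stated without that extra hypothesis.
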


The main result of \cite{Nav-Spa14I} shows that Conjecture \ref{conj:iAM} reduces to quasi-simple groups and, together with \cite{Kes-Mal13}, implies Brauer's Height Zero Conjecture.

Now, consider the set $\mathfrak{P}(G)$ of $\ell$-chains of $G$ with initial term $\O_\ell(G)$. These are the $\ell$-chains $\d=\{D_0=\O_\ell(G)<D_1<\dots<D_n\}$ where $D_i$ is an $\ell$-subgroup of $G$ and $n$ is a non-negative integer. If we denote by $|\d|$ the integer $n$, called the \emph{length} of $\d$, then we obtain a partition of $\mathfrak{P}(G)$ into the sets $\mathfrak{P}(G)_+$ and $\mathfrak{P}(G)_-$ consisting of $\ell$-chains of even and odd length respectively. Notice that $G$ acts by conjugation on the sets $\mathfrak{P}(G)$, $\mathfrak{P}(G)_+$ and $\mathfrak{P}(G)_-$ and we denote by $G_\d=\bigcap_i\n_G(D_i)$ the stabiliser in $G$ of $\d\in\mathfrak{P}(G)$ and by $\mathfrak{P}(G)/G$ a set of representatives for the $G$-orbits on $\mathfrak{P}(G)$. For any $\ell$-block $B$ of $G$, we the set $\irr(B_\d)=\{\vartheta\in\irr(G_\d)\mid \bl(\vartheta)^G=B\}$ and denote its cardinality by $\k(B_\d)$. Notice that the induced block $\bl(\vartheta)^G$ is well defined according to \cite[Lemma 3.2]{Kno-Rob89}. Moreover, $\k^d(B_\d)$ denotes the cadrinality of $\irr^d(B_\d):=\irr^d(G_\d)\cap \irr(B_\d)$ for any $d\geq 0$. Then Kn\"orr-Robinson reformulation of Alperin's Weight Conjecture (see \cite{Alp87} and \cite[Theorem 3.8 and Theorem 4.6]{Kno-Rob89}) can be stated as follows.

\begin{conj}[Alperin's Weight Conjecture]
\label{conj:AWC}
Let $G$ be a finite group such that $\O_\ell(G)\leq \z(G)$. Then
\[\sum\limits_{\d\in\mathfrak{P}(G)/G}(-1)^{|\d|}\k(B_\d)=0\]
for every $\ell$-block $B$ of $G$ with defect groups strictly containing $\O_\ell(G)$.
\end{conj}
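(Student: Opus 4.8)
This is the Kn\"orr--Robinson reformulation of Alperin's Weight Conjecture (\cite{Alp87}, \cite{Kno-Rob89}), a theorem only in special cases; so what follows is a programme rather than a complete argument, adapted to the finite reductive groups $G=\G^F$ treated in this paper and, for the reasons explained at the end, to primes $\ell$ that are large for $\G$.

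The plan is to transport the alternating sum over the poset $\mathfrak{P}(G)$ of $\ell$-chains of $G$ to an alternating sum over the poset $\CL(\G)_{>0}$ of non-trivial chains of $e$-split Levi subgroups. First I would use that, when $\ell$ is large, every $e$-split Levi subgroup of $\G$ is the connected centraliser $\c_\G^\circ(Y)$ of a radical abelian $\ell$-subgroup $Y\leq\G^F$ and conversely, with $\c_\G^\circ(Y)^F=\c_{\G^F}(Y)$ (Proposition \ref{prop:e-split Levi large primes}, Lemma \ref{prop:Good primes}); this should yield a $\G^F$-homotopy equivalence between the Brown complex of $\mathfrak{P}(\G^F)$ and the simplicial complex of $\CL(\G)_{>0}$. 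A standard equivariant Euler-characteristic computation with these complexes --- the one underlying the equivalence of the various forms of Dade's and Alperin's conjectures --- should then give, for every block $B$ of $\G^F$,
\[
\sum_{\d\in\mathfrak{P}(G)/G}(-1)^{|\d|}\k(B_\d)\;=\;\k(B)-\k_{\rm c}(B)-\sum_{\sigma\in\CL(\G)_{>0}/\G^F}(-1)^{|\sigma|+1}\k(B_\sigma),
\]
turning Conjecture \ref{conj:AWC} for $\G^F$ and $B$ into Conjecture \ref{conj:AWC reductive} for $B$ (this is Proposition \ref{prop:Equivalences Dade}). Summing over all defects $d\geq 0$, Theorem \ref{thm:CTC reductive imples Dade reductive} shows Conjecture \ref{conj:CTC reductive} $\Rightarrow$ Conjecture \ref{conj:AWC reductive}, and Theorem \ref{thm:Reduction for CTC for groups of Lie type} reduces Conjecture \ref{conj:CTC reductive} --- hence Conjecture \ref{conj:AWC} for $\G^F$ --- to Parametrisation \ref{para:iEBC} for the irreducible rational components of the $e$-split Levi subgroups of $\G$. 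For a general finite group $G$ I would instead invoke a reduction of Alperin's Weight Conjecture to quasi-simple groups and then handle the reductive quasi-simple constituents as above.

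The hard part will be the comparison of the $\ell$-local and $e$-local posets. For $\ell$ merely good (not large) for $\G$ the identification of $e$-split Levi subgroups with centralisers of abelian $\ell$-subgroups breaks down, radical $\ell$-chains are no longer all accounted for by chains of $e$-split Levi subgroups, and one would need the homotopy equivalence by other means or a different model for the chain complex (e.g.\ via Bredon cohomology, as in \cite{Sym05}, \cite{Lin05}); bad primes are harder still, since the block-theoretic input of \cite{Cab-Eng99} used throughout Section \ref{sec:Brauer-Lusztig blocks and e-HC series} would have to be replaced. A secondary difficulty, already visible for Dade's Conjecture, is that Parametrisation \ref{para:iEBC} itself rests on delicate extendibility properties of characters of $e$-split Levi subgroups, whose verification is ongoing work.
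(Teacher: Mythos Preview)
You correctly recognise that this is an open conjecture and that the paper does not prove it; your programme --- reduce to the $e$-local statement Conjecture~\ref{conj:AWC reductive} via Proposition~\ref{prop:Equivalences Dade}, then invoke Theorem~\ref{thm:CTC reductive imples Dade reductive} and Theorem~\ref{thm:Reduction for CTC for groups of Lie type} to reduce further to Parametrisation~\ref{para:iEBC} --- is exactly the chain of implications the paper sets up. So on the large-scale logic you are aligned with the paper.

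Where your sketch diverges is in the mechanism behind Proposition~\ref{prop:Equivalences Dade}. You propose to obtain the comparison of alternating sums from a $\G^F$-homotopy equivalence between the Brown complex of $\ell$-chains and the simplicial complex on $\CL(\G)_{>0}$. The paper does \emph{not} establish such a homotopy equivalence; in the introduction it explicitly says this is only ``suspected''. Instead, the paper works directly at the level of alternating sums: it first passes (via \cite[Proposition 3.3]{Kno-Rob89}) to $\ell$-elementary abelian chains, then splits these into \emph{good} and \emph{bad} chains (Definition~\ref{def:Good chains}). Lemma~\ref{lem:Bad chains avoider} produces a parity-reversing involution on bad chains, so their contribution cancels; Lemma~\ref{lem:Elementary abelian vs Levi} then gives an honest length-preserving $\aut_\mathbb{F}(\G^F)$-equivariant bijection between good chains and $\CL(\G)$, with equal stabilisers. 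No homotopy-theoretic input is used, and the correspondence is with good $\ell$-elementary abelian subgroups $E=\Omega_1(\O_\ell(\z^\circ(\c_\G^\circ(E))^F))$, not with radical abelian $\ell$-subgroups as you wrote. Your proposed route may well work, but it is a genuinely different (and presently unproven) argument; the paper's approach is more elementary and self-contained, at the cost of requiring the extra hypotheses $\O_\ell(\G^F)=1=\z(\G^*)^{F^*}_\ell$ to make $\k_{\rm c}(B)=0$ via Proposition~\ref{prop:Minimal Brauer--Lusztig triple general}.
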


Inspired by the reformulation introduced by Kn\"orr and Robinson, In \cite{Dad92} (see also \cite{Dad94}) Dade proposed a refinement of Alperin's Weight Conjecture by considering characters of any fixed defect.
 
\begin{conj}[Dade's Conjecture]
\label{conj:Dade}
Let $G$ be a finite group such that $\O_\ell(G)\leq \z(G)$. Then
\[\sum\limits_{\d\in\mathfrak{P}(G)/G}(-1)^{|\d|}\k^d(B_\d)=0\]
for every $\ell$-block $B$ of $G$ with defect groups strictly containing $\O_\ell(G)$ and any $d\geq 0$.
\end{conj}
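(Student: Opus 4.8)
The plan is to prove Dade's Conjecture for a finite reductive group $\G^F$ whenever the prime $\ell$ is large for $(\G,F)$, $(\G,F,e)$-adapted and $\z(\G^*)^{F^*}_\ell=1$, by showing (block by block) that in this range it is equivalent to Conjecture \ref{conj:Dade reductive}, and then invoking Theorem \ref{thm:Main CTC reductive implies Dade reductive} together with Theorem \ref{thm:Main reduction} to deduce the latter from Parametrisation \ref{para:Main iEBC}. Under these hypotheses $\ell\in\Gamma(\G,F)$ with $\ell\geq 5$ by \cite[Lemma 2.1]{Mal14} and the Sylow $\ell$-subgroups of $\G^F$ are abelian by \cite[Theorem 25.14]{Mal-Tes}, so the $\ell$-local structure of $\G^F$ is governed by centralisers of $\ell$-subgroups; moreover the Mackey formula and Condition \ref{cond:Transitivity} hold (Remark \ref{rmk:Brauer-Lusztig for simple simply connected}, Corollary \ref{cor:e-HC theory for connected reductive}), so Hypothesis \ref{hyp:Brauer--Lusztig blocks} is at our disposal.

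First I would turn the Broué--Fong--Srinivasan heuristic into a precise $\G^F$-equivariant dictionary between $\ell$-chains and chains of $e$-split Levi subgroups. The two assignments $Q\mapsto\c_\G^\circ(Q)$ and $\L\mapsto\Omega_1(\z^\circ(\L)_{\Phi_e})^F_\ell$ are order-reversing and, by Proposition \ref{prop:e-split Levi large primes}, mutually inverse on the appropriate $\Phi_e$-closed subposets, since $\L=\c_\G^\circ(\Omega_1(\z^\circ(\L)_{\Phi_e})^F_\ell)$ and $\c_\G^\circ(Q)^F=\c_\G(Q)^F$ (Lemma \ref{prop:Good primes}(i)). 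Applying these termwise to chains yields a correspondence between $\G^F$-orbits of chains $\d\in\mathfrak{P}(\G^F)$ of (elementary abelian) $\ell$-subgroups and $\G^F$-orbits of chains $\sigma\in\CL(\G)$, matching stabilisers $\G^F_\d=\G^F_\sigma$ and lengths $|\d|=|\sigma|$. The block maps also correspond: Brauer induction $\bl(\vartheta)^{\G^F}$ from $\G^F_\d$ agrees with $\R_{\G_\sigma}^\G(\bl(\vartheta))$ from $\G^F_\sigma$ by Lemma \ref{lem:Characters and blocks of chains normalizers}(ii), so $\k^d(B_\d)=\k^d(B_\sigma)$ for corresponding chains and every $d$.

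Next I would reconcile the two shapes of the alternating sum. Moving the $|\sigma|=0$ term into the sum shows that Conjecture \ref{conj:Dade reductive} is equivalent to $\sum_{\sigma\in\CL(\G)/\G^F}(-1)^{|\sigma|}\k^d(B_\sigma)=\k_{\rm c}^d(B)$, while Dade's Conjecture reads $\sum_{\d\in\mathfrak{P}(\G^F)/\G^F}(-1)^{|\d|}\k^d(B_\d)=0$ for $B$ with defect groups strictly containing $\O_\ell(\G^F)$. By Proposition \ref{prop:Minimal Brauer--Lusztig triple general} an $e$-cuspidal character has $\ell$-defect zero in the present setting, so $\k_{\rm c}^d(B)=0$ for every block that is not a single defect-zero character, in particular for every $B$ in the range of Dade's Conjecture, and the dictionary of the previous paragraph then gives the desired block-by-block equivalence (this is Proposition \ref{prop:Equivalences Dade}). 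Combining this with Theorems \ref{thm:Main CTC reductive implies Dade reductive} and \ref{thm:Main reduction}, Dade's Conjecture for $\G^F$ at large primes follows once Parametrisation \ref{para:Main iEBC} is known for the irreducible rational components of the $e$-split Levi subgroups of $\G$; and since the defect groups here are abelian, the same dictionary together with \cite{Kes-Mal13} yields in addition the equivalences with Conjecture \ref{conj:AWC reductive} and with the Kn\"orr--Robinson form of Alperin's Weight Conjecture.

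The hard part will be making the object-level assignments $Q\mapsto\c_\G^\circ(Q)$ and $\L\mapsto\Omega_1(\z^\circ(\L)_{\Phi_e})^F_\ell$ into an honest equivariant bijection on chain orbits that simultaneously respects lengths, stabilisers, defects and block induction. The genuine difficulty is that a chain of elementary abelian $\ell$-subgroups need not be $\Phi_e$-closed, so one must show that the contributions of non-closed chains cancel in the alternating sum; I expect this to require a closure-operator (conical contraction) argument on the relevant simplicial complexes in the spirit of Quillen and Kn\"orr--Robinson, establishing that the Brown complex of $\G^F$ and the order complex of proper $e$-split Levi subgroups of $\G$ are $\G^F$-homotopy equivalent when $\ell$ is large. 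A secondary delicate point is checking that passing between an $\ell$-chain and its associated $e$-chain does not disturb the correspondence of blocks of the respective stabilisers, for which one relies on \cite[Theorem 2.5]{Cab-Eng99}, Lemma \ref{lem:Characters and blocks of chains normalizers} and the identity $\c_\G^\circ(Q)^F=\c_\G(Q)^F$ for $\ell\in\Gamma(\G,F)$ (Lemma \ref{prop:Good primes}(i)).
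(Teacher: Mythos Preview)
The statement you are asked to prove is a \emph{conjecture}, and the paper does not prove it: Dade's Conjecture is stated here as background, and the paper explicitly remarks (at the start of Section~\ref{sec:Towards Dade's Projective Conjecture}) that at the time of writing it remains open for finite reductive groups even for large primes. What the paper does establish is the block-by-block equivalence you describe between Conjecture~\ref{conj:Dade} and Conjecture~\ref{conj:Dade reductive} for $\ell$ large, $(\G,F,e)$-adapted and $\O_\ell(\G^F)=1=\z(\G^*)^{F^*}_\ell$ (Proposition~\ref{prop:Equivalences Dade}), and then the conditional implication you cite from Parametrisation~\ref{para:Main iEBC} via Theorems~\ref{thm:Main CTC reductive implies Dade reductive} and~\ref{thm:Main reduction}. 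Your conclusion that Dade's Conjecture would follow once Parametrisation~\ref{para:Main iEBC} is known is therefore correct, but this is a reduction, not a proof: Parametrisation~\ref{para:Main iEBC} is itself open.

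On the technical content of your equivalence argument, your outline is essentially the paper's, but one point deserves correction. You anticipate that cancelling the contribution of $\ell$-chains not of $\Phi_e$-type will require a $\G^F$-homotopy equivalence between the Brown complex and the complex of proper $e$-split Levi subgroups. The paper does not go this route: it first invokes \cite[Proposition~3.3]{Kno-Rob89} to pass to elementary abelian $\ell$-chains, then uses an explicit self-bijection on \emph{bad} elementary abelian chains (Lemma~\ref{lem:Bad chains avoider}) that pairs each bad chain $\e$ with a chain $\e'$ having the same stabiliser and length differing by one, thereby cancelling their contributions directly. The remaining \emph{good} chains are then put in honest length- and stabiliser-preserving bijection with $\CL(\G)$ by Lemma~\ref{lem:Elementary abelian vs Levi}, exactly via the mutually inverse maps you wrote down. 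This is more elementary than a homotopy argument and already handles the block compatibility (via Lemma~\ref{lem:Characters and blocks of chains normalizers}(ii)) since the stabilisers literally coincide. You should also record the hypothesis $\O_\ell(\G^F)=1$, which the paper needs for both Lemma~\ref{lem:Elementary abelian vs Levi} and Lemma~\ref{lem:Bad chains avoider}.
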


Next, we recall the statements of the Character Triple Conjecture and of the inductive condition for Dade's Conjecture. For $\epsilon\in\{+,-\}$ and $B$ an $\ell$-block of $G$, define
\[\C^d(B)_\epsilon:=\left\lbrace(\d,\vartheta)\hspace{1pt}\middle|\hspace{1pt}\d\in\mathfrak{P}(G)_\epsilon,\vartheta\in\irr^d(B_\d)\right\rbrace\]
 Observe that $G$ acts on $\C^d(B)_\epsilon$ and denote by $\overline{(\d,\vartheta)}$ the $G$-orbit of $(\d,\vartheta)\in\C^d(B)_\epsilon$ and by $\C^d(B)_\epsilon/G$ a set of representatives for the $G$-orbits on $\C^d(B)_\epsilon$. The following statement has been proposed by Sp\"ath's in \cite[Conjecture 6.3]{Spa17} and can be seen as an analogue of Conjecture \ref{conj:iAM} to Dade's Conjecture.

\begin{conj}[Sp\"ath's Character Triple Conjecture]
\label{conj:CTC}
Let $G$ be a finite group such that $\O_\ell(G)\leq \z(G)$ and consider a block $B\in\Bl(G)$ with defect groups strictly larger than $\O_\ell(G)$. Suppose that $G\unlhd X$. Then, for every $d\geq 0$, there exists an $X_B$-equivariant bijection
\[\Omega:\C^d(B)_+/G \to \C^d(B)_-/G\]
such that
\[\left(X_{\d,\vartheta},G_\d,\vartheta\right)\iso{G}\left(X_{\e,\chi},G_\e,\chi\right)\]
for every $(\d,\vartheta)\in\C^d(B)_+$ and $(\e,\chi)\in\Omega(\overline{(\d,\vartheta)})$.
\end{conj}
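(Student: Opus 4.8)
The plan is to deduce the statement, for $G=\G^F$ a finite reductive group, from Conjecture~\ref{conj:CTC reductive} whenever $\ell$ is large enough, following the idea of Brou\'e, Fong and Srinivasan that the $\ell$-local structure of $\G^F$ is a shadow of the $e$-local structure of $(\G,F)$. Throughout I would keep the standing assumptions of this section, that $\ell$ is large for $(\G,F)$, $(\G,F,e)$-adapted and $\z(\G^*)^{F^*}_\ell=1$, together with Hypothesis~\ref{hyp:Brauer--Lusztig blocks}; recall that then $\ell\in\Gamma(\G,F)$ and the Sylow $\ell$-subgroups of $\G^F$ are abelian. A first observation is that a block $B$ of $\G^F$ has defect groups strictly containing $\O_\ell(\G^F)$ exactly when $\CP_e(B)_<\neq\emptyset$: by Proposition~\ref{prop:Minimal Brauer--Lusztig triple general} every $e$-cuspidal character of $\G^F$ has defect zero, so a block with no proper relevant $e$-split Levi subgroup reduces to a single defect-zero character, while conversely $\k^d_{\rm c}(B)=0$ for every block $B$ with non-central defect groups. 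This matches the hypothesis on $B$ in Conjecture~\ref{conj:CTC} with the non-triviality condition built into $\CL^d(B)_\epsilon$.

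The first and main step is to produce an $X_B$-equivariant, block- and defect-preserving identification of the Brown-complex data $\C^d(B)_\pm/\G^F$ with the $e$-chain data. By Proposition~\ref{prop:e-split Levi large primes}(iii) and Lemma~\ref{prop:Good primes}, the assignment $\L\mapsto Q_\L:=\Omega_1\big((\z^\circ(\L)_{\Phi_e})^F_\ell\big)$ is an order-reversing, $\aut_\mathbb{F}(\G^F)$-equivariant injection from $e$-split Levi subgroups of $\G$ into abelian $\ell$-subgroups of $\G^F$, with inverse $Q\mapsto\c_\G^\circ(Q)$ on its image and with $\c_{\G^F}(Q_\L)=\L^F$ and $\n_{\G^F}(Q_\L)=\n_\G(\L)^F$. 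Applying this termwise sends $\sigma=\{\G>\L_1>\cdots>\L_n\}\in\CL(\G)$ to an $\ell$-chain $\d_\sigma\in\mathfrak{P}(\G^F)$ with $\G^F_\sigma=\G^F_{\d_\sigma}$, $X_\sigma=X_{\d_\sigma}$ for $X=\G^F\rtimes\aut_\mathbb{F}(\G^F)$, and $|\sigma|=|\d_\sigma|$. I would then invoke the Brou\'e--Fong--Srinivasan homotopy equivalence, available for large $\ell$, between the Brown complex of $\G^F$ and the poset of $e$-split Levi subgroups: for $D$ the last term of an $\ell$-chain $\d$ one sets $D^\star:=\Omega_1\big((\z^\circ(\c_\G^\circ(D))_{\Phi_e})^F_\ell\big)$; since $\c_\G^\circ(D)$ is an $e$-split Levi subgroup (Proposition~\ref{prop:e-split Levi large primes}(i)) and $D$ is abelian one has $D\leq D^\star$, and a standard Quillen-type cancellation using the closure operator $D\mapsto D^\star$ pairs every $\ell$-chain not of the form $\d_\sigma$ with one of opposite parity. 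The point is that $\G^F_\d$ normalises $D$ and hence $D^\star$, so $\G^F_{\d\cup\{D^\star\}}=\G^F_\d$ and the pairing is the identity on the attached character sets; it therefore cancels all non-$e$-chains and yields an $X_B$-equivariant, block- and defect-preserving bijection between $\C^d(B)_\pm/\G^F$ and the sets built only from the chains $\d_\sigma$, $\sigma\in\CL(\G)_\pm$.

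The second step is bookkeeping. By Lemma~\ref{lem:Characters and blocks of chains normalizers}(ii) one has $\R_{\G_\sigma}^\G(\bl(\vartheta))=\bl(\vartheta)^{\G^F}$ for $\vartheta\in\irr(\G^F_\sigma)$, so the block conditions defining $\irr^d(B_\sigma)$ agree on both sides; and by Lemma~\ref{lem:Characters and blocks of chains normalizers}(iii) together with Corollary~\ref{cor:e-Harish-Chandra, disjointness} the pair $(\M,\ab(\mu))$ attached to a quadruple in $\CL^d(B)_\epsilon$ is recovered uniquely up to $\G^F_\sigma$-conjugacy from $(\sigma,\vartheta)$. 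Hence forgetting $(\M,\ab(\mu))$ identifies $\CL^d(B)_\epsilon/\G^F$ with the $\G^F$-orbits of pairs $(\sigma,\vartheta)$, $\sigma\in\CL(\G)_\epsilon$, $\vartheta\in\irr^d(B_\sigma)$, with $\vartheta$ non-$e$-cuspidal when $\sigma=\{\G\}$; by the first paragraph ($\k^d_{\rm c}(B)=0$ for relevant $B$) and $\G^F_\sigma=\G^F_{\d_\sigma}$ this is exactly the $e$-chain model of $\C^d(B)_\epsilon/\G^F$. Composing, I obtain $X_B$-equivariant bijections $\C^d(B)_\pm/\G^F\cong\CL^d(B)_\pm/\G^F$.

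Finally, define $\Omega$ as the transport of the bijection $\Lambda$ of Conjecture~\ref{conj:CTC reductive} across these identifications. It is $X_B$-equivariant since $\Lambda$ and all the identifications are; it respects parity because $\Lambda$ adds or deletes the smallest term $\M$ of a chain, which under $\sigma\mapsto\d_\sigma$ corresponds to adding or deleting $Q_\M$; and it preserves $\ell$-defects because $\C^d$ and $\CL^d$ are both relative to a fixed $d$ and the characters are literally preserved. For the character-triple condition, if $(\d,\vartheta)$ corresponds to $(\sigma,\vartheta)$ and $(\e,\chi)$ to $(\rho,\chi)\in\Lambda(\omega)^\bullet$, then $\G^F_\d=\G^F_\sigma$, $\G^F_\e=\G^F_\rho$ and $X_{\d,\vartheta}=X_{\sigma,\vartheta}$, $X_{\e,\chi}=X_{\rho,\chi}$, so $(X_{\d,\vartheta},\G^F_\d,\vartheta)\iso{\G^F}(X_{\e,\chi},\G^F_\e,\chi)$ is literally the relation supplied by Conjecture~\ref{conj:CTC reductive}. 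The main obstacle is the equivariant homotopy equivalence of the first step: one needs the cancellation between the Brown complex and the $e$-chain complex to preserve not merely chain stabilisers but the stabilisers $X_{\d,\vartheta}$ of characters and their $\ell$-blocks and defects, so that the induced map on $\C^d(B)_\pm/\G^F$ is $X_B$-equivariant and block-and-defect preserving. This is precisely the point the authors flag as known only for large $\ell$ and merely expected for good primes, and it rests on the abelianness of the relevant $\ell$-subgroups and on Propositions~\ref{prop:e-split Levi large primes} and~\ref{prop:Good primes} identifying their centralisers with $e$-split Levi subgroups.
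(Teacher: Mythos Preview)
The statement you were given is Conjecture~\ref{conj:CTC}, Sp\"ath's Character Triple Conjecture; this is an open conjecture and the paper contains no proof of it. What you have actually written is a proof sketch of Proposition~\ref{prop:Equivalence CTC}: that Conjecture~\ref{conj:CTC reductive} implies Conjecture~\ref{conj:CTC} for $\G^F$ when $\ell$ is large and $(\G,F,e)$-adapted with $\O_\ell(\G^F)=1=\z(\G^*)^{F^*}_\ell$. Read as such, your proposal follows the same strategy as the paper, and the final transport of $\Lambda$ together with the character-triple check is exactly what the paper does.

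There is one genuine gap in your cancellation step. You only describe the closure operator $D\mapsto D^\star$ for the \emph{last} term of an $\ell$-chain and then assert that ``a standard Quillen-type cancellation'' pairs off every chain not of the form $\d_\sigma$. This does not suffice: a chain can have $D_n=D_n^\star$ good while some intermediate term $D_j$ is bad, and your operator as stated does nothing to such chains. The paper handles this in two separate moves. First it invokes \cite[Proposition~6.10]{Spa17} to reduce from arbitrary $\ell$-chains $\mathfrak{P}(\G^F)$ to $\ell$-elementary abelian chains $\mathfrak{E}(\G^F)$; you skip this reduction, and even though Sylow $\ell$-subgroups are abelian for large $\ell$, they are not elementary abelian, so the image of $\sigma\mapsto\d_\sigma$ does not exhaust the abelian chains. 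Second, for bad elementary abelian chains the paper uses Lemma~\ref{lem:Bad chains avoider}, which computes $D_i^\star$ for \emph{every} term, locates the \emph{maximal} index $j$ with $E_j<D_j$, and inserts or removes $D_j$ there; this is what actually produces an $\aut_\mathbb{F}(\G^F)$-equivariant involution on $\mathfrak{E}_b(\G^F)$ with $|\e'|=|\e|\pm 1$ and $\G^F_{\e'}=\G^F_{\e}$. Once you make these two refinements explicit, your argument coincides with the paper's proof of Proposition~\ref{prop:Equivalence CTC}. You should also add the hypothesis $\O_\ell(\G^F)=1$ explicitly, since Lemmas~\ref{lem:Elementary abelian vs Levi} and~\ref{lem:Bad chains avoider} both require it.
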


We now show that the Character Triple Conjecture plays the role of an inductive condition for Dade's Conjecture. Notice that we could also consider a version of the above statement in the context of Alperin's Weight Conjecture (Conjecture \ref{conj:AWC}) by removing the restriction on the defect of characters, however the reduction theorem for Alperin's Weight Conjecture does not take into account the Kn\"orr--Robinson reformulation and deals with Brauer characters and weights instead.

\begin{lem}
\label{lem:iDade universal vs covering}
Let $S$ be a non-abelian simple group with universal covering group $\wh{S}$ and consider $B\in\Bl(\wh{S})$ with non-central defect groups. Then the following are equivalent:
\begin{enumerate}
\item the inductive condition for Dade's Conjecture (in the sense of \cite[Definition 6.7]{Spa17}) holds for $B$;
\item for each $Z\leq \z(G)$, Conjecture \ref{conj:CTC} holds with respect to $G:=\wh{S}/Z$, $X:=G\rtimes \aut(G)$ and every block of $G$ dominated by $B$;
\item there exists a bijection $\Omega:\C^d(B)_+/\wh{S}\to\C^d(B)_-/\wh{S}$ as in Conjecture \ref{conj:CTC} satisfying $\ker(\vartheta_{\z(\wh{S})})=\ker(\chi_{\z(\wh{S})})=:Z$ and
\begin{equation}
\label{eq:CTC with kernels}
\left(X_{\d,\vartheta}/Z,\wh{S}_\d/Z,\overline{\vartheta}\right)\iso{\wh{S}/Z}\left(X_{\e,\chi}/Z,\wh{S}_\e/Z,\overline{\chi}\right)
\end{equation}
for every $(\d,\vartheta)\in\C^d(B)_+$ and $(\e,\chi)\in\Omega(\overline{(\d,\vartheta)})$ and where $\overline{\vartheta}$ and $\overline{\chi}$ correspond, via inflation of characters, to $\vartheta$ and $\chi$ respectively and $X:=\wh{S}\rtimes \aut(\wh{S})$.
\end{enumerate}
\end{lem}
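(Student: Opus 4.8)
The approach is to translate between the three formulations via inflation of characters along the central quotient maps $\wh S\to\wh S/W$, $W\leq\z(\wh S)$, relying on two standard facts: the compatibility of $N$-block isomorphisms of character triples with passing to and from a central quotient (\cite[Lemma 3.8]{Spa17}, used in both directions since the central subgroups occurring lie in the kernels of the relevant characters), and the identification $\aut(\wh S)_W\simeq\aut(\wh S/W)$ coming from \cite[Corollary 5.1.4 (b)]{GLS} together with the fact that a perfect group has no nontrivial central automorphisms. Throughout, the defect bookkeeping uses that inflation along $\wh S\to\wh S/Z$ takes $\irr^{d'}$ to the characters of defect $d=d'+\log_\ell|Z_\ell|$ that contain $Z$ in their kernel, so that the shift $d\mapsto d\pm\log_\ell|Z_\ell|$ is harmless because Conjecture \ref{conj:CTC} is assumed for every defect.

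The equivalence (i) $\Leftrightarrow$ (iii) is essentially the definition: by \cite[Definition 6.7]{Spa17}, the inductive condition for Dade's Conjecture for a block $B$ of the universal covering group $\wh S$ of a non-abelian simple group $S$ asks, for each $d\geq 0$, for an $\aut(\wh S)_B$-equivariant bijection $\Omega\colon\C^d(B)_+/\wh S\to\C^d(B)_-/\wh S$ preserving the kernel of the restriction of characters to $\z(\wh S)$ and realising the relation \eqref{eq:CTC with kernels}; since $\aut(\wh S)_B$-equivariance is $X_B$-equivariance for $X=\wh S\rtimes\aut(\wh S)$, statement (iii) is a literal reformulation, and I would only need to check that its two clauses account for every requirement of \cite[Definition 6.7]{Spa17}.

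For (iii) $\Rightarrow$ (ii), fix $Z\leq\z(\wh S)$, write $G:=\wh S/Z$, and let $\overline B$ be a block of $G$ dominated by $B$; since central $\ell$-subgroups lie in every defect group and $B$ has non-central defect, $\overline B$ has non-central defect, and $\O_\ell(G)\leq\z(G)$ because $G$ is quasisimple, so Conjecture \ref{conj:CTC} applies to $G$ and $\overline B$. Inflation identifies the $\ell$-chains of $G$ with initial term $\O_\ell(G)$ with the $\ell$-chains of $\wh S$ above $Z$, identifies $\irr^{d'}(G_{\overline\d})$ with $\{\vartheta\in\irr^{d}(\wh S_\d)\mid Z\leq\ker\vartheta\}$, and matches blocks of $G$ dominated by $B$ with characters whose block induces $B$. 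Because $\Omega$ preserves the central kernel globally, its parts in central kernel $Z'$, for $Z'$ ranging over the subgroups of $\z(\wh S)$ containing $Z$, assemble under this identification into a bijection between $\coprod_{\overline B}\C^{d'}(\overline B)_+$ and $\coprod_{\overline B}\C^{d'}(\overline B)_-$ (central kernel exactly $Z$ contributes the $\z(G)$-faithful characters, larger $Z'$ the non-faithful ones). The relation \eqref{eq:CTC with kernels} for $\wh S$, after dividing by $Z$ and replacing $(\wh S\rtimes\aut(\wh S))_Z/Z$ by $G\rtimes\aut(G)$ via \cite[Corollary 5.1.4 (b)]{GLS} and \cite[Theorem 5.3]{Spa17}, yields the $\iso{G}$ relation of Conjecture \ref{conj:CTC}, the pieces at larger $Z'$ giving the corresponding relations after pushing down through \cite[Lemma 3.8]{Spa17}; $\aut(\wh S)_B$-equivariance descends to $\aut(G)_{\overline B}$-equivariance, proving (ii).

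The implication (ii) $\Rightarrow$ (iii) is the substantial one, and I would carry it out by induction on the index $[\z(\wh S):Z]$, constructing for each $G=\wh S/Z$ the $\z(G)$-faithful part of a kernel-compatible bijection and then gluing over the partition of $\C^d(B)_\pm/\wh S$ by the kernel of the central character. For a given $G$, Conjecture \ref{conj:CTC} from (ii) supplies a bijection on $\coprod_{\overline B}\C^{d'}(\overline B)_\pm$ which need not respect the splitting into $\z(G)$-faithful and non-faithful characters, while the inductive hypothesis gives kernel-compatible bijections for the proper central quotients $\wh S/Z'$ with $Z\subsetneq Z'$, which inflate to a bijection on the non-faithful part; one must rearrange these into a single bijection respecting the splitting and still realising $\iso{G}$. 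I expect this rearrangement to be the main obstacle — it is a matching problem for the relation $\iso{G}$, and resolving it requires the functoriality of $N$-block isomorphisms under central quotients (\cite[Lemma 3.8]{Spa17}) to show that the $\iso{G}$-class of a pair of triples is already determined by data visible on the center, so that the faithful and non-faithful parts may be handled independently. The remaining steps (assembling over all $Z$, verifying $\aut(\wh S)_B$-equivariance, and pulling \eqref{eq:CTC with kernels} back along inflation using \cite[Corollary 5.1.4 (b)]{GLS} and \cite[Theorem 5.3]{Spa17}) are then routine bookkeeping.
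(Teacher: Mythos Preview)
The paper's proof is a single line: ``This is just a restatement of \cite[Proposition 6.8]{Spa17}.'' The lemma is not proved from scratch in the paper but quoted as a known equivalence due to Sp\"ath.

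Your proposal instead attempts to reconstruct that equivalence. The directions (i)$\Leftrightarrow$(iii) and (iii)$\Rightarrow$(ii) are reasonable sketches and line up with how such arguments go (inflation along central quotients, \cite[Corollary 4.4--4.5, Theorem 5.3]{Spa17}, \cite[Corollary 5.1.4]{GLS}). But in (ii)$\Rightarrow$(iii) you openly flag the crucial step as unresolved: you say the bijection supplied by (ii) ``need not respect the splitting into $\z(G)$-faithful and non-faithful characters'' and that rearranging it into one that does is ``the main obstacle'', to be handled by some unspecified matching argument. That is exactly the content of \cite[Proposition 6.8]{Spa17}, and your sketch does not explain how the matching is carried out --- appealing to ``functoriality of $N$-block isomorphisms under central quotients'' is not enough, since the issue is combinatorial (two bijections on overlapping pieces of the same set must be spliced into one equivariant bijection) rather than a question of whether the relation $\iso{G}$ survives passage to quotients. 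As written, (ii)$\Rightarrow$(iii) is a plausible outline but not a proof. Since the paper only needs the statement and the argument already exists in the literature, the appropriate fix is simply to cite \cite[Proposition 6.8]{Spa17}.
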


\begin{proof}
This is just a restatement of \cite[Proposition 6.8]{Spa17}.
\end{proof}

The main theorem of \cite{Spa17} shows that if the above conditions are satisfied for every simple group, then Dade's Conjecture holds for every finite group. Sp\"ath's results have been improved in \cite{Ros-Reduction_CTC} where Conjecture \ref{conj:CTC} is shown to reduce to quasi-simple groups. The requirement presented in \eqref{eq:CTC with kernels} is more restrictive than the one stated in Conjecture \ref{conj:CTC}. In fact, $G$-block isomorphisms of character truokes can be lifted from quotients with respect to central subgroups (see \cite[Corollary 4.4]{Spa17}). The above lemma tells us that proving the inductive condition for Dade's Conjecture (as defined in \cite[Definition 6.7]{Spa17}) for a non-abelian simple group $S$ is equivalent to show that Conjecture \ref{conj:CTC} holds, with this more restrictive requirement \eqref{eq:CTC with kernels}, for its universal covering group $\wh{S}$ with respect to $\wh{S}\unlhd \wh{S}\rtimes \aut(\wh{S})$. This remark is helpful since, in the majority of cases, the universal covering group of a simple group of Lie type is a finite reductive group $\G^F$ where $\G$ is a simple simply connected algebraic group with a Frobenius endomorphism $F$.

\subsection{An idea of Broué, Fong and Srinivasan}
\label{sec:Reduction first part}

For every finite group $G$, recall that the set $\mathfrak{E}(G)$ of $\ell$-elementary abelian chains of $G$ (starting at $\O_\ell(G)$) consists of those chains $\e=\{E_0=\O_\ell(G)<E_1<\dots<E_n\}$ such that $E_n/\O_\ell(G)$ is $\ell$-elementary abelian. Consider $\G$, $F$, $q$, $\ell$ and $e$ as in Notation \ref{notation}.

\begin{defin}[Broué--Fong--Srinivasan]
\label{def:Good chains}
Let $E$ be an $\ell$-elementary abelian subgroup of $\G^F$. Then $E$ is said to be \emph{good} if
\[E=\Omega_1\left(\O_\ell\left(\z^\circ\left(\c_\G^\circ\left(E\right)\right)^F\right)\right),\]
and \emph{bad} otherwise. An $\ell$-elementary abelian chain $\e\in\mathfrak{E}(\G^F)$ is said to be \emph{good} if $E_i$ is good for every $i$, while it is \emph{bad} otherwise. The set of good and bad $\ell$-elementary abelian chains of $\G^F$ is denote by $\mathfrak{E}_g(\G^F)$ and $\mathfrak{E}_b(\G^F)$ respectively.
\end{defin}

When $\ell$ is large and $(\G,F,e)$-adapted there exists a bijection between chains of $e$-split Levi subgroups of $\G$ and good $\ell$-elementary abelian chains of $\G^F$. Recall from Section \ref{sec:Automorphisms} that every automorphism $\alpha\in\aut_\mathbb{F}(\G^F)$ extends to a bijective endomorphism of $\G$ commuting with $F$. Then $\aut_\mathbb{F}(\G^F)$ acts on the set of $F$-stable closed connected subgroups of $\G$.

\begin{lem}
\label{lem:Elementary abelian vs Levi}
Suppose that $\ell$ is large, $(\G,F,e)$-adapted and $\O_\ell(\G^F)=1$. Then the maps
\begin{align*}
\CL(\G)&\to \mathfrak{E}_g\left(\G^F \right)
\\
\sigma=(\L_i) &\mapsto \e=\left(\Omega_1\left(\O_\ell\left(\z^\circ(\L_i)^F\right)\right)\right)
\end{align*}
and 
\begin{align*}
\mathfrak{E}_g(\G^F) &\to \CL(\G)
\\
\e=(E_i)&\mapsto \sigma:=\left(\c^\circ_\G(E_i)\right)
\end{align*}
are mutually inverse $\aut_\mathbb{F}(\G^F)$-equivariant length preserving bijections.
\end{lem}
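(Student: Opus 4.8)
The plan is to show that the two displayed maps are well-defined, mutually inverse, length-preserving, and $\aut_\mathbb{F}(\G^F)$-equivariant, by reducing everything to the structural results on $e$-split Levi subgroups established earlier, most importantly Proposition \ref{prop:e-split Levi large primes}. Throughout, set $\S_i:=\z^\circ(\L_i)_{\Phi_e}$ and note that, since $\ell$ is large and $(\G,F,e)$-adapted, $\z^\circ(\L_i)^F_\ell=(\z^\circ(\L_i)_{\Phi_e})^F_\ell=\S_{i,\ell}^F$ by the argument used in the proof of Proposition \ref{prop:e-split Levi large primes}(ii) (the $\Phi_e$-part carries all the $\ell$-torsion of the centre because $\ell$ is adapted to $e_0=e$). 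Consequently $\O_\ell(\z^\circ(\L_i)^F)=\S_{i,\ell}^F$ and $\Omega_1(\O_\ell(\z^\circ(\L_i)^F))=\Omega_1(\S_{i,\ell}^F)$, which is $\ell$-elementary abelian; this identifies the first map with $\sigma=(\L_i)\mapsto(\Omega_1(\S_{i,\ell}^F))$.

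First I would check that the first map lands in $\mathfrak{E}_g(\G^F)$. Fix a term $\L:=\L_i$ of $\sigma$ and put $E:=\Omega_1(\S_{i,\ell}^F)$. By Proposition \ref{prop:e-split Levi large primes}(iii) we have $\L=\c_\G^\circ(E)$, hence $\z^\circ(\c_\G^\circ(E))=\z^\circ(\L)$ and therefore $\Omega_1(\O_\ell(\z^\circ(\c_\G^\circ(E))^F))=\Omega_1(\S_{i,\ell}^F)=E$, which is exactly the goodness condition of Definition \ref{def:Good chains}. Since $\sigma$ is a strictly descending chain of $e$-split Levi subgroups, applying Proposition \ref{prop:e-split Levi large primes}(ii)--(iii) to the inclusions $\L_{i+1}<\L_i$ shows $\Omega_1(\S_{i+1,\ell}^F)\neq\Omega_1(\S_{i,\ell}^F)$ — indeed if the two coincided, taking $\c_\G^\circ$ would force $\L_{i+1}=\L_i$ — and moreover $\Omega_1(\S_{i,\ell}^F)\subseteq\Omega_1(\S_{i+1,\ell}^F)$ because $\z^\circ(\L_i)\leq\z^\circ(\L_{i+1})$; so the image is a genuine strictly increasing $\ell$-elementary abelian chain of the same length, starting at $\Omega_1(\z^\circ(\G)^F_\ell)=1=\O_\ell(\G^F)$ by hypothesis. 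For the second map, given $\e=(E_i)\in\mathfrak{E}_g(\G^F)$, Proposition \ref{prop:e-split Levi large primes}(i) (note $\ell$ is large hence $\ell\in\Gamma(\G,F)$ by \cite[Lemma 2.1]{Mal14}, so Lemma \ref{prop:Good primes} applies) gives that each $\c_\G^\circ(E_i)$ is an $e$-split Levi subgroup of $\G$; strict inclusions $E_i<E_{i+1}$ give $\c_\G^\circ(E_{i+1})\leq\c_\G^\circ(E_i)$, and these are strict because goodness lets one recover $E_i$ from $\c_\G^\circ(E_i)$ via $E_i=\Omega_1(\O_\ell(\z^\circ(\c_\G^\circ(E_i))^F))$ — equal centralisers would force equal chain terms. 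Hence the second map lands in $\CL(\G)$ and preserves length.

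That the two maps are mutually inverse is then a termwise statement. Starting from $\sigma=(\L_i)$: applying the first map gives $E_i=\Omega_1(\S_{i,\ell}^F)$, and applying the second recovers $\c_\G^\circ(E_i)=\L_i$ by Proposition \ref{prop:e-split Levi large primes}(iii). Starting from a good chain $\e=(E_i)$: applying the second map gives $\L_i=\c_\G^\circ(E_i)$, and applying the first recovers $\Omega_1(\O_\ell(\z^\circ(\L_i)^F))=\Omega_1(\O_\ell(\z^\circ(\c_\G^\circ(E_i))^F))=E_i$, the last equality being precisely the goodness of $E_i$. Finally, $\aut_\mathbb{F}(\G^F)$-equivariance: by the discussion in Section \ref{sec:Automorphisms} every $\alpha\in\aut_\mathbb{F}(\G^F)$ is the restriction of a bijective algebraic endomorphism of $\G$ commuting with $F$, so it carries $e$-split Levi subgroups to $e$-split Levi subgroups, commutes with the formation of $\z^\circ$, of $(-)^F$, of $\O_\ell$, of $\Omega_1$, and of $\c_\G^\circ$; hence both maps intertwine the $\aut_\mathbb{F}(\G^F)$-actions on $\CL(\G)$ and on $\mathfrak{E}_g(\G^F)$. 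I do not anticipate a serious obstacle here: the entire content is bookkeeping on top of Proposition \ref{prop:e-split Levi large primes}, and the only mildly delicate point is the identification $\O_\ell(\z^\circ(\L)^F)=(\z^\circ(\L)_{\Phi_e})_\ell^F$, i.e. that the large-prime hypothesis forces all $\ell$-torsion in the centre of an $e$-split Levi to sit in its $\Phi_e$-part — which is exactly what is proved inside Proposition \ref{prop:e-split Levi large primes}(ii) and its use of \cite[Proposition 3.3]{Bro-Mal92}.
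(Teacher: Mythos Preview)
Your argument is correct and follows essentially the same approach as the paper's proof: both rely on Proposition~\ref{prop:e-split Levi large primes}(i) and (iii) to show well-definedness and mutual inversion termwise, use goodness to recover chain terms from centralisers, and appeal to the algebraic nature of $\aut_\mathbb{F}(\G^F)$ for equivariance. You are in fact slightly more explicit than the paper in justifying the identification $\O_\ell(\z^\circ(\L)^F)=(\z^\circ(\L)_{\Phi_e})^F_\ell$, which the paper invokes tacitly when applying Proposition~\ref{prop:e-split Levi large primes}(iii) to the group $\Omega_1(\O_\ell(\z^\circ(\L_i)^F))$.
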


\begin{proof}
First, consider a chain of $e$-split Levi subgroups $\sigma=(\G=\L_0>\dots >\L_n)$. Since $\ell$ is large for $\G$, Proposition \ref{prop:e-split Levi large primes} (iii) implies that $E_i:=\Omega_1(\O_\ell(\z^\circ(\L_i)^F))$ is a good $\ell$-elementary abelian subgroup such that $\L_i=\c^\circ_\G(E_i)$. Since $\L_i>\L_{i+1}$, this also shows that $E_i<E_{i+1}$ for every $i=0,\dots, n-1$. Moreover, as $\O_\ell(\G^F)=1$, we deduce that $E_0=\O_\ell(\G^F)$. On the other hand, if $\d=(\O_\ell(\G^F)=D_0<\dots <D_n)$ is a good $\ell$-elementary abelian chain, then all terms $D_i$ are elementary abelian (since $\O_\ell(\G^F)=1$) and Proposition \ref{prop:e-split Levi large primes} (i) shows that $\K_i:=\c_\G^\circ(D_i)$ is an $e$-split Levi subgroup. Furthermore $D_i=\Omega_1(\O_\ell(\z^\circ(\K_i)^F))$, because $D_i$ is good in the sense of Definition \ref{def:Good chains}, and $K_0=\G$. As a consequence, since $D_i<D_{i+1}$, we obtain that $\K_i>\K_{i+1}$ for every $i=0,\dots, n-1$. It follows that the above maps are inverses of each other and preserve the length of chains. To show that the maps are $\aut_\mathbb{F}(\G^F)$-equivariant, observe that $\Omega_1(\O_\ell(\z^\circ(\L)^F))^\alpha=\Omega_1(\O_\ell(\z^\circ(\L^\alpha)^F))$ and $\c_\G^\circ(E)^\alpha=\c_\G^\circ(E^\alpha)$ for every $e$-split Levi subgroup $\L$ of $\G$, every $\ell$-elementary abelian subgroup $E$ of $\G^F$ and every $\alpha\in\aut_\mathbb{F}(\G^F)$.
\end{proof}

Next, we show that there exists a self inverse $\aut_\mathbb{F}(\G^F)$-equivariant bijection on the set of bad $\ell$-elementary abelian chains.

\begin{lem}
\label{lem:Bad chains avoider}
If $\ell\in\Gamma(\G,F)$ and $\O_\ell(\G^F)=1$, then there exists an $\aut_\mathbb{F}(\G^F)$-equivariant bijection
\[\mathfrak{E}_b(\G^F)\to\mathfrak{E}_b(\G^F)\]
such that, if $\e$ is mapped to $\e'$, then $|\e|=|\e'|\pm 1$.
\end{lem}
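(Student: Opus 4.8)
The plan is to exhibit $\mathfrak{E}_b(\G^F)$ as the set of chains that fail to be ``closed'' for a canonical closure operator, and then use the standard add‑or‑delete involution attached to such an operator. For an $\ell$‑elementary abelian subgroup $E$ of $\G^F$ set $\overline E:=\Omega_1\left(\O_\ell\left(\z^\circ\left(\c_\G^\circ(E)\right)^F\right)\right)$, so that by Definition \ref{def:Good chains} $E$ is good precisely when $\overline E=E$. Since $\ell\in\Gamma(\G,F)$, Lemma \ref{prop:Good primes} applies, and I would first record three facts about this operator: \emph{(a)} $E\leq\overline E$ and $\overline E$ is a good $\ell$‑elementary abelian subgroup with $\c_\G^\circ(\overline E)=\c_\G^\circ(E)$ — the inclusion is Lemma \ref{prop:Good primes}(ii), and $\overline E\leq\z^\circ(\c_\G^\circ(E))$ forces $\c_\G^\circ(E)\leq\c_\G^\circ(\overline E)$ while $E\leq\overline E$ forces the reverse inclusion, so the centralisers agree and $\overline{\overline E}=\overline E$; \emph{(b)} if $E\leq E'$ then $\overline E\leq\overline{E'}$ — because $E'$ is abelian and contains $E$, hence $E'\leq\c_\G(E)^F=\c_\G^\circ(E)^F$ by Lemma \ref{prop:Good primes}(i), so $\z^\circ(\c_\G^\circ(E))$ centralises $E'$ and lies in $\c_\G^\circ(E')$, whence $\z^\circ(\c_\G^\circ(E))\leq\z^\circ(\c_\G^\circ(E'))$ and one applies $\Omega_1(\O_\ell((-)^F))$; \emph{(c)} $\O_\ell(\G^F)=1$ is good, since $\z^\circ(\G)^F\leq\z(\G^F)$ makes $\O_\ell(\z^\circ(\G)^F)$ a central, hence trivial, $\ell$‑subgroup. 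In particular every chain in $\mathfrak{E}(\G^F)$ starts at the good subgroup $E_0=1$.

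With this in hand I would define the bijection as an involution. Let $\e=\{1=E_0<E_1<\dots<E_n\}\in\mathfrak{E}_b(\G^F)$ and let $j\geq 1$ be the \emph{largest} index for which $E_j$ is bad. By maximality $E_{j+1},\dots,E_n$ are good, so \emph{(b)} together with goodness gives $\overline{E_j}\leq\overline{E_{j+1}}=E_{j+1}$ when $j<n$, while $E_j<\overline{E_j}$ since $E_j$ is bad. If $j=n$, or if $j<n$ and $\overline{E_j}<E_{j+1}$, set $\iota(\e)$ to be $\e$ with $\overline{E_j}$ inserted immediately after $E_j$; this is a strictly increasing chain — $\overline{E_j}$ was not already a term, since $\overline{E_j}>E_m$ for $m\leq j$ and $\overline{E_j}<E_m$ for $m>j$ — it still starts at $1$, and it is still bad because $E_j$ survives, so $\iota(\e)\in\mathfrak{E}_b(\G^F)$ with $|\iota(\e)|=|\e|+1$. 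Otherwise $j<n$ and $\overline{E_j}=E_{j+1}$, and I set $\iota(\e)$ to be $\e$ with the term $E_{j+1}$ deleted, again an element of $\mathfrak{E}_b(\G^F)$, now of length $|\e|-1$. These two cases exhaust all possibilities.

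To finish I would check that $\iota$ is an involution: in either output chain the last bad term is still $E_j$ and $\overline{E_j}$ is unchanged; in the first case the term right after $E_j$ in $\iota(\e)$ is exactly $\overline{E_j}$, so $\iota$ re‑enters the second case and deletes it, recovering $\e$; in the second case the term right after $E_j$ in $\iota(\e)$ is $E_{j+2}$, which is $>\overline{E_j}=E_{j+1}$, or there is no such term, so $\iota$ re‑enters the first case and re‑inserts $\overline{E_j}$, recovering $\e$. Equivariance for $\aut_\mathbb{F}(\G^F)$ is immediate: any $\alpha\in\aut_\mathbb{F}(\G^F)$ extends to a bijective algebraic endomorphism commuting with $F$, so $\c_\G^\circ(E^\alpha)=\c_\G^\circ(E)^\alpha$ and therefore $\overline{E^\alpha}=\overline{E}^{\,\alpha}$; hence ``bad'', the index $j$, and the inserted or deleted subgroup all behave naturally under $\alpha$, giving $\iota(\e^\alpha)=\iota(\e)^\alpha$.

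The one point that requires care — and the reason for the choice of $j$ — is that taking the \emph{first} bad term instead of the last one does not work: the closure of the first bad term can be incomparable to its successor in the chain and so cannot be inserted. Choosing the last bad term is exactly what makes $\overline{E_j}$ comparable to (in fact below) $E_{j+1}$, via monotonicity of the closure and goodness of the tail, which is what powers the add/delete pairing. The verifications of \emph{(a)}--\emph{(c)} and of the involution property are then routine.
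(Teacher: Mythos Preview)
Your proposal is correct and follows essentially the same approach as the paper: both take the maximal index $j$ with $E_j$ bad, use that the closure $\overline{E_j}$ is good and satisfies $\overline{E_j}\leq E_{j+1}$ (your monotonicity property \emph{(b)} is exactly the content of the paper's direct argument that $D_j\leq D_{j+1}=E_{j+1}$), and then add or delete $\overline{E_j}$ accordingly. Your write-up is in fact more complete than the paper's, which leaves the involution property and the $\aut_\mathbb{F}(\G^F)$-equivariance implicit.
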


\begin{proof}
Let $\e=(E_0<\dots<E_n)\in\mathfrak{E}_b(\G^F)$ and set $D_i:=\Omega_1(\O_\ell(\z^\circ(\c_\G^\circ(E_i))^F))$. Since $\O_\ell(\G^F)=1$, notice that $E_i$ is elementary abelian so that $E_i\leq D_i$ by Lemma \ref{prop:Good primes} (ii) and hence $\c_\G^\circ(D_i)\leq \c_\G^\circ(E_i)$. On the other hand, as $D_i\leq \z^\circ(\c_\G^\circ(E_i))^F$, we have $\c_\G^\circ(E_i)\leq \c_\G^\circ(D_i)$. Thus $\c_\G^\circ(E_i)=\c_\G^\circ(D_i)$ and we conclude that $D_i$ is a good $\ell$-elementary abelian subgroup. Now, since $\e$ is a bad chain, there exists a maximal index $j$ such that $E_j<D_j$. If $j=n$, then we define $\e'$ by adding $D_n$ to the chain $\e$. Assume $j<n$. In this case we claim that $D_j\leq E_{j+1}$ and we define $\e'$ to be the chain obtained from $\e$ by adding or removing $D_j$ to $\e$ if $D_j<E_{j+1}$ or $D_j=E_{j+1}$ respectively. To prove the claim, notice that $E_{j+1}\leq \c_\G(E_{j+1})^F\leq \c_\G(E_j)^F=\c_\G^\circ(E_j)^F$ by Lemma \ref{prop:Good primes} (i). As $D_j$ centralises $\c_\G^\circ(E_j)$, we deduce that $D_j\leq \c_\G(E_{j+1})^F=\c_\G^\circ(E_{j+1})^F$ and that $D_j$ centralises $\c_\G^\circ(E_{j+1})$. Thus $D_j\leq \z(\c_\G^\circ(E_{j+1}))$. Observing that $\c^\circ(E_{j+1})$ is an $F$-stable Levi subgroup of $\G$ (see \cite[Proposition 13.16]{Cab-Eng04}) we obtain $\ell\in\Gamma(\c_\G^\circ(E_{j+1}),F)$ by Remark \ref{rmk:Very good primes go to Levi subgroups} and hence $D_j\leq \z^\circ(\c_\G^\circ(E_{j+1}))$. This proves our claim that $D_j\leq D_{j+1}=E_{j+1}$.
\end{proof}

\subsection{New and old conjectures for large primes}
\label{sec:New and old conjecture for large primes}

Using the results described in the previous section, we can now show the equivalence between the Conjectures proposed in Section \ref{sec:Counting characters e-locally} and the local-global counting conjectures for finite reductive groups with respect to large primes. First, we recall that building on ideas of Bouc, Quillen, Thevenaz and Webb, \cite[Proposition 3.3]{Kno-Rob89} and \cite[Proposition 6.10]{Spa17} show that it is no loss of generality to restrict our attention to $\ell$-elementary abelian chains when working with Conjecture \ref{conj:Dade} and Conjecture \ref{conj:CTC} respectively. 

\begin{prop}
\label{prop:Equivalences Dade}
Let $\ell$ be large, $(\G,F,e)$-adapted and suppose that $\O_\ell(\G^F)=1=\z(\G^*)^{F^*}_\ell$. Then the following are equivalent for any $\ell$-block $B$ of $\G^F$ with non-trivial defect:
\begin{enumerate}
\item Conjecture \ref{conj:Dade reductive} holds for $B$;
\item Conjecture \ref{conj:AWC reductive} holds for $B$;
\item Alperin's Weight Conjecture holds for $B$ (see Conjecture \ref{conj:AWC});
\item Dade's Conjecture holds for $B$ (see Conjecture \ref{conj:Dade}).
\end{enumerate}
\end{prop}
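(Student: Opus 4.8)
The plan is to prove the equivalence by a chain of implications, exploiting the fact that under the hypotheses ($\ell$ large, $(\G,F,e)$-adapted, $\O_\ell(\G^F)=1=\z(\G^*)^{F^*}_\ell$) the Sylow $\ell$-subgroups of $\G^F$ are abelian, so all blocks of $\G^F$ have abelian defect. First I would record that, since $\ell$ is large, \cite[Theorem 25.14]{Mal-Tes} gives abelian Sylow $\ell$-subgroups, hence every block $B$ has abelian defect group $D$; this is what makes the main result of \cite{Kes-Mal13} applicable, yielding the equivalence of Dade's Conjecture and Alperin's Weight Conjecture block by block — and likewise, I expect, the equivalence of Conjecture \ref{conj:Dade reductive} and Conjecture \ref{conj:AWC reductive} for the block $B$, because when defect groups are abelian the alternating sums over chains collapse the defect parameter $d$ in a uniform way (there is essentially one relevant value of $d$, namely $d=d(B)$, for the height-zero characters, and the cuspidal contribution is controlled by Proposition \ref{prop:Minimal Brauer--Lusztig triple general}). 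So (i)$\Leftrightarrow$(ii) and (iii)$\Leftrightarrow$(iv) will follow from \cite{Kes-Mal13} once the abelian defect statement is in place.

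The heart of the argument is then (i)$\Leftrightarrow$(iv), i.e.\ that the $e$-local form of Dade's Conjecture agrees with the genuine $\ell$-local Dade's Conjecture. Here I would invoke the machinery of Section \ref{sec:Reduction first part}: by \cite[Proposition 3.3]{Kno-Rob89} it is no loss of generality to run Dade's Conjecture over the poset $\mathfrak{E}(\G^F)$ of $\ell$-elementary abelian chains rather than all $\ell$-chains. Lemma \ref{lem:Bad chains avoider} produces a self-inverse, length-parity-reversing, $\aut_\mathbb{F}(\G^F)$-equivariant bijection on the bad $\ell$-elementary abelian chains $\mathfrak{E}_b(\G^F)$, which is readily checked to preserve the quantities $\k^d(B_{\e})$ (the two chains involved have the same stabiliser up to the obvious inclusion, hence contribute canceling terms to the alternating sum). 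Therefore the alternating sum in Dade's Conjecture over $\mathfrak{E}(\G^F)/\G^F$ reduces to the alternating sum over the \emph{good} chains $\mathfrak{E}_g(\G^F)/\G^F$. Now Lemma \ref{lem:Elementary abelian vs Levi} gives an $\aut_\mathbb{F}(\G^F)$-equivariant, length-preserving bijection between $\CL(\G)$ and $\mathfrak{E}_g(\G^F)$, under which a chain $\sigma$ of $e$-split Levi subgroups corresponds to the good chain $\e=(\Omega_1(\O_\ell(\z^\circ(\L_i)^F)))$; crucially $\G^F_\sigma=\G^F_{\e}$ and, by Lemma \ref{lem:Characters and blocks of chains normalizers} together with Proposition \ref{prop:e-split Levi large primes}(iii), the block correspondence $\R^\G_{\G_\sigma}$ agrees with Brauer induction $\bl(\vartheta)\mapsto \bl(\vartheta)^{\G^F}$ used in Dade's Conjecture. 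Consequently $\k^d(B_\sigma)=\k^d(B_{\e})$ term by term, and the $e$-local alternating sum in Conjecture \ref{conj:Dade reductive} (after separating the trivial chain, whose non-$e$-cuspidal part contributes $\k^d(B)-\k^d_{\rm c}(B)$, and whose $e$-cuspidal part corresponds to no good chain of positive length) matches the $\ell$-local one. The $e$-cuspidal bookkeeping is handled by noting that, under the present hypotheses, Proposition \ref{prop:Minimal Brauer--Lusztig triple general} forces $e$-cuspidal characters to have defect zero, so $\k^d_{\rm c}(B)$ is nonzero only when $B$ is a defect-zero block — which is excluded since $B$ has non-trivial defect — so in fact $\k^d_{\rm c}(B)=0$ and the two statements literally coincide.

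The main obstacle I anticipate is the careful identification of the block-induction maps and chain stabilisers across the bijection of Lemma \ref{lem:Elementary abelian vs Levi}: one must verify that for a good chain $\e$ with corresponding Levi chain $\sigma$, not only do the groups $\G^F_\sigma$ and $\G^F_{\e}$ coincide, but the condition ``$\bl(\vartheta)^{\G^F}=B$'' defining $\irr^d(B_{\e})$ in Dade's Conjecture is the \emph{same} condition as ``$\R^\G_{\G_\sigma}(\bl(\vartheta))=B$'' defining $\irr^d(B_\sigma)$ in Conjecture \ref{conj:Dade reductive}. This is where Lemma \ref{lem:Characters and blocks of chains normalizers}(ii) does the work — it says precisely $\R^\G_{\G_\sigma}(\bl(\vartheta))=\bl(\vartheta)^{\G^F}$ — but one should also check that $\O_\ell(\G^F_\sigma)$ contains the relevant central $\ell$-subgroup $\z^\circ(\L(\sigma))^F_\ell$ so that Brauer induction to $\G^F$ is defined and well-behaved (this uses $\O_\ell(\G^F)=1$ to ensure chains start correctly at $\O_\ell(\G^F)=D_0$, plus Lemma \ref{lem:e-split Levi} and Lemma \ref{prop:Good primes}(i)). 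A secondary subtlety is confirming that the bad-chain involution of Lemma \ref{lem:Bad chains avoider} genuinely cancels contributions to the defect-$d$ sum (not merely the total sum) and is compatible with the $\aut_\mathbb{F}(\G^F)_B$-action so that the reduction to good chains is $\G^F$-equivariant; this follows because the two paired chains differ by insertion/deletion of a single good term $D_j$ that is canonically attached to the chain, hence their $\G^F$-stabilisers and the associated defect-$d$ character counts $\k^d(B_{\e})$ agree. Once these identifications are nailed down, the equivalence is a formal consequence, and — as emphasised in the surrounding text — it holds block by block rather than merely as a logical equivalence.
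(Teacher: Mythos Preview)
Your proposal is correct and follows essentially the same approach as the paper's proof: reduce (i)$\Leftrightarrow$(ii) and (iii)$\Leftrightarrow$(iv) to the abelian-defect result of \cite{Kes-Mal13}, then prove (i)$\Leftrightarrow$(iv) by passing to $\ell$-elementary abelian chains via \cite[Proposition~3.3]{Kno-Rob89}, cancelling bad chains with Lemma~\ref{lem:Bad chains avoider}, matching good chains to Levi chains via Lemma~\ref{lem:Elementary abelian vs Levi}, and using Proposition~\ref{prop:Minimal Brauer--Lusztig triple general} to kill $\k^d_{\rm c}(B)$. The paper's write-up is terser (it does not spell out the stabiliser equality for bad chains or the identification of block induction via Lemma~\ref{lem:Characters and blocks of chains normalizers}(ii), which you correctly flag as the points needing care), but the argument is the same.
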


\begin{proof}
Since $\ell$ is large, the block $B$ has abelian defect and therefore \cite{Kes-Mal13} implies that Conjecture \ref{conj:Dade reductive} is equivalent to Conjecture \ref{conj:AWC reductive} while Conjecture \ref{conj:AWC} is equivalent to Conjecture \ref{conj:Dade}. We show that Conjecture \ref{conj:Dade reductive} is equivalent to Conjecture \ref{conj:Dade}. As explained above, \cite[Proposition 3.3]{Kno-Rob89} implies that Conjecture \ref{conj:Dade} holds for $B$ if and only if
\begin{equation}
\label{eq:Equivalence Dade 1}
\sum\limits_{\d\in\mathfrak{E}(\G^F)/\G^F}(-1)^{|\d|}\k^d(B_\d)=0.
\end{equation}
On the other hand, using Lemma \ref{lem:Bad chains avoider} we deduce that
the contribution given by considering bad $\ell$-elementary abelian chains is zero and hence \eqref{eq:Equivalence Dade 1} is equivalent to
\begin{equation}
\label{eq:Equivalence Dade 2}
\sum\limits_{\d\in\mathfrak{E}_g(\G^F)/\G^F}(-1)^{|\d|}\k^d(B_\d)=0.
\end{equation}
Next, consider the bijection described in Lemma \ref{lem:Elementary abelian vs Levi} and suppose that $\sigma\in\CL(\G)$ corresponds to $\d\in\mathfrak{E}_g(\G^F)$. Since the bijection is $\aut_\mathbb{F}(\G^F)$-equivariant, it follows that $\G^F_\d=\G^F_\sigma$ and therefore $\k^d(B_\d)=\k^d(B_\sigma)$. Then \eqref{eq:Equivalence Dade 2} is equivalent to
\begin{align*}
0&=\sum\limits_{\sigma\in\CL(\G)/\G^F}(-1)^{|\sigma|}\k^d(B_\sigma)
\\
&=\sum\limits_{\sigma\in\CL(\G)_{>0}/\G^F}(-1)^{|\sigma|}\k^d(B_\sigma)+\k^d(B)
\end{align*}
which, since $\k^d_{\rm c}(B)=0$ (see Proposition \ref{prop:Minimal Brauer--Lusztig triple general}), holds if and only if Conjecture \ref{conj:Dade reductive} holds for $B$. 
\end{proof}

Next, we reformulate the Alperin--McKay Conjecture in terms of $e$-local structures. Recall that for any $e$-pair $(\L,\lambda)$ of $\G$ we denote by $\k(\n_\G(\L)^F,\lambda)$ the number of characters of $\n_\G(\L)^F$ lying above $\lambda$

\begin{prop}
\label{prop:Equivalence AM}
Assume Hypothesis \ref{hyp:Brauer--Lusztig blocks} with $\ell$ large and $(\G,F,e)$-adapted and consider an $\ell$-block $B$ of $\G^F$. Then the Alperin--McKay Conjecture (Conjecture \ref{conj:AM}) holds for $B$ if and only if
\begin{equation}
\label{eq:Equivalence AM 0}
\k(B)=\sum\limits_{(\L,\lambda)}\k\left(\n_\G(\L)^F,\lambda\right)
\end{equation}
where $(\L,\lambda)$ runs over a set of representatives for the action of $\G^F$ on $\CP_e(B)$. Moreover, this is equivalent to \eqref{eq:para consequence 3} whenever $B$ has non-trivial defect and $\z(\G^*)^{F^*}_\ell=1$.
\end{prop}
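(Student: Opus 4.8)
## Proof plan for Proposition \ref{prop:Equivalence AM}

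The plan is to exploit the description of blocks in terms of $e$-Harish-Chandra series (Theorem \ref{thm:Blocks are unions of e-HC series}) together with the fact that, for $\ell$ large and $(\G,F,e)$-adapted, $e$-cuspidal characters have $\ell$-defect zero (Proposition \ref{prop:Minimal Brauer--Lusztig triple general}), so they are alone in their blocks and contribute nothing essential to the count. First I would observe that under Hypothesis \ref{hyp:Brauer--Lusztig blocks} with $\ell$ large, Brauer's Height Zero Conjecture (known here via \cite{Kes-Mal13} since $B$ has abelian defect group) gives $\irr_0(B)=\irr(B)$, hence $\k_0(B)=\k(B)$; similarly the local block $b$ in $\n_{\G^F}(D)$ has abelian defect, so $\k_0(b)=\k(b)$. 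Thus Conjecture \ref{conj:AM} for $B$ is equivalent to $\k(B)=\k(b)$.

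Next I would identify the right-hand side of \eqref{eq:Equivalence AM 0} with $\k(b)$. The key input is \cite[Theorem 5.24]{Bro-Mal-Mic93}, which for large primes describes the Brauer correspondent $b$ of $B$ in the normaliser of its abelian defect group: the defect group $D$ is (up to conjugacy) $\Omega_1(\z^\circ(\L)^F_\ell)$ for the unique $e$-cuspidal pair $(\L,\lambda)$ attached to $B$ in the sense of \cite{Cab-Eng99}, and $\n_{\G^F}(D)=\n_\G(\L)^F$ by Proposition \ref{prop:e-split Levi large primes} (iii) (which gives $\L=\c_\G^\circ(\Omega_1((\z^\circ(\L)_{\Phi_e})^F_\ell))$ and hence $\n_{\G^F}(D)$ normalises $\L$; the reverse inclusion is routine). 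Then the characters of $b$ are exactly those of $\n_\G(\L)^F$ lying above $\lambda'$ for $\lambda'$ running through the $\n_\G(\L)^F$-orbit data, i.e.\ $\k(b)=\k(\n_\G(\L)^F,\lambda)$. When $\z(\G^*)^{F^*}_\ell=1$, Proposition \ref{prop:Minimal Brauer--Lusztig triple general} shows that $B$ contains a genuine $e$-cuspidal pair only if $B$ has defect zero, in which case both sides of \eqref{eq:Equivalence AM 0} equal $1$; when $B$ has positive defect, Theorem \ref{thm:Blocks are unions of e-HC series} expresses $\irr(B)$ as a disjoint union over $\G^F$-classes of $e$-cuspidal pairs $(\L,\lambda)$ with $\bl(\lambda)^{\G^F}=B$, all of which satisfy $\L<\G$ (again by Proposition \ref{prop:Minimal Brauer--Lusztig triple general}), so $\CP_e(B)=\CP_e(B)_<$ and the sum in \eqref{eq:Equivalence AM 0} runs over $\CP_e(B)$.

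To assemble these: by Theorem \ref{thm:Blocks are unions of e-HC series},
\[
\k(B)=\k_{\rm c}(B)+\sum_{(\L,\lambda)}\bigl|\E\left(\G^F,(\L,\lambda)\right)\bigr|,
\]
where $(\L,\lambda)$ runs over $\G^F$-orbits in $\CP_e(B)_<$, and $\k_{\rm c}(B)=0$ unless $B$ has defect zero. The displayed equality \eqref{eq:Equivalence AM 0} then reads
\[
\k_{\rm c}(B)+\sum_{(\L,\lambda)}\bigl|\E\left(\G^F,(\L,\lambda)\right)\bigr|=\sum_{(\L,\lambda)}\k\left(\n_\G(\L)^F,\lambda\right),
\]
and, since in the positive-defect case $\k_{\rm c}(B)=0$ and the index sets agree, this holds if and only if the Alperin--McKay count $\k(B)=\k(b)$ holds block by block. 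The "moreover" clause is then immediate: combining \eqref{eq:Equivalence AM 0} with the identity \eqref{eq:para consequence 2} of Remark \ref{rmk:Consequences of parametrisation} (valid under Hypothesis \ref{hyp:Brauer--Lusztig blocks}), which gives $\k^d(B)=\k^d_{\rm c}(B)+\sum_{(\L,\lambda)}\k^d(\G^F,(\L,\lambda))$, and summing over $d$, one recovers exactly \eqref{eq:para consequence 3}, using once more that $\k_{\rm c}(B)=0$ when $B$ has non-trivial defect and $\z(\G^*)^{F^*}_\ell=1$.

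The main obstacle is the precise identification of $b$ and of $\n_{\G^F}(D)$ with $\lambda$-data in $\n_\G(\L)^F$ for large primes: this requires invoking \cite[Theorem 5.24]{Bro-Mal-Mic93} carefully (including the passage from the unipotent case there to the general Lusztig series via Jordan decomposition, which is exactly the content packaged in our Lemma \ref{lem:Jordan decomposition for l-elements} and Lemma \ref{lem:Jordan decomposition for l-elements, blocks}) and checking that the defect group really is $\Omega_1((\z^\circ(\L)_{\Phi_e})^F_\ell)$ up to conjugacy, which is where Proposition \ref{prop:e-split Levi large primes} and the adaptedness hypothesis are used. Everything else is a bookkeeping consequence of the block partition already established.
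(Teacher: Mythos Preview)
Your plan has a genuine gap in the identification of the right-hand side of \eqref{eq:Equivalence AM 0} with $\k(b)$. The sum in \eqref{eq:Equivalence AM 0} runs over $\G^F$-orbits of \emph{all} $e$-cuspidal pairs $(\M,\mu)\in\CP_e(B)$, including those with $\mu$ lying in a Lusztig series associated to an $\ell$-singular element, whereas you only discuss the unique $(e,\ell')$-cuspidal pair $(\L,\lambda_0)$ with $B=b_{\G^F}(\L,\lambda_0)$. Your sentence ``$\k(b)=\k(\n_\G(\L)^F,\lambda)$'' collapses the sum to a single term, which is not justified. To match the sum with $\irr(b)$ one must prove that for every $(\M,\mu)\in\CP_e(B)$ the Levi $\M$ is $\G^F$-conjugate to the fixed $\L$; this is precisely the non-trivial claim occupying the second paragraph of the paper's proof. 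It is established by passing from $(\M,\mu)$ to the $(e,s_{\ell'})$-pair $(\M(s_\ell),\mu(s_\ell))$ via Lemma~\ref{lem:Jordan decomposition for l-elements}, using the $(\G,F,e)$-adaptedness (Proposition~\ref{prop:e-split Levi large primes}~(i)) to see that $\G(s_\ell)$ is itself $e$-split so that $(\M(s_\ell),\mu(s_\ell))$ is $e$-cuspidal in $\G$, invoking Lemma~\ref{lem:Jordan decomposition for l-elements, blocks} to keep it in $\CP_e(B)$, and then applying \cite[Theorem~4.1 and Proposition~1.10]{Cab-Eng99} to conclude $\M(s_\ell)=\M$ is conjugate to $\L$. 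Only once this is known can one identify a $\G^F$-transversal in $\CP_e(B)$ with an $\n_\G(\L)^F$-transversal of $e$-cuspidal characters of $\L^F$ inducing to $B$, and then decompose $\irr(b)$ via Lemma~\ref{lem:Characters and blocks of chains normalizers} and Brauer's first main theorem.

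Your ``To assemble these'' paragraph is also off-track. The paper never invokes the partition of $\irr(B)$ from Theorem~\ref{thm:Blocks are unions of e-HC series} for the first equivalence; it computes $\k(b)$ directly. Your detour through that partition forces you to assume $\k_{\rm c}(B)=0$, which via Proposition~\ref{prop:Minimal Brauer--Lusztig triple general} requires $\z(\G^*)^{F^*}_\ell=1$ --- a hypothesis only available for the ``moreover'' clause, not for the first statement. Finally, for the ``moreover'' itself, the paper does not sum \eqref{eq:para consequence 2} over $d$; rather it uses \cite{Kes-Mal13} (abelian defect, hence all characters in $B$ and in $\irr(\n_\G(\L)^F\mid\lambda)$ have the same defect $d=d(B)$) to identify $\k(B)$ with $\k^d(B)$ and $\k(\n_\G(\L)^F,\lambda)$ with $\k^d(\n_\G(\L)^F,\lambda)$ directly.
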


\begin{proof}
Let $D$ be a defect group of $B$ and consider its Brauer correspondent $b$ in $\n_{\G^F}(D)$. Notice that $D$ is abelian since $\ell$ is large and hence $\k(B)=\k_0(B)$ and $\k(b)=\k_0(b)$ (see \cite{Kes-Mal13}). To prove the first statement we need to show that
\begin{equation}
\label{eq:Equivalence AM 1}
\k(b)=\sum\limits_{(\L,\lambda)}\k\left(\n_\G(\L)^F,\lambda\right)
\end{equation}
where $(\L,\lambda)$ runs over a set of representatives for the action of $\G^F$ on $\CP_e(B)$.

Let $(\L,\lambda_0)$ be an $(e,\ell')$-cuspidal pair of $\G$ such that $B=b_{\G^F}(\L,\lambda_0)$ and notice that $\n_{\G^F}(D)=\n_{\G}(\L)^F$ by \cite[Lemma 4.16]{Cab-Eng99}. We claim that for every $(\M,\mu)\in\CP_e(B)$ the $e$-split Levi subgroup $\M$ is $\G^F$-conjugate to $\L$. In fact, consider a semisimple element $s\in\M^{*F^*}$ such that $\mu\in\E(\M^F,[s])$ and let $(\M(s_\ell),\mu(s_\ell))$ be the $e$-cuspidal pair of $\G(s_\ell)$ given by Lemma \ref{lem:Jordan decomposition for l-elements} (see also Lemma \ref{lem:Jordan decomposition for l-elements, order relation}). Under our assumptions $\G(s_\ell)$ is an $e$-split Levi subgroup of $\G$ (see Proposition \ref{prop:e-split Levi large primes} (i)) and therefore $(\M(s_\ell),\mu(s_\ell))$ is an $e$-cuspidal pair of $\G$. Moreover, Lemma \ref{lem:Jordan decomposition for l-elements, blocks} implies that $(\M(s_\ell),\mu(s_\ell))\in\CP_e(B)$ since $(\M,\mu)\in\CP_e(B)$. It follows that $B=b_{\G^F}(\M(s_\ell),\mu(s_\ell))$ and hence $\M(s_\ell)$ is $\G^F$-conjugate to $\L$ by \cite[Theorem 4.1]{Cab-Eng99}. On the other hand $\M(s_\ell)$ is an $e$-split Levi subgroup of $\M$ such that $\c_{\M^*}^\circ(s)\leq \M(s_\ell)^*$. Since $\mu$ is $e$-cuspidal, \cite[Proposition 1.10]{Cab-Eng99} implies that $\M(s_\ell)=\M$ and we conclude that $\M$ is $\G^F$-conjugate to $\L$.

Fix an $\n_\G(\L)^F$-transversal $\mathcal{T}'$ in the set of $e$-cuspidal characters $\lambda\in\irr(\L^F)$ such that $\bl(\lambda)^{\G^F}=B$. By the above paragraph we deduce that $\mathcal{T}:=\{(\L,\lambda)\mid \lambda\in\mathcal{T}'\}$ is an $\n_\G(\L)^F$-transversal in $\CP_e(\L,F)\cap \CP_e(B)$ and so, applying Lemma \ref{lem:Characters and blocks of chains normalizers} together with Brauer's first main theorem, we get
\[\irr(b)=\coprod\limits_{(\L,\lambda)\in\mathcal{T}}\irr\left(\n_\G(\L)^F\hspace{1pt}\middle|\hspace{1pt}\lambda\right).\]
Now, \eqref{eq:Equivalence AM 1} follows by noticing that $\mathcal{T}$ is also a $\G^F$-transversal in $\CP_e(B)$.

To conclude, assume that $B$ has non-trivial defect and that $\z(\G^*)^{F^*}_\ell=1$. In this case, Proposition \ref{prop:Minimal Brauer--Lusztig triple general} implies that no $e$-cuspidal character belong to the block $B$. In particular $\CP_e(B)=\CP_e(B)_<$ and $\k^d_{\rm c}(B)=0$ for every $d\geq 0$. Furthermore, the main result of \cite{Kes-Mal13} shows that $\k(B)=\k^d(B)$ and that $\k^d(\n_\G(\L)^F,\lambda)=\k(\n_\G(\L)^F,\lambda)$ for every $(\L,\lambda)\in\CP_e(B)$. Now, \eqref{eq:Equivalence AM 0} is equivalent to \eqref{eq:para consequence 3}, that is,
\[\k^d(B)=\k^d_{\rm c}(B)+\sum\limits_{(\L,\lambda)}\k^d\left(\n_\G(\L)^F,\lambda\right)\]
where $(\L,\lambda)$ runs over a set of representatives for the action of $\G^F$ on $\CP_e(B)_<$.
\end{proof}

By Remark \ref{rmk:Consequences of parametrisation} (see \eqref{eq:para consequence 3}), the following corollary provides evidence for Parametrisation \ref{para:iEBC}.

\begin{cor}
Assume Hypothesis \ref{hyp:Brauer--Lusztig blocks} with $\ell$ large and $(\G,F,e)$-adapted and consider an $\ell$-block $B$ of $\G^F$. Then
\[\k(B)=\sum\limits_{(\L,\lambda)}\k(\n_\G(\L),\lambda)\]
where $(\L,\lambda)$ runs over a set of representatives for the action of $\G^F$ on $\CP_e(B)$. Moreover, \eqref{eq:para consequence 3} holds whenever $B$ has non-trivial defect and $\z(\G^*)^{F^*}_\ell=1$.
\end{cor}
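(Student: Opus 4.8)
The plan is to obtain this corollary as an immediate combination of Proposition \ref{prop:Equivalence AM} with the fact that the Alperin--McKay Conjecture is already known to hold for finite reductive groups at large primes.

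First I would record that, since $\ell$ is large for $\G$, the Sylow $\ell$-subgroups of $\G^F$ are abelian and so every $\ell$-block $B$ of $\G^F$ has abelian defect groups (see \cite[Theorem 25.14]{Mal-Tes}). In this abelian-defect situation \cite[Theorem 5.24]{Bro-Mal-Mic93} provides an isotypy between $B$ and its Brauer correspondent $b$ in the normaliser of a defect group; in particular it yields a bijection between the height-zero characters of $B$ and those of $b$, so that $\k_0(B)=\k_0(b)$ and hence Conjecture \ref{conj:AM} holds for $B$.

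Next I would invoke Proposition \ref{prop:Equivalence AM}: under Hypothesis \ref{hyp:Brauer--Lusztig blocks} with $\ell$ large and $(\G,F,e)$-adapted, the Alperin--McKay Conjecture for $B$ is equivalent to the equality $\k(B)=\sum_{(\L,\lambda)}\k(\n_\G(\L)^F,\lambda)$, the sum running over a set of representatives for the action of $\G^F$ on $\CP_e(B)$. Combining this with the previous paragraph proves the first assertion. For the second assertion, the closing statement of Proposition \ref{prop:Equivalence AM} says that, when in addition $B$ has non-trivial defect and $\z(\G^*)^{F^*}_\ell=1$, this equality is itself equivalent to \eqref{eq:para consequence 3}; hence \eqref{eq:para consequence 3} holds in that case.

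The only point that requires genuine attention is checking that the hypotheses of \cite[Theorem 5.24]{Bro-Mal-Mic93} are met in the present generality, namely that the relevant isotypy (or at least the height-zero bijection underlying the Alperin--McKay equality) is available for an arbitrary $\ell$-block $B$ of $\G^F$. It is also worth flagging here --- as in the discussion preceding this subsection --- that although Broué's Isotypy Conjecture is only known to imply Dade's Conjecture without the block-by-block refinement, it does imply the Alperin--McKay Conjecture block by block, which is exactly what is used above; beyond this and Proposition \ref{prop:Equivalence AM}, no further input is needed.
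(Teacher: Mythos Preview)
Your proposal is correct and follows essentially the same approach as the paper: invoke \cite[Theorem 5.24]{Bro-Mal-Mic93} to obtain the Alperin--McKay Conjecture for $B$, then apply Proposition \ref{prop:Equivalence AM} to deduce both assertions. The additional remarks you make (abelian defect via \cite[Theorem 25.14]{Mal-Tes}, and the distinction between the block-by-block implication for Alperin--McKay versus Dade) are helpful clarifications but not needed beyond what the paper's own two-line proof does.
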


\begin{proof}
By \cite[Theorem 5.24]{Bro-Mal-Mic93} and the subsequence discussion,  under our assumptions the Alperin--McKay Conjecture holds for any block $B$ of $\G^F$. Then the result follows from Proposition \ref{prop:Equivalence AM}.
\end{proof}

Next, we consider the connection between Conjecture \ref{conj:CTC reductive}, the Character Triple Conjecture and the inductive condition for Dade's Conjecture.

\begin{prop}
\label{prop:Equivalence CTC}
Assume Hypothesis \ref{hyp:Brauer--Lusztig blocks} with $\ell$ large, $(\G,F,e)$-adapted and such that $\O_\ell(\G^F)=1=\z(\G^*)^{F^*}_\ell$. Consider an $\ell$-block $B$ of $\G^F$ with non-trivial defect and fix $d\geq 0$. If Conjecture \ref{conj:CTC reductive} holds for $B$ and $d$, then the Character Triple Conjecture (Conjecture \ref{conj:CTC}) holds for $B$ and $d$.
\end{prop}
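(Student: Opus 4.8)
The plan is to follow the template of Proposition~\ref{prop:Equivalences Dade}, but upgrading the numerical identity to the level of character triples. First I would invoke \cite[Proposition~6.10]{Spa17} to reduce Conjecture~\ref{conj:CTC} for $\G^F$ (which is legitimate since $\O_\ell(\G^F)=1\leq\z(\G^F)$ and $B$ has non-trivial defect) to the construction of a bijection $\Omega\colon\C^d_{\mathfrak E}(B)_+/\G^F\to\C^d_{\mathfrak E}(B)_-/\G^F$ satisfying the $\iso{\G^F}$-property, where $\C^d_{\mathfrak E}(B)_\epsilon$ is the subset of $\C^d(B)_\epsilon$ consisting of pairs $(\e,\vartheta)$ with $\e$ an $\ell$-elementary abelian chain of $\G^F$ (starting at $\O_\ell(\G^F)=1$) of the prescribed parity. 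Since $\ell$ is large we have $\ell\in\Gamma(\G,F)$, so Lemmas~\ref{lem:Bad chains avoider} and~\ref{lem:Elementary abelian vs Levi} are available. Splitting $\mathfrak E(\G^F)=\mathfrak E_g(\G^F)\sqcup\mathfrak E_b(\G^F)$, on the bad part I would let $\Omega$ be induced by the parity-reversing $\aut_\mathbb F(\G^F)$-equivariant bijection $\beta$ of Lemma~\ref{lem:Bad chains avoider}, sending $(\e,\vartheta)$ to $(\beta(\e),\vartheta)$; the subgroup $D_j$ that is added to or removed from $\e$ in the construction of $\beta(\e)$ is built canonically from a term of $\e$ (namely $D_j=\Omega_1(\O_\ell(\z^\circ(\c_\G^\circ(E_j))^F))$), so it is normalised by $\G^F_\e$, whence $\G^F_\e=\G^F_{\beta(\e)}$ and likewise the stabilisers in $X:=\G^F\rtimes\aut_\mathbb F(\G^F)$ agree; the required $\iso{\G^F}$ is then the identity of character triples. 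This reduces everything to matching the good parts.

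On good chains I would transport Conjecture~\ref{conj:CTC reductive} through two identifications. By Lemma~\ref{lem:Elementary abelian vs Levi} the good $\ell$-elementary abelian chains of $\G^F$ correspond, $\aut_\mathbb F(\G^F)$-equivariantly and length-preservingly, to the chains $\sigma\in\CL(\G)$ of $e$-split Levi subgroups, with $\G^F_\e=\G^F_\sigma$ when $\e\leftrightarrow\sigma$; moreover, by Lemma~\ref{lem:Characters and blocks of chains normalizers}(ii) one has $\bl(\vartheta)^{\G^F}=\R_{\G_\sigma}^\G(\bl(\vartheta))$ for $\vartheta\in\irr(\G^F_\sigma)$, so $\irr^d(B_\e)=\irr^d(B_\sigma)$ in the notation of Section~\ref{sec:Counting characters e-locally}. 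Hence the good part of $\C^d_{\mathfrak E}(B)_\epsilon/\G^F$ is in $\G^F$-equivariant bijection with the set of $\G^F$-orbits of pairs $(\sigma,\vartheta)$ with $\sigma\in\CL(\G)_\epsilon$ and $\vartheta\in\irr^d(B_\sigma)$. Next I would identify this set with $\CL^d(B)_\epsilon/\G^F$ via $\omega\mapsto\omega^\bullet$: using the $e$-Harish-Chandra partition of $\irr(\G^F_\sigma)$ from Lemma~\ref{lem:Characters and blocks of chains normalizers}(iii) (in the rewritten form of the Remark following Theorem~\ref{thm:CTC reductive imples Dade reductive}) together with Proposition~\ref{prop:Minimal Brauer--Lusztig triple general}, which guarantees that $B$ contains no $e$-cuspidal character of $\G^F$ and hence that every relevant $e$-cuspidal pair $(\M,\mu)$ satisfies $\M<\G$, i.e. lies in $\CP_e(B)_<$, the forgetful map $(\sigma,\M,\ab(\mu),\vartheta)\mapsto(\sigma,\vartheta)$ is well defined and bijective on $\G^F$-orbits (injectivity from the uniqueness in Lemma~\ref{lem:Characters and blocks of chains normalizers}(iii); the case $\sigma=\{\G\}\leftrightarrow\{\O_\ell(\G^F)\}$ is exactly where Proposition~\ref{prop:Minimal Brauer--Lusztig triple general} is essential).

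Composing these identifications, the bijection $\Lambda$ of Conjecture~\ref{conj:CTC reductive} becomes an $\aut_\mathbb F(\G^F)_B$-equivariant bijection between the good parts of $\C^d_{\mathfrak E}(B)_\pm/\G^F$, and the displayed $\iso{\G^F}$ in Conjecture~\ref{conj:CTC reductive}, stated there with $X=\G^F\rtimes\aut_\mathbb F(\G^F)$, translates verbatim into the $\iso{\G^F}$ demanded by Conjecture~\ref{conj:CTC}, because for $\e\leftrightarrow\sigma$ and $\e'\leftrightarrow\rho$ one has $\G^F_\e=\G^F_\sigma$, $\G^F_{\e'}=\G^F_\rho$ and the corresponding $X$-stabilisers agree. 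Gluing this to the bad-chain bijection of the first paragraph yields $\Omega$ with all required properties for $X=\G^F\rtimes\aut_\mathbb F(\G^F)$; the case of a general overgroup $\G^F\unlhd X$ (in particular the one used later in the reduction to quasi-simple groups, where $\aut_\mathbb F(\G^F)=\aut(\G^F)$) then follows by restricting the $\iso{\G^F}$-data along the canonical embedding into $\G^F\rtimes\aut_\mathbb F(\G^F)$.

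The part I expect to be the main obstacle is the bookkeeping in the two good-chain steps: checking that $\omega\mapsto\omega^\bullet$ genuinely descends to a bijection on $\G^F$-orbits (this rests on the uniqueness of the $e$-cuspidal datum in Lemma~\ref{lem:Characters and blocks of chains normalizers}(iii) and, for the trivial chain, on the absence of $e$-cuspidal characters in $B$), and verifying that all the equivariance and stabiliser identifications under Lemma~\ref{lem:Elementary abelian vs Levi} are mutually compatible, so that the $\iso{\G^F}$ condition of Conjecture~\ref{conj:CTC reductive} is transported without loss through the whole chain of identifications. The reduction to $\ell$-elementary abelian chains and the bad-chain cancellation are routine once Lemmas~\ref{lem:Bad chains avoider} and~\ref{lem:Elementary abelian vs Levi} and \cite[Proposition~6.10]{Spa17} are in hand.
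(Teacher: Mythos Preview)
Your proposal is correct and follows essentially the same route as the paper: reduce to $\ell$-elementary abelian chains via \cite[Proposition~6.10]{Spa17}, cancel bad chains by the parity-reversing self-bijection of Lemma~\ref{lem:Bad chains avoider} (with identical stabilisers, so the $\iso{\G^F}$ is trivial), and on good chains transport $\Lambda$ through the identification of Lemma~\ref{lem:Elementary abelian vs Levi}, using Lemma~\ref{lem:Characters and blocks of chains normalizers}(iii) to attach the $e$-cuspidal datum and Proposition~\ref{prop:Minimal Brauer--Lusztig triple general} to guarantee $(\M,\mu)\in\CP_e(B)_<$. The paper's proof is pointwise rather than phrased via $\omega\mapsto\omega^\bullet$, but the content is the same; note that the paper, like your argument, establishes the conclusion for $X=\G^F\rtimes\aut_\mathbb F(\G^F)$ and does not attempt the passage to a general overgroup.
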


\begin{proof}
Consider $(\e,\vartheta)\in\C^d(B)_+$. By \cite[Proposition 6.10]{Spa17} we may assume that $\e$ is an $\ell$-elementary abelian chain. If $\e$ is a bad $\ell$-elementary abelian chain (see Definition \ref{def:Good chains}), then we define
\[\Omega\left(\overline{\left(\e,\vartheta\right)}\right):=\overline{\left(\e',\vartheta\right)},\]
where $\e'$ is the chain corresponding to $\e$ via the bijection given by Lemma \ref{lem:Bad chains avoider}. Notice in this case that $\G^F_\e=\G^F_{\e'}$ and therefore that $(\e',\vartheta)\in\C^d(B)_-$. Assume that $\e$ is a good $\ell$-elementary abelian chain and consider the corresponding chain of $e$-split Levi subgroups $\sigma$ given by Lemma \ref{lem:Elementary abelian vs Levi}. Since the map $\d\mapsto\sigma$ is $\aut_{\mathbb{F}}(\G^F)$-equivariant, we know that $\G^F_\e=\G^F_\sigma$. Recall that $\L(\sigma)$ denotes the final term of $\sigma$. By Lemma \ref{lem:Characters and blocks of chains normalizers} (iii), there exists an $e$-cuspidal pair $(\M,\mu)$ of $\L$, unique up to $\G^F_\sigma$-conjugation, such that $\vartheta\in\irr^d(\G^F_\sigma\mid \E(\L^F,(\M,\mu))$. Then, as $\bl(\vartheta)^{\G^F}=B$, we have $\vartheta\in\irr^d(B_\sigma\mid \E(\L(\sigma)^F,(\M,\ab(\mu))))$.

Next, we claim that $(\M,\mu)\in\CP_e(B)_<$, so that $(\sigma,\M,\ab(\mu),\vartheta)\in\CL^d(B)_+$. First, observe that every character of $\E(\L^F,(\M,\mu))$ is contained in the block $\bl(\mu)^{\L^F}$ by Proposition \ref{prop:e-Harish-Chandra series and blocks}. Then, applying Lemma \ref{lem:Characters and blocks of chains normalizers} (i) and using the transitivity of block induction, it follows that $\bl(\vartheta)=(\bl(\mu)^{\L^F})^{\G^F_\sigma}=\bl(\mu)^{\G^F_\sigma}$. Since $(\d,\vartheta)\in\C^d(B)_+$, we deduce that $\bl(\vartheta)^{\G^F}=B$ and hence $(\M,\mu)\in\CP_e(B)$. Furthermore, as $B$ has non-trivial defect, Proposition \ref{prop:Minimal Brauer--Lusztig triple general} shows that $\M<\G$ and so $(\M,\mu)\in\CP_e(B)_<$. This proves our claim.

Now $(\sigma,\M,\ab(\mu),\vartheta)\in\CL^d(B)_+$ and we choose $(\rho,\n,\ab(\nu),\chi)\in\Lambda(\overline{(\sigma,\M,\ab(\mu),\vartheta)})$ where $\Lambda$ is the bijection given by Conjecture \ref{conj:CTC reductive}. If $\d$ is the good $\ell$-elementary abelian chain corresponding to $\rho$ via the bijection given by Lemma \ref{lem:Elementary abelian vs Levi}, then $(\d,\chi)\in\C^d(B)_-$ and we define
\[\Omega\left(\overline{\left(\e,\vartheta\right)}\right):=\overline{\left(\d,\chi\right)}.\]
Since $(\M,\mu)$ is unique up to $\G^F_\sigma$-conjugation while $\Lambda$ and the bijections given by Lemma \ref{lem:Elementary abelian vs Levi} and Lemma \ref{lem:Bad chains avoider} are equivariant, we conclude that $\Omega$ is a well defined $\aut_\mathbb{F}(\G^F)$-equivariant bijection. Moreover, using the property on character triples of $\Lambda$ it is immediate to show that $\Omega$ satisfies the analogous properties required by Conjecture \ref{conj:CTC} with respect to $X:=\G^F\rtimes \aut_\mathbb{F}(\G^F)$. This completes the proof.
\end{proof}

To obtain the inductive condition for Dade's Conjecture we apply Lemma \ref{lem:iDade universal vs covering}. Recall that, apart from a few exceptions, the universal covering group of a (non-abelian) simple group of Lie type is a finite reductive group $\G^F$ with $\G$ simple and simply connected. Moreover, in this case, $\aut_\mathbb{F}(\G^F)=\aut(\G^F)$ (see \cite[1.15]{GLS} and \cite[2.4]{Cab-Spa13}).

\begin{cor}
\label{cor:Equivalence iDade}
Consider the set up of Proposition \ref{prop:Equivalence CTC} and suppose that $\G^F/\z(\G^F)$ is a non-abelian simple group with universal covering group $\G^F$ where $\G$ is simple and simply connected. If Conjecture \ref{conj:CTC reductive} holds for $B$ and $d$, then the inductive condition for Dade's Conjecture holds for $B$ and $d$. 
\end{cor}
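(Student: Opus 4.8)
The plan is to deduce Corollary \ref{cor:Equivalence iDade} directly from Proposition \ref{prop:Equivalence CTC} and Lemma \ref{lem:iDade universal vs covering}. Under the hypotheses of Proposition \ref{prop:Equivalence CTC} we already obtain that Conjecture \ref{conj:CTC} (Sp\"ath's Character Triple Conjecture) holds for the block $B$ and the fixed defect $d$, with respect to the ambient group $X:=\G^F\rtimes\aut_\mathbb{F}(\G^F)$. Since $\G$ is simple and simply connected with $\G^F/\z(\G^F)$ a non-abelian simple group $S$, we have $\G^F=\wh{S}$, the universal covering group of $S$, and moreover $\aut_\mathbb{F}(\G^F)=\aut(\G^F)$ by \cite[Section 1.15]{GLS} (see also \cite[Section 2.4]{Cab-Spa13}), so that $X=\wh{S}\rtimes\aut(\wh{S})$. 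Thus part (ii) of Lemma \ref{lem:iDade universal vs covering} is almost in hand; the remaining point is to upgrade the statement of Conjecture \ref{conj:CTC} from the version with ambient group $X=G\rtimes\aut(G)$ for $G=\G^F$ itself to the version quantified over all central quotients $G:=\wh{S}/Z$ with $X:=G\rtimes\aut(G)$ and all blocks of $G$ dominated by $B$.

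The key step is therefore the passage to central quotients. First I would recall that $\G^F$-block isomorphisms of character triples lift along quotients by central subgroups: by \cite[Corollary 4.4]{Spa17}, if Conjecture \ref{conj:CTC} holds for $\wh{S}$ with the more restrictive requirement \eqref{eq:CTC with kernels} on kernels over $\z(\wh{S})$, then it holds for every quotient $\wh{S}/Z$ and every dominated block. So the real task is to verify that the bijection $\Omega$ produced in the proof of Proposition \ref{prop:Equivalence CTC} can be arranged to respect central kernels, i.e.\ that it maps pairs $(\e,\vartheta)$ and $(\d,\chi)$ with $\ker(\vartheta_{\z(\wh S)})=\ker(\chi_{\z(\wh S)})$ and induces the isomorphism \eqref{eq:CTC with kernels} on the quotients. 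This in turn should follow by tracing through the construction: the $\G^F$-block isomorphisms coming from Conjecture \ref{conj:CTC reductive} (hence ultimately from the bijections $\Omega^{\K,H}_{(\L,\lambda)}$ and $\Psi^\K_{(\L,\lambda)}$) are compatible with the action of $\z(\G^F)$, and a character $\vartheta$ of $\G^F_\sigma$ lying in an $e$-Harish-Chandra series $\E(\L(\sigma)^F,(\M,\ab(\mu)))$ has the same central character as the characters of that series, which is governed by the underlying $e$-cuspidal datum; since $\Lambda$ preserves the $e$-cuspidal pair $(\M,\ab(\mu))$ (by the construction of $\Delta$ in Theorem \ref{thm:Reduction for CTC for groups of Lie type}, which keeps $(\n,\ab(\nu))=(\M,\ab(\mu))$), the central kernels on both sides coincide, and the isomorphism of character triples descends to the quotient by this common central kernel. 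Once the refined version \eqref{eq:CTC with kernels} is established, condition (iii) of Lemma \ref{lem:iDade universal vs covering} holds, hence condition (i): the inductive condition for Dade's Conjecture (in the sense of \cite[Definition 6.7]{Spa17}) holds for $B$ and $d$.

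I would organise the write-up as follows. Step 1: invoke Proposition \ref{prop:Equivalence CTC} to get Conjecture \ref{conj:CTC} for $\G^F$, $X=\G^F\rtimes\aut_\mathbb{F}(\G^F)$, $B$ and $d$. Step 2: identify $\G^F=\wh{S}$ and $\aut_\mathbb{F}(\G^F)=\aut(\G^F)$, so $X=\wh S\rtimes\aut(\wh S)$. Step 3: show the bijection $\Omega$ from Step 1 can be chosen to satisfy the kernel condition of Lemma \ref{lem:iDade universal vs covering}(iii); here I would unwind the chain of constructions (Lemma \ref{lem:Bad chains avoider}, Lemma \ref{lem:Elementary abelian vs Levi}, Conjecture \ref{conj:CTC reductive} via Theorem \ref{thm:Reduction for CTC for groups of Lie type}) and observe that at each stage the relevant central characters — those of $\z(\G^F)$ — are preserved, either because the chain maps fix the final $e$-split Levi subgroup's centre or because $\Lambda$ preserves the $e$-cuspidal datum $(\M,\ab(\mu))$ that determines the central character of the whole $e$-Harish-Chandra series. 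Step 4: apply Lemma \ref{lem:iDade universal vs covering}, equivalence (iii)$\Rightarrow$(i), to conclude.

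The main obstacle I expect is Step 3: making precise and airtight the claim that the $\G^F$-block isomorphisms constructed via $e$-Harish-Chandra theory are compatible with central characters and therefore descend to quotients by $Z\le\z(\G^F)$ with the requirement \eqref{eq:CTC with kernels}. The bijections $\Omega^{\K,H}_{(\L,\lambda)}$ of Proposition \ref{prop:iEBC going up} are constructed through Clifford theory relative to $\K_0=[\K,\K]$ and then lifted; one must check that the resulting correspondence of characters of chain stabilisers, and the accompanying projective representations, behave correctly under restriction to $\z(\G^F)$. It is plausible this is essentially automatic — $\z(\G^F)$ acts trivially by conjugation and the bijections are equivariant — but a careful argument using \cite[Corollary 4.4]{Spa17} together with the fact that $e$-Harish-Chandra series have a well-defined central character (via the associated semisimple element $s$ up to conjugacy) is needed, and this is the technical heart of the corollary.
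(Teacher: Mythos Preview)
Your overall strategy (Steps 1, 2, 4) matches the paper exactly, but Step 3 --- which you flag as the ``technical heart'' and ``main obstacle'' --- is vastly overcomplicated. The kernel condition and the descent to quotients are \emph{automatic} consequences of general properties of $\G^F$-block isomorphisms; no unwinding of the $e$-Harish-Chandra constructions is needed.

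Concretely, the paper proceeds as follows. Given the $\G^F$-block isomorphism
\[
\left(X_{\d,\vartheta},\G^F_\d,\vartheta\right)\iso{\G^F}\left(X_{\e,\chi},\G^F_\e,\chi\right)
\]
from Proposition \ref{prop:Equivalence CTC}, the equality $Z:=\ker(\vartheta_{\z(\G^F)})=\ker(\chi_{\z(\G^F)})$ follows immediately from \cite[Lemma 3.4]{Spa17}: any $\G^F$-block isomorphism forces the two characters to agree on $\z(\G^F)$ up to scalar, hence to have the same central kernel. There is no need to argue via the $e$-cuspidal datum $(\M,\ab(\mu))$ or central characters of $e$-Harish-Chandra series. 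Next, the descent to the quotient \eqref{eq:CTC with kernels} follows from \cite[Corollary 4.5]{Spa17} (together with Lemma \ref{lem:Centralizer, automorphisms and central subgroups} to identify the automorphism group of $\G^F/Z$), using the crucial fact that $\z(\G^F)$ has order coprime to $\ell$. This last point holds because $\G$ is simple (so $\z^\circ(\G)=1$) and $\ell\in\Gamma(\G,F)$ forces $\ell\nmid|\z(\G)^F:\z^\circ(\G)^F|=|\z(\G^F)|$.

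In short: replace your proposed Step 3 by a two-line invocation of \cite[Lemma 3.4]{Spa17} and \cite[Corollary 4.5]{Spa17}, and the proof is done.
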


\begin{proof}
Consider the bijection $\Omega:\C^d(B)_+/\G^F\to\C^d(B)_-/\G^F$ constructed in the proof of Proposition \ref{prop:Equivalence CTC} and fix $(\d,\vartheta)\in\C^d(B)_+$ and $(\e,\chi)\in\Omega(\overline{(\d,\vartheta)})$. Since $\Omega$ satisfies the requirements of Conjecture \ref{conj:CTC}, we know that
\[\left(X_{\d,\vartheta},\G^F_\d,\vartheta\right)\iso{\G^F}\left(X_{\e,\chi},\G^F_\e,\chi\right)\]
with $X:=\G^F\rtimes \aut_\mathbb{F}(\G^F)$ and applying \cite[Lemma 3.4]{Spa17} we obtain $Z:=\ker(\vartheta_{\z(\G^F)})=\ker(\chi_{\z(\G^F)})$. Because, under our assumption, $\z(\G^F)$ has order coprime to $\ell$, it follows from \cite[Corollary 4.5]{Spa17} (see also Lemma \ref{lem:Centralizer, automorphisms and central subgroups}) that
\[\left(X_{\d,\vartheta}/Z,\G^F_\d/Z,\overline{\vartheta}\right)\iso{\G^F/Z}\left(X_{\e,\chi}/Z,\G^F_\e/Z,\overline{\chi}\right)\]
where $\overline{\vartheta}$ and $\overline{\chi}$ correspond to $\vartheta$ and $\chi$ respectively via inflation of characters. Now, the result follows from Lemma \ref{lem:iDade universal vs covering} after noticing that $\aut(\G^F)=\aut_\mathbb{F}(\G^F)$ under our hypothesis.
\end{proof}

To conclude we show that Parametrisation \ref{para:iEBC} implies the inductive Alperin--McKay condition (Conjecture \ref{conj:iAM}).

\begin{prop}
\label{prop:Equivalence iAM}
Assume Hypothesis \ref{hyp:Brauer--Lusztig blocks} with $\ell$ large and $(\G,F,e)$-adapted and consider an $\ell$-block $B$ of $\G^F$. If Parametrisation \ref{para:iEBC} holds with respect to every $(\L,\lambda)\in\CP_e(B)$, then the inductive Alperin--McKay condition (Conjecture \ref{conj:iAM}) holds for $B$ with respect to $\G^F\unlhd \G^F\rtimes \aut_\mathbb{F}(\G^F)$.
\end{prop}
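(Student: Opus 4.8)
The plan is to build the bijection required by Conjecture \ref{conj:iAM} by gluing together the bijections $\Omega^\G_{(\L,\lambda)}$ furnished by Parametrisation \ref{para:iEBC} over the $e$-cuspidal pairs lying in $B$, and then to transport the $\iso{\G^F}$-property through this gluing with the help of \cite[Lemma 3.8]{Spa17} and Lemma \ref{lem:Basic properties of N-block isomorphism}.

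\emph{Setting up the combinatorics.} By \cite[Theorem 4.1]{Cab-Eng99} there is a unique $\G^F$-conjugacy class of $(e,\ell')$-cuspidal pairs $(\L,\lambda_0)$ with $B=b_{\G^F}(\L,\lambda_0)$; fix $\L$ in this class. Since $\ell$ is large and $(\G,F,e)$-adapted, the description of defect groups of blocks of finite reductive groups for large primes (see \cite[Theorem 5.24]{Bro-Mal-Mic93}, \cite[Theorem (ii)]{Cab-Eng94} and \cite[Lemma 4.16]{Cab-Eng99}) allows one to choose a defect group $D$ of $B$ equal to $(\z^\circ(\L)_{\Phi_e})^F_\ell$. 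Then $\n_{\G^F}(D)=\n_\G(\L)^F$, and since $\L=\c_\G^\circ(\Omega_1(D))$ by Proposition \ref{prop:e-split Levi large primes}(iii), every element of $X:=\G^F\rtimes\aut_\mathbb{F}(\G^F)$ normalising $D$ normalises $\L$; in particular $X_{D,B}\leq\n_X(\L)$, and $\n_\G(\L)^F\leq X_{D,B}$ since blocks are stable under inner automorphisms. Let $b$ be the Brauer correspondent of $B$ in $\n_\G(\L)^F$. As $\ell$ is large, $B$ and $b$ have abelian defect group, so by \cite{Kes-Mal13} we have $\irr_0(B)=\irr(B)$, $\irr_0(b)=\irr(b)$ and $d(\chi)=d(B)=d(b)$ for all these characters. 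Arguing exactly as in the proof of Proposition \ref{prop:Equivalence AM}, every $(\M,\mu)\in\CP_e(B)$ has $\M$ conjugate to $\L$; writing $\mathcal{C}$ for the set of $e$-cuspidal $\lambda\in\irr(\L^F)$ with $\bl(\lambda)^{\G^F}=B$, Theorem \ref{thm:Blocks are unions of e-HC series} and Proposition \ref{prop:e-Harish-Chandra series, disjointness} give $\irr(B)=\coprod_\lambda\E(\G^F,(\L,\lambda))$ over $\n_\G(\L)^F$-orbit representatives $\lambda$ in $\mathcal{C}$, and Lemma \ref{lem:Characters and blocks of chains normalizers} together with Brauer's first main theorem gives $\irr(b)=\coprod_\lambda\irr(\n_\G(\L)^F\mid\lambda)$ over the same index set.

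\emph{Gluing.} Fix an $X_{D,B}$-transversal $\mathcal{T}\subseteq\mathcal{C}$; the group $X_{D,B}$ normalises $\L$ and stabilises $B$, hence acts on $\mathcal{C}$ and permutes the two families of sets above compatibly. For each $\lambda\in\mathcal{T}$ apply Parametrisation \ref{para:iEBC} to $(\L,\lambda)\in\CP_e(B)$ to obtain a defect preserving $\aut_\mathbb{F}(\G^F)_{(\L,\lambda)}$-equivariant bijection $\Omega^\G_{(\L,\lambda)}\colon\E(\G^F,(\L,\lambda))\to\irr(\n_\G(\L)^F\mid\lambda)$ with the $\iso{\G^F}$-property. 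Then set
\[\Omega\left(\vartheta^x\right):=\Omega^\G_{(\L,\lambda)}(\vartheta)^x\qquad(\lambda\in\mathcal{T},\ \vartheta\in\E(\G^F,(\L,\lambda)),\ x\in X_{D,B}).\]
That $\Omega$ is a well-defined $X_{D,B}$-equivariant bijection $\irr_0(B)\to\irr_0(b)$ is a routine check: if a conjugate of $\E(\G^F,(\L,\lambda))$ by some $x\in X_{D,B}$ equals $\E(\G^F,(\L,\lambda))$, then by Proposition \ref{prop:e-Harish-Chandra series, disjointness} $x$ normalises $(\L,\lambda)$ up to $\n_\G(\L)^F$, and the $\aut_\mathbb{F}(\G^F)_{(\L,\lambda)}$-equivariance of $\Omega^\G_{(\L,\lambda)}$ (which in particular covers the inner automorphisms in $\n_\G(\L)^F_\lambda$) removes the ambiguity; bijectivity and the defect statement follow from the corresponding properties of the $\Omega^\G_{(\L,\lambda)}$ and the two decompositions above.

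\emph{Character triples.} Fix $\chi\in\irr_0(B)$ and write $\chi=\vartheta^x$ with $\lambda\in\mathcal{T}$, $\vartheta\in\E(\G^F,(\L,\lambda))$, $x\in X_{D,B}$, so $\Omega(\chi)=\psi^x$ with $\psi:=\Omega^\G_{(\L,\lambda)}(\vartheta)$. Parametrisation \ref{para:iEBC} gives $(X_\vartheta,\G^F,\vartheta)\iso{\G^F}(\n_X(\L)_\vartheta,\n_\G(\L)^F,\psi)$; conjugating all the data by $x\in\n_X(\L)$, invoking Lemma \ref{lem:Basic properties of N-block isomorphism} and using $\L^x=\L$ yields
\[\left(X_\chi,\G^F,\chi\right)\iso{\G^F}\left(\n_X(\L)_\chi,\n_\G(\L)^F,\Omega(\chi)\right).\]
Since the defect groups of $B$ are all $\G^F$-conjugate and $X_\chi$ stabilises $B=\bl(\chi)$, a Frattini argument gives $X_\chi=\G^F\,\n_X(D)_\chi$; combined with $\n_{\G^F}(D)=\n_\G(\L)^F$ and $\n_X(D)_\chi\leq\n_X(\L)_\chi$ this shows that $\n_X(D)_\chi/\n_{\G^F}(D)$ maps isomorphically onto $X_\chi/\G^F$, so \cite[Lemma 3.8]{Spa17} lets us shrink the overgroups to obtain
\[\left(X_\chi,\G^F,\chi\right)\iso{\G^F}\left(\n_X(D)_\chi,\n_{\G^F}(D),\Omega(\chi)\right),\]
which is precisely Conjecture \ref{conj:iAM} for $B$ with $\G^F\unlhd X$. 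The main obstacle is the very first step: identifying the defect group $D$ sharply enough with the $\ell$-part of the central $\Phi_e$-torus of $\L$ so as to secure simultaneously $\n_{\G^F}(D)=\n_\G(\L)^F$ and the ambient inclusion $\n_X(D)\leq\n_X(\L)$, and then the Frattini and \cite[Lemma 3.8]{Spa17} bookkeeping that bridges the overgroup $\n_X(\L)_\chi$ produced by Parametrisation \ref{para:iEBC} with the overgroup $\n_X(D)_\chi$ appearing in the inductive Alperin--McKay condition.
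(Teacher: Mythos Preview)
Your proof is correct and follows essentially the same strategy as the paper: reduce to the fixed $e$-split Levi $\L$ with $B=b_{\G^F}(\L,\lambda_0)$, use the decompositions $\irr(B)=\coprod_\lambda\E(\G^F,(\L,\lambda))$ and $\irr(b)=\coprod_\lambda\irr(\n_\G(\L)^F\mid\lambda)$, and glue the bijections $\Omega^\G_{(\L,\lambda)}$ equivariantly. The one place where you work harder than necessary is the character-triple step: the paper simply invokes \cite[Lemma 4.16]{Cab-Eng99} to obtain $\aut_\mathbb{F}(\G^F)_\L=\aut_\mathbb{F}(\G^F)_D$, which together with $\n_{\G^F}(\L)=\n_{\G^F}(D)$ gives $\n_X(\L)=\n_X(D)$ outright, so the $\iso{\G^F}$-relation from Parametrisation \ref{para:iEBC} is already the one required by Conjecture \ref{conj:iAM} and no Frattini argument or shrinking via \cite[Lemma 3.8]{Spa17} is needed.
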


\begin{proof}
Let $D$ be a defect group of $B$ and consider its Brauer correspondent $b$ in $\n_{\G^F}(D)$. Notice that $\irr_0(B)=\irr(B)$ and that $\irr_0(b)=\irr(b)$ by \cite{Kes-Mal13} since $D$ is abelian under our assumption. Let $(\L,\lambda_0)$ be an $(e,\ell')$-cuspidal pair of $\G$ such that $B=b_{\G^F}(\L,\lambda)$ and observe that $\n_{\G^F}(D)=\n_\G(\L)^F$ and that $\aut_\mathbb{F}(\G^F)_\L=\aut_\mathbb{F}(\G^F)_D$ by \cite[Lemma 4.16]{Cab-Eng99}. Fix an $\aut_\mathbb{F}(\G^F)_{\L,B}$-transversal $\mathcal{T}'$ in the set of $e$-cuspidal characters $\lambda\in\irr(\L^F)$ such that $\bl(\lambda)^{\G^F}=B$. Then, proceeding as in the proof of Proposition \ref{prop:Equivalence AM}, we deduce that $\mathcal{T}:=\{(\L,\lambda)\mid \lambda\in\mathcal{T}'\}$ is an $\aut_\mathbb{F}(\G^F)_{\L,B}$-transversal in $\CP_e(\L,F)\cap \CP_e(B)$ and a $\G^F\aut_\mathbb{F}(\G^F)_{\L,B}$-transversal in $\CP_e(B)$.

Next, for every $(\L,\lambda)\in\mathcal{T}$ we choose an $\aut_\mathbb{F}(\G^F)_{(\L,\lambda)}$-transversal $\mathcal{T}_B^{(\L,\lambda)}$ in $\E(\G^F,(\L,\lambda))$. We claim that
\[\mathcal{T}_B:=\coprod_{(\L,\lambda)\in\mathcal{T}}\mathcal{T}_B^{(\L,\lambda)}\]
is an $\aut_\mathbb{F}(\G^F)_{\L,B}$-transversal in $\irr(B)$. First suppose that $\chi\in\irr(B)$ and let $(\M,\mu)\in\CP_e(B)$ such that $\chi\in\E(\G^F,(\M,\mu))$ (see Theorem \ref{thm:Blocks are unions of e-HC series}). By the choices made in the previous paragraph, there exists $(\L,\lambda)\in\mathcal{T}$ such that $(\M,\mu)^{gx}=(\L,\lambda)$ for some $g\in\G^F$ and $x\in\aut_\mathbb{F}(\G^F)_{\L,B}$. It follows that $\chi^x=\chi^{gx}\in\E(\G^F,(\L,\lambda))$ and hence we find $\chi_0\in\mathcal{T}_B^{(\L,\lambda)}$ such that $\chi^{xy}=\chi_0$ for some $y\in\aut_\mathbb{F}(\G^F)_{(\L,\lambda)}$. Noticing that $\aut_\mathbb{F}(\G^F)_{(\L,\lambda)}\leq \aut_\mathbb{F}(\G^F)_{\L,B}$, this shows that $\chi$ is $\aut_\mathbb{F}(\G^F)_{\L,B}$-conjugate to an element in $\mathcal{T}_B$. On the other hand suppose that $(\L,\lambda)\in\mathcal{T}$, $\chi,\psi\in\mathcal{T}_B^{(\L,\lambda)}$ with $\chi=\psi^x$ for some $x\in\aut_\mathbb{F}(\G^F)_{\L,B}$. Then $(\L,\lambda)^{xg}=(\L,\lambda)$ for some $g\in \G^F$ by Proposition \ref{prop:e-Harish-Chandra series, disjointness} and we deduce that $\chi=\psi^x=\psi^{xg}$ with $xy\in\aut_\mathbb{F}(\G^F)_{(\L,\lambda)}$. By the choice of $\mathcal{T}^{(\L,\lambda)}_B$, this shows that $\chi=\psi$ and our claim is proved.

By assumption, for every $(\L,\lambda)\in\mathcal{T}$, there is an $\aut_\mathbb{F}(\G^F)_{(\L,\lambda)}$-equivariant bijection
\[\Omega_{(\L,\lambda)}^\G:\E\left(\G^F,(\L,\lambda)\right)\to\irr\left(\n_\G(\L)^F\hspace{1pt}\middle|\hspace{1pt}\lambda\right)\]
and hence $\mathcal{T}_b^{(\L,\lambda)}:=\Omega_{(\L,\lambda)}^\G(\mathcal{T}_B^{(\L,\lambda)})$ is an $\aut_\mathbb{F}(\G^F)_{(\L,\lambda)}$-transversal in $\irr(\n_\G(\L)^F\mid \lambda)$. We claim that
\[\mathcal{T}_b:=\coprod_{(\L,\lambda)\in\mathcal{T}}\mathcal{T}_b^{(\L,\lambda)}\]
is an $\aut_\mathbb{F}(\G^F)_{\L,B}$-transversal in $\irr(b)$. If $\vartheta\in\irr(b)$, arguing as in the proof of Proposition \ref{prop:Equivalence AM} we deduce that $\vartheta\in\irr(\n_\G(\L)^F\mid \mu)$ for some $(\L,\mu)\in\CP_e(\L,F)\cap \CP_e(B)$  via an application of Lemma \ref{lem:Characters and blocks of chains normalizers} and Brauer's first main theorem. Let $(\L,\lambda)\in\mathcal{T}$ and $x\in\aut_\mathbb{F}(\G^F)_{\L,B}$ such that $(\L,\mu)^x=(\L,\lambda)$. Then $\vartheta^x\in\irr(\n_\G(\L)^F\mid \lambda)$ and we can find $\vartheta_0\in\mathcal{T}_b^{(\L,\lambda)}$ such that $\vartheta^{xy}=\vartheta_0$ for some $y\in\aut_\mathbb{F}(\G^F)_{(\L,\lambda)}$. Therefore $\vartheta$ is conjugate to $\vartheta_0$ via an element of $\aut_\mathbb{F}(\G^F)_{\L,B}$. On the other, let $(\L,\lambda)\in\mathcal{T}$, $\vartheta,\eta\in\mathcal{T}_b^{(\L,\lambda)}$ and $x\in\aut_\mathbb{F}(\G^F)_{\L,B}$ such that $\vartheta^x=\eta$. By Clifford's theorem there exists $g\in\n_\G(\L)^F$ such that $\lambda^{xg}=\lambda$ and hence $\vartheta^{xg}=\vartheta^x=\eta$ for some $xg\in\aut_\mathbb{F}(\G^F)_{(\L,\lambda)}$. This shows that $\vartheta=\eta$ by the choice of $\mathcal{T}_b^{(\L,\lambda)}$ and so we obtain our claim.

Since $\mathcal{T}_B$ and $\mathcal{T}_b$ are in bijection, we obtain an $\aut_\mathbb{F}(\G^F)_{\L,B}$-equivariant bijection between the sets $\irr(B)$ and $\irr(b)$ by setting
\[\Omega(\chi^x):=\Omega_{(\L,\lambda)}^\G(\chi)^x\]
for every $(\L,\lambda)\in\mathcal{T}$, $\chi\in\mathcal{T}_B^{(\L,\lambda)}$ and $x\in\aut_\mathbb{F}(\G^F)_{\L,B}$. Recalling that $\n_\G(\L)^F=\n_{\G^F}(D)$ and $\aut_\mathbb{F}(\G^F)_{\L,B}=\aut_\mathbb{F}(\G^F)_{D,B}$, we deduce that the equivalence of character triples required by Conjecture \ref{conj:iAM} follow from the analogue properties given by Parametrisation \ref{para:iEBC}.
\end{proof}











\bibliographystyle{alpha}
\bibliography{References}

\vspace{1cm}

\mbox{DEPARTMENT OF MATHEMATICS, CITY, UNIVERSITY OF LONDON, EC$1$V $0$HB, UNITED KINGDOM.}

\textit{Email address:} \href{mailto:damiano.rossi@city.ac.uk}{damiano.rossi@city.ac.uk}

\end{document}